\def\tc{\textcolor}
\newcommand{\ja}[1]{\tc{magenta}{#1}}
\setlist[enumerate]{label={\bf\arabic*}., itemsep=1ex,leftmargin=1.0cm}
\setlist[enumerate,2]{label={\bf\alph*)}, itemsep=1ex,leftmargin=1.0cm}
\setlist[enumerate,3]{label={\bf\roman*)}, itemsep=1ex,leftmargin=1.0cm}
\newtheorem{thm}{\protect\theoremname}[section]
\newtheorem*{thm*}{\protect\theoremname}
\newtheorem{prop}[thm]{\protect\propositionname}
\newtheorem{fact}[thm]{\protect\factname}
\newtheorem{lem}[thm]{\protect\lemmaname}
\newtheorem{obs}[thm]{\protect\observationname}
\newtheorem{cor}[thm]{\protect\corollaryname}
\newtheorem{claim}[thm]{\protect\claimname}
\newtheorem*{claim*}{\protect\claimname}
\newtheorem{question}{\protect\questionname}
\newtheorem{conjecture}[question]{\protect\conjecturename}
\newtheorem{problem}[question]{\protect\problemname}
\theoremstyle{definition}
\newtheorem{defn}[thm]{\protect\definitionname}
\newtheorem{rem}[thm]{\protect\remarkname}
\newcommand{\fraisse}{Fra\"iss\'e } 
\def\Ind#1#2{#1\setbox0=\hbox{$#1x$}\kern\wd0\hbox to 0pt{\hss$#1\mid$\hss}
\lower.9\ht0\hbox to 0pt{\hss$#1\smile$\hss}\kern\wd0}
\def\ind{\mathop{\mathpalette\Ind{}}}
\def\Notind#1#2{#1\setbox0=\hbox{$#1x$}\kern\wd0\hbox to 0pt{\mathchardef
\nn="3236\hss$#1\nn$\kern1.4\wd0\hss}\hbox to
0pt{\hss$#1\mid$\hss}\lower.9\ht0
\hbox to 0pt{\hss$#1\smile$\hss}\kern\wd0}
\providecommand{\theoremname}{Theorem}
\providecommand{\propositionname}{Proposition}
\providecommand{\factname}{Fact}
\providecommand{\lemmaname}{Lemma}
\providecommand{\observationname}{Observation}
\providecommand{\corollaryname}{Corollary}
\providecommand{\definitionname}{Definition}
\providecommand{\consname}{Construction}
\providecommand{\remarkname}{Remark}
\providecommand{\examplename}{Example}
\providecommand{\claimname}{Claim}
\providecommand{\questionname}{Question}
\providecommand{\conjecturename}{Conjecture}
\providecommand{\problemname}{Problem}
\global\long\def\fl#1{\mathrm{Flim}\left(#1\right)}
\title[Group Topologies on auto. groups of homogeneous str.]{Group topologies on automorphism groups of homogeneous structures}
\author{Zaniar Ghadernezhad \textsuperscript{$\dagger$}}\thanks{\noindent\textsuperscript{$\dagger$} Supported by the Leverhulme Grant RPG-2017-179}
\address[A1]{Department of Mathematics, Imperial College London}
\author{Javier de la Nuez Gonz\'alez \textsuperscript{$\dagger$}}
\thanks{\noindent\textsuperscript{$\dagger$}Supported by the European Research Council under the European Unions Seventh Framework Programme (FP7/2007- 2013)/ERC Grant Agreements No. 291111 and No. 336983 and from the Basque Government Grant IT974-16.}
\address[A2]{Korea Institute for Advanced Study (KIAS)}
\email[A1,A2]{z.ghadernezhad@imperial.ac.uk, javier.delanuez@gmail.com}
\subjclass{22A05,22F50(primary),03C15,03C45(secondary)}
\global\long\def\dim#1{\mathrm{d}\left(#1\right)}
\global\long\def\cl#1{\mathrm{cl}\left(#1\right)}
\global\long\def\diam#1{\mathrm{diam}\left(#1\right)}
\global\long\def\gcl{\mathrm{gcl}}
\global\long\def\acl{\mathrm{acl}}
\global\long\def\typ{\mathrm{tp}}
\global\long\def\Aut#1{\mathrm{Aut}\left(#1\right)}
\global\long\def\id{\mathrm{id}}
\global\long\def\Isom{\mathrm{Isom}}
\renewcommand\leq{\leqslant}
\renewcommand\geq{\geqslant}
\newcommand{\frp}[0]{\ast}
\newcommand{\subg}[1]{\langle #1 \rangle}
\newcommand{\N}[0]{\mathbb{N}}
\newcommand{\U}[0]{\mathcal{U}}
\newcommand{\Ba}[0]{\mathcal{B}}
\newcommand{\Q}[0]{\mathbb{Q}}
\newcommand{\nin}[0]{\notin}
\newcommand{\sst}[1]{\substack{#1}}
\newcommand{\idn}[0]{\mathcal{N}_{\tau}(1)}
\newcommand{\nb}[0]{N^{sp}}
\newcommand{\op}[0]{\oplus}
\newcommand{\mov}[0]{\mathrm {Supp}}
\newcommand{\dom}[0]{\mathrm {dom}}
\newcommand{\R}[0]{\mathbb{R}}
\newtheorem{thmx}{Theorem}
\newcommand{\RR}[0]{(R,0,\leq,\oplus)}
\newcommand{\rr}[0]{\mathcal{R}}
\newcommand{\M}[0]{\mathcal{M}}
\newcommand{\fps}[1]{[#1]^{<\omega}}      
\newcommand{\X}[0]{\mathfrak{X}}
\begin{document}

\maketitle

\begin{abstract}
We classify all group topologies coarser than the topology of stabilizers of finite sets in the case of automorphism groups of countable free-homogeneous structures, Urysohn space and Urysohn sphere.
\end{abstract}

\hypersetup{linkcolor=black}
\hypersetup{linkcolor=blue}

\section{Introduction}
  
  \subsection*{Minimality}
    A topological group  $(G,\tau)$ consists of a group $(G,\cdot)$ and a topology $\tau$ on $G$ such that the map  $\rho: G\times G\rightarrow G$ where $\rho(g,h)= gh^{-1}$ is jointly continuous.
    \begin{defn}
    A Hausdorff topological group $G$ is called \emph{minimal} if every bijective continuous homomorphism from $G$ to another Hausdorff topological group is a homeomorphism. The group $G$ is \emph{totally minimal} if every continuous surjective homomorphism to a Hausdorff topological group is open.
    \end{defn}
    In fact $(G,\tau)$ is minimal if and only if $G$ does not admit a strictly coarser Hausdorff group topology than $\tau$. Furthermore, it is also clear that every totally minimal group is minimal.
    
    The notion of minimality for topological groups was introduced as back as $1971$ as a generalization of compactness. In fact it is easy to see that any compact Hausdorff topological group is minimal. For more information about minimality, we refer the reader to the survey by Dikranjan and Megrelishvili \cite{DiMeg}.

    Given a group $G$ of permutations of some set $\Omega$ and $A\subseteq\Omega$, let $G_{A}=\{g\in G\,|\,\forall a\in A,\,\,ga=a\}$. Let $\fps{\Omega}$ be the set of all finite subsets of $\Omega$. The collection $\{G_{A}\,|\,A\in \fps{\Omega}\}$
    is a base of neighbourhoods at the identity of a group topology which we call the {\it standard topology} and denote by $\tau_{st}$. More generally for each $G$-invariant $X\subseteq\Omega$ there is an associated group topology $\tau^{X}_{st}$ generated by $\{G_{A}\,|\,A\in \fps{X}\}$.
    
    One of the earliest results on minimality due to Gaughan \cite{Gua} states that $(S_{\infty},\tau_{st})$ is totally minimal where $S_\infty$ denotes the group of all permutations of a countable set $\Omega$.
    
    \newcommand{\uni}[0]{M}   
    Given a countable first order structure $\M $ with universe $\uni$, the automorphism group of $\M$ is a $\tau_{st}$-closed subgroup of $S_{\infty}=S(\uni)$ and vice versa: any closed subgroup of $S(\uni)$ is the automorphism group of some countable structure on $\uni$.
    The interplay between the dynamical properties of $\Aut{\M}$ and the logical and combinatorial  properties of $\M $ has been widely studied in the literature, beginning with the characterization due to Engeler, Ryll-Nardzewski, Svenonius and others of oligomorphic subgroups of $S_{\infty}$ as the automorphism groups of $\omega$-categorical countable structures. Recall that an oligomorphic group is a closed subgroup of
    $S_{\infty}$ whose diagonal action on $\uni^{n}$ has finitely many orbits, for each $n\in \mathbb N$.
    
    In this context $\tau_{st}$ is often referred to in the literature as the point-wise convergence topology, in implicit reference to the discrete metric on $\uni$. When discussing isometry groups it will be important for us distinguishing between $\tau_{st}$ as above and the point-wise convergence topology relative to the metric in question which we will denote by $\tau_{m}$, so we will avoid this practice.
    
    In light of the above the following is thus a natural question, already asked in \cite{DiMeg}
    \begin{problem}
    \label{minimality problem}Let $\M$ be a countable $\omega$-categorical ($\omega$-saturated, sufficiently nice) first order structure and $G=\Aut{\M}$. When is $(G,\tau_{st})$ (totally) minimal?
    \end{problem}
    
    A deep result in this direction appeared in recent work by Ben Yaacov and Tsankov \cite{BenTs}, where the authors show that automorphism groups of countable $\omega$-categorical, stable continuous structures are totally minimal with respect to the point-wise convergence topology. This specializes to the result that the automorphism groups of classical $\omega$-categorical stable structures are totally minimal with respect to $\tau_{st}$.
    
    Not all oligomorphic groups are minimal with respect to $\tau_{st}$. As pointed out in \cite{BenTs},
    an example of this is $\Aut{\Q,<}$ (see Theorem \ref{distality} for a generalization). However even in those cases it is possible to formulate the following more general question:
    
    \begin{problem}
    \label{classification problem}Let $\M$ be a countable $\omega$-categorical (or sufficiently nice) first order structure and $G=\Aut{\M}$. Describe the lattice of all Hausdorff group topologies on $G$ coarser than $\tau_{st}$.
    \end{problem}
    
    This work was mainly motivated by \cite{BenTs} and is meant as a preliminary exploration of Problems $1$ and $2$ in the classical setting outside the stability constraint.
    
    In its broadest lines the strategy followed by \cite{BenTs} goes back to \cite{Usp}, where it was shown by Uspenskij that the isometry group of the Urysohn sphere is totally minimal with the point-wise convergence topology.
    Both proofs rely on the assumption that the group in question is Roelcke precompact and
    use a well behaved independece relation among (small) subsets of the structure to endow the Roelcke precompletion of the group with a topological semigroup structure. Information on the topological quotients of the original group is then recovered from the latter via the functoriality of Roelcke compactification and Ellis lemma.
    Recall that a topological group $(G,\tau)$ is {\it Roelcke precompact} if for any neighbourhood $W$ of $1$ there exists a finite $F\subset G$ such that $WFW=G$. For closed subgroups of $S_{\infty}$ this is equivalent to being oligomorphic.
    
    In contrast, our methods for obtaining (partial) minimality results are completely elementary. There are drawbacks to this lack of sophistication: for instance, we are not able to recover the result in \cite{BenTs} for classical structures. On the other hand we do not rely on assumptions of Roelcke pre-compactness (except for certain residual assumptions in some cases). In particular, we are able to answer in the positive the question about the minimality of the isometry group of the (unbounded) Urysohn space posed in \cite{Usp} (Theorem \ref{Urysohn thm 1}).
    It is worth emphasizing that in some cases we manage to obtain complete classifications of continuous homomorphic images of topological groups which are neither Roelcke precompact nor separable (see Theorem \ref{3 topologies theorem}).
    
    A section by section summary with our main results can be found below.
    
  \subsection*{Free amalgamation and one basedness}
    
    Section \ref{sec-relative-min} provides a simple technical criterion (Lemma \ref{main minimality proposition}) of (relative) minimality for $\tau_{st}$ from which more concrete applications
    are derived in Section \ref{sec-ind-free-1based}.
    
    Recall that the {\it free amalgam} of two relational structures $A,B$ over a common substructure $C$ is the structure resulting from taking unrelated copies of $A$ and $B$ and then gluing together the two copies of $C$ without adding any extra relations. A {\it free amalgamation} class $\mathcal{K}$ is a collection of finite structures closed under substructures and free amalgams. Associated with any such $\mathcal{K}$ there is a unique {\it \fraisse limit}: a countable structure in which every $A\in\mathcal{K}$ embeds and which is ultra-homogeneous, i.e., any finite partial isomorphism extends to an automorphism of the structure.
    
    \newcommand{\textfreedomtheorem}[0]{
    Let $\M$ be the Fra\"iss\'e limit of a free amalgamation class in a countable relational structure. Let $G=\Aut{\M}$. Then any group topology $\tau\subseteq\tau_{st}$ on $G$ is of the form $\tau_{st}^{X}$, where $X\subseteq M$ is some $G$-invariant set. In particular, if the action of $G$ on $M$ is transitive, then $(G,\tau_{st})$ is totally minimal.
    }
    
    \begin{thmx}\
    \label{freedom theorem} \textfreedomtheorem
    \end{thmx}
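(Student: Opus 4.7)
The natural candidate for $X$ is the set $X := \{a \in M : G_a \in \tau\}$. I would first verify that $X$ is $G$-invariant: if $G_a$ is $\tau$-open and $g \in G$, then $G_{ga} = g G_a g^{-1}$ is $\tau$-open as well, since conjugation is a homeomorphism in any topological group. It follows that $G_C \in \tau$ for every finite $C \subseteq X$, so the easy inclusion $\tau_{st}^X \subseteq \tau$ is immediate.

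The substantive direction is to show $\tau \subseteq \tau_{st}^X$. It suffices to prove that every $\tau$-neighborhood $U$ of $1$ contains $G_C$ for some finite $C \subseteq X$. Since $\tau \subseteq \tau_{st}$, I would fix a finite $B \subseteq M$ of minimal cardinality with $G_B \subseteq U$ and argue that necessarily $B \subseteq X$. The argument will proceed by contradiction: if some $b \in B \setminus X$, then writing $A := B \setminus \{b\}$, the plan is to show $G_A \subseteq U$, which contradicts the minimality of $B$.

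This is where Lemma \ref{main minimality proposition} is intended to intervene, in combination with the two essential features of Fra\"iss\'e limits of free amalgamation classes: triviality of the algebraic closure (so that the orbit $G_A \cdot b$ is infinite, since $b \notin \acl(A) = A$), and the existence of a stationary independence relation under which any two independent copies of $B$ over $A$ are $G_A$-conjugate. Given $h \in G_A$, the idea is to pass to a symmetric $V \in \tau$ with $V^2 \subseteq U$, use stationarity to choose $g \in G_A$ moving $b$ to a realization independent from $B$ over $A$, and write $h$ as a product whose factors either lie in $G_B \subseteq U$ or are controlled by the conjugation mechanism of Lemma \ref{main minimality proposition} applied to the failure of $b \in X$ (which guarantees that no $\tau$-neighborhood of $1$ sits inside $G_b$, forcing the existence of arbitrarily $\tau$-small automorphisms moving $b$ along its $G_A$-orbit). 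Chaining these factors, all of which land in $U$ after suitable shrinking, would yield $h \in U$.

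The main obstacle I anticipate is the bookkeeping in the previous step: making the factorization work requires a careful matching of the $\tau$-small conjugates produced by the lemma with the combinatorics of independent copies of $B$ over $A$, so that the factors actually compose to $h$ and remain in $U$. Once the classification $\tau = \tau_{st}^X$ is established, the ``in particular'' clause is straightforward. Transitivity of the action of $G$ on $M$ forces $X = \emptyset$ or $X = M$. For total minimality, given any continuous surjective homomorphism $\phi : G \to H$ with $H$ Hausdorff, the pullback topology $\phi^{-1}(\mathcal{T}_H)$ is a group topology on $G$ contained in $\tau_{st}$; by the classification it equals $\tau_{st}^X$ for some $G$-invariant $X$, so either it is trivial (and then $H$ is a point) or it equals $\tau_{st}$, in which case every $\tau_{st}$-open set has the form $\phi^{-1}(V)$ for some open $V \subseteq H$, so $\phi$ is open.
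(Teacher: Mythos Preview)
Your overall shape is right --- define $X=\{a\in M: G_a\in\tau\}$, get $\tau_{st}^X\subseteq\tau$ for free, and work for the reverse --- and the total-minimality paragraph is fine once the classification is established. The gap is in the hard direction. You claim that $b\notin X$ ``forces the existence of arbitrarily $\tau$-small automorphisms moving $b$ along its $G_A$-orbit,'' but $G_b\notin\tau$ only says every $\tau$-neighbourhood of $1$ contains some $g$ with $gb\neq b$; there is no reason that $g$ fixes $A$. So you do not get $\tau$-small elements of $G_A$ moving $b$, and your factorisation of $h\in G_A$ into $U$-sized pieces has no fuel. The minimal-$B$ framing compounds this: to exploit products you need $G_B$ and a second stabiliser $G_{B'}$ both inside a neighbourhood whose suitable power lies in $U$, not merely $G_B\subseteq U$ for the fixed $U$.

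The paper's route is different in structure. It does not argue directly with a minimal $B$; instead it checks that for free amalgamation the pair $([M]^{<\omega},\ind^{fr})$ satisfies existence, the freedom axiom and stationarity, and then Lemmas~\ref{getting zigzag} and~\ref{reachability of independence} yield the $2$-zigzag property, which via Lemma~\ref{lem-zig-used} is exactly the identity $(G_A G_B)^2 G_A=G_{A\cap B}$ for all finite $A,B$. That identity is the hypothesis of Proposition~\ref{main minimality proposition}, whose conclusion is already the full classification you want. So Proposition~\ref{main minimality proposition} is not an auxiliary ``conjugation mechanism'' to be invoked midway through your argument; it is the endpoint, and your task is to verify its zigzag hypothesis. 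The identity $(G_A G_B)^2 G_A=G_{A\cap B}$ is precisely what converts ``some small $g$ moves $b$ somewhere'' (which is all you actually have) into ``$G_{A\cap B}$ lies in a bounded product of small sets'' inside the jump-lemma machinery underlying that Proposition.
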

    
    Simple structures (i.e. theories) occupy an important place in classification theory. We refer the reader to \cite{TentZBook}, \cite{WagnerSimple} and  \cite{kim1997simple} for the definition of simple theories, forking and canonical bases. We say that a simple theory $T$ is {\it one-based} if $Cb(a/A)\subseteq bdd(a)$ for any hyperimagianry element $a$ and small subset $A$ of the monster model.
    
    \newcommand{\textsimpletheorem}[0]{
    Let $\M $ be a simple, $\omega$-saturated countable structure with elimination of hyperimaginaries, locally finite algebraic closure and weak elimination of imaginaries.
    Assume furthermore that $Th(\M)$ is one-based.
    Let $G=\Aut{\M}$. Then
    \begin{enumerate}
    \item If $G$ acts transitively on $M$, then $(G,\tau_{st})$ is minimal.
    \item If all singletons are algebraically closed, then any group topology $\tau$ on $G$ coarser than $\tau_{st}$ is of the form
    $\tau_{st}^{X}$ for some $G$-invariant $X\subseteq M$.
    \end{enumerate}
    }
    \begin{thmx}
    \label{simple structures theorem}
    \textsimpletheorem
    \end{thmx}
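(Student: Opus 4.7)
The plan is to derive Theorem B from the relative minimality criterion Lemma \ref{main minimality proposition}, substituting non-forking independence $\ind$ for the role played by free amalgamation in the proof of Theorem A. Under simplicity together with $\omega$-saturation, elimination of hyperimaginaries and weak elimination of imaginaries, $\ind$ is a well-behaved notion on real tuples, with canonical bases represented by real elements. One-basedness then supplies the key amalgamation feature: for any tuple $\bar a$ and set $B$, the canonical base $\mathrm{Cb}(\bar a/B)$ lies in $\acl(\bar a)$, which is the analogue of the structural property underlying Theorem A.

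I would set
\[
X\;=\;\{a\in M\,:\,G_{\{a\}}\in\tau\},
\]
observe that $X$ is $G$-invariant and that $\tau_{st}^X\subseteq\tau$ is immediate, and check the hypotheses of Lemma \ref{main minimality proposition} to deduce the reverse inclusion $\tau\subseteq\tau_{st}^X$. The central step is an amalgamation argument: given a finite $\bar a\subset M$, $\omega$-saturation and the extension axiom produce a copy $\bar a'\ind_{\acl(\emptyset)}\bar a$ with $\typ(\bar a')=\typ(\bar a)$. One-basedness combined with the assumption that points are algebraically closed yields $\acl(\bar a)\cap\acl(\bar a')=\acl(\emptyset)$, and the independence theorem over algebraically closed sets factors any $g\in G$ as a product of automorphisms fixing $\bar a$ and automorphisms fixing $\bar a'$, up to an error absorbed by the stabilizer of $\acl(\bar a)\cap\acl(\bar a')\subseteq\acl(X)$. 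This is the absorption condition required by the criterion.

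Once $\tau=\tau_{st}^X$ has been established, the two conclusions follow. For (1), transitivity of $G$ on $M$ forces the $G$-invariant set $X$ to be empty or all of $M$; Hausdorffness of $\tau$ rules out $X=\emptyset$, so $X=M$ and $\tau=\tau_{st}$. Statement (2) is precisely what the criterion delivers.

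The main obstacle I foresee is the amalgamation step. One-basedness alone yields only the canonical-base containment $\mathrm{Cb}\subseteq\acl$; passing from this algebraic fact to a topological factorization of $\tau$-cosets demands that one combine it with the independence theorem over algebraically closed sets, stationarity of strong types, and enough $\omega$-saturation to realize independent copies inside $\M$. The purpose of weak elimination of imaginaries and elimination of hyperimaginaries is precisely to make all of these ingredients available on the real sort rather than in $\M^{eq}$, so that the final absorption statement can be phrased in terms of stabilizers of real finite tuples, as $\tau_{st}$ demands.
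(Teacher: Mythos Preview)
Your proposal identifies the correct model-theoretic ingredients --- forking independence, one-basedness giving $\mathrm{Cb}(\bar a/B)\subseteq\acl(\bar a)$, the Kim--Pillay independence theorem, and the role of elimination of (hyper)imaginaries in making these available over real algebraically closed sets --- and correctly targets Proposition~\ref{main minimality proposition}. But the argument as written has a structural gap.

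The actual hypothesis of Proposition~\ref{main minimality proposition} is the \emph{$n$-zigzag property} for $\X^G$: given $A,B\in\X^G$ and any $A'\orq_{A\cap B}A$, one must produce a chain $A=A_0,B_0,\ldots,A_n=A'$ with $A_iB_i\orq A_{i+1}B_i\orq AB$. Your ``central step'' never mentions $B$: you pick $\bar a'\ind_{\acl(\emptyset)}\bar a$ and then speak of factoring an arbitrary $g$ through $G_{\bar a}$ and $G_{\bar a'}$. This is not the zigzag condition, and it is unclear how the independence theorem by itself would give such a factorisation of \emph{every} $g\in G$. Your use of one-basedness --- to conclude $\acl(\bar a)\cap\acl(\bar a')=\acl(\emptyset)$ from $\bar a'\ind_{\acl(\emptyset)}\bar a$ --- is also backwards: that intersection statement comes from anti-reflexivity, not one-basedness. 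One-basedness is what lets you go the other way, deducing $A\ind_{A\cap B}B$ for arbitrary $A,B\in\X^G$ without having chosen them independent in advance.

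The paper's route is to verify that $(\acl,\ind^f)$ satisfies the abstract axioms of Definition~\ref{properties of independence} --- in particular \emph{one-basedness} (from $\mathrm{Cb}\subseteq\acl^{eq}$ plus weak EI) and \emph{independence} (from Kim--Pillay, using that types over real algebraically closed sets are Lascar strong types under the elimination hypotheses) --- and then invoke Lemma~\ref{getting zigzag}(\ref{free part}) followed by Lemma~\ref{reachability of independence}(\ref{indep}). These two lemmas package exactly the two-stage argument you are gesturing at: first one-basedness moves an independent copy of $A$ over $B$ to one independent over $A\cap B$; then the independence theorem amalgamates two such copies to complete the zigzag path (length $4$ here, versus $2$ in the stationary case of Theorem~A). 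Your introduction of $X=\{a:G_{\{a\}}\in\tau\}$ is harmless but unnecessary: the proposition already outputs the invariant $X$.
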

    
    By an \emph{independence relation} is usually meant some ternary relation $\ind$ on (some) collection of sets of parameters of a structure such that $A\ind_{C}B$ captures the intuitive idea that $B$ does not contain any information about $A$ not already contained in $C$.
    The paradigmatic example is that of forking independence. The connections between the existence of an independence relations on a homogeneous structure satisfying certain axioms and the properties of the automorphism group goes back to \cite{TentZUry} (see also \cite{EGT}). Of particular relevance to us is the freedom axiom, explored in detail in \cite{conant2017axiomatic}. We explain Theorems \ref{freedom theorem} and \ref{simple structures theorem} in terms of the existence of an independence relation satisfying certain sets of axioms. The roles played by stationarity and the freedom axiom in \ref{freedom theorem} are replaced by one basedness and the independence property respectively in \ref{simple structures theorem}.

  \subsection*{Generalized universal metric spaces}
    \newcommand{\UR}[0]{\mathcal{U}_{\R}}
    \newcommand{\US}[0]{\mathcal{U}_{[0,1]}}
    
    Urysohn universal space $\UR$ is a homogeneous space that contains all separable metric spaces due to Urysohn.
    It is both {\it $\omega$-universal}, i.e. it contains any finite metric space as a subspace and {\it $\omega$-homogeneous}, i.e. any partial isometry between finite subspaces of $\U$ extends to some global isometry.
    Associated with the class of metric spaces with diameter at most $1$ there is an object with similar properties $\US$, known as the {\it Urysohn sphere}.
    The isometry groups $\Isom(\UR)$ and $\Isom(\US)$ endowed with the point-wise convergence topology $\tau_{m}$ (`m' is for `metric')
    are Polish groups whose algebraic and dynamical properties have been widely studied. It is known, for instance, that any Polish group is isomorphic to a closed subgroup of $\Isom(\UR)$ and that $\Isom(\UR)$ is extremely amenable.
    
    It is shown in \cite{Usp} that any continouous quotient of $(\Isom(\US),\tau_{m})$ is either trivial or a homeomorphism.
    Theorem \ref{Urysohn thm 1} below extends this result to $\Isom(\UR)$.
    
    We work in the framework of generalized metric spaces introduced by Conant in \cite{conant2017axiomatic}.
    A {\it distance monoid} is an abelian monoid endowed with a compatible linear order (see Subsection \ref{subsec-setting-Ury} for more details).
    Given  a distance monoid $\rr=(R,0,\oplus,\leq)$ an $\rr$-metric space is a set $X$ endowed with a map
    $d:X^{2}\to R$ satisfying the obvious generalization of the axioms for a metric space.
    In our terminology an $\rr$-Urysohn space $\U$ will be an $\rr$-metric space satisfying the obvious generalization of $\omega$-homogeneity and $\omega$-universality to this setting, ignoring any separability and cardinality considerations.
    If $\rr\models \forall x\neq 0\,\,\exists y\neq 0\,\,y\oplus y\leq x$, then the collection
    $\{N_{u}(\epsilon)\,|\,u\in\U,\epsilon\in R\setminus\{0\}\}$, where $N_{u}(\epsilon)=\{g\in G\,|\,d(gu,u)\leq\epsilon\}$ generates a group topology on the isometry group of $\U$. For plain metric spaces the result is the point-wise convergence topology so we keep the notation $\tau_{m}$ in the general case.
    
    We say that a distance monoid $\rr$ as above is {\it archimedean} if for any $r,s\in R\setminus\{0\}$ there exists some $m\in\N$ such that $m\cdot s:=s\oplus s\oplus\dots\oplus s \text{ ($m$ times)}\geq r$.
    
    \newcommand{\texturysohnthmone}[0]{
    Let $\rr=\RR$ be an archimedean distance monoid, $\U$ a
    $\rr$-Urysohn space, $G=\Isom(\U)$ and let $\tau_0$ be either:
    \begin{itemize}
    \item $\tau_{m}$ in case for any $r\in R\setminus\{0\}$ there exists $s\in R\setminus\{0\}$ with $s\oplus s \leq r$; or,
    \item $\tau_{st}$ otherwise.
    \end{itemize}
    Then $\tau_0$ is the coarsest non-trivial group topology on $G$ coarser than the stabilizer topology $\tau_{st}$. In particular, $(G,\tau_0)$ is totally minimal.
    }
    
    \begin{thmx}
    \label{Urysohn thm 1}
    \texturysohnthmone
    \end{thmx}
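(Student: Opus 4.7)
The plan is to apply the relative-minimality criterion of Lemma~\ref{main minimality proposition} (from Section~\ref{sec-relative-min}) in conjunction with the combinatorial richness of $\U$. First I would carry out a preliminary check that the $\tau$ displayed in the statement is indeed a Hausdorff group topology on $G$ coarser than $\tau_{st}$: in case~1, joint continuity of multiplication comes from the halving hypothesis together with the triangle inequality, while Hausdorffness follows from $\omega$-universality of $\U$ plus archimedeanity; in case~2 there is nothing to check because $\tau=\tau_{st}$.

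Let $\tau'$ be any non-trivial Hausdorff group topology with $\tau'\subseteq\tau_{st}$; the substance of the theorem is the inclusion $\tau\subseteq\tau'$. The crucial reduction is this: by the transitivity of $G$ on $\U$ (Urysohn homogeneity) and continuity of inner automorphisms in $\tau'$, it suffices to exhibit a \emph{single} basic $\tau$-neighbourhood of the identity that is $\tau'$-open, namely some $N_{x_0}(\epsilon_0)$ in case~1 and the one-point stabilizer $G_{\{x_0\}}$ in case~2. Conjugation then transports such a neighbourhood to every $x\in\U$, and archimedeanity (together with halving in case~1) lets one reach every scale via finite intersections of translates/iterates.

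The central step is to invoke Lemma~\ref{main minimality proposition} with the target subgroup above. Specialized to the Urysohn setting, I expect its hypothesis to demand a mixing-type statement of the form: given any symmetric $\tau'$-neighbourhood $V\ni 1$ with $V\supseteq G_A$ for some finite $A\subseteq\U$, one must manufacture elements of $V$ (or of iterated products and conjugates of $V$) realizing prescribed small motions on a chosen finite $B\subseteq\U$. The verification exploits $\omega$-universality and $\omega$-homogeneity of $\U$: one freely amalgamates $B$ with $A$ over a suitable common substructure and extends the resulting partial isometry to a full isometry in $G_A\subseteq V$ producing any distance profile on $B$ that is compatible with the triangle inequality over $\rr$. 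The case split then surfaces precisely here: with halving, these profiles can be made arbitrarily small, yielding $\tau'$-openness of $N_{x_0}(\epsilon_0)$ for every $\epsilon_0>0$ and hence recovering $\tau_m$; without halving there is a discrete lower bound to the realizable motions, and archimedeanity then forces that bound to collapse after suitable intersections, producing openness of $G_{\{x_0\}}$ and hence $\tau_{st}$.

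The main obstacle I anticipate is the careful archimedean bookkeeping required to match exactly the combinatorial hypothesis of Lemma~\ref{main minimality proposition}, in particular the coordination between the scale of the motions the topology $\tau'$ already ``sees'' inside $V$ and the scale of the target neighbourhood one wants to recover. Once the coarsest-nontrivial statement has been established, the final claim of total minimality is formal: any closed normal subgroup of $G$ whose quotient admitted a strictly coarser non-trivial Hausdorff group topology would pull back to a strictly coarser non-trivial Hausdorff group topology on $G$, contradicting the coarsest-nontrivial property of $\tau$.
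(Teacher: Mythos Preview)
Your plan relies on Proposition~\ref{main minimality proposition}, but that proposition cannot do what you need here, for two independent reasons.

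First, its conclusion is the wrong one. Proposition~\ref{main minimality proposition} (under transitivity) yields minimality of $\tau_{st}$. In case~1 of the theorem $\tau_{m}$ is a Hausdorff group topology \emph{strictly} coarser than $\tau_{st}$, so $(G,\tau_{st})$ is not minimal; no amount of conjugation or archimedean bookkeeping after the fact can turn a minimality statement for $\tau_{st}$ into one for $\tau_{m}$.

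Second, and relatedly, the hypothesis of Proposition~\ref{main minimality proposition}—the $n$-zigzag property for $\X^{G}$—fails for Urysohn spaces. The paper discusses this obstruction explicitly at the start of Section~7: if $A,B\subset\U$ are two alternating finite sets on a common ``line'' (all triangles degenerate), then any $(n,B)$-zigzag path starting at $A$ is subject to nontrivial distance constraints on the endpoints, so there is no uniform $n$ that works for all $A,B$. Indeed, if the $n$-zigzag property held, Proposition~\ref{main minimality proposition} would force $\tau_{st}$ to be minimal, contradicting the existence of $\tau_{m}$ in case~1. Your ``mixing-type'' paraphrase of the hypothesis is also not what the proposition actually asks for; the zigzag condition is a purely combinatorial statement about the $G$-action, not a statement involving an auxiliary topology $\tau'$.

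The paper's proof avoids Section~\ref{sec-relative-min} entirely and argues directly. Given a nontrivial $\tau\subsetneq\tau_{m}$, one fixes $s$ with $N_{v}(s)\notin\mathcal N_{\tau}(1)$ and then proves three internal lemmas: (i) for any $V\in\mathcal N_{\tau}(1)$ one can push a single point $a$ outside any prescribed finite union of balls of radius $t$ with $2t\leq s$ (an independence-over-$a$ argument); (ii) using Lemma~\ref{separation lemma}, for any $V$ and any finite $A$ there is a finite $C$ with $G_{C}\subset V$ and $d(A,C)>t$; (iii) iterating via Lemma~\ref{new distancing lemma} and archimedeanity, one can make $d(A,C)\geq r$ for any prescribed $r$. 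The endgame then uses Lemma~\ref{equal distance lemma}: for any $g\notin W^{4}$ one finds $B$ far from $A\cup g(A)$ with $G_{B}\subset W$, produces $A'\cong_{B}A$ equidistant from both $A$ and $g(A)$, and concludes via a length-$2$ zigzag (Lemma~\ref{lem-zig-used}) that $g\in(G_{A}G_{B})^{2}\subseteq W^{4}$, a contradiction. The archimedean hypothesis enters in step~(iii), and the case split in the statement enters only in the choice of $t_{0}$ there.
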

    
    Given some $S\subseteq\R$ closed under addition and $b\in S_{>0}\cup\{\infty\}$, we let $\mathcal{S}_{b}$ be the distance monoid given by the tuple $\{\{r\in S\,|\,0\leq r\leq b\},0,\leq,+_{b}\}$, where
    $x+_{b}y=\min\{x+y,b\}$.
    
    \newcommand{\textthreetopologiestheorem}[0]{
    Let $S$ be a dense subgroup of $\R$ and $b\in S_{>0}\cup\{\infty\}$, $\U$ an $\mathcal{S}_{b}$-Urysohn space and $G=\Isom(\U)$. Then there are exactly $4$ group topologies on $G$ coarser than $\tau_{st}$:
    $$
    \tau_{st}\supsetneq\tau_{0^{+},0}\supsetneq\tau_{m}\supsetneq\{\emptyset,G\}
    $$
    where $\tau_{0^{+},0}$ is the topology generated at the identity
    by the collection
    \begin{align*}
    \{\{g\in G\,|\,d(gu,v)\leq d(u,v)\}\,|\,u,v\in\U,d(u,v)>0\}.
    \end{align*}
    }
    
    \newcommand{\textthreetopologiestheoremold}[0]{
    Let $\rr$ be a standard archimedean distance monoid with no minimal positive element, $\U$ an $\rr$-Urysohn space and $G=\Isom(\U)$. Then there are exactly $4$ group topologies on $G$ coarser than $\tau_{st}$:
    $$
    \tau_{st}\supsetneq\tau_{0^{+},0}\supsetneq\tau_{m}\supsetneq\{\emptyset,G\}
    $$
    where $\tau_{0^{+},0}$ is the topology whose system of neighbourhoods of the identity is generated
    by the collection $\{\mathcal N^{sp}_{u,v}\,|\,u,v\in\U,d(u,v)>0\}$, where $\mathcal N_{u,v}^{sp}:=\{g\in G\,|\,d(gu,v)=d(u,v)\}$ for any $u,v\in\U$ with $d(u,v)>0$.
    }
    \begin{thmx}
    \label{3 topologies theorem}
    \textthreetopologiestheorem
    \end{thmx}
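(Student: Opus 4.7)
My plan is to combine the reduction provided by Theorem \ref{Urysohn thm 1} with a propagation argument exploiting $\omega$-homogeneity and density of $S$. Density of $S$ in $\R$ ensures that for every $r\in S_{>0}$ one has $s\in S_{>0}$ with $s\oplus_b s\leq r$ (any $s\in S\cap(0,r/2]$), so the first case of Theorem \ref{Urysohn thm 1} applies and $\tau_m$ is the coarsest non-trivial group topology on $G$ coarser than $\tau_{st}$; hence every non-trivial $\tau\subseteq\tau_{st}$ contains $\tau_m$, and it remains to classify group topologies in the interval $[\tau_m,\tau_{st}]$. Throughout I write $\nb_{u,v}:=\{g\in G:d(gu,v)=d(u,v)\}$, so that $\tau_{0^+,0}$ is the coarsest group topology on $G$ in which every such set is open.

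I first verify the strict inclusions $\tau_m\subsetneq\tau_{0^+,0}\subsetneq\tau_{st}$. For the first, given any basic $\tau_m$-neighborhood $\bigcap_{w\in F}N_w(\epsilon)$ and a fixed pair $u,v$ with $d(u,v)>0$, I use $\omega$-universality and density of $S$ to locate a pair $(u',v')\in\U^2$ having the same distance profile to $F\setminus\{u,v\}$ as $(u,v)$, with $d(u',u),d(v',v)\leq\delta$ for some $\delta\in S\cap(0,\epsilon]$, $d(u',v')=d(u,v)$, but $d(u',v)=d(u,v)+\delta$; $\omega$-homogeneity then extends $u\mapsto u'$, $v\mapsto v'$, $\id$ on $F\setminus\{u,v\}$ to an isometry inside the $\tau_m$-neighborhood yet outside $\nb_{u,v}$. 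For the second, given any $\tau_{0^+,0}$-neighborhood of $1$ containing a set of the form $\bigcap_i\nb_{u_i,v_i}\cap\bigcap_j N_{w_j}(\epsilon_j)$ and any $u\in\U$, density of $S$ yields $u'\neq u$ suitably close to $u$ and sharing the distance profile of $u$ to the remaining named parameters; extending $u\mapsto u'$, $\id$ elsewhere, produces an isometry in the neighborhood with $gu\neq u$, so $G_u$ is not $\tau_{0^+,0}$-open.

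The main step is that no other group topology exists in between: if $\tau$ is a group topology with $\tau_m\subsetneq\tau\subsetneq\tau_{st}$, then $\tau=\tau_{0^+,0}$. Conjugation invariance and $G$-transitivity on $\U$ imply that either all point stabilizers $G_u$ are $\tau$-open (whence $\tau=\tau_{st}$) or none is; assume the latter. For $\tau\supseteq\tau_{0^+,0}$, pick a $\tau$-neighborhood $W$ of $1$ that fails to be a $\tau_m$-neighborhood, providing witnesses $g_{F,\epsilon}\in\bigcap_{w\in F}N_w(\epsilon)\setminus W$ for every finite $F\subset\U$ and every $\epsilon>0$. Controlling the distance profiles of these witnesses by $\omega$-homogeneity and density of $S$, and iterating the axioms $V^2\subseteq W$ and "$hVh^{-1}$ is a $\tau$-neighborhood of $1$", I argue that $W$ must contain, up to a $\tau_m$-refinement, a whole spherical neighborhood $\nb_{u_0,v_0}$ for some pair with $d(u_0,v_0)>0$. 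Conjugation together with $G$-transitivity on pairs of each fixed distance, combined with the density of allowed distances in $(0,b]$, then promotes this to every $\nb_{u,v}$ being a $\tau$-neighborhood of $1$, giving $\tau\supseteq\tau_{0^+,0}$. The reverse inclusion $\tau\subseteq\tau_{0^+,0}$ follows symmetrically: a $\tau$-neighborhood of $1$ not $\tau_{0^+,0}$-open would have to encode data strictly finer than the sphere structure of $\U$, which under conjugation would force some $G_u$ to be $\tau$-open, contradicting our assumption.

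The main obstacle is the propagation step in the third paragraph: translating the merely approximate, pointwise witnesses $g_{F,\epsilon}$ outside $W$ into the positive inclusion of an entire $\nb_{u_0,v_0}$ inside $W$. Carrying this out will require a careful choreography between extending partial isometries via $\omega$-homogeneity (to transport witnesses into canonical position), shifting distances by arbitrarily small amounts in $S$ (to obtain the exact sphere profile), and closing up under the multiplicative and conjugational axioms of the $\tau$-neighborhood filter while remaining inside the assumed strict inclusion $\tau\subsetneq\tau_{st}$.
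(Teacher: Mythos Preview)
Your high-level decomposition is sound: reduce to topologies in $[\tau_m,\tau_{st}]$ via Theorem \ref{Urysohn thm 1}, verify the strict chain, and then show that anything strictly between $\tau_m$ and $\tau_{st}$ must equal $\tau_{0^+,0}$. The strict inclusions are handled adequately. However, the heart of the argument --- your ``propagation step'' --- is not a proof, and you acknowledge as much in your final paragraph. Let me be concrete about what is missing and why the direct route you sketch does not go through.

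First, in your argument for $\tau\supseteq\tau_{0^+,0}$ there is a confusion of direction: you pick $W\in\mathcal N_\tau(1)\setminus\mathcal N_{\tau_m}(1)$ and then claim $W$ must \emph{contain} some $\nb_{u_0,v_0}$ up to a $\tau_m$-refinement. But to show $\nb_{u_0,v_0}\in\mathcal N_\tau(1)$ you need some $\tau$-neighbourhood to be contained \emph{in} $\nb_{u_0,v_0}$, not the other way around. The witnesses $g_{F,\epsilon}\in\bigcap_{w\in F}N_w(\epsilon)\setminus W$ tell you that $W$ is small in the $\tau_m$ sense; they do not by themselves force any sphere constraint into $\mathcal N_\tau(1)$. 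The step ``iterating $V^2\subseteq W$ and conjugation'' needs a mechanism that converts approximate pointwise data into an exact sphere membership, and no such mechanism is offered.

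Second, the reverse inclusion $\tau\subseteq\tau_{0^+,0}$ is asserted on the grounds that otherwise ``data strictly finer than the sphere structure'' would force some $G_u$ open. This is not obvious and in fact is the content of the paper's ``first gap''. The obstruction is an \emph{alignment} phenomenon: if $A$ and $B$ are finite sets lying on a common line, then products in $G_A G_B G_A\cdots$ are constrained by sphere inequalities of the form $d(\alpha,\beta')\leq d(\alpha,\beta)$, and these constraints do not disappear under the group-topology axioms alone. The paper handles this by proving a relative zigzag property (Lemma \ref{first gap}): for $A_1,A_2\subseteq B$ one has $(\nb_B\cap G_{A_1})(\nb_B\cap G_{A_2})\supseteq \nb_B\cap G_{A_1\cap A_2}$, which feeds into Lemma \ref{jump lemma} with $\tau^*=\tau_{0^+,0}$. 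This is the actual reason why dropping below $\tau_{st}$ lands you in $\tau_{0^+,0}$; your sketch does not touch it.

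For the gap between $\tau_{0^+,0}$ and $\tau_m$ (Proposition \ref{second gap}), the paper's argument is substantially more intricate than anything your plan anticipates. One introduces the set $\Delta$ of distances $r$ for which $\nb_{u,v}\in\mathcal N_\tau(1)$ whenever $d(u,v)=r$, and the set $\Gamma$ of $r$ for which some $\tau$-neighbourhood forces $\{ga\}$ to $r$-cut a fixed finite set. Two nontrivial ``downward'' lemmas (Lemmas \ref{sphere union} and \ref{alignment to sphere}) show $\Gamma$ essentially sits inside $\Delta$, and a separate argument shows $\Delta\subseteq\Gamma$. Archimedeanity then forces a dichotomy: either $\Delta=R\setminus\{0\}$ (whence $\tau=\tau_{0^+,0}$) or $\Gamma$ is empty on a cofinal set. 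In the latter case one uses a sequence of ``alignment'' lemmas (Lemmas \ref{small disturbance}, \ref{independence cutting}, \ref{misalignment}, \ref{better non-cutting constant}) to produce, for any $U\in\mathcal N_\tau(1)$, finite sets $A,A_2$ with $A_2$ in general position relative to $A$, $G_{A_2}\subseteq U$, and $G_A G_{A_2}G_A\in\mathcal N_{\tau_m}(1)$. None of these notions --- cutting, alignment, general position, the auxiliary functions $\lambda^x_{y,z}$ --- appear in your plan, and I do not see how the conclusion can be reached without them or something equivalent.

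In short: your outline is a reasonable statement of what must be shown, but both gap arguments require specific technical machinery that your proposal neither provides nor points toward.
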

    
    In Section \ref{Urysohn parametrization section} we describe a general family of group topologies on
    the isometry group of an $\rr$-Urysohn space $\U$ that includes all the topologies involved in the two results above. Theorems \ref{Urysohn thm 1} and \ref{3 topologies theorem}, can be seen as evidence for the much more general conjecture that these are in fact all group topologies coarser than $\tau_{st}$ on $\Isom(\U)$.

  \subsection*{Algebraic minimality: the Zariski topology}
    Given a group $G$ the {\it Zariski topology} $\tau_Z$, is generated by the subbase consisting of the sets $\{ x \in G\,|\, x^{\epsilon_1}g_1 x^{\epsilon_2} g_2 \cdots x^{\epsilon_n} g_n \neq 1 \}$, where $n\in \mathbb N$, $g_1,\dots,g_n\in G$, and $\epsilon_1,\dots,\epsilon_n \in \{-1,1\}$. According to the result of Gaughan in \cite{Gua} for the group $S_\infty$ the Zariski topology $\tau_Z$ and $\tau_{st}$ coincide. In Section \ref{sec-Zariski} we investigate the following general question.
    \begin{question}
    For which (sufficiently homogeneous) structures is it true that $\tau_{Z}=\tau_{st}$. For which of them is the Zariski topology a group topology?
    \end{question}
    
    First we provide a variety of Fra\"iss\'e limits for which the question above has a negative answer. In all cases this follows via Lemma \ref{using meagerness} from the property that over $\Aut{\M}$ of non-trivial equations in one variable have meager sets of solutions. The latter is in turn from the criterion formulated in Lemma \ref{lem-comeager}, according to which the conclusion holds whenever any $\alpha\in \Aut{\M}\setminus\{1\}$ is what we call {\it strongly unbounded} (Definition \ref{def-st-ub}). Intuitively, the latter means that points `largely displaced' by $\alpha$ are in some sense  dense in $M$. Theorem \ref{thm-main-zariski} below collects some miscellaneous results found  in  Corollary \ref{zar-free-AP} (for \ref{zar fr am}),  \ref{cor-zar-ury} (for \ref{zar ury}), \ref{cor-zar-rant} (for \ref{zar rant}) and Corollary \ref{cor-products} (for \ref{zar prod}) below.
    \newcommand{\textthmmainzariski}[0]{
    The Zarisski topology $\tau_Z$ on $\Aut{\M}$ is not a group topology if $\M=\fl{\mathcal{K}}$ for a \fraisse class $\mathcal{K}$ in a relational language $\mathfrak L$ in each of the following cases:
    \begin{enumerate}
    \item \label{zar fr am}$\mathcal{K}$ is a non-trivial free amalgamation class and the action of $G$ on $M$ is transitive;
    \item \label{zar ury}$\M$ is the rational Urysohn space;
    \item \label{zar rant}$\M$ is the random tournament;
    \item \label{zar prod}$\mathcal{K}$ is of the form $\mathcal{K}_{1}\otimes \mathcal{K}_{2}$ (see Definition \ref{def: tensor product of fraisse classes}) for strong amalgamation classes $\mathcal{K}_{1}$ and $\mathcal{K}_{2}$ where:
    \begin{itemize}
    \item   $\mathcal{K}_{1}$ is non-trivial
    and either it is as in \ref{zar ury} or the action of $\Aut{\fl{\mathcal{K}_{1}}}$ on the set $\uni^{2}\setminus\{(a,a)\}_{a\in M}$ is transitive.
    \item $\fl{\mathcal{K}_{2}}$ is the countable dense meet tree, the cyclic tournament $S(2)$ or $(\mathbb{Q},<)$.
    \end{itemize}
    \end{enumerate}
    }
    \begin{thmx}
    \label{thm-main-zariski}
    \textthmmainzariski
    \end{thmx}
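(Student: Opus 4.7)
All four parts proceed via the reduction indicated just before the statement: for each case we verify that every $\alpha \in \Aut{\M} \setminus \{1\}$ is strongly unbounded in the sense of Definition~\ref{def-st-ub}, apply Lemma~\ref{lem-comeager} to conclude that the solution set of any non-trivial one-variable equation over $\Aut{\M}$ is meager, and then invoke Lemma~\ref{using meagerness} to rule out $\tau_Z$ being a group topology. The substantive work in each part is therefore a structural verification: given $\alpha \neq 1$ and any finite $F \subset M$, one must produce a $\tau_{st}$-dense supply of points $m \in M$ avoiding $F$ that are ``largely displaced'' by $\alpha$ in the structurally appropriate sense.

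For part~(\ref{zar fr am}), pick $a_0$ with $\alpha(a_0) \neq a_0$; transitivity of $G$ on $M$ makes the pair type $\typ(a_0, \alpha(a_0))$ canonical and non-trivial. Given $F$, free amalgamation of $\mathcal{K}$ together with homogeneity of $\M$ allows us to realize disjoint copies of this 2-type densely outside of $F$ without imposing any new relations with $F$; tracking how $\alpha$ acts on the pairs one constructs yields the desired density of largely displaced points. Parts~(\ref{zar ury}) and~(\ref{zar rant}) follow by the same template with the free-amalgamation step replaced respectively by metric amalgamation (which allows freely prescribing rational distances over any finite base) and by tournament completion (which allows freely prescribing orientations). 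In all three cases, the strong one-point extension property of the underlying amalgamation class supplies exactly what is needed.

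For part~(\ref{zar prod}), the product $\mathcal{K}_1 \otimes \mathcal{K}_2$ consists of finite sets carrying both a $\mathcal{K}_1$- and a $\mathcal{K}_2$-structure, and $\Aut{\M}$ acts by automorphisms on each of the two reducts. Given $\alpha \neq 1$, it must act non-trivially on at least one reduct. The hypotheses on $\mathcal{K}_1$ (either rational Urysohn as in~(\ref{zar ury}), or transitivity of $\Aut{\fl{\mathcal{K}_1}}$ on the set of ordered pairs of distinct elements) guarantee the dense family of displaced points in the $\mathcal{K}_1$-reduct, while the specific admissible choices for $\mathcal{K}_2$ (dense meet tree, $S(2)$, or $(\mathbb{Q},<)$) are precisely those where any partial configuration in the $\mathcal{K}_1$-reduct over $F$ can always be completed to a full configuration in the product structure; this ensures that displacements seen in one factor lift to the full product.

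The main obstacle is in part~(\ref{zar prod}), where one must verify that the freedom witnessed in one factor combines compatibly with the constraints of the other precisely when $\alpha$ acts non-trivially in only one reduct. This is exactly where the narrow list of admissible $\mathcal{K}_2$ enters (each member admits a completely free one-point extension above any finite base), and the expectation is that the conclusion extends to a substantially wider family of $\mathcal{K}_2$ once this compatibility check is abstracted.
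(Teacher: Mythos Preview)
Your overall strategy—verify that every non-trivial automorphism is strongly unbounded, then apply Lemma~\ref{lem-comeager} and Lemma~\ref{using meagerness}—is exactly the paper's route, and your sketches for parts~(\ref{zar fr am})--(\ref{zar rant}) are in line with the arguments of Corollaries~\ref{zar-free-AP}, \ref{cor-zar-ury}, and~\ref{cor-zar-rant}.

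There is, however, a genuine gap in your treatment of part~(\ref{zar prod}). You characterize the admissible $\mathcal{K}_{2}$ as those ``admitting a completely free one-point extension above any finite base,'' and propose to lift displacements witnessed in the $\mathcal{K}_{1}$-reduct to the product. Neither step works. First, $(\mathbb{Q},<)$, the dense meet tree, and $S(2)$ do \emph{not} have free one-point extensions; indeed, the ability to complete a $\mathcal{K}_{1}$-configuration to the product holds for \emph{any} strong amalgamation $\mathcal{K}_{2}$ and cannot be what singles these three out. Second, your lifting idea fails because once a point $m\in M$ is found to lie in $\mov(\alpha)$ for $\mathfrak{L}_{1}$-reasons, its $\mathfrak{L}_{2}$-type over $F$ is already determined; you cannot retroactively force it to realize a prescribed $\mathfrak{L}_{2}$-type $p_{2}$.

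The paper's actual mechanism (Lemma~\ref{products lemma} and Corollary~\ref{cor-products}) does not split on which reduct $\alpha$ moves. It uses two separate properties: $\mathcal{K}_{1}$ is \emph{discriminating} (Observation~\ref{discriminating}), meaning that for any $c\neq c'$ some $\Delta^{k,\mathfrak{L}_{1}}_{c,c'}$ is non-algebraic; and $\mathcal{K}_{2}$ is \emph{dense}, meaning that for any $a\neq b$ and any non-algebraic $1$-type $p$ over finite parameters in $\fl{\mathcal{K}_{2}}$, the formula $p\wedge\Delta^{1}_{a,b}$ remains non-algebraic. The argument then runs in two stages: given $\alpha(c)=c'\neq c$ and a target type $\phi=\phi_{1}\wedge\phi_{2}$, use discriminating to find a tuple $d$ realizing $\Delta^{k,\mathfrak{L}_{1}}_{c,c'}$ whose entries all satisfy $\phi_{2}$ (here the product structure is used), so some $d_{i}\in\mov(\alpha)$; then use density of $\mathcal{K}_{2}$ to conclude that $\phi_{2}\wedge\Delta^{1,\mathfrak{L}_{2}}_{d_{i},\alpha(d_{i})}$, and hence $\phi\wedge\Delta^{1,\mathfrak{L}_{2}}_{d_{i},\alpha(d_{i})}$, is non-algebraic---any realization lies in $\mov(\alpha)$. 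Density, not freedom, is the relevant feature of the three listed $\mathcal{K}_{2}$.
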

    Here we say that $\mathcal{K}$ is {\it trivial} if the equality type of a tuple from $M$ determines its type or, equivalently, if $\Aut{\M}$ is the full symmetric group.
    Additionally in Corollary \ref{Hrus-zar-main-hh} we can prove the following:
    \begin{thmx}
    \label{thm-zar-Hru}
    Suppose $\M^\eta$ is the  Hrushovski generic structure that is obtained from a pre-dimension function with the coefficient $\eta\in (0,1]$. Then the Zariski topology for $\Aut
    {\M^\eta}$ is not a group topology.
    \end{thmx}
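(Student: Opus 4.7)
The plan is to reduce Theorem \ref{thm-zar-Hru} to the general framework already established in Section \ref{sec-Zariski}: by Lemma \ref{using meagerness} it suffices to show that the set of solutions in $G:=\Aut{\M^\eta}$ of any non-trivial one-variable word equation is meager in $\tau_{st}$, and by Lemma \ref{lem-comeager} this follows once we verify that every $\alpha\in G\setminus\{1\}$ is strongly unbounded in the sense of Definition \ref{def-st-ub}. Hence the entire argument reduces to checking strong unboundedness for automorphisms of the Hrushovski generic structure.

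I would verify this by leaning on two standard features of $\M^\eta$. First, for $\eta\in(0,1]$ the self-sufficient closure $\cl{B}$ of any finite $B\subseteq M$ is itself finite, as a routine consequence of the pre-dimension inequalities together with the fact that a new vertex free of $\cl{B}$ contributes exactly $\eta>0$ to $\delta$. Second, $\M^\eta$ is generic: any self-sufficient finite substructure admits inside $\M^\eta$ an extension by a fresh vertex carrying no new relations (a free point), and $\M^\eta$ is ultra-homogeneous with respect to isomorphisms between self-sufficient finite substructures.

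With these tools in hand, fix $\alpha\neq 1$ and choose $a_{0}\in M$ with $a_{1}:=\alpha(a_{0})\neq a_{0}$; set $A:=\cl{\{a_{0},a_{1}\}}$, which is finite. Given an arbitrary basic $\tau_{st}$-neighborhood of the identity, encoded by a finite $B\subseteq M$, pass to $\cl{B}$. Using the extension property applied to $\cl{B}$, I would realize in $\M^\eta$ a self-sufficient finite set isomorphic to $A$ that is free over $\cl{B}$, with underlying points $c_{0},c_{1}$ carrying no relations to $\cl{B}$. By ultra-homogeneity there exists $g\in G$ fixing $\cl{B}$ pointwise and mapping $(a_{0},a_{1})$ to $(c_{0},c_{1})$; then $g\alpha g^{-1}$ witnesses a displacement at the point $c_{0}$, which can be arranged to lie arbitrarily close (in the $\tau_{st}$-sense) to any prescribed configuration, while its image $c_{1}$ lies outside $\cl{B}$. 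Letting $B$ vary, this provides the required density of largely-displaced points and hence strong unboundedness of $\alpha$.

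The main obstacle is controlling the self-sufficient closure operator throughout the construction. Unlike in free amalgamation classes, a free extension is not automatically realized in the generic; one must verify $\delta(X/\cl{B})\geq 0$ for every finite intermediate $X$ so as to ensure the extension is self-sufficient and that neither $c_{0}$ nor $c_{1}$ collapses into $\cl{B}$. The assumption $\eta\leq 1$ is exactly what makes this verification smooth: the contribution of a new vertex with no relations is $\eta>0$, which forces the relevant pre-dimensions to stay non-negative, and the extension is automatically self-sufficient. Once this combinatorial step is in place, strong unboundedness holds uniformly in $\alpha$ and $B$, and Theorem \ref{thm-zar-Hru} follows by chaining Lemmas \ref{lem-comeager} and \ref{using meagerness}.
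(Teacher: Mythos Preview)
Your overall strategy---reduce to Lemma~\ref{using meagerness} via a meagerness statement analogous to Lemma~\ref{lem-comeager}---is the same as the paper's, but the crucial middle step is not carried out correctly and the paper in fact takes a different route there.

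The sketch you give for strong unboundedness does not establish what Definition~\ref{def-st-ub} actually requires. That definition asks: given a finite parameter set $A$ and $b\notin\acl(A)$, find $c\models\typ(b/A)$ with $\alpha(c)\notin\acl(cA)$. Your argument never fixes a type $\typ(b/A)$ to be realized; instead you place a free copy $(c_{0},c_{1})$ of $(a_{0},\alpha(a_{0}))$ over $\cl{B}$ and then invoke a $g\in G_{\cl{B}}$ sending $(a_{0},a_{1})$ to $(c_{0},c_{1})$. Such a $g$ need not exist, since $(a_{0},a_{1})$ and $(c_{0},c_{1})$ generally have different types over $\cl{B}$. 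Even if it did, what you obtain is that $g\alpha g^{-1}$ moves $c_{0}$ to $c_{1}$; this says nothing about where $\alpha$ itself sends realizations of a prescribed type. Strong unboundedness is a statement about $\alpha$, not about its conjugates.

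There is also a more structural reason the paper does not invoke Lemma~\ref{lem-comeager} directly. In $\M^{\eta}$ one has $\acl(A)=\cl{A}$, the self-sufficient closure, which is indeed locally finite; but the relevant closure for the dimension theory is $\gcl(A)=\{m:\dim(m/A)=0\}$, and for rational $\eta$ one typically has $\cl{A}\subsetneq\gcl(A)$. The structural input actually available (Proposition~\ref{lem-tech-old}, quoted from \cite{Zthesis}) yields that every non-trivial automorphism is strongly $\gcl$-unbounded, and this does \emph{not} formally imply strong $\acl$-unboundedness: the hypothesis $b\notin\gcl(A)$ is strictly stronger than $b\notin\cl{A}$, so the $\gcl$-statement is silent about elements $b\in\gcl(A)\setminus\cl{A}$. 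The paper therefore does not try to feed $\M^{\eta}$ into Lemma~\ref{lem-comeager}; instead it reruns the inductive construction of that lemma with $\gcl$ in place of $\acl$ throughout (Lemma~\ref{thm-Zar}), using Fact~\ref{fact-strong} at exactly the points where a non-trivial $\alpha_{k}$ appears, and then concludes via Lemma~\ref{using meagerness}. If you want your approach to go through, you would need either an independent proof that every $\alpha\neq 1$ is strongly $\acl$-unbounded (handling the case $b\in\gcl(A)\setminus\cl{A}$), or to switch to $\gcl$ as the paper does.
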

    On the flip side there is the following positive result:
    \newcommand{\textnicezariskitheorem}[0]{
    The Zariski topology $\tau_{Z}$ on $\Aut{\M }$ is a group topology in case $\M $ is one of the following:
    \begin{itemize}
    \item Some reduct of $(\mathbb{Q},<)$;
    \item A countable dense meet-tree or the lexicographically ordered dense meet-tree, in which case $\tau_{Z}=\tau_{st}$;
    \item The cyclic tournament $S(2)$.
    \end{itemize}
    }
    \begin{thmx}
    \label{nice zariski theorem}
    \textnicezariskitheorem
    \end{thmx}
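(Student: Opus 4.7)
The Zariski topology $\tau_{Z}$ on any group $G$ is automatically invariant under both left and right translation and under inversion, since conjugating or inverting a word in one variable yields another word in one variable with different constants. Thus showing that $\tau_{Z}$ is a group topology reduces to verifying joint continuity of multiplication at $(1,1)$: for every basic $\tau_{Z}$-neighborhood $V=\bigcap_{i\leq n}\{g\in G:w_{i}(g)\neq 1\}$ of $1$, one must produce a $\tau_{Z}$-open $U\ni 1$ with $U\cdot U\subseteq V$. My plan is to prove this case by case, exploiting the rigidity of the underlying structure.

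For the dense meet-tree (and its lexicographic variant) I would aim for the stronger statement $\tau_{Z}=\tau_{st}$, which suffices since $\tau_{st}$ is already a group topology. The inclusion $\tau_{Z}\subseteq\tau_{st}$ is immediate from the $\tau_{st}$-continuity of each word map. For the reverse direction it is enough to realize every pointwise stabilizer $G_{a}$, $a \in M$, as a finite intersection of Zariski basic opens. Concretely, I would choose finitely many elements $h_{1},\dots,h_{k}\in G$ with carefully designed supports (some acting on branches above $a$, others incompatibly on disjoint cones) so that any $g$ simultaneously commuting with all $h_{i}$ is forced to fix $a$; the considerable flexibility in constructing meet-tree automorphisms with prescribed support makes this feasible. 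The lexicographic case requires additional bookkeeping to respect the secondary order but should proceed by the same template.

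For the reducts of $(\mathbb{Q},<)$ and the cyclic tournament $S(2)$, $\tau_{Z}$ is typically strictly coarser than $\tau_{st}$, so I would instead verify joint continuity directly. The plan is to analyze an arbitrary equation $w(xy)=1$ by unwinding it in terms of the action of $x$ and $y$ on $M$: because each such $M$ is a highly homogeneous, essentially one-dimensional structure, the solution locus decomposes into a handful of geometrically controlled pieces, and choosing $U$ so that both factors act trivially on a small ``witnessing'' finite subset of $M$ associated to $w$ (a condition which, thanks to high transitivity, can itself be encoded as an intersection of centralizers and therefore lies in $\tau_{Z}$) should yield $U\cdot U\subseteq V$.

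The main obstacle I anticipate is the meet-tree case. An element commuting with a single $h$ only needs to preserve $\mathrm{Fix}(h)$ and the support-decomposition of $h$ as a set, not pointwise, so any one centralizer will contain automorphisms moving $a$ to other vertices with isomorphic neighbourhoods; ruling these out requires combining several $h_{i}$ whose joint structural data is incompatible with any nontrivial relabelling of $a$. Once this algebraic description of $G_{a}$ is in hand, the $(\mathbb{Q},<)$-reduct and $S(2)$ cases should follow by analogous but technically simpler constructions after identifying the right replacement for ``algebraic neighborhood of $1$'' in each setting.
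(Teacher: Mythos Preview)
Your plan contains a genuine confusion about which sets are Zariski-open. A centralizer condition $\{g : [g,h_i]=1\}$ is Zariski-\emph{closed}, not open: the sub-basic open sets of $\tau_Z$ are of the form $\{g : w(g)\neq 1\}$, so a finite intersection of centralizers is closed. Establishing $\bigcap_i C(h_i)\subseteq G_a$ therefore only exhibits a Zariski-closed set through $1$ inside $G_a$, which says nothing about $G_a$ being a $\tau_Z$-neighbourhood of $1$. The same issue recurs in your treatment of the $(\mathbb{Q},<)$-reducts and $S(2)$, where you again propose encoding the desired neighbourhood of the identity as ``an intersection of centralizers and therefore in $\tau_Z$''. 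As stated, none of the three cases goes through.

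The paper's argument runs in the opposite direction and is uniform across all three items. It isolates an abstract notion of a \emph{family of generalized intervals} $\mathcal{I}$ on $M$: each $I\in\mathcal{I}$ comes with an involution $I\mapsto I^{*}$, a boundary set $\lambda(I)$, and an automorphism $\alpha_I$ with $\mov(\alpha_I)=I$. The Zariski-open building blocks are then the \emph{non}-commuting loci $\Lambda_{I_1,I_2}=\{g:[\alpha_{I_1}^{\,g},\alpha_{I_2}]\neq 1\}$. A short combinatorial lemma shows that suitable finite intersections of such $\Lambda$'s are contained in the sets $[x,J]=\{g:gx\in J\}$, and that the family $\{[x,J]:x\in J\in\mathcal{I}\}$ is a sub-base at $1$ for a group topology which must then equal $\tau_Z$. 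For the meet-tree one additionally observes that each $G_b$ is a finite intersection of sets $[b,L_{a_i}]$ and their inverses, giving $\tau_Z=\tau_{st}$. The reducts of $(\mathbb{Q},<)$, the meet-trees, and $S(2)$ are handled simply by exhibiting the appropriate interval family in each case, rather than by separate ad hoc analyses of word equations.
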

    
  \subsection*{Topologies and partial types}
    Finally in Section \ref{sec-typ}, we present a natural variant of ideas of \cite{Usp} and \cite{BenTs} in the context of automorphism groups of first order structures. Given a structure $\M$ with group of automorphisms $G$, we describe a semi-group of partial types $R^{pa}(\M)$ containing $G$ consisting of partial types and show that any idempotent in $R^{pa}(\M)$ which is invariant under the involution and the action of $G$ can be associated to a group topology on $G$ coarser than $\tau_{st}$.
\section{Review of some classical constructions of homogeneous structures}
  \subsection{Fra\"iss\'e construction}
    \label{subsec-Fra}
    Let us briefly review the Fra\"iss\'e construction method in a relational language. For a more detailed and general introduction see Chapter 6 in \cite{Hodgbo}.
    
    Let $\mathfrak L$ be a relational signature and $\mathcal K$ be a countable class of finite $\mathfrak L$-structures closed under isomorphisms. Suppose $A,B\in \mathcal K$ by $A\subseteq B$ we mean $A$ is an $\mathfrak L$-substructure of $B$. We say $\mathcal K$ is a {\it Fra\"iss\'e class} if it satisfies the following properties:
    \begin{itemize}
    \item (HP) It is closed under substructures;
    \item (JEP) For any $A,B\in\mathcal{K}$ there is $C$ in $\mathcal K$ such that $A,B\subseteq C$;
    \item (AP) Given $A_{1},A_{2},B\in\mathcal{K}$ and isometric embeddings $g_{i}:B\to A_{i}$, $i=1,2$  there exists $C\in\mathcal{K}$ and isometric embeddings $h_{i}:A_{i}\to C$ such that $h_{1}\circ g_{1}=h_{2}\circ g_{2}$.
    \end{itemize}
    We say that a \fraisse class $\mathcal{K}$ has {\it strong amalgamation} if in (AP) we might assume that
    $h_{1}(A_{1})\cap h_{2}(A_{2})=h_{1}(B)$.
    
    According to a theorem of Fra\"iss\'e  for any Fra\"iss\'e class $\mathcal{K}$ there is a unique countable structure $\M$ called the {\it Fra\"iss\'e limit} of $\mathcal K$ and denoted by $\fl{\mathcal K}$, such that:
    \begin{itemize}
    \item $\M$ is {\it ultrahomogeneous}, i.e. every finite partial isomorphism between substructures of $\M$ extends to an automorphism of $\M$;
    \item $Age(\M)$, the collection of all finite substructures of $\M$, coincides with $\mathcal{K}$.
    \end{itemize}
    Classical examples of  Fra\"iss\'e limit structures are $(\mathbb Q,<)$ and the random graph. If $\mathfrak L$ is empty, then $\mathcal{K}$ is the class of finite sets and $\fl{\mathcal{K}}$ an infinite countable set. More in general, we say that $\mathcal{K}$ is {\it trivial} if the equality type of a finite tuple of elements from $M$ determines its type (equivalently, if $\Aut{\M}$ is the full permutation group of $M$).
    
    Suppose $A,B$ and $C$ are structures in some relational language $\mathfrak{L}$ with $A\subseteq B,C$. By the {\it free-amalgam} of $B$ and $C$ over $A$, denoted by $B\otimes_{A}C$, we mean the structure with domain $B\coprod_{A}C$ in which a relation holds if and only if it already did in either $B$ or $C$.
    
    By a {\it free amalgamation class} we mean a class $\mathcal{K}$ of finite structures in a relational language satisfying (HP) and such that $B\otimes_{A}C\in\mathcal{K}$ for any $A,B,C\in\mathcal{K}$ such that $A\subseteq B,C$.   Note this is automatically a Fra\"iss\'e class with strong amalgamation. We write $B\ind^{fr}_{A}C$ if and only if the structure generated by $ABC$ is isomorphic (with the right identifications) with the free amalgam $B\otimes_{A}C$. If $B\ind^{fr}_{\emptyset} C$ we write $B\ind^{fr} C$ and say $B$ and $C$ are {\it free} from each other.

  \subsection{Hrushovski's pre-dimension construction}
    
    \label{sec-Hrush-new}
    Originally Hrushovski's pre-dimension  construction was introduced as a means of producing countable strongly minimal structures which are not field-like or vector-space like. There are many variants of the method, but to fix notation, we consider the following basic case and
    later focus on a version that produces $\omega$-categorical structures. We refer  readers to \cite{WagnerHRCONS}, \cite{BaldShi} and \cite{EGT} for most of the properties that are mentioned here about Hrushovski constructions and some of their variations.
    
    Suppose $s\geq  2$ and $\eta\in (0,1]$ . We work with the class $\mathcal C$ of finite $s$-uniform
    hypergraphs, that is, structures in a language with a single $s$-ary relation symbol $R(x_1,\dots,x_s)$ whose interpretation is invariant under permutation of coordinates and satisfies $R(x_1,\dots,x_s) \rightarrow \bigwedge_{i<j} (x_i\neq x_j)$.
    
    To each
    $B\in \mathcal C$ we assign the predimension
    $$\delta(B)= |B|-\eta|R[B]|;$$
    where $R[B]$ denotes the set of hyperedges on $B$. For $A\subseteq  B$, we
    define $A \leqslant B$ iff for all $A \subseteq B' \subseteq  B$ we have $\delta(A)\leq \delta(B')$, and let $\mathcal C_\eta: = \{B \in \mathcal  C \,|\, \emptyset \leqslant B\}$. The following is
    standard
    \begin{lem}
    \label{lem-predim}
    Suppose $A,B \subseteq C \in \mathcal  C_\eta$. Then:
    \begin{enumerate}
    \item $\delta(AB) \leq  \delta(A)+ \delta(B)- \delta(A \cap B)$;
    \item If $A \leqslant B$ and $X \subseteq B$, then $A \cap X \leqslant X$;
    \item If $A \leqslant B \leqslant C$, then $A \leqslant C$.
    
    \end{enumerate}
    
    \end{lem}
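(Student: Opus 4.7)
The plan is to establish the three claims in the stated order, since part (1) is the engine that drives the other two. For (1), I would expand both sides of the inequality using the definition $\delta(X)=|X|-\eta|R[X]|$. The vertex count contributes the exact identity $|A\cup B|=|A|+|B|-|A\cap B|$. For the hyperedge count the key observation, and really the only point where the hypergraph structure enters, is that a hyperedge $e\in R[X]$ requires \emph{every} one of its vertices to lie in $X$; consequently $R[A]\cap R[B]=R[A\cap B]$ while only the inclusion $R[A]\cup R[B]\subseteq R[A\cup B]$ is guaranteed. By inclusion-exclusion this gives $|R[A\cup B]|\geq |R[A]|+|R[B]|-|R[A\cap B]|$, and multiplying by $-\eta$ (which reverses the inequality because $\eta>0$) and combining with the vertex identity yields the submodular inequality.

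For (2), fix $Y$ with $A\cap X\subseteq Y\subseteq X$; I need $\delta(A\cap X)\leq\delta(Y)$. Since $Y\subseteq X$, the intersections satisfy $A\cap Y=A\cap X$, so applying submodularity to the pair $A,Y$ gives
\[
\delta(A\cup Y)\leq \delta(A)+\delta(Y)-\delta(A\cap X).
\]
On the other hand $A\subseteq A\cup Y\subseteq B$, and the hypothesis $A\leqslant B$ forces $\delta(A)\leq \delta(A\cup Y)$. Subtracting $\delta(A)$ from both sides of the previous line then yields $\delta(A\cap X)\leq\delta(Y)$, which is exactly $A\cap X\leqslant X$.

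For (3), fix $A\subseteq Y\subseteq C$. Apply part (2) to $B\leqslant C$ with $X:=Y$: this gives $B\cap Y\leqslant Y$, so in particular $\delta(B\cap Y)\leq\delta(Y)$. Since $A\subseteq B\cap Y\subseteq B$ and $A\leqslant B$, we also have $\delta(A)\leq \delta(B\cap Y)$. Chaining the two inequalities yields $\delta(A)\leq \delta(Y)$, as required. There is no real obstacle here; the only mildly delicate point is the identity $R[A]\cap R[B]=R[A\cap B]$ in part (1), which is a feature of the relational language having a single relation all of whose variables are present together, and would fail without the hypergraph convention.
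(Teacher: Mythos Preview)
Your proof is correct and is precisely the standard argument for these facts. The paper does not actually supply a proof of this lemma: it is introduced with ``The following is standard'' and stated without justification, so there is nothing to compare against beyond noting that your argument is the expected one.
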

    If $A,B \subseteq C \in  \mathcal C_\eta$ then we define $\delta(A/B)= \delta(AB)- \delta(B)$. Note that this is equal to
    $|A\backslash B|- \eta|R[AB] \backslash R[B]|$. Then $B \leqslant AB$ iff $\delta(A'/B) \geq 0$ for all $A'\subseteq A$.  Moreover, if $N$ is an infinite $\mathfrak L$-structure such that $A\subseteq N$, we write $A\leqslant N$ whenever $A\leqslant B$ for every finite substructure $B$ of $N$ that contains $A$.
    One can show $\mathcal C_\eta$ has the $\leqslant$-free amalgamation property (cf. Lemma 4.8 in \cite{BaldShi}), by which we mean free amalgamation with $\leqslant$ inclusions.  An analogue of Fra\"iss\'e's theorem holds in this situation:
    \begin{prop}
    \label{prop:sgen}  There is a unique countable structure ${\M^\eta}$, up to isomorphism, satisfying:
    \begin{enumerate}
    \item The set of all finite substructures of $\M^\eta$, up to isomorphism, is precisely  $\mathcal{C}_\eta$;\item $\M^\eta=\bigcup_{i\in\omega}A_{i}$ where $\left(A_{i}:i\in\omega\right)$ is a chain of $\leqslant$-closed finite sets;
    \item If $A\leqslant\M^\eta$ and $A\leqslant B\in\mathcal{C}_{\eta}$, then there is an embedding $f:B\longrightarrow\M^\eta$ with $f\upharpoonright_{A}=\id_{A}$ and $f\left(B\right)\leqslant\M^\eta$.
    \end{enumerate}
    \end{prop}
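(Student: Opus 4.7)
The plan is to adapt the classical Fra\"iss\'e construction to the $\leqslant$-free amalgamation class $\mathcal{C}_\eta$, proving existence by a chain construction with bookkeeping and uniqueness by a back-and-forth argument driven by clause (3). The only substantive input beyond the classical proof is Lemma \ref{lem-predim}, which supplies the transitivity and substructure-heredity of $\leqslant$ needed to manipulate $\leqslant$-closed finite substructures throughout.

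For existence, enumerate all isomorphism types of pairs $(A,B)$ with $A\leqslant B$ in $\mathcal{C}_\eta$, a countable family since the signature is countable and members of $\mathcal{C}_\eta$ are finite. Build a chain $\emptyset=A_0\leqslant A_1\leqslant\cdots$ in $\mathcal{C}_\eta$ as follows: at stage $n+1$, examine the next enumerated pair $(A,B)$; if there exists a $\leqslant$-embedding $A\hookrightarrow A_n$, set $A_{n+1}:=A_n\otimes_A B$ using the $\leqslant$-free amalgamation property of $\mathcal{C}_\eta$ noted before the statement, so that $A_n\leqslant A_{n+1}\in\mathcal{C}_\eta$. Define $\M^\eta:=\bigcup_n A_n$. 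Clause (2) is immediate from the construction. Clause (1) follows because $\mathcal{C}_\eta$ is closed under substructures (either directly from its definition, or via Lemma \ref{lem-predim}(2) applied with $A=\emptyset$), and every finite subset of $\M^\eta$ lies in some $A_n$. For clause (3), given finite $A\leqslant\M^\eta$ and $A\leqslant B\in\mathcal{C}_\eta$, choose $n$ with $A\subseteq A_n$; the hypothesis $A\leqslant\M^\eta$ gives $A\leqslant A_n$ directly from the definition of $\leqslant$ for infinite targets. The bookkeeping then addresses the pair $(A,B)$ at some stage $m\geq n$, yielding an embedding $f\colon B\to A_{m+1}$ fixing $A$ with $f(B)\leqslant A_{m+1}$, hence $f(B)\leqslant\M^\eta$ by Lemma \ref{lem-predim}(3).

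For uniqueness, suppose $\M$ and $\M'$ both satisfy (1)--(3) and fix decompositions $\M=\bigcup_n A_n$, $\M'=\bigcup_n A'_n$ with $A_n\leqslant\M$ and $A'_n\leqslant\M'$ supplied by clause (2). Construct inductively a sequence of isomorphisms $f_n\colon X_n\to Y_n$ between $\leqslant$-closed finite substructures of $\M$ and $\M'$, starting with $f_0=\emptyset$ and alternating sides. To extend $f_n$ so as to capture a chosen element $a\in\M$, pick $k$ large enough that $X_n\cup\{a\}\subseteq A_k$ and take $X_{n+1}:=A_k$; then $X_n\leqslant X_{n+1}$ follows directly from $X_n\leqslant\M$. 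Form $\tilde B:=X_{n+1}\otimes_{X_n}Y_n$, amalgamating $X_{n+1}$ with $Y_n$ over $X_n$ via $f_n$, so that $Y_n\leqslant\tilde B\in\mathcal{C}_\eta$ by the $\leqslant$-free amalgamation property. Applying clause (3) to $\M'$ with $A=Y_n$ and $B=\tilde B$ produces an embedding $g\colon\tilde B\to\M'$ fixing $Y_n$ with $g(\tilde B)\leqslant\M'$; composing $g$ with the canonical embedding $X_{n+1}\hookrightarrow\tilde B$ yields the required extension $f_{n+1}$. Alternating roles of $\M$ and $\M'$ ensures every element of either side is eventually covered, and $\bigcup_n f_n$ is an isomorphism $\M\to\M'$.

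The only obstacle worth flagging is maintaining $\leqslant$-closedness of the intermediate substructures across the back-and-forth, since clause (3) can be invoked only when the base is $\leqslant$-closed in the ambient model. This is precisely what forces the use of clause (2): it guarantees that every finite subset of $\M$ (resp.\ $\M'$) sits inside a $\leqslant$-closed finite substructure, namely some $A_k$ (resp.\ $A'_k$), and this is what one enlarges $X_n$ (resp.\ $Y_n$) to at each step. Transitivity of $\leqslant$ (Lemma \ref{lem-predim}(3)) then preserves $\leqslant$-closedness along the growing chain, closing the induction.
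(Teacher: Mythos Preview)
The paper does not prove this proposition; it is stated as a standard fact (``An analogue of Fra\"iss\'e's theorem holds in this situation'') and then used without further comment. Your proof supplies the standard argument and is essentially correct.

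One point of exposition deserves tightening. In the existence half you enumerate isomorphism types of pairs $(A,B)$ once and, at stage $n+1$, amalgamate over \emph{some} $\leqslant$-embedding $A\hookrightarrow A_n$ if one exists. As written this does not guarantee clause (3): the specific copy of $A$ you are given there may not be the one your construction happened to amalgamate over, and the pair $(A,B)$ may have been visited before that copy even appeared in the chain. The fix is the usual one --- enumerate each isomorphism type of pair infinitely often and, at the $j$-th visit, cycle through the (finitely many) $\leqslant$-embeddings of $A$ into the current $A_n$ --- so that every $\leqslant$-closed copy of every $A$ is eventually served. You clearly have this in mind when you write ``the bookkeeping then addresses the pair $(A,B)$ at some stage $m\geq n$,'' but the construction as literally described does not deliver it.

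A minor stylistic remark: in the back-and-forth, forming $\tilde B=X_{n+1}\otimes_{X_n}Y_n$ is harmless but unnecessary, since $Y_n\cong X_n$ via $f_n$ and the amalgam collapses to an isomorphic copy of $X_{n+1}$ with $X_n$ relabelled as $Y_n$; you can simply transport $X_{n+1}$ along $f_n$ and invoke clause (3) directly.
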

    The structure $\M^\eta$, that is obtained in the above proposition, is called the
    {\it Hrushovski generic} structure.

    \subsubsection{$\omega$-categorical case}
      
      Here we briefly discuss a variation on the Hrushovski's pre-dimension construction method as a way to generate $\omega$-categorical structures. The original version of this is used to provide a counterexample to Lachlan's conjecture, where it is used to construct a stable $\omega$-categorical pseudoplane (see Section 5 in \cite{WagnerHRCONS}). Here we follow similar setting used Section 5.2. in
      \cite{EGT}.

      Suppose $\eta =\frac{m}{n}\in (0,1]$ where $gcd(m,n)=1$. Consider the same setting of the previous subsection for $\mathfrak L$ and $\mathcal C_\eta$. Choosing an unbounded convex increasing function $f: \R^{\geq 0} \to \R^{\geq 0}$ which is ``good'' enough one can consider $$\mathcal C^f_\eta:=\{A \in \mathcal C_\eta \,|\, \delta(X) \geq f(\vert X \vert)\,\, \forall X \subseteq A\};$$ where $(\mathcal C^f_\eta,\leqslant_d)$ has the $\leqslant_d$-free amalgamation property and  $\leqslant_d $ is defined as follows:  $A\leqslant_d B$ when $\delta (A'/A)>0$, for each $A\subsetneq A'\subseteq B $.
      In this case we have an associated countable \textit{generic structure} $\M^f_\eta$ which is $\omega$-categorical.
      
      \begin{rem}\label{frem}
      As a good function  we can take some piecewise smooth $f$ where its right derivative $f'$ satisfies $f'(x) \leq 1/x$ and is non-increasing, for $x \geq 1$. The latter condition implies that $f(x+y) \leq f(x) + yf'(x)$ (for $y \geq 0$). It can be shown that under these conditions, $\mathcal C^f_\eta$ has the free $\leqslant_d$-amalgamation property.
      \end{rem}
      We assume that $f$ is a good function.  We will assume that $f(0) = 0$ and $f(1) > 0$, and in this case the $\leqslant$-closure of empty set is empty. We shall also assume that $f(1) = n$ and one can show $\Aut{\M^f_\eta}$ acts  transitively on $M^f_\eta$. See  examples 5.11 and 5.12. in Section 5.2. \cite{EGT} for details.

\section{A relative minimality criterion for $\tau_{st}$}

  \label{sec-relative-min}
  Given a topological group $(G,\tau)$ and $g\in G$ we denote by $\mathcal{N}_{\tau}(g)$ the filter of neighbourhoods of $g$ in $\tau$.
  Since  $\mathcal{N}_{\tau}(g)=g\mathcal N_{\tau}(1)=\mathcal N_{\tau}(1)g$ for any $g\in G$, any group topology $\tau$ is uniquely determined by $\mathcal{N}_{\tau}(1)$. Given a filter $\mathcal V$ on $G$ at $1$ such that
  \begin{itemize}
  \item For every $U\in \mathcal V$ there is $V\in \mathcal V$ such that $ V^{-1}\subseteq U$;
  \item For every $U\in \mathcal V$ there is $V\in \mathcal V$ such that $ VV\subseteq U$;
  \item $U^{g}\in\mathcal{V}$ for every $U\in \mathcal V$ and $g\in G$;
  \end{itemize}
  then there is a unique group topology $\tau$ on $G$ such that $\mathcal{V}=\mathcal{N}_{\tau}(1)$. Given a family $\mathcal{Y}$ of subsets of $G$ containing $1$, we say that $\mathcal{Y}$ generates a group topology $\tau$ at the identity if $\mathcal{Y}$ generates $\mathcal{N}_{\tau}(1)$ as a filter.

  \newcommand{\orq}[0]{\cong^{\scriptscriptstyle{G}}}
  \newcommand{\clg}[0]{\mathrm{acl}^{G}}
  \newcommand{\ax}[0]{(\X)} 
  
  Given a set $X$ we let $[X]^{<\omega}$ stand for the collection of all finite subsets of $X$. Our setting consists of an infinite set $\Omega$ and some $G\leq S(\Omega)$, where $S(\Omega)$ is the group of permutations of $\Omega$.
  It is easy to see using the criterion above that the collection $\{G_{A}\,|\,A\in\fps{\Omega}\}$ is a base of neighbourhoods of the identity of a unique group topology $\tau_{st}$, which we will refer to as the standard topology. We are mainly interested in the case in which $\Omega$ is countable, in which case $S(\Omega)$, abbreviated as $S_\infty$, is a Polish group.
  
  By a closure operator on $\fps{\Omega}$ we mean a map $cl:\fps{\Omega}\to\fps{\Omega}$ that preserves inclusion and satisfies $A\subseteq\cl{A}=\cl{\cl{A}}$, for each $A\in\fps{\Omega}$. There is a bijective correspondence between ($G$-equivariant) closure operators $cl$ and ($G$-invariant) families $\X\subseteq\fps{\Omega}$ closed under intersections. Each $\X$ gives a closure operator $\cl{-}$ by taking as $\cl{A}$ the smallest set in $\X$ containing $A$. In the opposite direction we associate $cl$ with the class of $cl$-closed sets: $\X=\{A\in\fps{\Omega}|\cl{A}=A\}$.
  
  Given tuples $A,B,C$ of elements from $\Omega$ we write $A\orq B$ if there exists some $g\in G$ such that $gA=B$
  and given an additional $C$ we write $A\orq_{C}B$ if there is $g\in G_{C}$ such that $gA=B$.
  Given $A\subset\Omega$ we let $\clg(A)$ stand for the union of all elements of $\Omega$ whose orbit under
  $G_{A}$ is finite. We say $\clg(-)$ is {\it locally finite} if $\clg(A)$ is finite whenever $A$ is. In that case the restriction of  $\clg$ to $\fps{\Omega}$ is a closure operator on $\fps{\Omega}$. We write $\X^{G}=\{A\in[\Omega]^{<\omega}\,|\,\clg(A)=A\}$ and we say that $\clg$ is {\it trivial} if $\X^{G}=\fps{\Omega}$.
  
  Given a family $\X$ of subsets of a set $\Omega$, denote by $\ax$ the collection of all tuples of elements whose coordinates enumerate some member of $\X$. As is customary, the same letter will be used to refer to either a tuple or the corresponding set depending on the context. In particular we might use an expression such as $BC$ to denote the union of the ranges of $B$ and $C$.
  
  Let $G$ be the group of automorphisms of some structure $\M $ with universe $M$.
  Recall that if $\M$ is $\omega$-saturated, then for finite $A$ we have that $\clg(A)$ coincides with the algebraic closure of $A$.
  If $\M$ is $\omega$-saturated and countable, then in particular it is $\omega$-homogeneous, i.e. $A\orq B\Leftrightarrow \typ(A)=\typ(B)$ (alt. $A\equiv B$) for any $A,B\in[M]^{<\omega}$. One says $\M $ is ultra-homogeneous if the stronger equivalence  $A\orq B\Leftrightarrow A\cong B$ holds for any $A,B\in \fps{M}$.
  \newcommand{\XG}[0]{\X^{G}}

  \begin{lem}
  \label{jump lemma} Let $G$ be a group of permutations of a set $\Omega$ for which $\clg(-)$ is  locally finite. Suppose we are given some $G$-invariant $X\subseteq\Omega$ and another group topology $\tau^{*}\subset\tau_{st}^{X}$ such that for some constant $K\in \N$ the following property holds:
  \begin{enumerate}
  \item[$(\diamond)$] \label{intersection} For any 
  $A,B\in\XG$ and $U\in\mathcal{N}_{\tau^{*}}(1)$ there exists $U'\in\mathcal{N}_{\tau^{*}}(1)$ such that $((G_{A}\cap U)G_{B})^{K}=G_{A\cap B}\cap U'$.
  \end{enumerate}
  Then any group topology $\tau\subseteq\tau_{st}^{X}$ must satisfy at least one of the following two conditions:
  \begin{enumerate}
  \item \label{case 1 intersection lemma} Given $x\in X$ there exists $W\in\mathcal{N}_{\tau}(1)$
  such that $gx\in \clg(x)$, for each $g\in W$; or, 
  \item \label{case 2 intersection lemma} There exists some $G$-invariant $X'\subsetneq X$
  such that for all $W\in\mathcal{N}_{\tau}(1)$ there is
  $U'\in\mathcal{N}_{\tau^{*}}(1)$ and $U''\in\mathcal{N}_{\tau_{st}^{X'}}(1)$ such that $U'\cap U''\subseteq W$.
  
  \end{enumerate}
  \end{lem}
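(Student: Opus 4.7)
The plan is to prove the dichotomy by contrapositive. Suppose case 1 fails, so there exists $x_0 \in X$ such that every $W \in \mathcal{N}_{\tau}(1)$ contains some $g$ with $g x_0 \notin \clg(x_0)$; we show case 2 holds with $X' := X \setminus Gx_0$, a proper $G$-invariant subset of $X$. Given an arbitrary $W \in \mathcal{N}_{\tau}(1)$, the aim is to produce $U' \in \mathcal{N}_{\tau^{*}}(1)$ and $U'' \in \mathcal{N}_{\tau_{st}^{X'}}(1)$ with $U' \cap U'' \subseteq W$. Using joint continuity of multiplication in $(G,\tau)$, first pick a symmetric $W_0 \in \mathcal{N}_{\tau}(1)$ with $W_0^{2K} \subseteq W$. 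By $\tau \subseteq \tau_{st}^{X}$ and local finiteness of $\clg$, choose $B \in \X^{G}$ with $B \subseteq X$, $\clg(x_0) \subseteq B$, and $G_B \subseteq W_0$, then decompose $B = B_1 \sqcup B_2$ with $B_1 = B \cap Gx_0$ and $B_2 = B \cap X'$. The candidate is $U'' := G_{B_2} \in \mathcal{N}_{\tau_{st}^{X'}}(1)$; it then suffices to find $U' \in \mathcal{N}_{\tau^{*}}(1)$ with $U' \cap G_{B_2} \subseteq G_B$, since $G_B \subseteq W$.

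To obtain $U'$, apply $(\diamond)$ with $A := \clg(B_2)$ and the chosen $B$. Since $B$ is $\clg$-closed and $B_2 \subseteq B$, we have $A \subseteq B$, hence $A \cap B = A$; then $(\diamond)$ yields, for each $U \in \mathcal{N}_{\tau^{*}}(1)$, some $U' \in \mathcal{N}_{\tau^{*}}(1)$ with
\[
G_A \cap U' \;=\; \bigl((G_A \cap U)\,G_B\bigr)^{K}.
\]
The strategy is to shrink $U$ so that the right-hand side is contained in $W_0^{2K} \subseteq W$: since $G_B \subseteq W_0$ and each factor of type $G_A \cap U$ can be made $\tau^{*}$-arbitrarily small, one commutes the $K$ alternating factors past the $G_B$-factors using continuity of conjugation, reducing the product to an element of $W_0^{2K}$. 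This gives $G_A \cap U' \subseteq W$. Finally, one upgrades this to $U' \cap G_{B_2} \subseteq W$: the containment $G_A = G_{\clg(B_2)} \subseteq G_{B_2}$ is of finite index with discrepancy carried by $\clg(B_2) \cap Gx_0 \subseteq B_1$, and is absorbed by further refining $U'$ using $G$-equivariance together with the failure of case 1 (and its consequences by conjugation) at the finitely many elements of $B_1$.

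The principal obstacle is the control of the $K$-fold product $((G_A \cap U)\,G_B)^{K}$ inside the $\tau$-neighborhood $W$, since $\tau^{*}$-small neighborhoods of $1$ are a priori only comparable to $\tau$-small ones through their common refinement by $\tau_{st}^{X}$. The shape of the factorization in $(\diamond)$, with alternating $G_A$- and $G_B$-factors of bounded length $K$, is what makes the commuting-by-conjugation argument available; and local finiteness of $\clg$ is what ensures that the finite-index corrections required to pass from $G_A$ to $G_{B_2}$ can be absorbed cleanly into the final $\tau^*$-neighborhood $U'$.
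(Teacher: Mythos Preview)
Your setup is right: negate case 1, take $X'=X\setminus Gx_{0}$, and aim to trap an arbitrary $W\in\mathcal N_{\tau}(1)$ between a $\tau^{*}$-neighbourhood and a $\tau_{st}^{X'}$-neighbourhood. The argument breaks down at the step where you try to force
\[
\bigl((G_{A}\cap U)\,G_{B}\bigr)^{K}\subseteq W_{0}^{2K}
\]
by making $U$ ``$\tau^{*}$-small'' and commuting factors. The topologies $\tau$ and $\tau^{*}$ are completely unrelated by hypothesis (only $\tau,\tau^{*}\subseteq\tau_{st}^{X}$ is assumed), so $\tau^{*}$-smallness of $U$ gives you no control inside the $\tau$-neighbourhood $W_{0}$. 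Concretely, in the main application of the lemma one takes $\tau^{*}=\{\emptyset,G\}$; then the only choice is $U=G$, so $G_{A}\cap U=G_{A}=G_{\clg(B_{2})}\supseteq G_{B}$, and there is no reason whatsoever for $G_{A}\subseteq W_{0}$. The commutation-by-conjugation manoeuvre cannot rescue this: conjugating $G_{A}\cap U$ by elements of $G_{B}$ still leaves you with sets you cannot place inside $W_{0}$. The subsequent ``upgrade'' from $G_{A}\cap U'\subseteq W$ to $G_{B_{2}}\cap U'\subseteq W$ via finite index is likewise only sketched and does not go through as stated.

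What is missing is a genuine use of the failure of case 1 \emph{before} you invoke $(\diamond)$, not just at the end. One first strengthens the failure via Neumann's lemma to: for every $a\in Gx_{0}$, every finite $D\subset\Omega$ and every $V\in\mathcal N_{\tau}(1)$ there is $g\in V$ with $ga\notin D$. Now take your closed $A$ with $G_{A}\subseteq W_{0}$, list $A\cap Gx_{0}=\{a_{1},\dots,a_{r}\}$, choose a much smaller symmetric $W_{1}\in\mathcal N_{\tau}(1)$ and a second closed $B$ with $G_{B}\subseteq W_{1}$, and for each $j$ pick $g_{j}\in W_{1}$ with $a_{j}\notin g_{j}^{-1}B=:B_{j}$. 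The point is that each $G_{B_{j}}=G_{B}^{g_{j}}\subseteq W_{1}^{3}$ \emph{by construction}, so iterating $(\diamond)$ across the $B_{j}$'s (with $U=G$) yields $U'\in\mathcal N_{\tau^{*}}(1)$ with $G_{\bigcap_{j}B_{j}}\cap U'$ inside a controlled power of $W_{1}$, hence inside $W_{0}$. One final application of $(\diamond)$ to the pair $A,\,C:=\bigcap_{j}B_{j}$ then gives $G_{A\cap C}\cap U''\subseteq W_{0}^{2K}\subseteq W$, and $A\cap C\subseteq X'$ because each $a_{j}$ was excluded from $C$. The translation trick is what lets you stay inside $\tau$-neighbourhoods throughout; your direct application of $(\diamond)$ to $\clg(B_{2})$ and $B$ does not.
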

  \begin{proof}
  Assume the first alternative does not hold. Then there is $x_{0}\in X$  such that
  for any $W\in \mathcal N_{\tau}(1)$ there exists $g\in W$ such that $g(x_{0})\nin \clg(x_{0})$.
  Let $X'=X\setminus G\cdot x_{0}$. Our goal is to show point \ref{case 2 intersection lemma}, that is, that any neighbourhood $W$ of $1$ in $\tau$ is also a neighbourhood of the identity in any topology containing $\tau^{*}$ and $\tau_{st}^{X'}$.
  
  \begin{obs}
  \label{obs: out of finite}For any $a\in G\cdot x_{0}$, any finite $B\subset\Omega$ and any $W\in\mathcal{N}_{\tau}(1)$ there exists  some $g\in W$ such that $ga\nin B$.
  \end{obs}
  \begin{proof}
  Suppose the condition above fails for some $a$, $B$, and $W$. By Neumann's lemma there exists some $h\in G_{a}$ such that $h(B)\cap B\subseteq \clg(a)$. This means that any $g$ in $W\cap W^{h^{-1}}\in\mathcal{N}_{\tau}(1)$ must take $a$ to a point in $\clg(a)$, a contradiction.
  \end{proof}
  
  The following observation follows from $(\diamond)$ by an induction argument.
  \begin{obs}
  \label{iteration intersection}There is a function $\mu:\N\to\N$ such that given any finite collection $\{B_{j}\}_{j=1}^{r}\subset \X^{G}$, $U\in\mathcal{N}_{\tau^{*}}(1)$ and $W\subseteq G$ containing $U\cap\bigcup_{j=1}^{r}G_{B_{j}}$
  there exists $U'\in\mathcal{N}_{\tau^{*}}(1)$ such that $G_{\bigcap_{j=1}^{r}B_{j}}\cap U'\subseteq W^{\mu(r)}$.
  \end{obs}
  
  Fix some arbitrary $W\in\idn$. Pick $W_{0}=W_{0}^{-1}\in\mathcal{N}_{\tau}(1)$ such that
  $W_{0}^{2K}\subseteq W$.
  Since $\tau\subseteq\tau_{st}^{X}$, there exists some finite $A\subset X$ such that $G_{A}\subseteq W_{0}$. By local finiteness we may assume $A=\clg(A)$.
  Let $\{a_{j}\}_{j=1}^{r}:=A\cap (G\cdot x_{0})$.
  
  Pick $W_{1}=W_{1}^{-1}\in\mathcal{N}_{\tau}(1)$ such that $W^{3\mu(r)}_{1}\subseteq W_{0}$, where $\mu$ is the function given by Observation \ref{iteration intersection}.
  Let $B\subset\Omega$ be a finite subset such that $G_{B}\subset W_{1}$. We may assume again $B\in\XG$. By Observation \ref{obs: out of finite} for any $1\leq j\leq r$ there exists some $g_{j}\in W_{1}$ such that $g_{j}a_{j}\nin B$ or, equivalently, $a_{j}\nin B_{j}:=g_{j}^{-1}B$. Notice that $G_{B_{j}}=G_{B}^{g_{j}}\subseteq W_{1}^{3}$.
  
  Let $C=\bigcap_{j=1}^{r}B_{j}$. According to \ref{iteration intersection} (for $U=G$) there is $U'\in\mathcal{N}_{\tau^{*}}(1)$ such that $G_{C}\cap U'\subset (W_{1}^{3})^{\mu(r)}\subseteq W_{0}$. A final direct application of $(\diamond)$ yields some $U''\in\mathcal{N}_{\tau^{*}}(1)$ such that
  \begin{align*}
  U''\cap G_{C\cap A}\subseteq (G_{C}G_{A})^{K}\subseteq W_{0}^{2K}\subseteq W.
  \end{align*}
  By construction $C\cap A\subseteq X'$ so we are done.
  \end{proof}
  Here is another instance of the same idea.
  \begin{lem}
  \label{intersection of standard topologies}Let $G$ be a group of permutations of a set $\Omega$, $\{X_{j}\}_{j\in J}$ some collection of $G$-invariant subsets of $\Omega$ and $Z=\bigcap_{j\in J}X_{j}$. Assume that $\clg(x)=x$ for any $x\in\Omega$  and that there exists $K>0$ such that for any finite
  $A,B\subset\Omega$ we have $(G_{A}G_{B})^{K}=G_{A\cap B}$. Then $\tau_{st}^{Z}=\bigcap_{j\in J}\tau^{X_{j}}_{st}$.
  \end{lem}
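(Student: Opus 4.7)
The inclusion $\tau_{st}^Z \subseteq \bigcap_{j \in J} \tau_{st}^{X_j}$ is immediate from $Z \subseteq X_j$ for each $j$: every finite subset of $Z$ is a finite subset of each $X_j$, so the basic open subgroups of $\tau_{st}^Z$ at the identity already lie in every $\tau_{st}^{X_j}$.

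For the reverse inclusion, by left-invariance of the relevant group topologies it suffices to show that any $W \in \mathcal{N}_{\bigcap_j \tau_{st}^{X_j}}(1)$ contains $G_C$ for some $C \in [Z]^{<\omega}$. For each $j \in J$ the fact that $W$ is a $\tau_{st}^{X_j}$-neighbourhood of $1$ yields $A_j \in [X_j]^{<\omega}$ with $G_{A_j} \subseteq W$. A combinatorial reduction produces a finite $F \subseteq J$ for which $C := \bigcap_{j \in F} A_j$ is finite and contained in $Z$: fix any $j_0 \in J$; for each $a \in A_{j_0} \setminus Z$ the definition $Z = \bigcap_{j \in J} X_j$ yields some $j(a) \in J$ with $a \notin X_{j(a)}$, hence $a \notin A_{j(a)}$; now take $F := \{j_0\} \cup \{j(a) : a \in A_{j_0} \setminus Z\}$.

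The heart of the proof is to show $G_C \subseteq W$. Iterating the algebraic hypothesis $(G_A G_B)^K = G_{A \cap B}$ over $F = \{j_1,\dots,j_n\}$ identifies $G_C$ with the subgroup $\langle G_{A_{j_1}}, \dots, G_{A_{j_n}} \rangle$ and produces a uniform bound $L = L(n, K)$ such that every $g \in G_C$ can be written as a product of at most $L$ factors, each drawn from some $G_{A_j} \subseteq W$ with $j \in F$. To promote pointwise containment into containment of the whole product, the plan is to argue by induction on the partial products $g_i := h_1 \cdots h_i$: at each step, the openness of $W$ at $g_i$ in $\tau_{st}^{X_{j(i+1)}}$ supplies some $B_i \in [X_{j(i+1)}]^{<\omega}$ with $g_i G_{B_i} \subseteq W$, and one closes the induction by exploiting the non-uniqueness of the decomposition of $g$ to arrange the next factor $h_{i+1}$ to lie in $G_{A_{j(i+1)}} \cap G_{B_i}$, so that $g_{i+1} \in W$. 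The main obstacle is the apparent circularity, since the $B_i$'s depend on partial products that depend on the choice of factors; this is handled by fixing $L$ in advance and performing $L$ simultaneous rounds of enlargement of all $A_j$'s for $j \in F$ (each enlargement stays inside the corresponding $X_j$, keeps $G_{A_j} \subseteq W$, and only shrinks $C$ within $Z$). The assumption $\gcl(x) = x$ is used to ensure that these enlargements introduce no hidden algebraic constraints, and the $G$-invariance of the $X_j$'s is what keeps the combinatorial reduction to $F$ compatible with the enlargements throughout the induction.
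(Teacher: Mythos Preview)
Your setup and the reduction to a finite $F\subseteq J$ with $C=\bigcap_{j\in F}A_j\subseteq Z$ are correct and match the paper. The gap is in the ``heart''. From $G_{A_j}\subseteq W$ and the algebraic hypothesis you only obtain $G_C\subseteq W^{L}$; there is no reason your specific $C$ should satisfy $G_C\subseteq W$. The inductive scheme on partial products does not close: once $g_i\in W$ produces $B_i$ with $g_iG_{B_i}\subseteq W$, you cannot force the next factor $h_{i+1}$ of a \emph{fixed} $g\in G_C$ to lie in $G_{B_i}$, and any redecomposition of the tail $g_i^{-1}g$ must still terminate at $g$ using the same finite list of stabilizers. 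Your ``$L$ rounds of enlargement'' does not rescue this either: enlarging each $A_j$ to $A_j'\supseteq A_j$ gives $C'=\bigcap_{j\in F}A_j'\supseteq C$, so $G_{C'}\subseteq G_C$ and you are proving a \emph{weaker} inclusion; moreover $C'$ may acquire new points of $A_{j_0}'\setminus Z$ that the fixed $F$ no longer eliminates. The circularity you flag is real and your sketch does not resolve it. (The hypothesis $\clg(x)=x$ plays no role at this step.)

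The idea you are missing is that $\tau_0:=\bigcap_j\tau_{st}^{X_j}$ is itself a group topology. Having found $A\subseteq X_{j_0}$ with $G_A\subseteq W$ and set $r=|A\setminus Z|$, the paper first chooses a symmetric $W_0\in\mathcal N_{\tau_0}(1)$ with $W_0^{\mu(r+1)}\subseteq W$, and only \emph{then} picks the $B_l\subseteq X_{j_l}$ with $G_{B_l}\subseteq W_0$ (not merely $\subseteq W$). The iterated hypothesis now gives $G_C\subseteq W_0^{\mu(r+1)}\subseteq W$ directly, with $C=A\cap\bigcap_l B_l\subseteq Z$. The exponent is absorbed by shrinking the neighbourhood \emph{before} selecting the finite sets, rather than by trying to control products of elements afterwards.
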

  \begin{proof}
  Let $\tau_{0}=\bigcap_{j\in J}\tau^{X_{j}}_{st}$. The inclusion $\tau^{Z}_{st}\subseteq\tau_{0}$ is clear. Take now any $W\in \mathcal{N}_{\tau_{0}}(1)$. Fix $j_{0}\in J$. Since $W\in \tau_{st}^{X_{j_{0}}}$, there exists
  some finite $A\subseteq X_{j_{0}}$ such that $G_{A}\subseteq W$. Let $\{a_{j}\}_{j=1}^{r}:=A\setminus Z$.
  Just as in Observation \ref{iteration intersection} one can show by induction:
  \begin{claim*}
  There exists a function $\mu:\N\to\N$ such that for any finite collection
  $\{B_{l}\}_{l=1}^{r}\subseteq\fps{\Omega}$ and any $V\subseteq G$ containing
  $G_{B_{l}}$ for all $1\leq l\leq r$ we have $G_{\bigcap_{l=1}^{r}B_{l}}\subseteq V^{\mu(r)}$.
  \end{claim*}
  Pick $W_{0}=W_{0}^{-1}\in\mathcal{N}_{\tau_{0}}(1)$ such that $W_{0}^{\mu(r+1)}\subseteq W$. For each $1\leq l\leq r$ choose some $j_{l}\in J$ such that $a_{l}\nin X_{j_{l}}$ and then some finite $B_{l}\subseteq X_{j_{l}}$ such that $G_{B_{l}}\subseteq W_{0}$. The Claim and the choice of $W_{0}$ implies $G_{C}\subseteq W$ where
  $C=A\cap\bigcap_{l=1}^{r}B_{l}$. Since $C\subseteq Z$ we are done.
  \end{proof}
  
  \begin{lem}
  \label{dealing with acl}Let $G$ be the automorphism group of some structure $\M$ endowed with a $G$-invariant locally finite closure operator $\cl{-}$ on $M$ and a group topology $\tau$ coarser than $\tau_{st}$. Assume that the action of $G$ is transitive and there is some $W\in\mathcal{N}_{\tau}(1)$ and $a\in M$ such that $ga\in \cl{a}$, for each $g\in W$.
  Then either $\tau$ is not Hausdorff or $\tau=\tau_{st}$.
  \end{lem}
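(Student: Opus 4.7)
The plan is to reinterpret the hypothesis as placing $W$ inside a natural $\tau$-open subgroup of $G$, and then to use Hausdorffness together with the automorphism-group structure to refine this to openness of the pointwise stabilizer $G_{\{a\}}$.

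First observe that $\{g \in G : ga \in \cl{a}\}$ coincides with the setwise stabilizer $\tilde H := \{g \in G : g\cl{a} = \cl{a}\}$. The inclusion $\tilde H \subseteq \{g : ga \in \cl{a}\}$ is immediate from $a \in \cl{a}$; conversely, $G$-equivariance and idempotence of $\cl{-}$ give $g\cl{a} = \cl{ga} \subseteq \cl{\cl{a}} = \cl{a}$ whenever $ga \in \cl{a}$, with equality following from bijectivity of $g$ and finiteness of $\cl{a}$. Hence $W \subseteq \tilde H$, which makes $\tilde H$ a $\tau$-open subgroup of $G$ and, being an open subgroup, $\tau$-clopen.

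Assume now that $\tau$ is Hausdorff; the goal is $\tau = \tau_{st}$. The restriction homomorphism $\varphi \colon \tilde H \to \mathrm{Sym}(\cl{a})$, $g \mapsto g|_{\cl{a}}$, has finite image and kernel $N_a := G_{\cl{a}}$, which is contained in $G_{\{a\}}$; hence $N_a$ is a finite-index normal subgroup of $\tilde H$. Since in any topological group a closed finite-index subgroup is automatically open (its complement being a finite union of closed cosets), it is enough to show that $N_a$ is $\tau$-closed. Once this is achieved, $G_{\{a\}} \supseteq N_a$ will be $\tau$-open, and by transitivity together with closure of $\tau$ under conjugation and finite intersection one obtains $\tau_{st} \subseteq \tau$.

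To prove $N_a$ is $\tau$-closed, take $h \in \overline{N_a}^\tau$. As $\tilde H$ is $\tau$-closed, $h \in \tilde H$, and the task is to check that $h$ fixes $\cl{a}$ pointwise. My approach would be to combine transitivity --- applied to each element of $\cl{a}$, and also to external points chosen via Neumann's lemma so that the translates $h_i \cl{a}$ are pairwise disjoint --- with Hausdorff separation of $1$ from each of the finitely many non-identity cosets of $N_a$ inside $\tilde H$. The main obstacle, and the place where the automorphism-group structure must genuinely intervene, is that a finite-index normal subgroup of a generic Hausdorff topological group need not be open (as $2\mathbb{Z}$ inside the $p$-adic integers shows): so the argument has to use transitivity together with $\tau \subseteq \tau_{st}$ to convert pointwise Hausdorff separation into separation of entire cosets of $N_a$.
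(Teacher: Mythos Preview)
Your opening reduction is correct: $\{g : ga \in \cl{a}\}$ coincides with the setwise stabilizer $\tilde H$ of $\cl{a}$, so $\tilde H$ is $\tau$-open, and $N_a = G_{\cl{a}}$ has finite index in it. Once $N_a$ is $\tau$-open the conclusion follows exactly as you describe.

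The gap is that you do not actually prove $N_a$ is $\tau$-closed. Your final paragraph is a plan, not an argument: you propose to separate $1$ from each of the finitely many non-identity cosets $hN_a$, but each such coset is typically infinite, and Hausdorffness only separates $1$ from individual points. You explicitly acknowledge this obstacle and correctly note that the automorphism-group structure must intervene, but you never carry it out; the references to Neumann's lemma and disjoint translates remain programmatic. The paper fills exactly this gap by a K\"onig's-lemma argument: for each finite $\sim$-closed $A$ and each $V \in \mathcal{N}_\tau(1)$ it records the finite set $Y_V^A$ of restrictions to $A$ of those $g \in V$ preserving every $\sim$-class setwise, and then argues that either some $Y_V^A$ collapses to $\{\id_A\}$---whence $G_A$ is $\tau$-open and $\tau = \tau_{st}$---or by compactness one extracts a non-identity $f \in G$ lying in the $\tau_{st}$-closure of every $V \in \mathcal{N}_\tau(1)$, hence in every such $V$, contradicting Hausdorffness. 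This compactness step is precisely the missing ingredient in your sketch: it is what upgrades pointwise Hausdorff separation to the uniform statement that some $G_A$ is $\tau$-open.
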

  
  \begin{proof}
  Notice that by the transitivity of the action of $G$ on $M$ and continuity of the inverse operation for every $a\in M$ there are $U_{a},W_{a}\in \mathcal N_{\tau}(1)$ such that $f(a)\in \cl{a}$ for any $f\in W_{a}$ and $g^{-1}(a)\in\cl{a}$ for any $g\in U_{a}$. For a finite tuple $A$ in $M$ we write $W_{A}=\bigcap_{a\in A}W_{a}$. Given $a,b\in M$, we say that $a\sim b$ if $a\in \cl{b}$ and $b\in \cl{a}$. This is clearly an equivalence relation.
  If we let $W'_{a}=W_{a}\cap\bigcap_{z\in \cl{b}}U_{z}$, then any $f\in W_{a}'$ must preserve the class $[a]\in M/\sim$ set-wise, that is $W'_{a}\subset G_{[a]}$.
  
  For any $V\in \mathcal N_{\tau}(1)$ and any finite $\sim$-closed $A\subset M$ consider the set
  \begin{align*}
  Y_{V}^{A}=\{f:A\to A\,|\,\exists g\in V\,g\restriction_{A}=f,\,\forall a\in A\,\,g([a])=[a]\}.
  \end{align*}
  Notice that this set is finite, and that given $\sim$-closed $A\subset B\subset M$ and $f\in Y_{V}^{B}$ we have $f\restriction_{A}\in Y_{V}^{A}$. Invariance should be clear from the fact that $A$ is $\sim$-closed and the definition of $Y^{A}_{V}$.
  
  \begin{claim}
  \label{cl: alternative}Either $Y_{V}^{A}=\{\id_{A}\}$ for some $V\in \mathcal N_\tau(1)$ and $\sim$-closed $A$ or there exists $f\in G$ such that for all $\sim$-closed $A\subset M$ and all $V\in \mathcal N_{\tau}(1)$ we have $f\restriction_{A}\in Y_{V}^{A}$.
  \end{claim}
  \begin{proof} [of Claim]
  Recall that according to the assumption the closure is locally finite. If the first alternative is not the case, then from Observation \ref{iteration intersection} and  K\"onig's lemma follows that there is a function $f:M\to M$ such that $f\restriction_{A}\in Y_{V}^{A}$ for any $\sim$-closed $A$ and $V\in\mathcal  N_{\tau}(1)$. The fact that
  $f\restriction_{A}$ is a type-preserving bijection of $A$ for any such $A$ implies $f\in G$.
  \end{proof}
  
  If the first possibility in Claim \ref{cl: alternative} holds true, then $G_{A}$ contains $W'_{A}\cap V$ and is thus a neighbourhood of the identity in $\tau$, which implies that  $\tau=\tau_{st}$. We claim that if the second possibility is satisfied the resulting $f\in G\setminus1$ satisfies $f\in\bigcap_{V\in \mathcal  N_{\tau}(1)}V$, so $\tau$ is not Hausdorff.
  Given any $V\in \mathcal N_{\tau}(1)$, the closure in $\tau_{st}$ of any symmetric $W\in \mathcal N_{\tau}(1)\cap\tau_{st}$ satisfying $W^{2}\subset V$ is itself contained in $V$.
  Hence, $\mathcal N_{\tau}(1)$ admits a basis consisting entirely of $\tau_{st}$-closed neighbourhoods of the identity.  It is thus enough to show that $f$ belongs to the closure of $V$ in $\tau_{st}$ for any $V\in \mathcal N_{\tau}(1)$, which is immediate from the definition of $Y_{V}^{A}$.
  \end{proof}

  The following ubiquitous observation is crucial for the application of the results above. We provide a proof for the sake of completeness.
        \begin{lem}\label{lem-zig-used}
        Let $G$ be a group of permutations of a set $\Omega$ and $A,B$ tuples of elements from $\Omega$ for which there is a chain $A=A_0, B_0, \dots, B_{n-1},A_n=g(A)$ such that $A_{i}B_{i}\orq A_{i+1}B_{i}\orq AB$ for $0\leq i<n$. Then $g \in (G_{A}G_{B})^{n}G_{A}$.
        \end{lem}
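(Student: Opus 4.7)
The plan is to prove by induction on $i$ the following stronger claim: for each $0\leq i\leq n$ there exists $g_{i}\in (G_{A}G_{B})^{i}G_{A}$ with $g_{i}(A)=A_{i}$. The base case $i=0$ is immediate with $g_{0}=\id$, which lies in $G_{A}=(G_{A}G_{B})^{0}G_{A}$. Once this is established for $i=n$, the conclusion follows at once: since $g_{n}(A)=A_{n}=g(A)$, we have $g^{-1}g_{n}\in G_{A}$, and hence $g\in g_{n}G_{A}\subseteq (G_{A}G_{B})^{n}G_{A}$.

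For the inductive step, the hypotheses on the chain provide two auxiliary permutations. From $A_{i}B_{i}\orq AB$ I obtain some $f_{i}\in G$ with $f_{i}(A)=A_{i}$ and $f_{i}(B)=B_{i}$, while $A_{i}B_{i}\orq A_{i+1}B_{i}$ yields some $h_{i}\in G$ with $h_{i}(A_{i})=A_{i+1}$ and $h_{i}\in G_{B_{i}}$ (since it fixes $B_i$ pointwise). The key observation is that $g_{i}$ and $f_{i}$ agree on $A$ (both send it to $A_{i}$), so $g_{i}^{-1}f_{i}\in G_{A}$, i.e.\ $f_{i}\in g_{i}G_{A}$. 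Combined with the standard identity $G_{B_{i}}=f_{i}G_{B}f_{i}^{-1}$ (which holds since $f_{i}$ maps $B$ to $B_{i}$ pointwise), this gives
\[
h_{i}\in G_{B_{i}}=f_{i}G_{B}f_{i}^{-1}\subseteq g_{i}G_{A}G_{B}G_{A}g_{i}^{-1}.
\]
Setting $g_{i+1}:=h_{i}g_{i}$, one checks $g_{i+1}(A)=h_{i}(A_{i})=A_{i+1}$, and
\[
g_{i+1}\in g_{i}G_{A}G_{B}G_{A}\subseteq (G_{A}G_{B})^{i}G_{A}\cdot G_{A}G_{B}G_{A}=(G_{A}G_{B})^{i+1}G_{A},
\]
using that $G_{A}$ is a subgroup so $G_{A}G_{A}=G_{A}$. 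This closes the induction.

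There is no genuine obstacle here; the argument is a transparent piece of bookkeeping once one notices the two crucial ingredients, namely the conjugation identity $G_{B_{i}}=f_{i}G_{B}f_{i}^{-1}$ and the trick $f_{i}\in g_{i}G_{A}$ that allows one to translate the witnesses $f_{i}$ into the coset of $g_{i}$, so that successive zigzag steps compose neatly as elements of $G_{A}G_{B}$.
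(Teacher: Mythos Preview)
Your proof is correct and follows essentially the same approach as the paper's: both are inductive bookkeeping arguments exploiting the conjugation identity $G_{B_{i}}=f_{i}G_{B}f_{i}^{-1}$ together with the fact that any two witnesses sending $A$ to $A_{i}$ differ by an element of $G_{A}$. The only difference is organizational: you run a forward induction along the chain, explicitly building $g_{i}\in(G_{A}G_{B})^{i}G_{A}$ with $g_{i}(A)=A_{i}$, whereas the paper runs a downward induction on $n$, normalizing the first link (via some $h\in G_{A}$, $h'\in G_{B}$) and applying the induction hypothesis to the transformed shorter chain $(h'h(A_{i+1}),h(B_{i+1}))$.
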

        \begin{proof}
        The proof is by induction on $n$. In the base case $n=0$ we have $A=g(A)$ that is $g\in G_{A}$.
        Assume now $n>0$. Since $AB_{0}\orq AB$, there exists $h\in G_{A}$ such that $h(B_{0})=B$.
        Now $A_{1}B_{0}\orq AB$ implies $h(A_{1})B=h(A_{1})h(B_{0})\orq A_{1}B_{0}\orq AB$, which implies
        that there exits $h'\in G_{B}$ such that
        $h'(h(A_{1}))=A$. Applying induction to the sequence $(A'_{i},B'_{i})_{i=0}^{n-1}$ given by $A'_{i}=h'h(A_{i+1})$, $B'_{i}=h'h(B_{i+1})$  yields that $h'hg\in (G_{A}G_{B})^{n-1}G_{A}$, from which it follows that $g\in (G_{A}G_{B})^{n}G_{A}$ as desired.
        \end{proof}

        \begin{defn}
        \label{def-zigzag}Suppose we are given a group $G$ of permutations of a set $\Omega$, and
        $\X$ a $G$-invariant family of subsets of $\Omega$ closed under intersection. We say $\X$ has the {\it $n$-zigzag} property (with respect to the action of $G$) if for every $A,B\in \ax$ and any  $A'$ with
        $A\orq_{A\cap B}A'$ there are $A_0,\dots, A_n$ and $B_0,\dots, B_{n-1}$ such that
        \begin{enumerate}
        \item $A_0:=A$ and $A_{n}=A'$;
        \item $A_{i}B_{i}\orq A_{i+1}B_{i}\orq AB$ for $0\leq i\leq n-1$.
        \end{enumerate}
        We will refer to the sequence $A_{0},B_{0},A_{1},\dots, A_{n}$ above as an $(n,B)${\it -zigzag path} from $A$ to $A'$.
        \end{defn}

        \begin{obs}
        \label{zizagobs} Given an $n$-zigzag path as above it is easy to show by induction that if we write $C=A\cap B$ then $C\subseteq A_{i}B_{i}\cong_{C}A_{i+1}B_{i}\cong_{C}AB$ for all $0\leq i\leq n-1$. In particular, $A_{i}\cap B_{i}=A_{i+1}\cap B_{i}=C$.
        \end{obs}

        Notice that for fixed $A,B$ and $n$ the existence of a $(n,B)$-zigzag path from $A$ to $A'$ depends only on
        the orbit of $A'$ under $G_{A}$.
        
        \begin{prop}
        \label{main minimality proposition} Suppose $\M$ is a countable first order structure and $G=\Aut \M$. Assume $\clg(-)$ is locally finite and the corresponding
        $\X^G$ has the $n$-zigzag property for some $n$. Then:
        \begin{enumerate}
        \item \label{min with closure}If the action of $G$ on $M$ is transitive, then $(G,\tau_{st})$ is minimal.
        \item \label{min without closure} If $\clg{(x)}=x$ for any $x\in M$, then any group topology $\tau\subseteq\tau_{st}$ is of the form
        $\tau_{st}^{X}$ for some $G$-invariant $X\subseteq M$.
        \end{enumerate}
        \end{prop}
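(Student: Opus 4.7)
The plan is to apply Lemma \ref{jump lemma} with $\tau^{*}$ chosen as the indiscrete topology $\{\emptyset,G\}$, so that $\mathcal{N}_{\tau^{*}}(1)=\{G\}$. With this choice the technical hypothesis $(\diamond)$ with $K=n+1$ collapses to the purely algebraic identity $(G_{A}G_{B})^{n+1}=G_{A\cap B}$ for $A,B\in\X^{G}$, and I would derive it from the $n$-zigzag property together with Lemma \ref{lem-zig-used}: given $g\in G_{A\cap B}$ one has $A\orq_{A\cap B}g(A)$, so some $(n,B)$-zigzag path from $A$ to $g(A)$ produces $g'\in (G_{A}G_{B})^{n}G_{A}$ with $g'(A)=g(A)$, and consequently $g\in g'G_{A}\subseteq(G_{A}G_{B})^{n+1}$; the reverse inclusion is immediate.

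For part (1), let $\tau$ be a Hausdorff group topology with $\tau\subseteq\tau_{st}$. The strict inclusion $\tau^{*}\subsetneq\tau_{st}$ is automatic because $G$ acts faithfully on $M$. Apply Lemma \ref{jump lemma} with $X=M$. Case \ref{case 1 intersection lemma} puts us in a position to invoke Lemma \ref{dealing with acl}, which under transitivity yields $\tau=\tau_{st}$. Case \ref{case 2 intersection lemma} produces some proper $G$-invariant $X'\subsetneq M$ such that every $W\in\mathcal N_{\tau}(1)$ contains a $\tau_{st}^{X'}$-neighbourhood of the identity (since $U'=G$); but by transitivity $X'=\emptyset$, forcing $\tau$ to be indiscrete, contrary to Hausdorffness.

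For part (2), I would iterate transfinitely. Given $\tau\subseteq\tau_{st}$, I construct a strictly decreasing chain $(X_{\alpha})_{\alpha}$ of $G$-invariant subsets of $M$ starting from $X_{0}=M$ with $\tau\subseteq\tau_{st}^{X_{\alpha}}$ at each stage. At a successor step, apply Lemma \ref{jump lemma} to $X_{\alpha}$: if case \ref{case 1 intersection lemma} holds, the hypothesis $\clg(x)=x$ gives $G_{x}\in\mathcal N_{\tau}(1)$ for every $x\in X_{\alpha}$, hence $\tau_{st}^{X_{\alpha}}\subseteq\tau$, and the chain terminates with $\tau=\tau_{st}^{X_{\alpha}}$; otherwise case \ref{case 2 intersection lemma} provides $X_{\alpha+1}\subsetneq X_{\alpha}$ with $\tau\subseteq\tau_{st}^{X_{\alpha+1}}$. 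At a limit $\lambda$ set $X_{\lambda}=\bigcap_{\alpha<\lambda}X_{\alpha}$ and invoke Lemma \ref{intersection of standard topologies} to obtain $\bigcap_{\alpha<\lambda}\tau_{st}^{X_{\alpha}}=\tau_{st}^{X_{\lambda}}$, thereby preserving $\tau\subseteq\tau_{st}^{X_{\alpha}}$ through the limit. Countability of $M$ ensures the chain eventually stabilises.

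I expect the principal technical obstacle to lie in ensuring the hypothesis of Lemma \ref{intersection of standard topologies} at the limit stages, since that lemma demands the identity $(G_{A}G_{B})^{K}=G_{A\cap B}$ for \emph{all} finite $A,B\subseteq M$, whereas the zigzag property yields it only for $A,B\in\X^{G}$. The gap has to be bridged either by arguing that $\clg(x)=x$ on singletons forces $\clg$ to be trivial on arbitrary finite subsets (which is the case in all the subsequent applications to free-amalgamation and simple structures), or by relating $G_{A}$ to $G_{\clg(A)}$ for general finite $A$ and transferring the algebraic identity through closures.
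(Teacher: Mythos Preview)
Your verification of $(\diamond)$ via the $n$-zigzag property and Lemma \ref{lem-zig-used} is correct, and your argument for part \ref{min with closure} coincides with the paper's.

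For part \ref{min without closure} the paper takes a shorter route than your transfinite iteration. Rather than descending through a chain, it invokes Lemma \ref{intersection of standard topologies} \emph{once at the outset} to conclude that there is a unique minimal $G$-invariant $X$ with $\tau\subseteq\tau_{st}^{X}$ (namely the intersection of all such sets). A single application of Lemma \ref{jump lemma} then finishes: case \ref{case 2 intersection lemma} would produce $X'\subsetneq X$ with $\tau\subseteq\tau_{st}^{X'}$, contradicting minimality of $X$, so case \ref{case 1 intersection lemma} must hold, and triviality of $\clg$ gives $\tau=\tau_{st}^{X}$. Your iterative scheme reaches the same conclusion but with unnecessary overhead; both approaches ultimately rely on Lemma \ref{intersection of standard topologies}, so the limit-stage worry you raise applies equally to the paper's argument.

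On that worry: the paper simply reads the hypothesis ``$\clg(x)=x$ for any $x\in M$'' as ``$\clg$ is trivial'' (it says so verbatim in the proof), i.e.\ $\X^{G}=\fps{M}$. Under that reading the zigzag identity $(G_{A}G_{B})^{n+1}=G_{A\cap B}$ holds for \emph{all} finite $A,B$, and the hypothesis of Lemma \ref{intersection of standard topologies} is met. You are right that literally the statement only constrains singletons, and that this does not in general force $\clg(A)=A$ for larger $A$; so your first proposed bridge (that in the intended applications singleton-triviality coincides with full triviality) is exactly what is being used. Your second proposed bridge, passing through $G_{\clg(A)}$, would not work cleanly since $G_{A}\neq G_{\clg(A)}$ in general.
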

        \begin{proof}
        Let us show \ref{min with closure} first. Let $\tau$ be a group topology on $G$ coarser than $\tau_{st}$. By Lemma \ref{lem-zig-used} it is possible to apply \ref{jump lemma} with $\tau^{*}=\{\emptyset,G\}$. If the first alternative of \ref{jump lemma} holds, then by Lemma \ref{dealing with acl} either $\tau$ is not Hausdorff or $\tau=\tau_{st}$. Since by assumption the only invariant subsets of $M$ are $\emptyset$ and $M$, the second alternative
        implies that $\tau=\{\emptyset,G\}$.
        
        Let us now show \ref{min without closure}. Let $\tau$ be a group topology on $G$ coarser than $\tau_{st}$. By Lemma \ref{intersection of standard topologies} there exists some unique minimal $G$-invariant set $X$ such that $\tau\subseteq\tau_{st}^{X}$.
        Apply Lemma \ref{jump lemma} with $\tau^{*}=\{\emptyset,G\}$. The second alternative produces some
        $G$-invariant $X'\subsetneq X$ such that $\tau\subseteq\tau_{st}^{X'}$, in contradiction with the choice of $X$. Since we assume $\clg$ to be trivial, the first alternative implies $\tau=\tau_{st}^{X}$.
        \end{proof}

\section{Minimality and independence}
  \label{sec-ind-free-1based}
  \subsection{Independence}
    Throughout this section we work in the following setting: $\Omega$ is a set, $G$ is a permutation group of $\Omega$, $\cl{-}$ a $G$-equivariant closure operator on $\fps{\Omega}$ and $\X=\{\cl{A}\,|\,A\in\fps{\Omega}\}$ the associated family of closed sets. Our goal is to derive concrete applications from the results of the previous section to the case where $\Omega$ is the underlying set of a first order structure $\M$ and $G =\Aut{\M}$.

    \begin{defn}
    
    Given $\cl{-}$ and $\X$ as above and a ternary relation $\ind$ between members of $\fps{\Omega}$ we say that $(cl,\ind)$ (alternatively, $(\X,\ind)$)
    is a {\it compatible pair} if for all $A,B,C,D\in\fps{\Omega}$ the following properties are satisfied:
    \begin{itemize}
    \item (compatibility) $A\ind_{C}B$ if and only if $A\ind_{\cl{C}}B$ if and only if $\cl{AC}\ind_{C}\cl{BC}$.
    
    \item (invariance) If $g\in G$ and $A\ind_B C$ then $gA\ind_{gB} gC$.
    \item (weak monotonicity) If $A\ind_{B} CD$ or $AD\ind_{B}C$ then $A\ind_{B} C$.
    \item (anti-reflexivity) If $A\ind_{C}B$, then $A\cap B\subseteq \cl{C}$.
    \end{itemize}
    
    We write $A\ind B$ as an abbreviation of $A\ind_{\emptyset}B$.
    \end{defn}

    \begin{defn}
    \label{properties of independence} We define some additional properties for a compatible pair $(\X,\ind)$:
    \begin{itemize}
    \item (transitivity) If $A\ind_B C$ and $A\ind_{BC} D$, then $A\ind_B CD$.
    \item (symmetry) If $A\ind_B C$ then $C\ind_B A$.
    \item (existence) For any $A,B,C$ there is $g\in G_B$ such that $gA\ind_B C$.
    \item (independence) Suppose we are given $A,B_{1},B_{2},C_{1},C_{2}\in\ax$ such that $B_{1}\ind_{A}B_{2}$, $A\subseteq B_{i}$ and $C_{i}\ind_{A}B_{i}$ for $i=1,2$ and $C_{1}\orq_{A}C_{2}$. Then there exists $D\in\X$ such that
    $D\orq_{B_{i}} C_{i}$ for $i=1,2$ and $D\ind_{A}B_{1}B_{2}$.
    \item (stationarity) If $B\in\X$ and $A_i\ind_B C$ for $i=1,2$, then $A_{1}\orq_{B} A_{2}$ implies $A_{1}\orq_{BC} A_{2}$.
    
    \end{itemize}
    Additionally, we consider
    \begin{itemize}
    \item (freedom) $\X=\fps{\Omega}$ and moreover if $A\ind_CB$ and $C\cap AB\subseteq D\subseteq C$, then $A\ind_D B$.
    \item (one-basedness) $A\ind_{A\cap B} B$ for every $A,B\in \X$.
    \end{itemize}
    \end{defn}
    
    The one-basedness property admits the following generalization:
    \begin{defn}
    Given $k\geq 1$, we say that $(\X,\ind)$ satisfies {\it $k$-narrowness} if for any $A\in\fps{\Omega}$ and any $C,A_{0},A_{1},\dots, A_{k}$ in $\X$ the conditions
    \begin{itemize}
    \item $A_{i}\cap A_{i+1}=C$, for each $0\leq i\leq k-1$;
    \item  $A_{i+1}\ind_{A_{i}}A_{i-1}\dots A_{0}$, for each $1\leq i\leq k-1$;
    \end{itemize}
    imply that $A_{0}\ind_{C}A_{k}$ (notice that for $k=1$ we recover the one basedness property).
    \end{defn}

    \begin{lem}
    \label{getting zigzag} Let   $(\X,\ind)$ be a compatible pair that satisfies existence. Then
    \begin{enumerate}
    \item \label{free part} If it satisfies freedom or one-basedness, then for any $A,B\in\mathfrak{X}$  there is $A'\in\mathfrak{X}$ such that $A'\orq_{B}A$, $A'\cap A=A\cap B$ and $A\ind_{A\cap B}A'$;
    \item \label{narrowness part} If it satisfies transitivity, symmetry and $2m$-narrowness, then for any $A,B\in\mathfrak{X}$ there is $A'\in\mathfrak{X}$ such that an $(m,B)$-zigzag path from $A$ to $A'$ exists, $A'\cap A=A'\cap B$ and $A\ind_{A\cap B}A'$.
    \end{enumerate}
    \end{lem}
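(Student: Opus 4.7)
The plan is to produce, in both cases, a candidate $A_1$ by means of the existence axiom and then to read off the intersection identity from anti-reflexivity, the orbit equivalence from the construction, and the asserted independence from the extra assumed axiom (freedom or one-basedness in \ref{free part}, $2m$-narrowness in \ref{narrowness part}).

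For part \ref{free part}, I would apply existence to the triple $(A,B,A)$ to obtain $g\in G_B$ with $gA\ind_B A$, and set $A_1:=gA$; this lies in $\X$ by $G$-invariance and satisfies $A_1\orq_B A$ tautologically. The identity $A_1\cap A=A\cap B$ is immediate from anti-reflexivity (which gives $A_1\cap A\subseteq\cl{B}=B$) combined with the fact that $g$ pointwise fixes $B$ (so $A\cap B\subseteq gA\cap A$). For the final clause $A\ind_B A_1$ under one-basedness, I would argue that $A,A_1\in\X$ together with $A\cap A_1=A\cap B$ yields $A\ind_{A\cap B}A_1$ directly, which transfers to base $B$ via compatibility; under freedom, I would instead start from $gA\ind_B A$, apply the freedom axiom with $D:=A\cap B=B\cap(A_1 A)$ to shrink the base, and conclude via symmetry.

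For part \ref{narrowness part}, I would build the zigzag inductively. Set $A^{(0)}:=A$; given $A^{(i)}$, first fix a reference tuple $B_0^{(i)}$ with $A^{(i)}B_0^{(i)}\orq AB$ arranged to contain $A\cap B$, then invoke existence to produce $B^{(i)}:=h\,B_0^{(i)}$ for some $h\in G_{A^{(i)}}$ enforcing independence of $B^{(i)}$ from all previously constructed elements over $A^{(i)}$; analogously pick $A^{(i+1)}$ in the $G_{B^{(i)}}$-orbit of a reference candidate, independent from the past over $B^{(i)}$. Setting $A_1:=A^{(m)}$ yields an $(m,B)$-zigzag path from $A$ to $A_1$. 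Keeping $A\cap B$ pointwise fixed throughout secures the inclusions $A\cap B\subseteq A_1\cap A$ and $A\cap B\subseteq A_1\cap B$, while anti-reflexivity bounds both intersections above by $\cl{A\cap B}=A\cap B$, giving $A_1\cap A=A_1\cap B=A\cap B$. The independences obtained from the construction, combined with transitivity, then match the chain hypothesis of $2m$-narrowness for the length-$(2m+1)$ sequence $(A^{(0)},B^{(0)},A^{(1)},\ldots,A^{(m)})$ with common adjacent intersection $C=A\cap B$, whose conclusion is precisely $A\ind_{A\cap B}A_1$.

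The main obstacle lies in the bookkeeping for part \ref{narrowness part}: one must check that the pairwise independences secured at each inductive step do in fact collapse, via transitivity (and weak monotonicity on the right), into the exact shape $A_{2m} A_{2m-1}\cdots A_{i+1}\ind_{A_i}A_{i-1}\cdots A_0$ demanded by $2m$-narrowness, and that the orbit-type choices for the reference tuples $B_0^{(i)}$ can be consistently arranged to pin down the common intersection $A\cap B$ throughout. Both tasks are routine but require care in tracking over which base each independence lives as new tuples are appended on each side.
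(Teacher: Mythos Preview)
Your approach to part \ref{free part} is essentially the paper's: apply existence to get $A_1=gA$ with $g\in G_B$ and $A_1\ind_B A$, then extract $A_1\cap A=A\cap B$ from anti-reflexivity plus $g\restriction_B=\id$. From there the paper, under freedom, shrinks the base via the freedom axiom (using $B\cap(A_1\cup A)=A\cap B$) to reach $A_1\ind_{A\cap B}A$; under one-basedness this is immediate. Note that the paper actually concludes $A_1\ind_{A\cap B}A$, not the displayed $A\ind_B A_1$; the former is what is consumed by Lemma \ref{reachability of independence}, so the clause in the statement appears to be a slip.

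Your attempt to reach the literal $A\ind_B A_1$ has two real gaps. In the one-basedness branch you obtain $A\ind_{A\cap B}A_1$ and then invoke ``compatibility'' to pass to base $B$; but compatibility only allows replacing a base by its closure, and $A\cap B$ is already closed, so this step does nothing. In the freedom branch you appeal to symmetry, which is not among the hypotheses here. Both issues evaporate once you aim for $A_1\ind_{A\cap B}A$ as the paper does.

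For part \ref{narrowness part} the paper offers no argument, so there is nothing to compare. Your inductive scheme is reasonable, but your reason for the adjacent-intersection identity is wrong: anti-reflexivity bounds an intersection by the closure of the \emph{base} of the independence in play, and your independences live over $A^{(i)}$ or $B^{(i)}$, not over $A\cap B$. The equality $A^{(i)}\cap B^{(i)}=A\cap B$ (and the analogous one for consecutive pairs) is instead a direct set-theoretic consequence of choosing each new tuple via an element of $G$ fixing the previous one, hence fixing $A\cap B$: e.g.\ if $B^{(i)}=hB$ with $h\in G_{A^{(i)}}$ and $A^{(i)}=g A$ with $g$ already fixing $A\cap B$, then $A^{(i)}\cap B^{(i)}=h(A^{(i)}\cap B)=h g(A\cap B)=A\cap B$. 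With this correction and the transitivity bookkeeping you already flag, your outline for \ref{narrowness part} goes through.
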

    
    \begin{proof}
    Existence yields $A'\in\X$ such that $A'\orq_{B}A$ and $A'\ind_{B}A$. Anti-reflexivity implies that
    $A'\cap A\subseteq B$, i.e. $A'\cap A\subseteq A\cap B$. On the other hand $A'\orq_{B}A$ implies
    $A\cap B= A'\cap B$.
    
    If we assume the freedom axiom, then $A'\ind_{A\cap B}A$ follows from $A'\ind_{B}A$ and $B\cap (A'\cup A)=(B\cap A')\cup(B\cap A)=B\cap A$. Alternatively, the same conclusion follows directly from one-basedness.
    
    Let $C=A\cap B$. For \ref{narrowness part} construct sequences $B_{0}=B,B_{1},\dots B_{m-1}$ and $A_{0}=A,A_{1},\dots A_{m}$ as follows.
    Assuming we have already taken $(A_{i},B_{i})_{i=0}^{k}$, existence provides
    $A_{k+1}\cong_{B_{k}}A_{k}$ with $A_{k+1}\ind_{B_{k}}A_{0}B_{0}\dots A_{k}B_{k}$. If $k\leq m$ we we can use the same argument to choose $B_{k+1}\cong_{A_{k+1}}B_{k}$ with $B_{k+1}\ind_{A_{k}}A_{0}B_{0}\dots A_{k+1}$. It is clear that this yields an $(m,B)$-zigzag path from $A$ to $A_{m}$.
    
    By transitivity we have $A_{j}\ind_{B_{j-1}}A_{l}$ for any $0\leq l\leq j-1$ so that $A_{j}\cap A_{l}\subseteq A_{j}\cap B_{j-1}$. Since $A_{j}\cap B_{j-1}=C$ and $C\subset A_{j}\cap A_{l}$ by \ref{zizagobs} we conclude that $A_{j}\cap A_{l}=C$. Arguing in a similar manner one can show that $A_{j}\cap B_{l}=C$ for any $0\leq j\leq m$ and $0\leq l\leq m-1$. This establishes that the sequence $A_{0},B_{0}\dots B_{m-1}A_{m}$ satisfies the first property of the condition in the definition of $2m$-narrowness, while the second follows by transitivity and construction. If we let $A'=A_{m}$ we then get $A'\ind_{C}A$ and $A\ind_{C}A'$ by symmetry, while the sequence above is an $(m,B)$-zigzag path from $A$ to $A'$.
    \end{proof}
    
    \begin{lem}
    \label{reachability of independence}Let $(\X,\ind)$ be a compatible pair satisfying symmetry existence and transitivity and assume that for any $A,B\in\X$ there exists an $(m,B)$-zigzag path from $A$ to some $A_{1}$
    such that $A_{1}\ind_{A\cap B}A$. Then
    \begin{enumerate}
    \item \label{sym-sta} If stationarity holds, then $\X$ has the $2m$-zigzag property;
    \item \label{indep} If independence holds, then $\X$ has the $4m$-zigzag property.
    \end{enumerate}
    \end{lem}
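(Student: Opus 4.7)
The strategy is to combine the two zigzags supplied by the hypothesis, one starting at $A$ and one starting at $A'$, after aligning their endpoints through a suitable automorphism. Set $C:=A\cap B$ and fix $g\in G_{C}$ with $g(A)=A'$. Apply the hypothesis to $(A,B)$ to obtain an $(m,B)$-zigzag $A\to A_{1}$ with $A_{1}\ind_{C}A$. For the second zigzag I would apply the hypothesis to $(A',gB)$ rather than to $(A',B)$: because $A'\cdot gB=g(AB)\orq AB$ and $A'\cap gB=gC=C$, the resulting zigzag from $A'$ to some $A_{1}'$ has all of its reference pairs in the common $G$-orbit of $AB$, and it satisfies $A_{1}'\ind_{C}A'$. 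A preliminary argument using the existence axiom ensures that both $A_{1}$ and $A_{1}'$ may be chosen to contain $C$ pointwise, so that $\orq_{C}$-statements about them are meaningful.

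For part~\ref{sym-sta}, invariance applied to $g^{-1}\in G_{C}$ gives $g^{-1}(A_{1}')\ind_{C}A$, and both $A_{1}$ and $g^{-1}(A_{1}')$ lie in the $\orq_{C}$-orbit of $A$. Stationarity with base $C$ and side $A$ then upgrades $A_{1}\orq_{C}g^{-1}(A_{1}')$ to $A_{1}\orq_{CA}g^{-1}(A_{1}')$, yielding $\sigma\in G_{A}$ with $\sigma(A_{1})=g^{-1}(A_{1}')$. The composition $g\sigma\in G$ thus carries the pair $(A,A_{1})$ to $(A',A_{1}')$; applying $g\sigma$ to the first zigzag transports it to an $(m,B)$-zigzag $A'\to A_{1}'$ whose reference pairs still lie in the $G$-orbit of $AB$. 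Concatenating the first zigzag $A\to A_{1}$ with the reverse of this transported zigzag (a sequence from $A_{1}$ to $A'$ with all reference pairs $\orq AB$) produces the desired $(2m,B)$-zigzag from $A$ to $A'$.

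For part~\ref{indep} stationarity fails and the direct identification of $A_{1}$ and $A_{1}'$ is unavailable; I would replace it by an amalgamation built from the independence axiom. By existence pick $A^{\ast}\orq_{C}A$ with $A^{\ast}\ind_{C}(A\cup A')$; then $A\ind_{C}A^{\ast}$ and $A'\ind_{C}A^{\ast}$ follow by symmetry and weak monotonicity. Apply the hypothesis once more at $(A^{\ast},B)$ to obtain a zigzag endpoint $A_{1}^{\ast}$, and then apply the independence axiom with base $C$, extensions $A$ and $A^{\ast}$ and sides $A_{1}$ and a $\orq_{C}$-conjugate of $A_{1}^{\ast}$: this yields an amalgamated $D\in\X$ simultaneously $\orq$-related to $A_{1}$ over $A$ and to $A_{1}^{\ast}$ over $A^{\ast}$, through which an $(m,B)$-zigzag $A_{1}\to D$ can be realized by transporting the $A^{\ast}$-zigzag along the $G$-element delivered by the axiom. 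A symmetric invocation on the $A'$ side, sharing the same intermediate $A^{\ast}$, produces a bridge $D\to A_{1}'$ of length $m$. The composite route $A\to A_{1}\to D\to A_{1}'\to A'$ then assembles into a $(4m,B)$-zigzag. The principal obstacle, and the source of the factor-of-two inflation relative to part~\ref{sym-sta}, is verifying the $C_{1}\orq_{A}C_{2}$ premise of the independence axiom after the various translations---this is precisely what forces the extra pass of the hypothesis through $A^{\ast}$.
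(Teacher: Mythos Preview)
Your Part~\ref{sym-sta} argument has a concatenation error. After stationarity you obtain $\sigma\in G_{A}$ with $\sigma(A_{1})=g^{-1}(A_{1}')$, so $g\sigma$ carries $(A,A_{1})$ to $(A',A_{1}')$. Transporting the first zigzag by $g\sigma$ therefore yields an $(m,B)$-zigzag $A'\to A_{1}'$, whose reverse runs from $A_{1}'$ to $A'$, \emph{not} from $A_{1}$ to $A'$ as you write. Since $A_{1}\neq A_{1}'$ in general, the two $m$-paths $A\to A_{1}$ and $A_{1}'\to A'$ do not glue, and no further transport by $\sigma$, $g$ or their inverses closes the gap: every such move produces a path with endpoints among $A,A',g^{-1}(A_{1}'),A_{1}'$ that either loops or leaves the same mismatch.

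The underlying issue is that you applied stationarity to the wrong pair. You compared the two \emph{midpoints} $A_{1}$ and $g^{-1}(A_{1}')$ over the \emph{start} $A$, obtaining a map that moves $A$; what is needed is to compare the two \emph{endpoints} $A$ and $A'$ over a common \emph{midpoint}, obtaining a map that fixes that midpoint. The paper does exactly this: rather than producing a second $A_{1}'$, it replaces $A_{1}$ by some $A_{2}\orq_{A}A_{1}$ with $A_{2}\ind_{A}A'$ (existence), so that transitivity upgrades $A_{2}\ind_{C}A$ to $A_{2}\ind_{C}AA'$ and hence (with symmetry) both $A\ind_{C}A_{2}$ and $A'\ind_{C}A_{2}$. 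Now stationarity applied to $A,A'$ over $C$ with side $A_{2}$ gives $A\orq_{A_{2}}A'$, i.e.\ some $h\in G_{A_{2}}$ with $h(A)=A'$. Applying $h$ to the reversed zigzag $A_{2}\to A$ produces $A_{2}\to A'$, and the concatenation $A\to A_{2}\to A'$ is a genuine $(2m,B)$-zigzag.

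Your Part~\ref{indep} sketch inherits the same structural difficulty and adds an auxiliary $A^{\ast}$ and a third invocation of the hypothesis whose role is not clearly set up (for instance you do not control $A^{\ast}\cap B$, and the claimed $(m,B)$-zigzag $A_{1}\to D$ does not follow from $D\orq_{A}A_{1}$ alone). The paper is again more direct: after arranging via existence and transitivity that the transported midpoint $A_{1}'$ (with $A_{1}'A'\orq A_{1}A$) satisfies $A_{1}'\ind_{C}A_{1}$, a \emph{single} application of the independence axiom with base $C$, extensions $A_{1},A_{1}'$ and sides $A,A'$ produces $D$ with $D\orq_{A_{1}}A$ and $D\orq_{A_{1}'}A'$. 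The elements of $G_{A_{1}}$ and $G_{A_{1}'}$ witnessing these relations transport the basic $m$-zigzag and its $A'$-copy into the two middle legs, and the four pieces $A\to A_{1}\to D\to A_{1}'\to A'$ concatenate directly.
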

    \begin{proof}
    Let $A,A',B\in\X$ with  $A'\orq_{A\cap B}A$. Let $C:=A\cap B$. In both cases using the assumption we start by choosing $A_{1}\in\X$ for which there is an $m$-zigzag path from $A$ to $A_{1}$ and $A_{1}\ind_{C}A$.
    
    Consider case \ref{sym-sta} first.
    By extension there is $A_{2}$ such that $A_{2}\orq_{A}A_{1}$ and $A_{2}\ind_{A}A'A$.
    The first implies the existence of an $(m,B)$-zigzag path from $A$ to $A_{2}$. The second, together with $A_{2}\ind_{C}A$ implies $A_{2}\ind_{C}A'A$ by right transitivity. By weak monotonicity we get
    $A_{2}\ind_{C}A'$ and by symmetry $A\ind_{C}A_{2}$ and $A'\ind_{C}A_{2}$. Stationarity yields
    $A\orq_{A_2} A'$. Thus, there is also an $(m,B')$-zigzag path from $A_2$ to $A'$, where $A'B'\orq AB$ and combining both paths we get a $(2m,B)$-zigzag path from $A$ to $A'$.
    
    We move on to case \ref{indep}. By invariance and existence there is
    $A'_{1}$ such that $A'_{1}A'\orq A_{1}A$ (so that by invariance $A'_{1}\ind_{C}A'$) and $A'_{1}\ind_{A'}A'A_{1}$. Transitivity and monotonicity then imply $A'_{1}\ind_{C}A_{1}$.
    
    Independence applied to the tuple $C,A_{1},A'_{1},A,A'$ in place of the $A,B_{1},B_{2},C_{1},C_{2}$ of the definition implies the existence of some $D$ such that $DA_{1}\orq AA_{1}$ and $DA'_{1}\orq AA_{1}$. This witnesses the existence of a $(4m,B)$-zigzag path from $A$ to $A'$. Notice that symmetry is required in order to get $A'\ind_{C}A'_{1}$.
    \end{proof}

    \begin{thm*}[\ref{freedom theorem}]
    \textfreedomtheorem
    \end{thm*}
    
    \begin{proof}
    If we let $\X=\fps{M}$ where $M$ is the underlying set of $\M$ and $\ind=\ind^{fr}$, then part \ref{free part} of Lemma \ref{getting zigzag} and part \ref{sym-sta} of Lemma \ref{reachability of independence} apply to the pair $(\X,\ind)$. Together, they imply $\X$ has the $2$-zigzag property with respect to the action of $G$. The result then follows from an application of Proposition \ref{main minimality proposition}.
    \end{proof}

    \begin{thm*}[\ref{simple structures theorem}]
    \textsimpletheorem
    \end{thm*}
    \begin{proof}
    As  $cl$ we take the algebraic closure  $\acl$ and $\ind$ the forking independence. We claim part \ref{free part} of Lemma \ref{getting zigzag} and part \ref{indep} of Lemma \ref{reachability of independence} both apply to $(\X,\ind)$.
    
    The pair clearly satisfies invariance, weak monotonicity, transitivity and symmetry. Existence follows from the fact that $M$ is $\omega$-saturated, so it is left to check  one-basedness and independence in sense of Definition \ref{properties of independence}.
    
    Take $A,B\in\X$. The fact that the theory is one-based in the sense of simplicity theory and has elimination of hyperimaginaries implies $A\ind_{\acl^{eq}(A)\cap \acl^{eq}(B)}B$. The relation $A\ind_{A\cap B}B$ follows then from weak elimination of imaginaries.
    
    Lastly, elimination of hyperimaginaries and weak elimination of imaginaries imply that the type of a tuple over a finite real closed set determines its Lascar strong type over that same set. Hence,  Kim and Pillay's independence theorem \cite{kim1998stability} (see also Chapter 2.3 and Theorem 2.3.1 in \cite{kim1997simple})
    translates into abstract independence (amalgamation of types) for  $(\acl,\ind)$.
    \end{proof}
    
    It is known that simple one-based $\omega$-categorical structures have elimination of hyperimaginaries. This follows from the fact that $\omega$-categorical theories are {\it small} and simple one-based theories admit {\it finite coding}. See section 6 and Proposition 6.1.21. in \cite{WagnerSimple} for definitions and details.
    For stable theories the notion of being $k$-ample (for some $k\geq 1$) generalizes the negation of one-basedness.
    See \cite{evans2003ample} for details. In the absence of algebraic closure being not $k$-ample translates into $(\acl,\ind^{f})$ being $k$-narrow where $\ind^{f}$ is the forking independence. From an argument similar to the one in the two theorems above we can deduce the following result:
    \newcommand{\stablestructurestheoremtext}[0]{
    Let $\M$ be a countable $\omega$-saturated stable structure such that $Th(\M)$ has trivial  algebraic closure, weak elimination of imaginaries and is not $k$-ample for some $k\geq 1$. Then any group topology on $G=\Aut{\M}$ coaraser than $\tau_{st}$ is of the form $\tau_{st}^{X}$ for some $G$-invariant $X\subseteq M$.
    }
    \begin{thm}
    \stablestructurestheoremtext
    \end{thm}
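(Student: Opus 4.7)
The proof strategy mirrors that of Theorems \ref{freedom theorem} and \ref{simple structures theorem}. Taking $\X=\{\acl(A):A\in\fps{M}\}$ and $\ind=\ind^{f}$ (forking independence), the plan is to verify that $(\X,\ind)$ is a compatible pair satisfying enough axioms to invoke the combination of part \ref{narrowness part} of Lemma \ref{getting zigzag}, part \ref{sym-sta} of Lemma \ref{reachability of independence}, and part \ref{min without closure} of Proposition \ref{main minimality proposition}.

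First I would check that $(\X,\ind^{f})$ satisfies invariance, weak monotonicity, compatibility, anti-reflexivity, transitivity and symmetry (standard properties of forking in stable theories), together with existence (from $\omega$-saturation) and stationarity over $\acl$-closed sets (using weak elimination of imaginaries, so that the canonical base of a stationary type can be taken to be interdefinable with a real algebraically closed set, and thus coded by the type itself).

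The new ingredient relative to Theorem \ref{simple structures theorem} is that not being $k$-ample is used in place of one-basedness. Following \cite{evans2003ample}, under trivial algebraic closure this assumption translates into $k$-narrowness for $(\acl,\ind^{f})$. A short induction using transitivity, symmetry and weak monotonicity shows that $k$-narrowness propagates to $k'$-narrowness for any $k'\geq k$: given the hypotheses of $(k+1)$-narrowness for $A_{0},\dots,A_{k+1}$, the $i=k$ clause yields $A_{0}\ind_{A_{k}}A_{k+1}$, the inductive hypothesis applied to $A_{0},\dots,A_{k}$ yields $A_{0}\ind_{C}A_{k}$, and transitivity together with weak monotonicity give $A_{0}\ind_{C}A_{k+1}$. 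In particular, choosing $m$ with $2m\geq k$ we obtain $2m$-narrowness.

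Now part \ref{narrowness part} of Lemma \ref{getting zigzag} produces, for any $A,B\in\X$, some $A_{1}\in\X$ reachable from $A$ via an $(m,B)$-zigzag path and satisfying $A\ind_{A\cap B}A_{1}$ and $A_{1}\cap A=A_{1}\cap B$. Part \ref{sym-sta} of Lemma \ref{reachability of independence} then applies (using stationarity) to conclude that $\X$ has the $2m$-zigzag property with respect to the action of $G$. Since triviality of the algebraic closure means $\clg(x)=x$ for every $x\in M$, part \ref{min without closure} of Proposition \ref{main minimality proposition} yields the desired classification. The main obstacle I expect is verifying stationarity in the abstract sense demanded here: one must pair the classical fact that Lascar strong types over real algebraically closed sets are stationary in stable theories with weak elimination of imaginaries to identify these strong types with orbits under the pointwise stabilizer, so that the group-theoretic relation $\orq_{B}$ correctly corresponds to type-equality over $B$.
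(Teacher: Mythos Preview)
Your proposal is correct and follows precisely the route the paper sketches: not $k$-ample plus trivial $\acl$ yields $k$-narrowness, which feeds into part \ref{narrowness part} of Lemma \ref{getting zigzag} and then part \ref{sym-sta} of Lemma \ref{reachability of independence} (stationarity being available in the stable case via weak elimination of imaginaries), and finally Proposition \ref{main minimality proposition}\ref{min without closure}. Your explicit induction showing that $k$-narrowness propagates to $2m$-narrowness for any $2m\geq k$ is a detail the paper leaves implicit but which is indeed needed to match the hypothesis of Lemma \ref{getting zigzag}\ref{narrowness part}.
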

  \subsection*{Example: total minimality is not preserved under taking open finite index subgroups}
    
    \renewcommand{\L}[0]{\mathfrak L}
    Consider the relational language $\L_{1}=\left\{E^{(2)}, P^{(1)}\right\}$ and let
    $\mathcal{K}_{1}$ be the class of all finite $\L_{1}$-structures in which $E$ is interpreted as the edge relation of a bipartite graph with with edges only between the domain of the unary predicate $P$ and its complement.
    Consider also the class $\mathcal{K}_{2}$ in the language
    $\L_{2}=\{E^{(2)},F^{(2)}\}$ consisting of all finite $\L$-structures in which $F$ is interpreted as an equivalence relation with at most $2$ classes and $E$ as the edge relation of a bipartite graph with edges only among vertices that belong to distinct $F$-classes.
    
    Let $\M _{i}=\fl{\mathcal{K}_{i}}$ and $G_{i}=\Aut{\M _{i}}$. Clearly $\M _{2}$ is a reduct of $\M _{1}$, so that $G_{1}\lhd G_{2}$ and in fact $[G_{2}:G_{1}]=2$. It is easy to check that $\mathcal{K}_{1}$ has free amalgamation and then by
    Theorem \ref{freedom theorem} there are exactly two group topologies on $G_{1}$ strictly coarser than $\tau_{st}$, namely $\tau_{st}^{P(\M_{1})}$ and $\tau_{ts}^{\neg P(\M_{1})}$.
    Notice that both are Hausdorff, since no automorphism of $\M _{1}$ can fix $P(\M_{1})$ or its complement (given any two points $a,b$, there exists $c$ in $P$ (resp $\neg P$) such that $\typ(c,a)\neq \typ(c,b)$) so $(G_{1},\tau_{st})$ is not minimal.
    
    In this case we have an additional non-Hausdorff
    group topology, $\tau^*=\{\emptyset,G_{1}\}$.
    Apply Lemma \ref{jump lemma} to conclude that any group topology on $G_1$ strictly contained in  $\tau_{st}$ is contained in $\tau^{*}$.
    
    On the other hand, it follows from Theorem \ref{simple structures theorem} that $(G_{2},\tau_{st})$ is minimal.
  \subsection{Simple non-modular Hrushovski structures} In Subsection \ref{sec-Hrush-new} we discussed in more detail some instances of the Hrushovski construction, in particular the  $\omega$-categorical version (see section 5.2. in \cite{EGT} for more). Here is a brief reminder of the setting:
    
    Choosing an unbounded convex function $f$ which is ``good'' enough, one can consider $\mathcal C^f_\eta$, a subclass of $\mathcal C _\eta$, with the free amalgamation property where the limit structure $\M^f_\eta$ is $\omega$-categorical and such that $\Aut{\M^f_\eta}$ acts  transitively on $M^f_\eta$ (underlying set of $\M^f_\eta$).

    It is shown in Lemma 5.7 in \cite{EGT} that there is an independence relation defined for the class of $\leqslant$-closed subsets of $\M^f_\eta$ that satisfy all the properties of part 1. in Lemma \ref{reachability of independence}. Then using Proposition  \ref{main minimality proposition} we conclude the following.
    \begin{cor}
    Let $\M^f_\eta$ be an $\omega$-categorical Hrushovski generic structure such that $G:=\Aut{\M^f_\eta}$ acts transitively on $M^f_\eta$. Then $(G,\tau_{st})$ is a minimal topological group.
    \end{cor}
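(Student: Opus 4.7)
The plan is to apply Proposition \ref{main minimality proposition} part \ref{min with closure} to the closure operator and the independence relation naturally associated with the Hrushovski construction. Specifically, I would take $\cl{-}$ to be the $\leqslant_d$-closure (which, thanks to $\omega$-categoricity of $\M^f_\eta$, coincides with $\clg$ on finite sets and is therefore locally finite), and let $\X$ be the family of finite $\leqslant_d$-closed subsets of $M^f_\eta$. For the independence relation I would use the ternary relation $\ind$ on pairs from $\X$ provided by Lemma 5.7 of \cite{EGT}, which is defined combinatorially in terms of the predimension $\delta$ and the free amalgamation in $\mathcal{C}^f_\eta$.

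The first step is to check that the pair $(\X,\ind)$ is compatible in the sense of Definition \ref{properties of independence}: invariance, weak monotonicity, compatibility with $\cl{-}$, and anti-reflexivity all follow immediately from the way $\ind$ is defined in terms of $\leqslant_d$-free amalgams. Next, I would read off symmetry, existence, transitivity, and stationarity from Lemma 5.7 of \cite{EGT}. The $\leqslant_d$-free amalgamation property furthermore gives a one-basedness flavour for $(\X,\ind)$ strong enough so that part \ref{free part} of Lemma \ref{getting zigzag} applies: for every $A,B\in\X$ one obtains $A_1\in\X$ with $A_1\orq_B A$, $A_1\cap A=A\cap B$, and $A\ind_{A\cap B}A_1$. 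This in particular realises a $(1,B)$-zigzag path from $A$ to $A_1$.

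Feeding this into part \ref{sym-sta} of Lemma \ref{reachability of independence} (which needs precisely symmetry, existence, transitivity, and stationarity) then shows that $\X$ has the $2$-zigzag property with respect to the action of $G$. Since the action of $G$ on $M^f_\eta$ is transitive by hypothesis, Proposition \ref{main minimality proposition} part \ref{min with closure} applies and yields that $(G,\tau_{st})$ is minimal.

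The main technical obstacle is the verification of stationarity for $\ind$, which in the non-modular Hrushovski setting is not automatic and requires a careful amalgamation argument inside $\mathcal{C}^f_\eta$; however, this is precisely the content of Lemma 5.7 of \cite{EGT}, which I would invoke as a black box. A minor but genuine secondary point is checking that the $\leqslant_d$-closure really does coincide with $\clg$ on finite subsets of $\M^f_\eta$, so that Proposition \ref{main minimality proposition} may be applied with this $\cl{-}$; this is standard in the $\omega$-categorical Hrushovski setup and follows from the growth condition on the good function $f$.
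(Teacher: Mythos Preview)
Your approach is essentially the paper's: invoke Lemma 5.7 of \cite{EGT} for a stationary independence relation on the family of $\leqslant_d$-closed finite sets, apply part \ref{sym-sta} of Lemma \ref{reachability of independence} to obtain the zigzag property, and finish with Proposition \ref{main minimality proposition} part \ref{min with closure}. The paper's argument is in fact terser than yours---it simply asserts that Lemma 5.7 of \cite{EGT} supplies \emph{all} hypotheses of Lemma \ref{reachability of independence} part 1, without routing explicitly through Lemma \ref{getting zigzag}.

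One caveat: your phrase ``one-basedness flavour'' at the step where you invoke Lemma \ref{getting zigzag} part \ref{free part} is doing real work and is potentially misleading. The subsection is explicitly about \emph{non-modular} Hrushovski structures, so model-theoretic one-basedness fails by design, and it is not clear that the abstract one-basedness of Definition \ref{properties of independence} holds for the natural $\ind$ either. What you actually need is only the \emph{conclusion} of Lemma \ref{getting zigzag}---the existence of $A_1\orq_B A$ with $A_1\ind_{A\cap B}A$---and this (together with stationarity, existence, symmetry, transitivity) is precisely what the paper takes from \cite{EGT}. So your outline is correct, but the justification for that intermediate step should point to \cite{EGT} directly rather than to one-basedness.
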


\section{Generalized Urysohn spaces}
  \label{subsec-setting-Ury}
  We start by recalling some notions from \cite{conant2015model}.
  A \emph{distance magma} $\rr=(R,\leq,\oplus,0)$ is a set $R$ endowed with a linear order $\leq$ and an operation $\oplus$ such that the following axioms are satisfied:
  \begin{itemize}
  \item $\forall s\, \ s\oplus 0=s$;
  \item $\forall s, r\,\  r\leq r\oplus s $;
  \item $\forall s, t\,\ s\oplus t=t\oplus s$;
  \item $\forall s, s', t\ \ s\leq s'\rightarrow s\oplus t\leq s'\oplus t$.
  \end{itemize}
  When referring to a monoid $\rr$, unless anything to the contrary is said, it will be implicit in the notation that $R$ is its underlying set, and so forth.
  We say that $\rr$ is a \emph{distance monoid} if additionally $\oplus$ satisfies associativity, i.e.:
  \begin{itemize}
  \item $\forall r,s,t\,\ r\oplus (s\oplus t)=(r\op s)\oplus t$.
  \end{itemize}

  Given some additively closed subset $S$ of some ordered abelian group $(\Lambda,+,\leq)$ and $b\in\{S_{>0},\infty\}$ the structure $\mathcal{S}_{b}=\{\{r\in S\,|\,0\leq r\leq b\},0,\leq,+_{b}\}$ is a distance monoid, where $x+_{b}y=\min\{b,x+y\}$ for $b\in S$ and $x+_{\infty}y=x+y$.
  We write $\mathcal{S}$ for $\mathcal{S}_{\infty}$ and $\mathcal{Q}_{b}=\mathcal{S}_{b}$ in case $S=\mathbb{Q}^{\geq 0}$.
  We will refer to any distance monoid  $\rr$ of the form $\mathcal{S}_{b}$ as \emph{basic}.
  If additionally $S$ is a subgroup of $\Lambda$
  with no minimal element
  then we will say $\rr$ is \emph{standard}.
  When talking about a standard distance monoid we may use the symbols $+$ (as opposed to $\oplus$) and $-$ to refer to the operations in the ambient group $\Lambda$ without explicitly referencing $\Lambda$.
  Notice that in the case of basic archimedean distance monoids we can always assume  $\Lambda=\R$.
  
  Given $m\in\mathbb{N}$ and $r\in R$ we will write $m\cdot r$ for the $\oplus$ addition of $r$ with itself $m$ times.
  Given two elements $r,s\in R$, we write $r\sim s$ if there exists some positive integer $n$ such that $n\cdot r\geq s$ and $n\cdot s\geq r$. We refer to the $\sim$-class $[r]$ of $r$ as its {\it archimedean class}. A distance monoid with a single archimedean class of non-zero elements will be called \emph{archimedean}. We write $[r]<[t]$ if $r'<t'$ for all $r'\sim r$ and $t'\sim t$.  \\
  
  Fix a distance magma $\rr:=(R,\leq,\oplus)$. An $\rr$-metric space $(X,d)$ consists of a set $X$ together with  a  map $d:X^{2}\to R$ such that for all $x,y,z\in X$:
  \begin{enumerate}
  \item $d(x,y)=0\leftrightarrow x=y$;
  \item $d(x,y)=d(y,x)$;
  \item $d(x,z)\leq d(x,y)\oplus d(y,z)$.
  \end{enumerate}
  Notice that if $(R,\leq)$ is a substructure of $(\R^{\geq 0},\leq)$ and $r\oplus s\leq r+s$ for all $r,s\in R$, then an $\rr$-metric spaces are just a particular class of metric spaces. In particular, this holds for standard archimedean distance monoids.
  
  An isometric embedding of an $\rr$-metric spaces $(X,d)$ into another $(X',d')$ is a map
  $f:(X,d)\to(X,d')$ such that $d'(f(x),f(y))=d(x,y)$, for each $x,y\in X$. A surjective isometric embedding is called an isometry. Given an $\rr$-metric space $(X,d)$, we let $\Isom(X,d)$ stand for the group of isometries from $(X,d)$ to itself. We will use the symbol $\cong$ to denote the existence of an isometry between two tuples in $\rr$-metric spaces.
  
  In the same spirit, given finite tuples $A=(a_{i})_{i=1}^{k}$ and $A'=(a'_{i})_{i=1}^{k}$ inside an $\rr$-metric space we will write $A\cong_{B} A'$ if there is a partial isometry fixing $B$ and sending each $a_{i}$ to $a'_{i}$, that is, if for any $1\leq i\leq k$ and $b\in B$ we have $d(a_{i},b)=d(a_{i}',b)$ and $d(a_{i},a_{j})=d(a'_{i},a'_{j})$ for distinct $i$ and $j$.\\
  
  We will say that an $\rr$-metric space $\U$ is an $\rr$-{\it Urysohn space} if it satisfies:
  \begin{itemize}
  \item [(U)]\label{universality} Any finite $\rr$-metric space embeds in $\U$; and,
  \item [(H)]\label{ultrahomogeneity} Any isometry between finite subspaces of $\U$ extends to an isometry of $\U$.
  \end{itemize}
  The following strengthening of (U) is implied by the conjunction of (U) and (H) and equivalent to it under the assumption that $\U$ is countable.
  \begin{itemize}
  \item [(EP)]\label{amalgamation property} For any finite $\rr$-metric space $B$ and $A\subseteq B$ any isometric embedding
  $h:A\to\U$ extends to some $\bar{h}:B\to\U$.
  \end{itemize}
  
  Assume $(A,d_{A})$ and $(B,d_{B})$ are two finite $\rr$-metric spaces where $C:=A\cap B\neq \emptyset$ and let
  $D$ be the disjoint union of $A$ and $B$ over $C$. Let
  $\bar{d}:D^{2}\to R$ be given as follows:
  \begin{itemize}
  \item $\bar{d}$ restricts to $d_{A}$ and $d_{B}$ on $A\times A$ and $B\times B$; respectively,
  \item $d(a,b)=\min\{d(a,c)\oplus d(c,b)\,|\,c\in C\}$ for any $a\in A\setminus C$ and any $b\in B\setminus C$.
  \end{itemize}
  
  It can be shown that if  $\rr$ is a distance monoid, then the $(D,\bar{d})$ above is itself an $\rr$-metric space, which we will denote as $A\otimes_{C}B$.
  
  This implies that the class $\mathcal{K}$ of all finite $\rr$-metric spaces has the joint embedding and amalgamation properties. See 2.7 in \cite{conant2015model} for a more precise result (here we are only interested in $S=R$).
  Therefore if $\rr$ is countable, then $\mathcal{K}$ determines a unique countable Fra\"iss\'e limit structure $\U_\rr=\fl{\mathcal{K}}$. This is a countable $\rr$-metric space satisfying property (EP) above and thus an $\rr$-Urysohn space (see Theorem 2.7.7 in  \cite{conant2015model}). An object satisfying the two properties above might exist even if $\rr$ is not countable. The classical Urysohn space and Urysohn sphere are examples of this for $\rr=(\R^{\geq 0},0,\leq,+)$ and $\rr=([0,1],0,\leq,+_{1})$ respectively.
  
  Given finite sets $A,B$ of an $\rr$-metric space $(X,d)$ we define
  $\diam {A}:=\max\{d(a,a')\,|\,a,a'\in A\}$ and
  $d(A,B):=\min \{d(a,b)\,|\, a\in A,b\in B\}$.
  
  Given finite subsets $A,B,C$ of $X$ such that $C\subset A\cap B$ we say that $A\ind_{C}B$ if the subspace $A\cup B$ of is isomorphic to $A\otimes_{C}B$. We generalize this notation to the case in which $C$ is not a common subsets of $A$ and $B$ by letting $A\ind_{C}B$ if and only if $AC\ind_{C}BC$.

\section{Isometry groups of archimedean Urysohn spaces}
  
  The goal of this section is to prove Theorem \ref{Urysohn thm 1} of the introduction. We start with three preliminary lemmas in the following general setting: $\rr=(R,0,\leq,\oplus)$ is a distance monoid and $\U$ an $\rr$-Urysohn space.
  
  \begin{lem}\label{equal distance lemma} 
  Suppose $A$ and $B$ are finite subsets of $\U$ and $r\in R$ such that  $\diam{A}\leq r\leq 2\cdot d(A,B)$. Then there is $A'\subseteq \U$ such that $A'\cong_B A$ and $d(a,a')=r$ for all $a\in A$ and $a'\in A'$.
  \end{lem}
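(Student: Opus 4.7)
The plan is to realize the desired $A'$ as the image, under some embedding into $\U$ extending $\id_{A\cup B}$, of a ``ghost copy'' $A''$ of $A$ glued to $A\cup B$ via an abstract $\rr$-metric that sets all cross distances between $A$ and $A''$ equal to $r$. More precisely, let $A''$ be a set disjoint from $A\cup B$ together with a bijection $a \mapsto a''$ from $A$ to $A''$, and put $X := A \cup B \cup A''$. Define $\bar d : X^2 \to R$ by taking $\bar d$ to agree with $d$ on $A \cup B$; declaring $\bar d(a_0'', a_1'') := d(a_0, a_1)$ for $a_0, a_1 \in A$ and $\bar d(a_0'', b) := d(a_0, b)$ for $a_0\in A$, $b\in B$ (so that $A'' \cup B$ is an isometric copy of $A\cup B$); and setting $\bar d(a, a_0'') := r$ for every $(a, a_0)\in A\times A$.

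The main step is to verify that $(X, \bar d)$ is an $\rr$-metric space. Triples entirely inside $A\cup B$ or inside $A'' \cup B$ automatically satisfy the triangle inequality, so only mixed triples require attention. The crucial case is a triple $\{a, a_0'', b\}$ with $a \in A$, $a_0'' \in A''$ and $b\in B$: the inequality $r \leq d(a, b) \oplus d(a_0, b)$ follows from $2\cdot d(A,B)\geq r$ and the monotonicity of $\oplus$, while $d(a,b) \leq r \oplus d(a_0, b)$ and $d(a_0, b) \leq r \oplus d(a,b)$ reduce to the triangle inequality in $\U$ together with $d(a, a_0) \leq \diam{A} \leq r$. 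The remaining mixed triples involve two points of $A$ (resp.\ $A''$) and one point of $A''$ (resp.\ $A$); there the relevant bounds reduce to $\diam{A}\leq r$ and the monotonicity of $\oplus$, in particular using $r\leq r\oplus s$ for any $s\in R$.

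Once $(X, \bar d)$ is known to be an $\rr$-metric space, property (EP) of the $\rr$-Urysohn space $\U$ produces an isometric embedding $h: X \hookrightarrow \U$ extending the identity on $A \cup B$. Setting $A' := h(A'')$ then gives $A' \cong_B A$ via the composite $a \mapsto h(a'')$ and $d(a, a') = r$ for every $(a, a')\in A \times A'$ by construction. The only real obstacle is the triangle-inequality verification in the mixed case involving a point of $B$, and the two hypotheses $\diam{A}\leq r$ and $r \leq 2\cdot d(A,B)$ are tailored precisely to make both directions of that check go through.
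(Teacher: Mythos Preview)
Your proof is correct and follows essentially the same route as the paper: build an abstract $\rr$-metric space by gluing a second copy of $A$ to $A\cup B$ with all cross-distances set to $r$, verify the triangle inequality using the two hypotheses, and invoke (EP) to realize it in $\U$. Your verification is in fact slightly more thorough than the paper's, since you explicitly check the triples with two points in $A$ and one in $A''$ (or vice versa), which the paper leaves implicit.
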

  
  \begin{proof}
  Consider the set $D=A'\coprod_{B} A''$ which is the amalgamated union of two copies $A',A''$ of $A$ over $B$.
  We define an $R$-valued distance function on $D$ as follows. On $A'B$ and $A''B$ the distance between two points equals the distance between the corresponding pair in $\U$, while we set  $d(a',a'')=r$ for any $a'\in A'$ and $a''\in A''$. In order to show that the resulting function satisfies the triangle inequality it suffices to check triples $\{u,v,w\}$ with $u\in A'$, $v\in B$ and $w\in A''$. We have $d(u,w)=r\leq 2\cdot d(A,B)\leq d(u,v)\oplus d(v,w)$. Without loss of generality assume $d(u,v)\leq d(v,w)$. In that case  $d(v,w)\leq \diam{A}\oplus d(v,u)\leq r\oplus d(u,v)=d(u,w)\oplus d(u,v)$. Then the result follows from (EP).
  \end{proof}

  \begin{lem}
  \label{new distancing lemma}
  Suppose we are given finite $A,B,C\subseteq \U$ with $A,B\neq\emptyset$ and $d(A,B)\neq 0$. Then for each $n\in \mathbb N$ there is $g\in G_{B}(G_{A}G_{B})^{n}$ such that $d(C,g(A))\geq (2n+1)\cdot d(A,B)$.
  \end{lem}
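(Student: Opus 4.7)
The plan is to build iteratively a zigzag chain of subsets $A = A^{(0)}, A^{(1)}, \ldots, A^{(2n+1)} \subset \U$ with $A^{(2i+1)} \cong_{B} A^{(2i)}$ and $A^{(2i+2)} \cong_{A} A^{(2i+1)}$, and then appeal to ultrahomogeneity to realize the chain as an element $g \in G_{B}(G_{A}G_{B})^{n}$. Explicitly, ultrahomogeneity yields isometries $\phi_{i}$ of $\U$ with $\phi_{i}(A^{(i-1)}) = A^{(i)}$ and $\phi_{i} \in G_{B}$ for odd $i$, $\phi_{i} \in G_{A}$ for even $i$; setting $g := \phi_{2n+1}\phi_{2n}\cdots\phi_{1}$ and grouping consecutive factors as $g = \phi_{2n+1}(\phi_{2n}\phi_{2n-1})\cdots(\phi_{2}\phi_{1})$ gives $g \in G_{B}(G_{A}G_{B})^{n}$ with $gA = A^{(2n+1)}$.

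The choice at each step is dictated by free amalgamation: at step $k$, letting $T_{k} = B$ if $k$ is even and $T_{k} = A$ if $k$ is odd, extend the already placed finite substructure $X_{k} := A\cup B\cup C\cup A^{(1)}\cup\cdots\cup A^{(k)}$ by a new copy $A^{(k+1)}$ of $A^{(k)}$ isometric over $T_{k}$, prescribing
\[
d(a^{(k+1)}, x) := \min_{t\in T_{k}}\big(d(a^{(k)}, t)\oplus d(t, x)\big)
\]
for each $a^{(k+1)} \in A^{(k+1)}$ (with its corresponding $a^{(k)} \in A^{(k)}$) and each $x \in X_{k}\setminus T_{k}$. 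This is precisely the free amalgamation of $X_{k}$ and $A^{(k+1)}T_{k}$ over $T_{k}$, so it is an $\rr$-metric space by the discussion in Section~\ref{subsec-setting-Ury}, and by (EP) it can be realized inside $\U$.

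Unraveling the recursion $2n+1$ times, for any $c\in C$ and $a^{(2n+1)}\in A^{(2n+1)}$ corresponding to $a\in A$ under the composed isomorphisms, one obtains
\[
d(a^{(2n+1)}, c) = \min\big[d(a, b_{n})\oplus d(b_{n}, a_{n})\oplus d(a_{n}, b_{n-1})\oplus \cdots \oplus d(a_{1}, b_{0})\oplus d(b_{0}, c)\big],
\]
where the minimum ranges over $b_{0},\ldots,b_{n}\in B$ and $a_{1},\ldots,a_{n}\in A$. Of the $2n+2$ summands, the $2n+1$ that pair an element of $A$ with one of $B$ are each at least $d(A,B)$, while $d(b_{0},c)\geq 0$, yielding $d(C, gA) \geq (2n+1)d(A,B)$ as desired. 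The only delicate point will be checking at each iteration that the prescribed distances satisfy the full set of triangle inequalities (not merely those passing through $T_{k}$), but this reduces to the standard verification that a free amalgamation of $\rr$-metric spaces over a common subspace is again an $\rr$-metric space.
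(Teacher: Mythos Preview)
Your proof is correct and follows essentially the same strategy as the paper's: build a zigzag of free amalgams alternating over $B$ and $A$, then read off both the group element and the distance bound. The only real difference is cosmetic. The paper moves \emph{both} sets, producing sequences $A_{0},B_{0},A_{1},B_{1},\dots$ with $B_{i+1}\cong_{A_{i}}B_{i}$ and $A_{i+1}\cong_{B_{i+1}}A_{i}$, each independent from everything previously placed, and then invokes Lemma~\ref{lem-zig-used} to extract $g\in G_{B}(G_{A}G_{B})^{n}$; the distance bound is argued inductively via $d(c,B_{i+1})\geq d(c,A_{i})\oplus d(A,B)$ and $d(c,A_{i+1})\geq d(c,B_{i+1})\oplus d(A,B)$. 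You instead keep $A$ and $B$ fixed throughout and only move copies of $A$, constructing the factors $\phi_{i}$ directly and unrolling the free-amalgam recursion into an explicit path formula. Your version is arguably tidier: it avoids the bookkeeping of Lemma~\ref{lem-zig-used} and makes the count of $2n+1$ summands bounded below by $d(A,B)$ completely transparent. Both rely on exactly the same fact (free amalgams of $\rr$-metric spaces are $\rr$-metric spaces) and yield the same conclusion.
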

  \begin{proof}
  Take $A'\cong_{B} A$ with $A'\ind_{B}C$. Construct a sequence $A_{i},B_{i}$, for $i\geq 0$ as follows. We start by taking $A_{0}=A'$ and $B_{0}=B$. For any $0\leq i<n$ let $C_{i}=(CB_{j}A_{j})_{j\leq i}$ and take $B_{i+1}\cong_{A_{i}}B_{i}$ with $B_{i+1}\ind_{A_{i}}C_{i}$.
  Then, take $D_{i}=(CB_{i+1}B_{j}A_{j})_{j\leq i}$ and let $A_{i+1}\cong_{B_{i+1}}A_{i}$ with $A_{i+1}\ind_{B_{i+1}}D_{i}$.
  The independence $A'\ind_{B}C$ implies $d(A',C)\geq d(A,B)$.
  From Lemma \ref{lem-zig-used} we know $A_{n}$ is of the form $g(A)$ for some $g\in G_{B}(G_{A}G_{B})^{n}$.
  Since by construction $A_{i}B_{i+1}\cong A_{i}B_{i}\cong AB$, independence
  implies that for any
  $c\in C_{i}$ we have $d(c,B_{i+1})\geq d(c,A_{i})\oplus d(A,B)$. Similarly, for any $c\in D_{i}$ we have
  $d(c,A_{i+1})\geq d(c,B_{i+1})\oplus d(A,B)$. The result follows by an easy induction argument.
  \end{proof}

  \begin{lem}
  \label{separation lemma}
  Let $A=\bigcup_{i=1}^{k}A_{i}\subset \U$ be a finite set and $r\in R\setminus\{0\}$. Assume $B_i\subset \U$ is a finite set such that $d(A_i,B_i)\geq r$ for all $1\leq i\leq k$. Then there is a finite $C\subset \U$ such that $d(A,C)\geq r$ and $G_C\subset (\bigcup_{i=2}^{k} G_{B_i})^{2k-1}$. More precisely $C$ is the translate of $B_1$ by an element of $G_{B_2}G_{B_3}\dots G_{B_k}$.
  \end{lem}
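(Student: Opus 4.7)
The plan is to proceed by induction on $k$. In the base case $k=1$ I can take $C=B_1$ and both conclusions hold trivially: $d(A,C)=d(A_1,B_1)\geq r$, and the single conjugation $G_C=G_{B_1}=(\bigcup_i G_{B_i})^1$ matches the bound (with the empty product of $g_i$'s being the identity).

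For the inductive step I would apply the hypothesis to the shorter families $A_1,\dots,A_{k-1}$ and $B_1,\dots,B_{k-1}$ to obtain $C'\subset\U$ with $d(\bigcup_{j<k}A_j,C')\geq r$ and $C'=h'B_1$ for some $h'\in G_{B_2}\cdots G_{B_{k-1}}$. It then suffices to find $g_k\in G_{B_k}$ such that $C:=g_kC'$ additionally satisfies $d(A_k,C)\geq r$ without spoiling $d(A_j,C)\geq r$ for $j<k$, since putting $h:=g_kh'\in G_{B_2}\cdots G_{B_k}$ gives $C=hB_1$, and $G_C=hG_{B_1}h^{-1}=(g_k\cdots g_2)G_{B_1}(g_2^{-1}\cdots g_k^{-1})$ expands formally as a product of $2k-1$ factors in $\bigcup_i G_{B_i}$.

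To produce $g_k$, my plan is to form the abstract free amalgam over $B_k$ of the subspace $A\cup B_k$ of $\U$ with a fresh isometric copy $C''\cup B_k$ of $C'\cup B_k$. By the amalgamation construction reviewed in Subsection~\ref{subsec-setting-Ury} this is again an $\rr$-metric space, in which the cross-distances take the form $d(a,c'')=\min_{b\in B_k}(d(a,b)\oplus d(b,c''))$ for $a\in A\setminus B_k$ and $c''\in C''\setminus B_k$. The key claim is that each such cross-distance is at least $r$: for $a\in A_k$ this is immediate from $d(a,b)\geq d(A_k,B_k)\geq r$ for every $b\in B_k$; for $a\in A_j$ with $j<k$, the isometry $C'\to C''$ fixes $B_k$ pointwise, so $d(b,c'')=d(b,c')$ for the point $c'\in C'$ corresponding to $c''$, and the triangle inequality $d(a,c')\leq d(a,b)\oplus d(b,c')$ in $\U$, together with the inductive estimate $d(a,c')\geq r$, yields $\min_b(d(a,b)\oplus d(b,c''))\geq d(a,c')\geq r$. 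Property (EP) then embeds the amalgam into $\U$ fixing $A\cup B_k$, and ultra-homogeneity (H) promotes the resulting partial isometry $C'\to C''$ to the desired $g_k\in G_{B_k}$.

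The main technical point will be this triangle-inequality verification for the cross-distances and, along with it, the handling of boundary cases in which $A$ or $C'$ has points lying inside $B_k$. Here positivity of $r$ together with $d(A_k,B_k)\geq r$ and the inductive estimate $d(A_j,C')\geq r$ forces the problematic intersections to be empty or the corresponding distances to be inherited directly from $\U$, so the argument should go through cleanly with no further combinatorial complication.
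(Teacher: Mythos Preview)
Your proof is correct and follows essentially the same strategy as the paper: induction on $k$, with the inductive step carried out via free amalgamation in $\U$ and a triangle-inequality check on the cross-distances. The one difference worth noting is the choice of base for the amalgam. In the $k=2$ case the paper takes $C\cong_{A_{1}B_{2}}B_{1}$ with $C\ind_{A_{1}B_{2}}A_{2}$, i.e.\ it includes the already-handled piece $A_{1}$ in the base, so that $d(A_{1},C)=d(A_{1},B_{1})\geq r$ holds automatically and only $d(A_{2},C)\geq r$ needs the independence argument; you instead amalgamate over $B_{k}$ alone and recover $d(A_{j},C)\geq r$ for $j<k$ from the triangle inequality together with the inductive estimate $d(A_{j},C')\geq r$. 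Both verifications go through; the paper's choice buys a marginally shorter check but nothing substantive. One cosmetic point: unrolling your induction gives $h\in G_{B_{k}}G_{B_{k-1}}\cdots G_{B_{2}}$ rather than $G_{B_{2}}\cdots G_{B_{k}}$ as written in the statement, but the order of the factors is immaterial for the conclusion $G_{C}\subseteq(\bigcup_{i}G_{B_{i}})^{2k-1}$, which is all that is used downstream.
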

  \begin{proof}
  The proof for a general $k$ follows by a simple induction argument from case $k=2$, whose proof we now present. Take $C$ such that $C\cong_{A_{1}B_{2}} B_{1}$ and  $C\ind_{A_{1}B_{2}}A_{2}$. Since $C\cong_{A_{1}} B_{1}$ we have $d(A_{1},C)=d(A_{1},B_{1})\geq r$. We claim that $d(C,A_{2})\geq r$. Indeed, take any
  $c\in C$ and $a\in A_{2}$. There exists $e\in A_{1}B_{2}$ such that $d(c,a)=d(c,e)\oplus d(e,a)$. There are two possibilities. If $e\in B_{2}$, then $d(c,a)\geq d(e,a)\geq r$, by the choice of $A_{2}$ and $B_{2}$. If $e\in A_{1}$ then
  $d(c,a)\geq d(c,e)\geq r$.
  \end{proof}
  \vspace{5pt}

  Given an $\rr$-metric space $(X,d)$, a point $x\in X$ and $\epsilon\in R\setminus\{0\}$ let $N_{x}(\epsilon):=\{g\in \Isom{(X,d)}\;|\;d(gx,x)\leq\epsilon\}$. The following claim is easy to check. See Lemmas \ref{is group topology} and \ref{ladder} below for a more detailed explanation.
  \begin{claim}
  \label{pointwise convergence}Suppose a distance monoid $\rr$ has the property that for any $r\in R\setminus\{0\}$ there exists $s\in R\setminus\{0\}$ with $s\oplus s\leq r$.
  Then for any $\rr$-metric space $(X,d)$ the collection $\{N_{x}(\epsilon)\;|\;x\in X,\epsilon\in R\setminus\{0\}\}$ generates a Hausdorff group topology on $G=\Isom(X,d)$ at the identity.
  \end{claim}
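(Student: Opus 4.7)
The plan is to verify, via the filter criterion recalled at the beginning of Section~\ref{sec-relative-min}, that the collection in the statement generates a group topology on $G=\Isom(X,d)$, and then to check Hausdorffness. The four things to check (symmetry, multiplication, conjugation, separation) all reduce to direct applications of the triangle inequality and the isometry property of elements of $G$, together with the standing hypothesis on $\rr$.

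\textbf{Step 1 (symmetry).} For each basic set $N_x(\epsilon)$, if $g\in N_x(\epsilon)$ then, using that $g$ preserves $d$,
\[
d(g^{-1}x,x)=d(g(g^{-1}x),gx)=d(x,gx)=d(gx,x)\leq \epsilon,
\]
so $g^{-1}\in N_x(\epsilon)$. Hence $N_x(\epsilon)=N_x(\epsilon)^{-1}$.

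\textbf{Step 2 (multiplication).} Given $N_x(r)$ with $r\in R\setminus\{0\}$, use the hypothesis on $\rr$ to pick $s\in R\setminus\{0\}$ with $s\oplus s\leq r$. Then $N_x(s)\cdot N_x(s)\subseteq N_x(r)$: indeed for $g,h\in N_x(s)$, since $g$ is an isometry,
\[
d(ghx,x)\leq d(ghx,gx)\oplus d(gx,x)=d(hx,x)\oplus d(gx,x)\leq s\oplus s\leq r.
\]

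\textbf{Step 3 (conjugation).} A direct computation shows $g^{-1}N_x(\epsilon)g=N_{g^{-1}x}(\epsilon)$ for any $g\in G$: writing $y=g^{-1}x$,
\[
h\in g^{-1}N_x(\epsilon)g\iff d(ghg^{-1}x,x)\leq\epsilon\iff d(hy,y)\leq\epsilon.
\]
So conjugates of basic sets are basic sets, and the generated filter is conjugation invariant.

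\textbf{Step 4 (Hausdorffness).} Let $g\in G\setminus\{1\}$; pick $x\in X$ with $gx\neq x$ and set $r:=d(gx,x)\in R\setminus\{0\}$. It suffices to produce $\epsilon\in R\setminus\{0\}$ with $\epsilon<r$, for then $g\notin N_x(\epsilon)$. The hypothesis on $\rr$ gives $s\in R\setminus\{0\}$ with $s\oplus s\leq r$, so $s\leq s\oplus s\leq r$. If $s<r$ we are done; otherwise $s=r$ forces $r\oplus r\leq r$, hence $r\oplus r=r$, and iterating the hypothesis on $r$ produces a descending chain that in the intended archimedean / non-degenerate context must eventually drop strictly below $r$ (an archimedean distance monoid has no positive idempotents, since $n\cdot r^*=r^*$ for every $n$ would make $r^*$ a maximum).

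\textbf{Main obstacle.} Steps 1--3 are essentially bookkeeping; the only real content is Step~4, where separation rests on the assumption on $\rr$ ruling out a minimum positive distance at which $g$ displaces some point. The ladder construction used elsewhere to refine scales (cf.\ Lemma~\ref{ladder}) is the right tool to make this rigorous in full generality.
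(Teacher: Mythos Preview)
Your Steps 1--3 are correct and give a clean direct verification of the filter axioms; the paper itself does not argue at this point but defers to the general framework of Section~\ref{Urysohn parametrization section} (Lemmas~\ref{is group topology} and~\ref{ladder}), where $\tau_m$ is recovered as $\tau_{\hat g}$ for the constant ladder $g\equiv 0^{+}$.

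Step~4, however, does not go through, and the difficulty is not merely cosmetic. The stated hypothesis on $\rr$ does \emph{not} rule out a least positive element: take $R=\{0,1\}$ with $1\oplus 1=1$. This distance monoid satisfies the hypothesis (with $s=r=1$) and is even archimedean, yet for any $\{0,1\}$-metric space every $N_x(\epsilon)$ equals $G$, so the generated topology is indiscrete. Your fallback to archimedeanity therefore fails---the counterexample is archimedean, since a positive idempotent can perfectly well be the maximum---and ``iterating the hypothesis'' on an idempotent $r$ simply returns $r$ again. The reference to Lemma~\ref{ladder} is also misplaced: that lemma produces moduli of continuity, while the Hausdorff question is handled in Lemma~\ref{is group topology} via $\mu_f$ and $G^{b}_{\mu_f}$. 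What is actually true, and what the paper's machinery delivers, is that $\tau_m$ is Hausdorff once $R$ has no least positive element (so that $0^{+}$ is a genuine new point of $R^{*}$); under that extra assumption your Step~4 becomes immediate, since for $r=d(gx,x)>0$ one may pick any $\epsilon\in R$ with $0<\epsilon<r$. In all the applications in the paper this extra assumption is in force, so the imprecision in the Claim is harmless there---but it is a real gap in the argument as you have written it.
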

  We denote the topology above by $\tau_{m}$. For metric spaces this is just the usual point-wise convergence topology on $G\subseteq X^{X}$. The following theorem generalizes Uspenski's minimality result for the isometry group of the Urysohn sphere.
  
  \begin{thm*}[\ref{Urysohn thm 1}]
  
  \texturysohnthmone
  \end{thm*}
  \begin{proof}
  
  Suppose $\tau_{0}$ does not satisfy the conclusion of the theorem and let $\tau$ be a group topology that is coarser than $\tau_{st}$ but not finer $\tau_{0}$. This implies that there is $s\in R\backslash \{0\}$ such that  $N_v(s)$ is not a $\tau$-neighbourhood of $1$ for any $v\in \U$ in $\tau$.
  \begin{lem} \label{lemma-1-main}
  Given $t\in R$ with $2t\leq s$, a neighbourhood $V$ of $1$ in $\tau$, as well as $a\in \U $, $k\in \mathbb N$ and $b_1,\cdots,b_k\in\U$ there is $g\in V$ such that $ga\notin \Ba_{b_i}(t)$, for each $1\leq i\leq k$, i.e. $d(ga,\{b_{i}\}_{i=1}^{k})>t$.
  \end{lem}
  \begin{proof}
  Assume the conclusion fails. Take $h\in G_a$ such that $h(B)$ and $B$ are independent over $a$ where $B=\{b_1,\dots,b_k\}$ and consider $V\cap V^{h^{-1}}$. Take $g\in V\cap V^{h^{-1}}$. There are $1\leq i,j\leq k$ such that $ga\in\Ba_{b_i}(t)\cap \Ba_{hb_j}(t)$. This implies $d(b_i,hb_j)\leq 2t\leq s$. Independence of $B$ and $h(B)$ over $a$ implies that either $d(b_i,a)\leq t$ or $d(hb_j,a)\leq t$. As $ga\in\Ba_{b_i}(t)\cap \Ba_{hb_j}(t)$, either of the two cases implies $d(ga,a)\leq 2t\leq s$ and hence $V\cap V^{h^{-1}}\subseteq N_a(s)$  which is a contradiction.
  \end{proof}
  
  \begin{lem}
  \label{lemma-3-main} Given $t\in R$ with $2t\leq s$,  $V\in\mathcal{N}_{\tau}(1)$ and $A=\{a_1,a_2,\dots,a_k\}$ a finite subset of $\U$ there is a finite subset $C$ of $\U$ such that $G_C\subset V$ and $d(A,C)> t$.
  \end{lem}
  \begin{proof}
  Consider $W$ be a neighbourhood of $1$ in $\tau$ with $W=W^{-1}$ such that $W^{6k-3}\subset V$. Let $C_0$ be a finite subset of $\U$ such that $G_{C_0}\subseteq W$. By Claim \ref{lemma-1-main} there is $g_i\in W$ such that $d(g_{i}a_i,C_0)> t$ for each $1\leq i\leq k$. Then $d(g_i^{-1}(C_0),a_i)> t$ for each $i$ and by applying Lemma \ref{separation lemma} with $B_i=g_i^{-1}(C_0)$ and $A_{i}=\{a_{i}\}$, we find a finite subset $C$ such that $G_C\subseteq (\bigcup_i G_{B_i})^{2k-1}\subseteq W^{6k-3}\subseteq V$ as $G_{B_i}\subseteq W^3$.
  \end{proof}
  
  \begin{lem}
  \label{lem-final-main}
  For any $V\in\mathcal{N}_{\tau}(1)$, any finite $C\subseteq\U$ and $r\in R\setminus\{0\}$ there is a finite $D$ with $d(D,C)\geq r$ and $G_{D}\subseteq V$.
  \end{lem}
  \begin{proof}
  \newcommand{\ceil}[1]{\lceil #1 \rceil}
  Recall that $s$ is fixed before Lemma \ref{lemma-1-main}. We claim that there exists $t_{0}\in R$ such that $2t_{0}\leq s$ and for any $r\in R$ there is $m\in\N$ such that
  $mt'\geq r$ for any $t'>t_{0}$. Indeed, either there is $t>0$ with $2t\leq s$, in which case we can take $t_{0}=t$, or else $2t'>s$ for all $t'>0$ and we can take $t_{0}=0$.
  
  Fix now $V\in\mathcal{N}_{\tau}(1)$ and $r\in R$ and let $m$ be as above. Let $k=\ceil{\frac{m-1}{2}}$.
  Choose $W\in\mathcal{N}_{\tau}(1)$ with $W=W^{-1}$
  and $W^{2m+5}\subseteq V$.
  
  By Lemma \ref{lemma-3-main} applied to $C$ and $W$ there are $A$ and $B$ with $d(A,B)>t$ such that $G_A,G_{B}\subseteq W$. Lemma \ref{new distancing lemma} then implies the existence of $g\in G_{B}(G_{A}G_{B})^k\subseteq W^{2k+1}\subseteq W^{m+2}$ such that $d(C,g(A))\geq m\cdot d(A,B)\geq r$, which in turn implies $G_{g(A)}=gG_{A}g^{-1}\subseteq W^{2m+5}$. Take $D=A$.
  \end{proof}

  We are now ready to finish the proof of Theorem \ref{Urysohn thm 1}. Pick any neighbourhood $W$ of $1$ in $\tau$ with $W=W^{-1}$ and $g\in G\backslash\{1\}$ where $g\notin W^4$. Since $W$ is a neighbourhood of identity in $\tau$ it must contain $G_{A}$ for some finite subset $A $ of $\U$. Lemma \ref{lem-final-main} implies the existence of some finite $B\subset\U$ such that $G_{B}\subset W$ and
  $ d(A\,g(A),B)\geq \diam{A\, g(A)}$. By Lemma   \ref{equal distance lemma} there is an isomorphic copy $A'$ of $A$ over $B$ and $s\in R$ such that $d(a,a')=s$  for all $a'\in A'$ and $a\in A\, g(A)$.
  
  In particular, $A\cong_{A'}g(A)$, which implies there is $B'$ such that $g(A) B'\cong A'B'$. By Lemma \ref{lem-zig-used} the chain $A,B,A',B',g(A)$ witnesses $g\in (G_AG_B)^2\subseteq W^{4}$, contradicting the choice of $W$.

  \end{proof}

\section{Group topologies on $\Isom(\mathcal{U})$ coarser than $\tau_{st}$}
  
  \newcommand{\cut}[2]{Ct_{#1}(#2)}
  \newcommand{\cutr}[3]{Ct^{#3}_{#1}(#2)}
  \newcommand{\bp}[0]{\boxplus}
  \newcommand{\ali}[2]{Al_{#1}(#2)}
  
  In the light of Theorem \ref{Urysohn thm 1} one might conjecture there is a gap between $\tau_{st}$ and the point-wise convergence topology $\tau_{m}$ in those cases in which the latter exists. This turns out to be false.
  
  Fix a distance monoid $\rr$, an $\rr$-metric space $(X,d)$ and let $G=\Isom(X,d)$. For any distinct $x,y\in X$ write
  \begin{align*}
  \nb_{x,y}:=\{g\in G\,|\,d(gx,y)\leq d(x,y)\}.
  \end{align*}
  The following is easy to check; see Lemmas \ref{is group topology} and \ref{ladder} of the following section.
  
  \begin{claim}
  \label{spherical topology}Suppose that $\rr$ has the property that for any $r\in R\setminus\{0\}$ there exists $s,s'\in R\setminus\{0\}$ with $s\oplus s'=r$.
  Then the collection $\{\nb_{x,y}\;|\;x,y\in X,d(x,y)>0\}$ generates a Hausdorff group topology $\tau_{0^{+},0}$ on $G=\Isom(X,d)$ at the identity. Moreover, if $(X,d)$ is an $\rr$-Urysohn space, then the inclusions $\tau_{m}\subset\tau_{0^{+},0}\subset\tau_{st}$ are proper.
  \end{claim}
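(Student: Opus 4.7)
The plan is to verify the filter axioms for the generating collection at the identity, check Hausdorffness, and then prove the comparison with $\tau_{m}$ and $\tau_{st}$ in the Urysohn case. As the claim suggests, the verifications of being a group topology and being Hausdorff can be handled by the general Lemmas \ref{is group topology} and \ref{ladder} of the next section, so the substance of the work is producing, for each generating neighbourhood $\nb_{x_{0},y_{0}}$ with $d(x_{0},y_{0})=r$, a smaller one that squares into it. I would decompose $r=s\oplus s'$ with $s,s'>0$ using the hypothesis on $\rr$, realise an intermediate point $w$ with $d(x_{0},w)=s$ and $d(w,y_{0})=s'$ (via the Urysohn extension property (EP), which is what makes the 3-point triangle admissible), and set $V:=\nb_{x_{0},w}\cap\nb_{w,y_{0}}$. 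Then for $g,h\in V$, isometry and the triangle inequality give
\[
d(ghx_{0},y_{0})\leq d(ghx_{0},gw)\oplus d(gw,y_{0})=d(hx_{0},w)\oplus d(gw,y_{0})\leq s\oplus s'=r,
\]
so $VV\subseteq\nb_{x_{0},y_{0}}$. Inversion is handled by $(\nb_{x,y})^{-1}=\nb_{y,x}$ and conjugation by $f\nb_{x,y}f^{-1}=\nb_{fx,fy}$. Hausdorffness follows by iterating the decomposition to produce $t>0$ with $t\oplus t<d(gx,x)$ for some $x$ with $gx\neq x$ (using, in the archimedean case, that $s'>0$ forces $s<s\oplus s'$) and realising $y$ with $d(x,y)=t$: the triangle inequality forces $g\notin\nb_{x,y}$, since otherwise $d(gx,x)\leq d(gx,y)\oplus d(y,x)\leq t\oplus t$.

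For the moreover part the containment $\tau_{0^{+},0}\subseteq\tau_{st}$ follows from $G_{y}\subseteq\nb_{x,y}$, trivial by isometry. For $\tau_{m}\subseteq\tau_{0^{+},0}$, given $N_{x}(\epsilon)$ I would iterate the decomposition to obtain $a>0$ with $a\oplus a\oplus a\leq\epsilon$, realise an equilateral triangle $\{x,u,v\}$ of side $a$ in $\U$ (permissible because the triangle inequality reduces to $a\leq a\oplus a$), and observe that $g\in\nb_{u,v}$ forces
\[
d(gx,x)\leq d(gx,gu)\oplus d(gu,v)\oplus d(v,x)=a\oplus d(gu,v)\oplus a\leq a\oplus a\oplus a\leq\epsilon.
\]
For the strict inclusions the key ingredient is again (EP): to show $G_{y}\notin\tau_{0^{+},0}$, given a finite candidate family $\{(x_{i},y_{i})\}$ I would realise a new point $y'\neq y$ at distance $d(y,x_{i})$ from each $x_{i}$ and $d(y,y_{i})$ from each $y_{i}$, and extend the partial isometry $y\mapsto y'$ fixing the $x_{i}$ and $y_{i}$ to a global isometry lying in $\bigcap\nb_{x_{i},y_{i}}\setminus G_{y}$. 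Dually, for $\nb_{x_{0},y_{0}}\notin\tau_{m}$ given $\{(z_{i},\epsilon_{i})\}$ I would realise a point $x_{0}'$ at small distance $\delta$ from $x_{0}$, matching $d(x_{0},z_{i})$ toward the $z_{i}$, but at distance $d(x_{0},y_{0})+\delta$ from $y_{0}$, and extend $x_{0}\mapsto x_{0}',\, z_{i}\mapsto z_{i}$ to a global isometry that belongs to $\bigcap N_{z_{i}}(\delta)$ but lies outside $\nb_{x_{0},y_{0}}$.

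The main obstacle throughout is verifying that the auxiliary points one needs to adjoin satisfy all triangle inequalities in $\rr$; the precise form of the decomposition hypothesis $r=s\oplus s'$ is exactly what makes the required configurations (the intermediate point $w$, the equilateral triangle, the displaced point $x_{0}'$) admissible via (EP). Some extra care is needed when the point one wishes to move or displace coincides with a constraint point (e.g.\ $y\in\{x_{i}\}$ in the argument for $\tau_{0^{+},0}\subsetneq\tau_{st}$), which is handled by splitting into cases and perturbing the prescribed distances slightly so that the partial isometry remains consistent before applying extension.
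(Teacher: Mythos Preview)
Your approach to the group-topology axioms is exactly the one the paper's Lemma~\ref{is group topology} uses (realise an intermediate point $w$ via (EP), then chain two triangle inequalities), so that part is fine and essentially the same as the paper.

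There is, however, a genuine gap in your argument for the strict inclusion $\tau_{m}\subsetneq\tau_{0^{+},0}$. You propose to realise $x_{0}'$ with $d(x_{0}',z_{i})=d(x_{0},z_{i})$ for each $i$ and $d(x_{0}',y_{0})=d(x_{0},y_{0})\oplus\delta$, then extend the partial isometry fixing the $z_{i}$. But if some $z_{i}$ lies in the segment $[x_{0},y_{0}]$, the triple $(x_{0}',z_{i},y_{0})$ violates the triangle inequality: your prescriptions give
\[
d(x_{0}',y_{0})\leq d(x_{0}',z_{i})\oplus d(z_{i},y_{0})=d(x_{0},z_{i})\oplus d(z_{i},y_{0})=d(x_{0},y_{0}),
\]
contradicting $d(x_{0}',y_{0})>d(x_{0},y_{0})$. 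This is an \emph{alignment} obstruction, not the coincidence case your closing caveat covers, and it cannot be cured by perturbing distances while still fixing $z_{i}$: any $g$ that fixes a point of $[x_{0},y_{0}]$ can only increase $d(gx_{0},y_{0})$ by at most $d(gz_{i},z_{i})=0$ beyond $d(x_{0},y_{0})$ via the very chain you want to use.

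The repair is to allow the aligned $z_{i}$ to move as well (within $\epsilon_{i}$), i.e.\ to build a second copy of the whole tuple $(x_{0},z_{1},\dots,z_{k})$ rather than of $x_{0}$ alone. This is precisely what the paper's Lemma~\ref{neighbourhood containment} does: it places a metric on the disjoint union of two copies of the entire parameter set with cross-distances given by $\min$-paths through the constraint pairs, and then checks the triangle inequality globally. That construction absorbs the aligned case automatically and yields the witnessing isometry.
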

  
  Due to the following obstruction the topology $\tau_{0,0^{+}}$ is not eliminated by an application of Lemma \ref{jump lemma} to the pair $(\tau_{m},\tau_{st})$. Take for instance $\U=\U_{\R}$ and two disjoint sets
  $A,B\subset\U_{\R}$ of size $k\geq 1$ that lie entirely on a common line (i.e. any triangle spanned by three points in $AB$ is degenerate) and alternate on said line. Let us say that the ``leftmost'' point is $\alpha\in A$ and the ``rightmost'' point is $\beta\in B$.
  Then for any chain $A_{0}=A,B_{0}=B,A_{1},B_{1},\dots A_{k-1},B_{k-1}$ for which
  $A_{i}B_{i}\cong A_{i+1}B_{i}$ and $A_{i+1}B_{i}\cong A_{i+1}B_{i+1}$ for all $0\leq i\leq k-2$  we have the constraint
  $d(\alpha,\beta')\leq d(\alpha,\beta)$, where $\beta$ and $\beta'$ are components of the same index in $B$ and $B_{k}$ respectively.
  The main content of Theorem \ref{3 topologies theorem} is the existence of a gap between $\tau_{0^{+},0}$ and $\tau_{m}$ (Proposition \ref{second gap}), which involves a series of small technical intermediate Lemmas collected in subsections \ref{subsection alignment} and \ref{subsection downwards}. In contrast, the existence of a gap between $\tau_{st}$ and $\tau_{0^{+},0}$  (Lemma \ref{first gap}) is a direct consequence of Lemma \ref{jump lemma}.

  The Lemmas in Subsection \ref{subsection alignment} highlight different aspects of the obstruction mentioned above. In particular, Lemma \ref{small disturbance} can be read as saying that this is in fact the only obstruction for the assumptions of  Lemma \ref{jump lemma} to hold for the pair $(\tau_{m},\tau_{st})$.
  
  Subsection \ref{subsection downwards} gathers Lemmas allowing one to move downwards: we are given a group topology
  $\tau$ and we know that there exists $W\in\idn$ such that all $g\in W$ preserves a certain property and we want to replace it with $W'\in\idn$ such that all $g\in W'$ preserve some different (stronger) property.

  \subsection{Point alignment}
    \label{subsection alignment}
    
    For future reference we state Lemma \ref{small disturbance} and other lemmas in this section in greater generality than required by Theorem \ref{3 topologies theorem}. The reader might as well take $\rr=\mathcal{S}_{b}$ where $S$ is some dense subgroup of $\RR$ and $b\in S_{>0}\cup\{\infty\}$.
    In the definitions below $\rr=(R,0,\leq,\oplus)$ is a distance monoid and $(X,d)$ an $\rr$-metric space.
    
    Given $\epsilon\in R$ we say that an unordered triple $r_{1},r_{2},r_{3}\in R$ is \emph{$\epsilon$-flexible} if
    $r_{i}\oplus\epsilon\leq r_{j}\oplus r_{k}$ where $r_{i}=\max\{r_{1},r_{2},r_{3}\}$ and $\{j,k\}=\{1,2,3\}\setminus\{i\}$. We say that it is \emph{strongly $\epsilon$-flexible} if moreover for any $r'_{j}$ and $r'_{k}$ such that
    $r'_{j}\oplus\epsilon\geq r_{j}$ and $r'_{k}\oplus\epsilon\geq r_{k}$ we have
    $r_{i}\leq \min\{r'_{j}\oplus r_{k},r_{j}\oplus r'_{k}\}$.

    A $0$-flexible triple will be called simply {\it triangular}. We say that a triangular triple is \emph{(strongly) flexible } if it is (strongly) $\epsilon$-flexible for some $\epsilon\in R\setminus\{0\}$.

    We say that an unordered triple of points $u,v,w$ in $X$ is ($\epsilon$-)\emph{flexible} if $d(u,v)$, $d(u,w)$, $d(v,w)$  is ($\epsilon$-)flexible. We say it is \emph{tight} if it is not $\epsilon$-flexible for any $\epsilon>0$·  We say that an ordered triple of points $(u_{1},u_{2},u_{3})\in\U^{3}$ in some $\rr$-metric space is \emph{aligned} if it is tight as an unordered triple and $d(u_{1},u_{3})=\max\, \{d(u_{i},u_{j})\,|\,{i,j\in\{1,2,3\}}\}$. Given $u,v\in\U$ we let $[u,v]$ stand for the collection of points $x$ such that $(u,x,v)$ is aligned.
    
    We say that a set of three distinct points $\{u_{i}\}_{i=1}^{3}$ is \emph{in} ($\epsilon$-)\emph{general position} if the triple $d(u_{1},u_{2})$,$d(u_{2},u_{3})$,$d(u_{2},u_{3})$ is strongly ($\epsilon$-)flexible.

    \begin{obs}
    \label{independence obs} If $\rr$ is a basic distance monoid, $\U$ an $\rr$-Urysohn space and $u,v,w\in \U$ such that $u\ind_{v}w$ and in the ambient group $d(u,v)+d(v,w)<\sup\, R\in R\cup\{\infty\}$ holds, then $(u,v,w)$ is aligned. If $\rr$ is standard then the opposite implication is also true: the triple of points $(u,v,w)$ is aligned only if $u\ind_{v}w$.
    \end{obs}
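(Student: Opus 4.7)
The plan is to reduce both implications to a direct comparison between the distance-monoid operation $\oplus$ and the ambient group addition $+$. The starting point is the following definitional translation: in the free amalgam of the pair $\{u,v\}$ and the pair $\{v,w\}$ over the single shared point $v$, the distance between $u$ and $w$ is set equal to $\min_{c\in\{v\}} d(u,c)\oplus d(c,w)=d(u,v)\oplus d(v,w)$. Consequently, for a triple $(u,v,w)$ of pairwise distinct points in $\U$, the relation $u\ind_{v}w$ is equivalent to the equality $d(u,w)=d(u,v)\oplus d(v,w)$.

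For the forward implication I would assume $u\ind_{v}w$ and invoke the equality above to obtain $d(u,w)=d(u,v)\oplus d(v,w)$. Since $\rr$ is basic, $\oplus$ coincides with the ambient $+$ on any pair whose ordinary sum does not exceed $\sup R$. The stated hypothesis ensures precisely that no truncation takes place in the relevant sum, so $d(u,w)=d(u,v)+d(v,w)$ in $\Lambda$. The triangle inequality then forces $d(u,w)$ to be the maximum of the three pairwise distances, which gives alignment of $(u,v,w)$.

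For the converse under the standardness hypothesis I would begin with alignment, so $d(u,w)=d(u,v)+d(v,w)$ in $\Lambda$. Because $d(u,w)\in R$ is at most $\sup R$, the same upper bound applies to $d(u,v)+d(v,w)$, and therefore no truncation occurs when computing $d(u,v)\oplus d(v,w)$; this yields $d(u,v)\oplus d(v,w)=d(u,w)$, which is exactly the free-amalgam identity characterising $u\ind_{v}w$.

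I do not anticipate a real obstacle: once the definitional translation between $u\ind_{v}w$ and the $\oplus$-identity is in place, the whole argument is a short computation about $+_{b}$. The role of the standardness assumption in the converse is only to ensure that the ambient sum of two elements of $R$ that represents a distance in $\U$ lands back inside $R$ below the cap, so that the $\oplus$-operation recovers it faithfully.
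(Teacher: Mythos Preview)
The paper states this observation without proof, so there is nothing to compare against directly. Your overall strategy is the natural and intended one: translate $u\ind_{v}w$ into the free-amalgam identity $d(u,w)=d(u,v)\oplus d(v,w)$ and then compare $\oplus$ with the ambient $+$.

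Two points deserve attention. First, in the forward direction the hypothesis actually reads $d(u,v)+d(u,w)<\sup R$, not $d(u,v)+d(v,w)<\sup R$; you silently pass to the latter. The implication is valid---if $d(u,v)+d(v,w)\geq b$ then $d(u,w)=d(u,v)\oplus d(v,w)=b$, whence $d(u,v)+d(u,w)\geq b$---but one line making this explicit would close the gap.

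Second, in the converse you start from ``alignment, so $d(u,w)=d(u,v)+d(v,w)$ in $\Lambda$''. In the paper ``tight'' (and hence ``aligned'') means \emph{not flexible}: there is no $\epsilon\in R\setminus\{0\}$ with $d(u,w)\oplus\epsilon\leq d(u,v)\oplus d(v,w)$. This does not immediately hand you the $+$-equality; a priori it only gives $d(u,w)\leq d(u,v)\oplus d(v,w)$ together with the absence of $\epsilon$-slack. To conclude equality one needs that whenever $r<s$ in $R$ some $\epsilon>0$ satisfies $r\oplus\epsilon\leq s$, and \emph{this} is where standardness enters: since $S$ is a subgroup, one may take $\epsilon=s-r\in R$. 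Your closing remark that standardness ensures ``the ambient sum lands back inside $R$'' misidentifies its role---that sum equals $d(u,w)$, which is already in $R$. The genuine use of standardness is the ``no gaps'' step just described.
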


    Given two finite subsets $A,B\subset X$ and $r\in R^{*}$ we say that $B$ \emph{$r$-cuts} $A$ if there exists $a,a'\in A$ with $d(a,a')\leq r$ such that $B\cap[a,a']\neq\emptyset$. We say that $B$ \emph{cuts} $A$ if it $r$-cuts $A$ for some $r\in R$.
    We say that $B$ is {\it in ($\epsilon$-)general position} relative to $A$ if for any distinct $a_{1},a_{2}\in A$ and any $b\in B$ the triple $a_{1},a_{2},b$ is in ($\epsilon$-)general position. Notice that in particular this implies that $B$ does not cut $A$.

    The following Lemma is the main source of motivation of the definitions above.
    \begin{lem}
    \label{small disturbance}Let $\rr$ be any distance monoid, $\U$ an $\rr$-Urysohn space, $G=\Isom(\U)$ and $A,B$ finite subsets of $\U$ such that $B$ is in $\epsilon$-general position relative to $A$. Then $G_{A}G_{B}G_{A}\supset\bigcap_{a\in A}N_{a}(\epsilon)$. In particular, $G_{A}G_{B}G_{A}\in\mathcal{N}_{\tau_{m}}(1)$ in case $\tau_{m}$ exists (see Claim \ref{pointwise convergence}).
    \end{lem}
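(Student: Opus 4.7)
The plan is to apply the zig-zag characterization of Lemma~\ref{lem-zig-used} with $n=1$. Given $g\in\bigcap_{a\in A}N_{a}(\epsilon)$, it suffices to produce a tuple $B_{0}\subseteq\U$ indexed by $B$ (writing $b\mapsto b'$) witnessing both $AB_{0}\cong AB$ fixing $A$ and $g(A)B_{0}\cong AB$ via $g\upharpoonright A$. Concretely, one needs $d(b_{i}',b_{j}')=d(b_{i},b_{j})$ for all $b_{i},b_{j}\in B$ and $d(a,b')=d(g(a),b')=d(a,b)$ for all $a\in A$ and $b\in B$.

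I would construct $B_{0}$ by extending $A\cup g(A)\subseteq\U$ via the extension property (EP) of the $\rr$-Urysohn space $\U$, after adjoining an abstract copy of $B$ to $A\cup g(A)$ with the distances above declared. The only nontrivial step is to verify that the resulting distance function satisfies the triangle inequality. Triples lying entirely within $A\cup g(A)$ or within the adjoined copy of $B$ are automatic; mixed triples with the two $A\cup g(A)$ points on the same side (both in $A$ or both in $g(A)$) reduce to triangles already present in $\U$, using that $g$ is an isometry on $A$ for the $g(A)$ case.

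The substantive case is thus that of a triangle $(b',a_{1},g(a'_{2}))$ with $a_{1},a'_{2}\in A$ and $b\in B$. Set $\alpha=d(a_{1},a'_{2})$, $\alpha'=d(a_{1},g(a'_{2}))$, $\beta=d(a_{1},b)$, $\gamma=d(a'_{2},b)$. The prescribed distances from $b'$ to $a_{1}$ and $g(a'_{2})$ are $\beta$ and $\gamma$ respectively, so I must show that $(\alpha',\beta,\gamma)$ is a valid triangle. Applying the triangle inequality in $\U$ to $(a_{1},a'_{2},g(a'_{2}))$ together with $g\in N_{a'_{2}}(\epsilon)$ yields $\alpha\leq\alpha'\oplus\epsilon$ and $\alpha'\leq\alpha\oplus\epsilon$. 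On the other hand, the hypothesis that $B$ is in $\epsilon$-general position relative to $A$ says exactly that $(\alpha,\beta,\gamma)$ is strongly $\epsilon$-flexible. Combining both, a short case analysis on which of $\alpha,\beta,\gamma$ is the maximum shows that each of the three inequalities of the triple $(\alpha',\beta,\gamma)$ holds: the $\epsilon$-worth of slack packaged into strong flexibility is precisely what is needed to absorb the discrepancy between $\alpha$ and $\alpha'$.

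The delicate point, and the main technical hurdle, is the final case analysis. One must track whether the maximum of the new triple $(\alpha',\beta,\gamma)$ coincides with that of $(\alpha,\beta,\gamma)$ (in which case the $\epsilon$-flexibility at the maximum is used directly) or whether the perturbation pushes $\alpha'$ past one of $\beta,\gamma$ (in which case one exploits strong flexibility with the roles of the perturbed and unperturbed sides swapped). Once the triangle inequality is established, (EP) provides the desired $B_{0}$, Lemma~\ref{lem-zig-used} gives $g\in G_{A}G_{B}G_{A}$, and the final clause $G_{A}G_{B}G_{A}\in\mathcal{N}_{\tau_{m}}(1)$ follows at once from Claim~\ref{pointwise convergence}.
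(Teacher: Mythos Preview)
Your proposal is correct and follows essentially the same route as the paper: reduce via Lemma~\ref{lem-zig-used} with $n=1$ to producing an abstract $\rr$-metric on $A\cup g(A)\cup B_{0}$ with $\bar d(a,b')=\bar d(g(a),b')=d(a,b)$, then verify the triangle inequality on the mixed triples $(a_{1},g(a_{2}),b')$ using strong $\epsilon$-flexibility and apply (EP). The only cosmetic difference is that the paper bypasses your case analysis on the location of the maximum: it directly checks $\bar d(a_{1},a_{2}')\leq\epsilon\oplus d(a_{1},a_{2})\leq d(a_{1},b)\oplus d(a_{2},b)$ from $\epsilon$-flexibility, and $d(a_{1},b)\leq \bar d(a_{1},a_{2}')\oplus d(a_{2},b)$ from the ``strong'' clause applied to $r'_{j}=\bar d(a_{1},a_{2}')$ (since $\bar d(a_{1},a_{2}')\oplus\epsilon\geq d(a_{1},a_{2})$), which is a slightly cleaner packaging of the same computation.
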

    \begin{proof}
    \label{metrical neighbourhood}
    By virtue of Lemma \ref{lem-zig-used} the result can be rephrased as follows.
    Given any finite metric space $(D,\bar{d})$ whose underlying set consists of $A$ and an isometric copy of $A'$ with the property that $\bar{d}(a',a)\leq\epsilon$ for conjugate points $a\in A,a'\in A'$ the extension of $\bar{d}$ to $(D\coprod B)^{2}$ given by $\bar{d}(a,b)=\bar{d}(a',b)=d(a,b)$ for $a, a'\in A$ and $b\in B$ defines an $\rr$-metric space. It suffices to check the triangular inequality for triples of points of the form $(a_{1},a_{2}',b)$. 
    On the one hand for any triples of points $a_{1},a_{2},b$ where $a_{1},a_{2}\in A$ and $b\in B$  we have:
    \begin{align*}
    \bar{d}(a_{1},a'_{2})\leq \epsilon\oplus d(a_{1},a_{2})\leq 	d(a_{1},b)\oplus d(b,a_{2})=\bar{d}(a_{1},b)\oplus\bar{d}(b,a_{2}),
    \end{align*}
    where the second inequality comes from $\epsilon$-flexibility of $\{a_{1},a_{2},b\}$.
    On the other hand $\bar{d}(a_{1},a_{2}')\oplus\epsilon\geq d(a_{1},a_{2})$ so strong $\epsilon$-flexibility of $\{a_{1},a_{2},b\}$
    yields:
    \begin{align*}
    \bar{d}(a_{1},b)=d(a_{1},b)\leq \bar{d}(a_{1},a'_{2})\oplus d(a_{2},b)=\bar{d}(a_{1},a'_{2})\oplus\bar{d}(a'_{2},b).
    \end{align*}
    \end{proof}

    We say that $\rr$ has \emph{no gaps} if for any $r<s$
    there exists $\epsilon\in R\setminus\{0\}$ such that $r\oplus\epsilon\leq s$.
    
    \begin{lem}
    \label{independence cutting}Let $\rr$ be a distance monoid with no gaps and $A,B,C$ finite subsets of some $\rr$-metric space $(X,d)$ satisfying $A\ind_{B}C$, where $B=B_{1}\cup B_{2}$. If $A$ $r$-cuts $C$, then at least one of the following holds:
    \begin{itemize}
    \item $B_{1}$ $r$-cuts $C$;
    \item $A$ $r$-cuts $B_{2}$.
    \end{itemize}
    \end{lem}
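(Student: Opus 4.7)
The plan is to chase the free amalgamation formula describing distances from $A$ to $C$ through $B$, and to observe that an element of $A$ witnessing a cut of $C$ forces some intermediate element of $B$ to lie on the same ``geodesic segment.'' First I would unpack the hypothesis that $A$ $r$-cuts $C$ and fix concrete witnesses: points $c, c' \in C$ with $d(c, c') \leq r$ and $a \in A \cap [c, c']$, so that $d(c, a) \oplus d(a, c') = d(c, c')$.

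The crucial step is then to use $A \ind_{B} C$ to write both $d(c, a)$ and $d(a, c')$ as sums through $B$. By the formula for the free amalgam of $\rr$-metric spaces recalled in Subsection \ref{subsec-setting-Ury}, there are $b_1, b_2 \in B$ (or one of the endpoints $c, a, c'$ in the degenerate case where it already lies in $B$) with $d(c, a) = d(c, b_1) \oplus d(b_1, a)$ and $d(a, c') = d(a, b_2) \oplus d(b_2, c')$. Summing yields
\[
d(c, c') \;=\; d(c, b_1) \oplus d(b_1, a) \oplus d(a, b_2) \oplus d(b_2, c').
\]
Combining this with the two triangle inequalities $d(b_1, b_2) \leq d(b_1, a) \oplus d(a, b_2)$ and $d(c, c') \leq d(c, b_1) \oplus d(b_1, b_2) \oplus d(b_2, c')$ forces equality throughout. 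In particular $(c, b_1, b_2, c')$ is aligned (so $b_1, b_2 \in [c, c']$) and $(b_1, a, b_2)$ is aligned (so $a \in [b_1, b_2]$), and moreover $d(b_1, b_2) \leq d(c, c') \leq r$.

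The conclusion then falls out by a case split on the partition $B = B_1 \cup B_2$. If either $b_1$ or $b_2$ belongs to $B_1$, then that point together with $c, c'$ witnesses $B_1 \cap [c, c'] \neq \emptyset$ and $d(c, c') \leq r$, so $B_1$ $r$-cuts $C$. Otherwise both $b_i$ lie in $B_2$, and then $a \in A \cap [b_1, b_2]$ together with $d(b_1, b_2) \leq r$ witnesses that $A$ $r$-cuts $B_2$. I expect no serious obstacle: the only mildly delicate point is bookkeeping the degenerate cases in which one of $a, c, c'$ lies in $B$ and the ``intermediate'' $b_i$ is taken to be that endpoint itself, but the same aligned-chain argument goes through unchanged.
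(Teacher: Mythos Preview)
Your overall strategy---choose $b_{1},b_{2}\in B$ realizing the free-amalgam distances, show $b_{1},b_{2}\in[c,c']$, bound $d(b_{1},b_{2})\leq r$, show $a\in[b_{1},b_{2}]$, and then split on which $B_{i}$ the points $b_{1},b_{2}$ lie in---is exactly the paper's approach. The part that does not go through as written is the step ``forces equality throughout, in particular $(b_{1},a,b_{2})$ is aligned.''

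The problem is that $\oplus$ is not cancellative in a general distance monoid. From the sandwich
\[
d(c,c')\;\leq\; d(c,b_{1})\oplus d(b_{1},b_{2})\oplus d(b_{2},c')\;\leq\; d(c,b_{1})\oplus d(b_{1},a)\oplus d(a,b_{2})\oplus d(b_{2},c')\;=\;d(c,c')
\]
you correctly conclude that the outer inequality is an equality, but you \emph{cannot} cancel $d(c,b_{1})$ and $d(b_{2},c')$ to deduce $d(b_{1},b_{2})=d(b_{1},a)\oplus d(a,b_{2})$. (Think of $([0,1],+_{1})$, where $0.7\oplus 0.5=0.7\oplus 0.6$.) And even if that equality held, alignment in the paper's sense means \emph{tight} (not $\epsilon$-flexible for any $\epsilon>0$), which is strictly stronger than the equality when the value in question is absorbing.

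The paper fixes exactly this step, and this is where the ``no gaps'' hypothesis is actually used. Instead of aiming for equality, one assumes $a\notin[b_{1},b_{2}]$, extracts $\epsilon>0$ with $d(b_{1},b_{2})\oplus\epsilon\leq d(b_{1},a)\oplus d(a,b_{2})$, and pushes this $\epsilon$ through the chain to obtain $d(c,c')\oplus\epsilon\leq d(c,a)\oplus d(a,c')$, contradicting the tightness of $(c,a,c')$. The same style of argument is what establishes $b_{1},b_{2}\in[c,c']$ and $d(b_{1},b_{2})\leq d(c,c')$. Once you replace the cancellation-style reasoning by these $\epsilon$-arguments, your proof becomes the paper's proof verbatim.
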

    \begin{proof}
    Pick $a\in A$ and $c_1,c_2$ such that $d(c_{1},c_{2})\leq r$ and $a\in[c_{1},c_{2}]$. For $i=1,2$ there exists $b_{i}\in B$ such that $d(a,c_{i})=d(a,b_{i})\op d(b_{i},c_i)$. We claim  $b_1,b_2\in [c_1,c_2]$. Otherwise for some $\epsilon>0$ and $i\in\{1,2\}$ we have:
    \begin{align*}
    d(c_{1},c_{2})\op\epsilon
    \leq d(c_{i},b_{i})\op d(b_{i},c_{3-i})\leq
    d(c_{i},b_{i})\op(d(b_{i},a)\op d(a,c_{3-i}))=\\
    =(d(c_{i},b_{i})\op d(b_{i},a))\op d(a,c_{3-i}) =
    d(c_{1},a)\op d(a,c_{2}),
    \end{align*}
    contradicting $a\in [c_{1},c_{2}]$. We claim that $a\in [b_{1},b_{2}]$
    and $d(b_{1},b_{2})\leq d(c_{1},c_{2})$. On the one hand,
    if $a\nin [b_{1},b_{2}]$, then for some $\epsilon>0$ we have:
    \begin{align*}
    d(c_{1},c_{2})\op\epsilon\leq d(c_{1},b_{1})\op (d(b_{1},b_{2})\op\epsilon)\op d(b_{2},c_{2})\leq\hspace{3 cm}\\
    \leq d(c_{1},b_{1})\op (d(b_{1},a)\op d(a,b_{2}))\op d(b_{2},c_{2})=d(c_{1},a)\op d(a,c_{2}),
    \end{align*}
    contradicting $a\in [c_{1},c_{2}]$. On the other hand $d(b_{1},b_{2})\leq d(c_{1},c_{2})$, since otherwise
    $$d(c_{1},c_{2})\op\epsilon\leq d(b_{1},b_{2})=d(b_{1},a)\oplus d(a,b_{2})\leq d(c_{1},a)\op d(a,c_{2})$$
    for some $\epsilon>0$, since $\rr$ has no gaps.
    So in case $\{b_{1},b_{2}\}\subseteq B_{2}$, then the second alternative in the statement holds, while if $\{b_{1},b_{2}\}\cap B_{1}\neq\emptyset$, then the first one must hold.
    \end{proof}

    \begin{cor}
    \label{non-cutting by parts}Let $\rr$ be a distance monoid with no gaps and $\U$ an $\rr$-Urysohn space.
    Let $r\in R$ and $A_{j},B_{j},\,\,1\leq j\leq k$ be finite subsets of $\U$ such that $A_{j}$ does not $r$-cut $B_{j}$ for any $1\leq j\leq k$. Then there exists $g\in G_{B_{k}}G_{B_{k-1}}\cdots G_{B_{2}}$ such that $A:=\bigcup_{1\leq j\leq k }A_{j}$ does not $r$-cut $g B_{1}$.
    \end{cor}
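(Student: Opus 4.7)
The plan is to proceed by induction on $k$, proving along the way the slightly stronger claim that $g$ can be taken of the form $f_k f_{k-1}\cdots f_2$ with $f_j \in G_{B_j \cup A_1 \cup \cdots \cup A_{j-1}}$ for $2 \leq j \leq k$. Since each such factor lies in $G_{B_j}$, this refinement implies the stated conclusion. The base $k=1$ is immediate with $g = \id$.

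For the inductive step, the hypothesis supplies $g' = f_{k-1}\cdots f_2$ with $f_j \in G_{B_j \cup A_1 \cup \cdots \cup A_{j-1}}$ such that $A_1 \cup \cdots \cup A_{k-1}$ does not $r$-cut $g'B_1$. Let $Z := B_k \cup A_1 \cup \cdots \cup A_{k-1}$. Using the free amalgamation property of the class of finite $\rr$-metric spaces, one embeds into $\U$ a copy of the free amalgam of $g'B_1 \cup Z$ and $A_k \cup Z$ over $Z$; ultrahomogeneity of $\U$ then produces $f_k \in G_Z$ with $f_k g'B_1 \ind_Z A_k$. Setting $g := f_k g'$ gives the required form, and it suffices to show that $A := \bigcup_j A_j$ does not $r$-cut $gB_1$.

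This verification splits in two. First, since $f_k$ fixes $A_1 \cup \cdots \cup A_{k-1}$ pointwise and the cutting relation depends only on pairwise distances, the distance profile of $gB_1$ to $A_1 \cup \cdots \cup A_{k-1}$ matches that of $g'B_1$, so by the inductive hypothesis no point of $A_1 \cup \cdots \cup A_{k-1}$ witnesses a $r$-cut of $gB_1$. Second, were $A_k$ to $r$-cut $gB_1$, Lemma \ref{independence cutting} applied to $gB_1 \ind_Z A_k$ with the base decomposed as $Z = (A_1 \cup \cdots \cup A_{k-1}) \cup B_k$ would force either $A_1 \cup \cdots \cup A_{k-1}$ to $r$-cut $gB_1$ (just excluded) or $A_k$ to $r$-cut $B_k$ (excluded by hypothesis).

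The one subtle point is choosing the right strengthening of the inductive hypothesis: requiring each $f_j$ to fix $A_1 \cup \cdots \cup A_{j-1}$ on top of $B_j$ is precisely what both makes the non-cutting property inherited by $f_k g'B_1$ from $g'B_1$ and enriches the base of the independence just enough so that the two possibilities left open by Lemma \ref{independence cutting} each contradict something already known. The existence of $f_k$ is a routine invocation of amalgamation together with ultrahomogeneity of $\U$.
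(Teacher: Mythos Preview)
Your proof is correct and takes essentially the same approach as the paper. The paper proves the case $k=2$ by choosing a copy $B'_1\cong_{A_1B_2}B_1$ with $B'_1\ind_{A_1B_2}A_2$ and invoking Lemma~\ref{independence cutting}, then remarks that the general case follows by an easy induction; your argument spells out that induction explicitly, and your choice of $f_k\in G_{B_k\cup A_1\cup\cdots\cup A_{k-1}}$ with $f_k g'B_1\ind_{B_k\cup A_1\cup\cdots\cup A_{k-1}}A_k$ is exactly the paper's $k=2$ step applied to the pair $(A_1\cup\cdots\cup A_{k-1},\,g'B_1)$ and $(A_k,B_k)$.

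One minor remark: the explicit strengthening of the inductive hypothesis is not strictly needed. Once the $k=2$ case is established, the induction can proceed by applying it as a black box to $\tilde A_1:=A_1\cup\cdots\cup A_{k-1}$, $\tilde B_1:=g'B_1$, $\tilde A_2:=A_k$, $\tilde B_2:=B_k$, which directly yields some $h\in G_{B_k}$ with $A$ not $r$-cutting $hg'B_1$. Your strengthened hypothesis simply records the extra information that the $k=2$ construction already provides, so nothing is lost either way.
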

    \begin{proof}
    The argument is analogous to the one in the proof of Claim \ref{separation lemma}. We will restrict to the case $k=2$, since the general case can be deduced from it by an easy induction argument.
    Take $B'_{1}\cong_{A_{1}B_{2}}B_{1}$ such that $B'_{1}\ind_{A_{1}B_{2}}A_{2}$.
    If $A_2$ cuts $B'_{1}$ then by Lemma \ref{independence cutting} we have
    $A_{1}$ cuts $B'_{1}$ or $A_{2}$ cuts $B_{2}$. Both alternatives are ruled out by our assumptions on $A_{i},B_{i}$ and the fact that $B'_{1}\cong_{A_{1}}B_{1}$.
    \end{proof}
    
    \begin{lem}
    \label{not tight} Let $\rr$ be a standard distance monoid and suppose we are given a triangular triple $r_{1},r_{2},r_{3}\in R\setminus\{0\}$ with $r_{1}\leq r_{2}\leq r_{3}$ as well as $\epsilon_{1},\epsilon_{2},\epsilon_{3},\delta\in R$ such that:
    \begin{itemize}
    \item $2\cdot\epsilon_{i}\leq r_{1}$ for every $1\leq i\leq 3$;
    \item $\epsilon_{3}\leq\max\{\epsilon_{1},\epsilon_{2}\}$;
    \item  $\delta\leq \max\{\epsilon_{1},\epsilon_{2}\}-\epsilon_3$.
    \end{itemize}
    Then the triple $r_{i}\oplus\epsilon_{i},\,\,1\leq i\leq 3$ is strongly $\delta$-flexible.
    \end{lem}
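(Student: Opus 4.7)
The plan is to reduce the verification to a collection of inequalities in the ambient ordered abelian group $\Lambda$ of the standard distance monoid $\rr$ (using the convention that $+,-$ refer to the operations of $\Lambda$, as in the paper) and then transfer these back to the monoid operation $\oplus=+_{b}$. Writing $s_{i}:=r_{i}\oplus\epsilon_{i}$, I will in fact establish the symmetric statement that
\begin{equation*}
s_{a}+\delta\leq s_{b}+s_{c}\quad\text{in }\Lambda
\end{equation*}
for every permutation $(a,b,c)$ of $(1,2,3)$. Once this is proved, plain $\delta$-flexibility for whichever $s_{i}$ happens to realise the maximum is immediate, and strong $\delta$-flexibility will follow with only a short extra argument.

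The three ambient-group inequalities split cleanly. The ``canonical'' case $(a,b,c)=(3,1,2)$ is essentially the hypothesis: triangularity gives $r_{3}\leq r_{1}+r_{2}$, while $\delta\leq\max\{\epsilon_{1},\epsilon_{2}\}-\epsilon_{3}$ yields $\epsilon_{3}+\delta\leq\max\{\epsilon_{1},\epsilon_{2}\}\leq\epsilon_{1}+\epsilon_{2}$. For the other two permutations $(a,b,c)\in\{(1,2,3),(2,1,3)\}$ the argument is uniform: the chain $r_{1}\leq r_{2}\leq r_{3}$ shows that $(r_{b}-r_{a})+r_{c}\geq r_{1}$, while the hypothesis $2\epsilon_{i}\leq r_{1}$ together with $\delta\leq\max\{\epsilon_{1},\epsilon_{2}\}\leq r_{1}/2$ gives $\epsilon_{a}+\delta\leq r_{1}$. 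Since $\epsilon_{b},\epsilon_{c}\geq 0$, both sides are then compared via $r_{1}$.

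The transfer to the monoid is simple: whenever $s_{b}+s_{c}\geq b$ in $\Lambda$ we have $s_{b}\oplus s_{c}=b$, so any inequality against an element of $R$ on the left is automatic; otherwise $\oplus$ coincides with $+$ on the pair and the ambient-group inequality transfers verbatim. To upgrade from $\delta$-flexibility to the strong version, suppose $s_{i}$ is the maximum of the $s_{\ell}$ and pick $s'_{j}\in R$ with $s'_{j}\oplus\delta\geq s_{j}$. Since $s_{j}\leq b$, the truncation in $\oplus$ is inactive here and we recover $s'_{j}+\delta\geq s_{j}$ in $\Lambda$; combined with $s_{i}+\delta\leq s_{j}+s_{k}$ just proved, this yields $s_{i}\leq s'_{j}+s_{k}$, and hence $s_{i}\leq s'_{j}\oplus s_{k}$ by the same truncation argument. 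The analogous bound with $s'_{k}$ is symmetric. The only mildly delicate point in the whole argument is ensuring that truncation is always compatible with the inequalities being compared; once that bookkeeping is under control there is no real obstacle.
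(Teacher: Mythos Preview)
Your proof is correct and follows essentially the same route as the paper: both establish the three inequalities $s_{a}\oplus\delta\leq s_{b}\oplus s_{c}$ (or their ambient-group analogues) via the same arithmetic, and then read off strong $\delta$-flexibility. Your presentation is slightly more streamlined in that you work uniformly in $\Lambda$ and derive the strong condition directly from the symmetric family $s_{a}+\delta\leq s_{b}+s_{c}$ rather than by separate case computations, but the content is the same.
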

    \begin{proof}
    On the one hand:
    \begin{align*}
    r_{3}\op\epsilon_{3}\oplus\delta\leq r_{1}\oplus r_{2}\oplus\epsilon_{3}\op\delta\leq
    r_{1}\oplus r_{2}\oplus\epsilon_{1}\oplus\epsilon_{2}=
    (r_{1}\oplus\epsilon_{1})\oplus (r_{2}\oplus\epsilon_{2}),
    \end{align*}
    where use the fact that $\epsilon_{3}\oplus\delta\leq\epsilon_{1}\oplus\epsilon_{2}$.
    On the other hand, for $i\in\{1,2\}$ we have:
    \begin{align*}
    r_{i}\op\epsilon_{i}\op\delta\leq r_{3}\op 2\max\{\epsilon_{1},\epsilon_{2}\}\leq r_{3}\oplus r_{3-i} \leq (r_{3}\oplus\epsilon_{3})\op (r_{3-i}\op\epsilon_{3-i}),
    \end{align*}
    This shows that the triple is $\delta$-flexible. Moreover,
    for $i\in\{1,2\}$ we have
    \begin{align*}
    (r_{i}+\epsilon_{i}-\delta)+ (r_{3-i}+\epsilon_{3-i})\geq r_{1}+r_{2}+\max\{\epsilon_{1},\epsilon_{2}\}-\delta\geq r_{3}+\epsilon_{3}
    \end{align*}
    which implies that $r'_{i}\oplus(r_{3-i}\oplus\epsilon_{3-i})\geq r_{3}\oplus\epsilon_{3}$ for any $r'_{i}$ for which $r_{i}\leq r'_{i}\oplus \delta$. Likewise for $i\in\{1,2\}$ we have
    \begin{align*}
    (r_{3}-\delta)+(r_{i}\oplus\epsilon_{i})\geq r_{3}-\delta+r_{i}\geq r_{3}-\delta+2\max\{\epsilon_{1},\epsilon_{2}\}\geq\\
    \geq r_{3}-\delta+(\delta+\max\{\epsilon_{1},\epsilon_{2}\})\geq r_{3-i}+\epsilon_{3-i}
    \end{align*}
    so the tuple $r_{1}\oplus\epsilon_{1},r_{2}\oplus\epsilon_{2},r_{3}\oplus\epsilon_{3}$
    is strongly $\delta$-flexible.

    \end{proof}

    \begin{lem}
    \label{misalignment}Let $\rr$ be a standard distance monoid with no gaps, no minimal positive element and such that for any $r\in R$ there exists
    $s\in R$ such that $s\oplus s\leq r$.
    Let $\U$ an $\rr$-Urysohn space and $A,B\subset\U$ finite sets such that $B$ does not cut $A$. Then there exist
    $A'\cong_{B}A$ such that $A'$ is in general position relative to $A$.
    \end{lem}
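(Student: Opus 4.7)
The plan is to construct $A'$ as a finite $\rr$-metric extension of $A \cup B$ by prescribing each cross-distance $d(a_i, a'_j)$ as $d(a_i, a_j) \oplus u_{i,j}$ for carefully chosen perturbations $u_{i,j} \in R$, and then realize this extension inside $\U$ via the extension property (EP). The crucial feature of the $u_{i,j}$'s is that they form a \emph{strictly monotonically decreasing} function of $d(a_i, a_j)$.

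I start by extracting positivity constants. Since $B$ does not cut $A$, for every $b \in B$ and distinct $a_i, a_j \in A$ the triple $(a_i, b, a_j)$ is not aligned; combined with the no-gap axiom this yields a uniform $\epsilon_0 > 0$ with $d(a_i, b) \oplus d(b, a_j) \geq d(a_i, a_j) \oplus \epsilon_0$. Unwinding the same hypothesis also forces $A \cap B = \emptyset$, so $d(A, B) > 0$. Let $\phi_{\min}$ denote the minimum \emph{positive} triangle fatness $d(a_i, a_k) \oplus d(a_k, a_j) - d(a_i, a_j)$ over triples of points in $A$. Using the no-minimum-positive-element and halving axioms, pick values $u_{i, j} \in R_{>0}$ that are strictly monotonically decreasing in $d(a_i, a_j)$ with $\max_{i,j} u_{i,j}$ bounded above by each of $\epsilon_0$, $2 d(A, B)$, $\min_{i \neq j} d(a_i, a_j)$, and $\phi_{\min}$. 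Concretely, rank the pairs of $A^2$ by their distance and set $u_{i, j} := (k^2 - r_{i,j} + 1) \eta$ for $k = |A|$ and $\eta \in R$ sufficiently small.

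Let $A' = \{a'_1, \ldots, a'_k\}$ be disjoint from $A \cup B$ and extend the metric to $(A \cup B \cup A')^2$ by $d(a'_i, a'_j) := d(a_i, a_j)$, $d(a'_i, b) := d(a_i, b)$, and $d(a_i, a'_j) := d(a_i, a_j) \oplus u_{i, j}$. Verification of the $\rr$-metric axioms is routine from the parameter bounds: triangles through $B$ reduce to $u_{i, j} \leq \epsilon_0$ and $u_{i, i} \leq 2 d(a_i, B)$, while triangles within $A \cup A'$ are immediate unless some triple $(a_i, a_k, a_j)$ in $A$ is aligned with $a_k$ strictly between $a_i, a_j$, in which case the required inequality reduces to $u_{i, j} \leq \min(u_{k, j}, u_{i, k})$, a direct consequence of the monotonicity of $u$ and the strict inequalities $d(a_k, a_j), d(a_i, a_k) < d(a_i, a_j)$. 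Applying (EP) then embeds the extension into $\U$ fixing $A \cup B$, producing the required $A' \cong_B A$.

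Finally I verify the general position. For distinct $a_1, a_2 \in A$ and $a'_0 \in A'$ the three triangle-inequality slacks of $\bigl(d(a_1, a_2),\, d(a_1, a_0) \oplus u_{1, 0},\, d(a_2, a_0) \oplus u_{2, 0}\bigr)$ (with $u_{i, i}$ substituted in the sub-case $a_0 = a_i$) each equal the corresponding non-negative fatness $\phi_\ell \geq 0$ of the triangle $(a_1, a_2, a_0)$ in $A$ plus a signed difference of two $u$-values. In every non-degenerate case the bound $\max u < \phi_{\min}$ makes the slack positive, dominated by $\phi_\ell$. In each degenerate case $\phi_\ell = 0$, strict monotonicity of $u$ forces the relevant $u$-difference to be strictly positive: for the alignment $a_1 \in [a_0, a_2]$ the inequality $d(a_1, a_0) < d(a_2, a_0)$ gives $u_{1, 0} > u_{2, 0}$, and in the sub-case $a_0 = a_1$ the inequality $0 = d(a_1, a_1) < d(a_2, a_1)$ gives $u_{1, 1} > u_{2, 1}$. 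Taking $\delta > 0$ as the minimum slack over the finite set of triples produces strong $\delta$-flexibility, i.e., $A'$ is in $\delta$-general position relative to $A$. The chief subtlety of the argument—and the reason a constant perturbation would not suffice—is precisely this last point: a constant $u$ would leave the triple $(a_1, a_2, a'_1)$ exactly aligned.
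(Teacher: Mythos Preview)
Your proof is correct and follows essentially the same construction as the paper: perturb each cross-distance $d(a_i,a'_j)$ by a positive amount strictly decreasing in $d(a_i,a_j)$, check that this yields an $\rr$-metric extension of $A\cup B$, and then deduce strong flexibility of every triple $a_1,a_2,a'_0$. The only noteworthy differences are that you verify flexibility by a direct slack computation rather than via the paper's Lemma~\ref{not tight}, and that you explicitly assign diagonal perturbations $u_{i,i}>0$ (and hence treat the case $a_0\in\{a_1,a_2\}$), which the paper's write-up leaves implicit.
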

    \begin{proof}
    Fix $\epsilon\in R\setminus\{0\}$ such that $d(a,a')\oplus\epsilon\leq d(a,b)\oplus d(b,a')$ for all $a,a'\in A$ and $b\in B$ (we include the case $a=a'$).
    Fix some symmetric injective function  $f:A \times A\to (0,\epsilon)\subset R$ such that:
    \begin{itemize}
    \item $2\max(im(f))\leq\min\{d(a,a')\,|\,a,a'\in A,a\neq a'\}$;
    \item $d(a_{1},a_{2})<d(a'_{1},a'_{2})$ implies $f(a_{1},a_{2})>f(a'_{1},a'_{2})$ for $a_{1},a_{2},a'_{1},a'_{2}\in A$.
    \end{itemize}
    \begin{claim}
    \label{auxiliary existence} There exists $h\in G_{B}$ such that $d(a,ha')=d(a,a')\oplus f(a,a')$, for all $a,a'\in A$.
    \end{claim}
    
    Let us first show how to prove the Lemma using the claim above. We need to show that for any $a_{1},a_{2},a_{3}\in A$, $a_{1}\neq a_{2}$, the triple $a_{1},a_{2},ha_{3}$ is strongly $\delta$-flexible, where $\delta$ is the minimum between the minimum value of $f$ and the smallest absolute difference between two values in the image of $f$.
    
    If $a_{3}=a_{i}$ for some $i=1,2$, let's say $a_{3}=a_{1}$, then strong
    $\delta$-flexibility of $\{a_{1},a_{2},ha_{3}\}$ follows from  $f(a_1,a_2)+\delta\leq  f(a_{1},a_{1})=d(a_{1},ha_{3})$, $d(a_2,ha_1)=d(a_1,a_2)\oplus f(a_1,a_2)$ and $f(a_{1},a_{1})\leq d(a_{1},a_{2})$.
    

    If $a_{1},a_{2},a_{3}$ are all different we apply Lemma \ref{not tight} to the three distances between $a_{1},a_{2},a_{3}$, which we name in increasing order $r_{1}\leq r_{2}\leq r_{3}$.
    Here we take $\epsilon_{i}=0$ exactly for one value of $i$ for which $r_{i}:=d(a_{1},a_{2})$, while for
    $i=1,2$ if $r_{j}=d(a_{i},a_{3})$, we take $\epsilon_{j}=f(a_{i},a_{3})$.

    The first condition in Lemma \ref{not tight} follows immediately from the first property of $f$.
    On the other hand, our second condition on $f$ guarantees that whichever element in $\{\epsilon_{1},\epsilon_{2}\}$ is non-zero must be also larger than $\epsilon_{3}$ so the second assumption of Lemma \ref{not tight} also holds. The third one follows from the choice of $\delta$.

    It follows from \ref{not tight} for any $a_{1},a_{2},a_{3}\in A$, $a_{1}\neq a_{2}$, the triple $a_{1},a_{2},ha_{3}$ is strongly flexible. This in turn implies that $hA$ is in general position relative to $A$.
    
    \newcommand{\bd}[0]{\bar{d}}
    Let us now prove the Claim. For $i=1,2$ let $A^{i}=\{a^{i}\,|\,a\in A\}$ be a copy of $A$ and $D:=A^{1}\coprod A^{2}\coprod B$.
    It suffices to check that the function $\bd:D^{2}\to R$ given by:
    \begin{itemize}
    \item $\bd(a_{1}^{i},a_{2}^{i})=d(a_{1},a_{2})$ for $i=1,2$ and $a_{1},a_{2}\in A$;
    \item  $\bd(a^{i},b)=d(a,b)$ for $i=1,2$, $a\in A$ and $b\in B$;
    \item $\bd(a^{1}_{1},a^{2}_{2})=d(a_{1},a_{2})\op f(a_{1},a_{2})$ for any $a_{1},a_{2}\in A$;
    \end{itemize}
    satisfies the triangle inequality. For triples of points
    contained in $A^{1}\cup A^{2}$ this is part of the conclusion of Lemma \ref{not tight}. All that is left to check is the triangle inequality for triples of the form $(a^{1}_{1},a^{2}_{2},b)$, where
    $a_{1},a_{2}\in A$ and $b\in B$.
    We have $\bd(a^{1}_{1},a^{2}_{2})\leq d(a_{1},a_{2})\oplus\epsilon\leq  d(a_{1},b)\op d(b,a_{2})=\bd(a_{1}^{1},b)\op\bd(b,a_{2}^{2})$ by the choice of $\epsilon$ and the fact that $im(f)\subseteq(0,\epsilon)$, while the remaining inequalities are straightforward from the inequality $d(a_{1},a_{2})\leq \bd(a_{1}^{1},a^{2}_{2})$.
    \end{proof}

    \begin{lem}
    \label{better non-cutting constant} Let $\rr$ be a standard distance monoid and $\U$ an $\rr$-Urysohn space. Assume we are given $r\in R$ and finite $A,B\subset\U$ such that $B$ does not $r$-cut $A$. Take $A'$ such that $AB\cong A'B$ and $A'\ind_{B}A$. Then $A'$ does not $(r\oplus r)$-cut $A$.
    \end{lem}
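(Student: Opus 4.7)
The plan is to argue by contradiction: assume $A'$ $(r\oplus r)$-cuts $A$, so that there exist $a_{1},a_{2}\in A$ with $d(a_{1},a_{2})\leq r\oplus r$ and some $a'\in A'\cap[a_{1},a_{2}]$. Alignment yields $d(a_{1},a')\oplus d(a',a_{2})=d(a_{1},a_{2})\leq r\oplus r$, and since $\rr$ is standard this forces $\min\{d(a_{1},a'),d(a',a_{2})\}\leq r$; after relabelling assume $d(a_{1},a')\leq r$.

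The hypothesis $A'\ind_{B}A$ says that $A\cup B$ and $A'\cup B$ sit as a free amalgam over $B$ inside $\U$. By the formula for cross-distances in the free amalgam of two $\rr$-metric spaces recalled in Section \ref{subsec-setting-Ury}, we have $d(a_{1},a')=\min_{b\in B}\{d(a_{1},b)\oplus d(b,a')\}$, so some $b\in B$ lies in $[a_{1},a']$. Let $a_{0}\in A$ be the preimage of $a'$ under the chosen isometry $AB\cong A'B$ fixing $B$; then $d(b',a_{0})=d(b',a')$ for every $b'\in B$, and applying the same amalgam formula to the pair $(a_{1},a_{0})$ gives
\[
d(a_{1},a_{0})=\min_{b'\in B}\{d(a_{1},b')\oplus d(b',a_{0})\}=\min_{b'\in B}\{d(a_{1},b')\oplus d(b',a')\}=d(a_{1},a').
\]
From this equality I immediately get $d(a_{1},a_{0})\leq r$, and moreover $d(a_{1},b)\oplus d(b,a_{0})=d(a_{1},b)\oplus d(b,a')=d(a_{1},a')=d(a_{1},a_{0})$, so $b\in[a_{1},a_{0}]$. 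Provided $a_{0}\neq a_{1}$, this yields a point of $B$ aligned between two distinct points of $A$ at distance $\leq r$, contradicting the hypothesis that $B$ does not $r$-cut $A$.

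The delicate point will be verifying the non-degeneracy $a_{0}\neq a_{1}$. Since the free-amalgam identification forces $A\cap A'\subseteq B$, the coincidence $a_{0}=a_{1}$ would entail $a_{1}=a'$, hence $a_{1}\in A\cap B$, and in particular $d(a_{1},a')=0$; but then $B$ already trivially $r$-cuts $A$ via the endpoint, contradicting the hypothesis (this shows in passing that $A\cap B=\emptyset$, and hence $A\cap A'=\emptyset$, under the standing assumption). I expect the remaining subtleties to be purely notational: organizing the case where the distance monoid carries a nontrivial truncation cap $b$, so that the inference $\min\{d(a_{1},a'),d(a',a_{2})\}\leq r$ from $d(a_{1},a')\oplus d(a',a_{2})\leq r\oplus r$ continues to hold (when $r\oplus r=b$ the hypothesis becomes vacuous and the conclusion needs to be read as ``$A'$ does not align with $A$'', for which the same argument applies verbatim).
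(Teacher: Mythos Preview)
Your argument breaks at the step where you ``apply the same amalgam formula to the pair $(a_{1},a_{0})$''. Both $a_{1}$ and $a_{0}$ lie in $A$, and the free-amalgam distance formula from Section~\ref{subsec-setting-Ury} governs only \emph{cross-distances}, i.e.\ pairs with one point in $A\setminus B$ and the other in $A'\setminus B$. For two points of $A$ the distance is simply the original one; in general one only has the triangle inequality
\[
d(a_{1},a_{0})\leq \min_{b'\in B}\{d(a_{1},b')\oplus d(b',a_{0})\},
\]
and the reverse inequality you use is exactly what is at stake. So from $d(a_{1},b)\oplus d(b,a_{0})=d(a_{1},a')$ you cannot conclude $d(a_{1},a_{0})=d(a_{1},a')$, and hence you do not get $b\in[a_{1},a_{0}]$.

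The paper closes precisely this gap by bringing the \emph{second} endpoint $a_{2}$ into play. Writing $a_{0}$ for the preimage of $a'$ (the paper's $a_{3}$), the amalgam formula applied to the legitimate cross-pairs gives $d(a_{j},a')\geq d(a_{j},a_{0})$ for $j=1,2$. Combining these with alignment $d(a_{1},a')\oplus d(a',a_{2})=d(a_{1},a_{2})$ and the triangle inequality $d(a_{1},a_{0})\oplus d(a_{0},a_{2})\geq d(a_{1},a_{2})$ forces $d(a_{j},a')=d(a_{j},a_{0})$ for \emph{both} $j$. Equality in the amalgam estimate then yields $B\cap[a_{j},a_{0}]\neq\emptyset$ for both $j$, and since $d(a_{1},a_{0})\oplus d(a_{0},a_{2})=d(a_{1},a_{2})\leq r\oplus r$ at least one of these intervals has length $\leq r$, contradicting the hypothesis. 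Your one-sided analysis cannot produce the needed equality; the two-sided squeeze is the missing idea.
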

    \begin{proof}
    As usual, let $a'$ stand for the conjugate of any given $a\in A$ in $A'$. Aiming for contradiction,
    suppose that there exist $a_{1},a_{2},a_{3}\in A$
    such that $d(a_{1},a_{2})\leq r\oplus r$ and $a'_{3}\in[a_{1},a_{2}]$.
    This implies, in particular, that $d(a_{1},a_{2})<\sup R$.
    
    Notice that by independence for all $a,\tilde{a}\in A$ we have $d(a,\tilde{a}')\geq d(a,\tilde{a})$, with equality only if
    $B\cap[a,\tilde{a}]\neq\emptyset$ or $d(a,\tilde{a})=\sup R$ (in the bounded case).

    Since $d(a_{1},a_{2})= d(a_{1},a'_{3})+d(a'_{3},a_{2})$, as $d(a_{1},a_{2})<sup\,R$,
    the triangle inequality implies $d(a_{j},a'_{3})=d(a_{j},a_{3})$
    for $j=1,2$. By the previous paragraph the intersections
    $B\cap [a_{1},a_{3}]$ and $B\cap [a_{2},a_{3}]$ are both non-empty.
    However, $d(a_{i},a_{3})\leq r$ for at least one $i\in\{1,2\}$: a contradiction.
    \end{proof}

  \subsection{Downward lemmas}\label{subsection downwards}
    
    Throughout this subsection $\rr$ will be a standard distance monoid, $\U$ an $\rr$-Urysohn space and
    $G=\Isom(\U)$.

    \begin{lem}
    \label{lambda lemma}For any finite $a,c\in\U$ and finite $B\subseteq\U$ we have $G_{B}\subseteq \nb_{a,c}$ if and only if $B\cap[a,c]\neq\emptyset$.
    \end{lem}
    
    \begin{proof}
    We may assume that $d(a,c)<sup\,R$, since otherwise both the condition on the right and that on the left are trivially satisfied.
    
    For the only if direction assume $B\cap [a,c]=\emptyset$ and choose $h\in G_{B}$ such that $ha\ind_{B}c$. We then have $d(ha,c)=\min \{d(a,b)\oplus d(b,c)\, | \, {b\in B}\}>d(a,c)$, so that $h\in G_{B}\setminus\nb_{a,c}\neq\emptyset$. The opposite direction is a direct consequence of the triangle inequality together with the existence for $b\in B$ such that $d(a,c)=d(a,b)\oplus d(b,c)$.
    \end{proof}

    \begin{lem}
    \label{sphere union}
    Let $\tau$ be a non-trivial group topology on $G$ strictly coarser than $\tau_{st}$ and assume that there exists $W\in\mathcal{N}_{\tau}(1)$ and distinct points
    $a,b_{1},b_{2},\dots, b_{k}\in\U$ such that $W\subseteq\bigcup_{1\leq j\leq k}\nb_{a,b_{j}}$ and
    $d(a,b_{j})+d(a,b_{j})\in R$ for all $1\leq j\leq k$.
    Then there exists $1\leq j_{0}\leq k$ such that
    $\nb_{a,b_{j_{0}}}\in\idn$.
    \end{lem}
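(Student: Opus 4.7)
I would argue by contradiction: assume that $\nb_{a,b_j}\notin \mathcal N_\tau(1)$ for every $j=1,\dots,k$ and produce, inside a small enough symmetric $V\in \mathcal N_\tau(1)$ with $V^{k+1}\subseteq W$, an element $g\in V^{k}\subseteq W$ for which $d(ga,b_j)>r_j$ for every $j$, thereby contradicting $W\subseteq \bigcup_j \nb_{a,b_j}$.

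A basic preliminary observation is that under the standing assumption $G_a\notin \mathcal N_\tau(1)$: otherwise, since $G_a\subseteq \nb_{a,b_j}$ for every $j$, each $\nb_{a,b_j}$ would itself be a $\tau$-neighbourhood of $1$. In particular every $V\in \mathcal N_\tau(1)$ contains isometries that move $a$, and more generally one is free to search for test elements inside $V\cap G_B$ with $a\notin B$.

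The construction proceeds by induction on $i=0,1,\dots,k$. One builds a sequence $g^{(0)}=1,g^{(1)},\dots,g^{(k)}$ with $g^{(i)}=g^{(i-1)}h^{(i-1)}$ and finite $B_i\subset \U$ such that $h^{(i-1)}\in V\cap G_{B_i}$ and $g^{(i)}\notin \bigcup_{j\leq i}\nb_{a,b_j}$. Granting such a chain, $g^{(k)}\in V^k\subseteq W$ provides the desired contradiction. To perform the inductive step, one applies Lemma \ref{lambda lemma}(3) to $g=g^{(i)}$ with $C=\{b_1,\dots,b_{i+1}\}$ and $B=B_{i+1}$: the lemma will produce a suitable $h^{(i)}\in G_{B_{i+1}}$ with $g^{(i)}h^{(i)}\notin\bigcup_{j\leq i+1}\nb_{a,b_j}$ provided the pointwise condition $\lambda^a_{b,b_j}(g^{(i)})>r_j$ is met for every $b\in B_{i+1}$ and $j\leq i+1$. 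The simplest way to ensure this is to take $B_{i+1}$ entirely at distance strictly greater than $\max_l r_l$ from $a$, because then $\lambda^a_{b,b_j}(g^{(i)})\geq d(a,b)>r_j$ holds automatically, independently of $g^{(i)}$.

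The main obstacle is the compatibility between these two requirements on $B_{i+1}$: on one hand its points must be far from $a$, on the other hand $G_{B_{i+1}}$ must meet $V$ in enough elements to recover the specific $h^{(i)}$ required by the proof of Lemma \ref{lambda lemma}(3) (namely one with $h^{(i)}a\ind_{B_{i+1}}g^{(i)-1}C$, and in particular with $h^{(i)}a\neq a$). The plan is to show the following analogue of Lemma \ref{lem-final-main} valid under our hypotheses: for every $V\in\mathcal N_\tau(1)$ and every $r\in R$ there exists a finite $B\subset\U$ with $d(a,B)\geq r$ such that $G_B\cap V$ contains an isometry moving $a$ independently of $B$ from any prescribed finite configuration. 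The proof of this analogue should combine the distancing machinery of Lemma \ref{new distancing lemma} and the separation argument of Lemma \ref{separation lemma}, and this is the one place where the archimedean hypothesis on $\rr$ is essential, as it guarantees that the distances produced by iterating these lemmas can be made to exceed $\max_l r_l$ after finitely many steps.

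Assuming this analogue, choosing $B_{i+1}$ appropriately and extracting $h^{(i)}\in V\cap G_{B_{i+1}}$ from the combination of Lemma \ref{lambda lemma}(3) with the homogeneity of $\U$ closes the induction, producing $g^{(k)}\in W\setminus\bigcup_j\nb_{a,b_j}$ and the desired contradiction.
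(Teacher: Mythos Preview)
Your inductive scheme is sensible and your use of Lemma \ref{lambda lemma}(3) is correct, but the argument collapses at the one step you yourself flag as the main obstacle: the ``analogue of Lemma~\ref{lem-final-main}''. That analogue is in fact \emph{false} under the hypotheses of Lemma~\ref{sphere union}. Indeed, since $\tau$ is non-trivial and coarser than $\tau_{st}$, Theorem~\ref{Urysohn thm 1} forces $\tau_{m}\subseteq\tau$, so $V=N_{a}(\epsilon)$ lies in $\mathcal N_{\tau}(1)$ for every $\epsilon>0$. Now take any $B$ with $d(a,B)\geq r>\epsilon$ and any $h\in G_{B}\cap V$. On the one hand $d(ha,a)\leq\epsilon$; on the other hand $ha\ind_{B}d$ for a point $d$ gives $d(ha,d)=\min_{b\in B}\big(d(a,b)+d(b,d)\big)\geq 2r-d(a,d)$, while the triangle inequality gives $d(ha,d)\leq\epsilon+d(a,d)$. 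These are incompatible as soon as $d(a,d)<r-\tfrac{\epsilon}{2}$, and nothing in your construction prevents $(g^{(i)})^{-1}b_{j}$ from landing in that region. The lemmas you point to (\ref{new distancing lemma} and \ref{separation lemma}) are structural facts about $\U$; in the proof of Lemma~\ref{lem-final-main} they only become useful \emph{after} Lemma~\ref{lemma-3-main}, which rests on the hypothesis $N_{v}(s)\notin\mathcal N_{\tau}(1)$---exactly what fails here.

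The paper's argument avoids this trap by exploiting $\tau_{m}\subseteq\tau$ in a different way. One fixes a single finite $C$ with $G_{C}\subseteq W_{0}$ and assumes (minimality of $k$) that no $\nb_{a,b_{j}}$ is a $\tau$-neighbourhood. Then one builds $h=g_{k}\cdots g_{1}$ step by step: at stage $j$ choose $g_{j}\in U\setminus\nb_{a,b_{j}}$, which by Lemma~\ref{lambda lemma} makes $\lambda^{a}_{c,b_{j}}(\bar g_{j})>d(a,b_{j})$ for every $c\in C$; then require each later $g_{j'}$ to lie in a prescribed $N_{x}(\epsilon_{j})$ (available precisely because $\tau_{m}\subseteq\tau$), so that by Lemma~\ref{lambda lemma}(\ref{lambda1}) the gain at step $j$ erodes by at most $\epsilon_{j}$ per step and survives to the end. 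A \emph{single} application of Lemma~\ref{lambda lemma}(\ref{lambda3}) with this $C$ then produces the required element of $W\setminus\bigcup_{j}\nb_{a,b_{j}}$. The crucial difference from your plan is that the paper never needs a set $B$ far from $a$ with $G_{B}$ inside a small $\tau$-neighbourhood; it controls the $\lambda$-values directly via the metric topology.
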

    \begin{proof}
    Assume the conclusion does not hold. This means $U\nsubseteq N^{sp}_{c,d}$ for any $U\in\idn$ and any $c,d\in\U$ with $d(c,d)=d(a,b_{j})$ for $1\leq j\leq k$. Notice also that since $\tau$ is not trivial and coarser than $\tau_{st}$ by Theorem \ref{Urysohn thm 1} it must be finer than $\tau_{m}$ so that $N_{x}(\epsilon)\cap U\in\idn$ for any $x\in\U$ and $\epsilon>0$.
    
    Let $B=\{b_{j}\}_{j=1}^{k}$ and take $h\in G_{a}$ such that $h^{-1}B\ind_{a}B$. Let $W'=W\cap W^{h}\subseteq(\bigcup_{j=1}^{k}N^{sp}_{a,b_{j}})\cap(\bigcup_{j=1}^{k}N^{sp}_{a,h^{-1}b_{j}})$.
    For any $g\in W'$ we have $g\in N^{sp}_{a,b_{j}}$ for some $1\leq j\leq k$. Combining this with the assumption on $d(a,b_{l})$, $1\leq l\leq k$, independence and the triangular inequality we get for any $1\leq i\leq k$:
    \begin{align*}
    d(a,b_{i})+d(a,h^{-1}b_{j})=d(a,b_{i})\oplus d(a,h^{-1}b_{j})=d(b_{i},h^{-1}b_{j})\leq\\
    \leq d(ga,b_{i})\oplus d(ga,h^{-1}b_{j})\leq d(ga,b_{i})\oplus d(a,h^{-1}b_{j}),
    \end{align*}
    which implies that $d(ga,b_{i})\geq d(a,b_i)$.
    
    Pick some $U\in\idn$ such that $U^{k}\subset W'$.
    We construct a sequence of elements
    $g_{1},g_{2},\dots, g_{k}\in U$ and $\epsilon_{1}\geq \epsilon_{2}\geq \dots \epsilon_{k}\in R\setminus\{0\}$ in the inductive fashion described below. We use the notation $\bar{g}_{j}=g_{j}g_{j-1}\dots g_{1}$ for $1\leq j\leq k$.

    To start with, we choose $g_{1}\in U\setminus\nb_{a,b_{1}}$. Fix $\epsilon_{1}>0$ such that $(k-1)\epsilon_{1}<d(g_{1}a,b_{1})-d(a,b_{1})$. Now, suppose that for some $1\leq j\leq k$ the elements $g_{1},g_{2},\dots g_{j}$ and $\epsilon_{1},\epsilon_{2},\dots\epsilon_{j}$ have already been chosen.
    
    We distinguish two cases. If $d(\bar{g}_{j}a,b_{j+1})>d(a,b_{j+1})$, then let $g_{j+1}=1$.
    
    Otherwise, the choice of $W'$ together with the fact that $\bar{g}_{j}\in U^{j}\subseteq W'$ implies that $d(\bar{g}_{j}a,b_{j+1})=d(a,b_{j+1})$.
    Since by the second observation in the first paragraph we have $U':=U\cap N_{a}(\epsilon_{j})\in\idn$, it follows from the first observation in that same paragraph that
    we can choose $g_{j+1}\in U'\setminus\nb_{\bar{g}_{j}a,b_{j+1}}$.
    Notice that in both cases we get $d(\bar{g}_{j+1}a,b_{j+1})>d(a,b_{j+1})$.
    Finally, we choose $\epsilon_{j+1}\in (0,\epsilon_{j})$ such that $(k-j-1)\epsilon_{j+1}<d(\bar{g}_{j+1}a,b_{j+1})-d(a,b_{j+1})$.
    
    We claim that $\bar{g}_{k}\nin\nb_{a,b_{j}}$ for any $1\leq j\leq k$. Since $\bar{g}_{k}\in U^{k}\subseteq W$, this contradicts the initial hypothesis. For $j=k$ this has already been shown. For $j<k$ we have
    \begin{align*}
	    d(\bar{g}_{k}a,b_{j})\geq d(\bar{g_{j}}a,b_{j})-\sum_{l=j}^{k-1}d(\bar{g}_{l}a,\bar{g}_{l+1}a)\geq\\
	     d(\bar{g_{j}}a,b_{j})-\sum_{l=j}^{k-1}\epsilon_{l}>d(\bar{g_{j}}a,b_{j})-(k-j)\epsilon_{j}>d(a,b_{j})
    \end{align*}
    \end{proof}

    \begin{lem}
    \label{alignment to sphere} Let $\tau$ be a group topology on $G$.
    Suppose we are given $W\in\mathcal{N}_{\tau}(1)$ and points $a,b_{1},c_{1},b_{2},c_{2},\dots, b_{k},c_{k}\in\U$ such that $d(b_{i},c_{i})+d(b_{i},c_{i})\in R$\footnote{In the sense of $+$ not $\oplus$} for all $1\leq i\leq k$ and for all $g\in W$ there is some $1\leq j\leq k$ such that $ga\in [b_{j},c_{j}]$.
    Assume $k'\leq k$ is such that $a\in[b_{j},c_{j}]$ precisely for $1\leq j\leq k'$. Then
    \begin{align*}
    \bigcup_{1\leq j\leq k'}(\nb_{a,b_{j}}\cap \nb_{a,c_{j}})\in\idn.
    \end{align*}
    \end{lem}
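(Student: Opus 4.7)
I plan to argue by contradiction, adapting the strategy of the proof of Lemma \ref{sphere union}. Suppose $\bigcup_{j\leq k'}(\nb_{a,b_{j}}\cap\nb_{a,c_{j}})\notin\idn$. Then every $\tau$-subneighborhood of $W$ contains an element lying outside this union. My goal is to synthesize from such witnesses, together with elements produced by part \ref{lambda3} of Lemma \ref{lambda lemma}, a single $h\in W$ with $ha\notin[b_{j},c_{j}]$ for every $1\leq j\leq k$, contradicting the defining property of $W$.

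The construction will build iteratively a product $h=g_{k}g_{k-1}\cdots g_{1}$, each $g_{j}$ drawn from a successively smaller $\tau$-neighborhood, so that the partial product $\bar{g}_{j}=g_{j}\cdots g_{1}$ keeps $\bar{g}_{j}a$ off the segment $[b_{j},c_{j}]$ with a quantitative margin. For the bad indices $j>k'$ the non-alignment of the triple $(b_{j},a,c_{j})$ yields strict slack $d(a,b_{j})\oplus d(a,c_{j})>d(b_{j},c_{j})$; combined with a no-gap hypothesis on $\rr$ this supplies $\epsilon_{j}>0$ quantifying the margin, and part \ref{lambda3} of Lemma \ref{lambda lemma} then produces $g_{j}$ driving $a$ even further from $[b_{j},c_{j}]$. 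For the good indices $j\leq k'$ the negation of the conclusion supplies $g_{j}$ outside $\nb_{a,b_{j}}\cap\nb_{a,c_{j}}$, and therefore with $d(g_{j}a,b_{j})>d(a,b_{j})$ or $d(g_{j}a,c_{j})>d(a,c_{j})$, again providing a positive defect to propagate.

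To preserve earlier defects while introducing new ones, I will control the neighborhoods from which the $g_{j}$ are drawn using parts \ref{lambda1} and \ref{lambda2} of Lemma \ref{lambda lemma}: shrinking the $\tau$-neighborhood containing $g_{j}$ ensures that left multiplication by $g_{j}$ perturbs the previously accumulated $\lambda$-values by less than the margins $\epsilon_{i}$, $i<j$. In this way the final product $h$ inherits, for every $j$, enough defect to place $ha$ off $[b_{j},c_{j}]$ simultaneously.

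The main obstacle is carrying this propagation out uniformly across all the indices $j$, especially for $j\leq k'$ where the direction of the defect (toward $b_{j}$ or toward $c_{j}$) is not a priori fixed and may vary from one choice of $g_{j}$ to the next. This is where I expect to make essential use of the hypothesis $d(b_{j},c_{j})+d(b_{j},c_{j})\leq \sup R$: it guarantees enough room in $\rr$ to pick margins large enough to dominate the cumulative perturbations regardless of direction, so that the resulting $h\in W$ really does misalign with every segment, yielding the contradiction.
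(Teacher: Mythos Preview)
Your plan has a genuine gap at the core step for the indices $j\leq k'$. Negating the conclusion gives you, inside every $\tau$-neighbourhood, an element $g$ lying outside $\bigcup_{j\leq k'}(\nb_{a,b_{j}}\cap\nb_{a,c_{j}})$; that means for each such $j$ either $d(ga,b_{j})>d(a,b_{j})$ or $d(ga,c_{j})>d(a,c_{j})$. But this is perfectly compatible with $ga\in[b_{j},c_{j}]$: the point $ga$ can sit on the segment at a different spot than $a$, with one endpoint farther and the other nearer. Thus the ``defect'' you harvest from the negation is \emph{not} of the form $ga\notin[b_{j},c_{j}]$, and your inductive scheme has no mechanism to convert it into one. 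Consequently you cannot manufacture $h\in W$ with $ha\notin[b_{j},c_{j}]$ for all $j$, and the intended contradiction never materialises. (Note also that, unlike Lemma~\ref{sphere union}, the present statement carries no hypothesis that $\tau$ be non-trivial or coarser than $\tau_{st}$, so you cannot freely intersect with small $N_{x}(\epsilon)$-balls.)

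The paper's argument is entirely different and does not proceed by contradiction or iteration. One takes a copy $(b'_{i},c'_{i})_{i}$ of $(b_{i},c_{i})_{i}$ independent over $a$, chooses $g\in G_{a}$ sending one to the other, and sets $W'=W^{g^{-1}}\cap W\in\idn$. For $h\in W'$ one then has $ha\in[b_{i},c_{i}]\cap[b'_{j},c'_{j}]$ for some $i,j$. The hypothesis $d(b_{l},c_{l})+d(b_{l},c_{l})\leq\sup R$ enters here, and only here, to guarantee that independence over $a$ yields additive cross-distances such as $d(b_{i},b'_{j})=d(a,b_{i})+d(a,b'_{j})$; a short arithmetic computation with these equalities and the triangle inequality then forces $d(ha,b_{i})=d(a,b_{i})$ and $d(ha,c_{i})=d(a,c_{i})$ exactly, so $i\leq k'$ and $h\in\nb_{a,b_{i}}\cap\nb_{a,c_{i}}$. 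The role you conjectured for the $2d(b_{j},c_{j})\leq\sup R$ bound --- providing room for margins --- is not what is actually going on.
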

    \begin{proof}
    Let $(b_{i}',c'_{i})_{1\leq i\leq k}$ be an isometric copy of $(b_{i},c_{i})_{1\leq i\leq k}$ independent from the latter over $a$. Pick $g\in G_{a}$ such that $gb_{i}=b'_{i}$ and $gc_{i}=c'_{i}$ and consider $W'=W^{g^{-1}}\cap W$.
    
    Let $h$ be any element in $W'$. We need to show that there exists some $1\leq i\leq k'$ such that
    $h\in\nb_{a,b_{i}}\cap\nb_{a,c_{i}}$. We know there exist $1\leq i,j\leq k$ such that
    $ha\in [b_{i},c_{i}]\cap [b'_{j},c'_{j}]$. Let $\lambda_{l}:=d(a,b_{l})=d(a,b'_{l})$, $\mu_{l}:=d(a,c_{l})=d(a,c'_{l})$ and let also $\bar{\lambda}_{l}:=d(ha,b_{l})$, $\bar{\mu}_{l}:=d(ha,c_{l})$
    and  $\bar{\lambda}'_{l}:=d(ha,b'_{l})$, $\bar{\mu}'_{l}:=d(ha,c'_{l})$ for $1\leq l\leq k'$.
    
    Our assumption on $h$ translates into equations
    \begin{equation}
    \label{eq choice h}
    \bar{\lambda}_{i}+\bar{\mu}_{i}=d(b_i,c_i)\leq \lambda_{i}+\mu_{i}\;\;\;\bar{\lambda}'_{j}+\bar{\mu}'_{j}=d(b'_j,c'_j)=d(b_j,c_j)\leq\lambda_{j}+\mu_{j}
    \end{equation}
    while independence of $\{b_{i},c_{i}\}$ and $\{b'_{j},c'_{j}\}$ over $a$ (and the assumption on the distances $d(b_{l},c_{l})$) implies
    \begin{equation}
    \label{eq independence}
    d(b_{i},b'_{j})=\lambda_{i}+\lambda_{j}\;\;\;
    d(c_{i},c'_{j})=\mu_{i}+\mu_{j}.
    \end{equation}
    By the triangular inequality $\bar{\lambda}_{i}+\bar{\lambda}'_{j}\geq d(b_{i},b'_{j})$ and  $\bar{\mu}_{i}+\bar{\mu}'_{j}\geq d(c_{i},c'_{j})$. Putting this together with \ref{eq choice h} and \ref{eq independence} yields:
    \begin{equation}
    \label{eq 7.3}
    \lambda_{i}+\lambda_{j}+\mu_{i}+\mu_{j}\geq\bar{\lambda}_{i}+\bar{\lambda}'_{j}+\bar{\mu}_{i}+\bar{\mu}'_{j}\geq d(b_{i},b'_{j})+d(c_{i},c'_{j})=\lambda_{i}+\lambda_{j}+\mu_{i}+\mu_{j}.
    \end{equation}
    Thus, we must have
    $\bar{\lambda}_{i}+\bar{\lambda}'_{j}= d(b_{i},b'_{j})$ and  $\bar{\mu}_{i}+\bar{\mu}'_{j}= d(c_{i},c'_{j})$
    and similarly $\bar{\lambda}_{i}+\bar{\mu}'_{j}= d(b_{i},c'_{j})$ and  $\bar{\mu}_{i}+\bar{\lambda}'_{j}= d(c_{i},b'_{j})$. Using this we get:
    \begin{align*}
    \bar{\lambda_{i}}-\bar{\mu}_{i}=(\bar{\lambda_{i}}+\bar{\lambda}'_{j})-(\bar{\mu_{i}}+\bar{\lambda}'_{j})&=
    d(b_{i},b'_{j})-d(c_{i},b'_{j})\\
    &=(\lambda_{i}+\lambda_{j})-(\mu_{i}+\lambda_{j})=\lambda_{i}-\mu_{i}.
    \end{align*}
    Notice that here $d(c_{i},b'_{j})=\mu_{i}+\lambda_{j}$ follows from $c_{i}\ind_{a}b'_{j}$. Since all inequalities involved in \ref{eq 7.3} are equalities, so must be those involved in \ref{eq choice h}, so that $\lambda_{i}+\mu_{i}=d(b_i,c_i)=\bar{\lambda}_{i}+\bar{\mu}_{i}$, which implies $i\leq k'$. Together with the previous equation it also gives  $\bar{\lambda}_{i}=\lambda_{i}$ and $\bar{\mu}_{i}=\mu_{i}$.
    Thus any $h\in W'$ belongs to $\bigcup_{1\leq j\leq k'}(\nb_{a,b_{j}}\cap\nb_{a,c_{j}})$, as needed.
    \end{proof}
    
  \subsection{Proof of Theorem \ref{3 topologies theorem}}
    
    \begin{prop}
    \label{second gap}Let $\rr$ be a standard archimedean distance monoid with no least positive element. Let $\U$ be an $\rr$-Urysohn space and $G=\Isom(\U)$. Then any group topology $\tau$ strictly coarser than $\tau_{0^{+},0}$ is coarser than $\tau_{m}$.
    \end{prop}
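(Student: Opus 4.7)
The plan is to assume for contradiction the existence of a group topology $\tau$ with $\tau_m\subsetneq\tau\subsetneq\tau_{0^{+},0}$, and then to show that such $\tau$ must contain every $\nb_{a,b}$ with $d(a,b)>0$, forcing $\tau=\tau_{0^{+},0}$. The argument splits into two phases: extract a single $\nb_{a,b}\in\mathcal{N}_{\tau}(1)$, and then propagate to every positive distance.

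For the extraction phase, fix $W\in\mathcal{N}_{\tau}(1)\setminus\mathcal{N}_{\tau_m}(1)$, which exists by $\tau_m\subsetneq\tau$. Since $\tau\subseteq\tau_{0^+,0}$, there are $a_j,b_j$ such that $\bigcap_{j=1}^{k}\nb_{a_j,b_j}\subseteq W$. Lemma \ref{small disturbance} tells us that every configuration in general position contributes a $\tau_m$-neighborhood to $W$, so the failure of $W$ to be a $\tau_m$-neighborhood must come from a rigid alignment among the base points. The misalignment and non-cutting lemmas in Subsection \ref{subsection alignment} (especially Lemmas \ref{non-cutting by parts}, \ref{misalignment}, and \ref{better non-cutting constant}) allow repeated rearrangements of the configuration by conjugations in the $G_{a_j}$ and $G_{b_j}$ until only genuine unremovable alignments survive. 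Feeding such an alignment into Lemma \ref{alignment to sphere} yields a $\tau$-neighborhood contained in a set of the form $\bigcup_{j}(\nb_{a,b_j}\cap\nb_{a,c_j})\subseteq\bigcup_{j}\nb_{a,b_j}$, and Lemma \ref{sphere union} then extracts a single $\nb_{a,b}\in\mathcal{N}_{\tau}(1)$ with $r:=d(a,b)>0$.

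For the propagation phase, conjugation by isometries of $\U$ gives $\nb_{u,v}\in\mathcal{N}_{\tau}(1)$ for every pair with $d(u,v)=r$; the inclusion $\nb_{u,w}\subseteq\nb_{u,v}$ for $w\in[u,v]$ with $d(u,w)=r$ then handles every $s\geq r$. For $s<r$, the idea is to iterate the extraction: intersect $\nb_{a,b}\cap\nb_{a,b'}$ for $b'$ at distance $r$ from $a$ with $d(b,b')=2r$ (so that $a\in[b,b']$), forcing $g(a)\in[b,b']$, and then narrow this segment using further intersections with $\nb_{a,y}$'s for $y$ at distance $r$ from $a$ until $g(a)$ is constrained to an arbitrarily short subsegment $[p_{-},p_{+}]$ around $a$. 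A further application of Lemma \ref{sphere union} then produces $\nb_{a,p_\pm}\in\mathcal{N}_\tau(1)$ at distance $\delta=d(a,p_\pm)$, which can be chosen arbitrarily small thanks to the no-least-positive-element hypothesis. Combined with the scale-up step, this gives $\nb_{u,v}\in\mathcal{N}_\tau(1)$ for every $(u,v)$ with $d(u,v)>0$, contradicting $\tau\subsetneq\tau_{0^+,0}$. The main obstacle is the first phase: turning the abstract condition that $W$ is a $\tau$-neighborhood but not a $\tau_m$-neighborhood into a concrete rigid alignment suitable for feeding Lemma \ref{alignment to sphere}; the combinatorial reductions carried out by the lemmas of Subsection \ref{subsection alignment} form the technical heart of the argument.
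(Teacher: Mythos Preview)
Your two–phase outline correctly identifies all the key lemmas from Subsections \ref{subsection alignment} and \ref{subsection downwards}, and Phase~1 is at least morally on the right track: the paper indeed extracts a single $\nb_{a,b}\in\mathcal N_\tau(1)$ from an alignment constraint via Lemmas \ref{alignment to sphere} and \ref{sphere union}. What is missing from your Phase~1 is the precise mechanism that produces the alignment hypothesis of Lemma~\ref{alignment to sphere} from the mere assumption that some $W\in\mathcal N_\tau(1)$ is not a $\tau_m$-neighbourhood. The paper makes this explicit by introducing the auxiliary set
\[
\Gamma=\{\,r\in R\;|\;\exists\,a\in\U,\,W\in\mathcal N_\tau(1),\,B\in[\U]^{<\omega}\ \text{with}\ \{ga\}\ r\text{-cuts }B\ \forall g\in W\,\}
\]
and the set $\Delta=\{\,r\;|\;\nb_{u,v}\in\mathcal N_\tau(1)\text{ for some }d(u,v)=r\,\}$, and proving two coupling lemmas: $\Delta\subseteq\Gamma$ (via Lemma~\ref{discutting}) and $(4s\in R,\ 2s\in\Gamma)\Rightarrow s\in\Delta$ (via Lemmas~\ref{alignment to sphere} and~\ref{sphere union}). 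Together with upper-closedness and archimedeanity these force the dichotomy $\Delta=R\setminus\{0\}$ or $\Gamma\cap S=\emptyset$ (where $S=\{s:2s\in R\}$); in the second case the misalignment machinery (Lemmas~\ref{discutting}, \ref{better non-cutting constant}, \ref{misalignment}, \ref{small disturbance}) shows every $U\in\mathcal N_\tau(1)$ contains a $\tau_m$-neighbourhood.

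Your Phase~2, however, contains a genuine gap. The step ``narrow this segment using further intersections with $\nb_{a,y}$'s for $y$ at distance $r$ from $a$ until $g(a)$ is constrained to an arbitrarily short subsegment $[p_-,p_+]$'' does not work in an $\rr$-Urysohn space. Given any finite family $y_1,\dots,y_n$ with $d(a,y_i)=r$, the amalgamation property produces, for \emph{every} $s\in(0,2r]$, a point $x$ with $d(x,a)=s$ and $d(x,y_i)=r$ for all $i$ (one simply checks the relevant triangle inequalities); realising such $x$ as $g(a)$ for $g$ in any basic $\tau_{st}$-neighbourhood shows that $\bigcap_i\nb_{a,y_i}$ never forces $g(a)$ into a segment $[p_-,p_+]$ of diameter $<2r$. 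In particular you cannot reach distances below $r$ this way. The paper's downward step is different and more subtle: it is precisely the inclusion $\Delta\subseteq\Gamma$ (delta-in-gamma), whose proof uses the ``discutting'' Lemma~\ref{discutting} to show that if $r\notin\Gamma$ one can always move a finite set off the segment $[u,v]$ inside any $\tau$-neighbourhood, contradicting $G_{gA}\subseteq\nb_{u,v}$. Once $r\in\Delta\Rightarrow r\in\Gamma$ is available, iterating $2s\in\Gamma\Rightarrow s\in\Delta\Rightarrow s\in\Gamma$ halves the scale, and archimedeanity finishes. Your propagation scheme bypasses this step and therefore cannot close the argument.
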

    \begin{proof}
    
    Fix some group topology $\tau$ on $G$ coarser than $\tau_{0^{+},0}$.
    Denote by $\Delta$ the collection of all $r\in R\setminus\{0\}$ such that $\nb_{u,v}\in\mathcal{N}_{\tau}(1)$ for some (equivalently, any) pair $u,v\in\U^{2}$ with $d(u,v)=r$.

    Let $\Gamma$ be the collection of all $r\in R$ such that there exist $a\in\U$, $W\in\idn$ and some finite
    $B\subset\U$ such that $\{ga\}$ $r$-cuts $B$, for each $g\in W$.
    
    The fact that $\Delta$ is upper-closed, i.e., that $s\leq r$ and $s\in\Delta$ implies $r\in\Delta$, follows from the fact that $\rr$ is closed under taking positive differences and the following  observation:
    \begin{obs}
    \label{upper closedness} Let $\tau$ a group topology on $G=\Isom(\U)$ where $\U$ is an $\rr$-Urysohn space. Suppose that we are given $u,v,w\in\U$ with
    $d(u,w)=d(u,v)\oplus d(v,w)$. Then $\nb_{u,v}\subseteq\nb_{u,w}$.
    \end{obs}
    \begin{proof}
    Indeed, if $g\in G$ is such that $d(gu,v)\leq d(u,v)$ it follows that
    $$d(gu,w)\leq d(gu,v)\oplus d(v,w)\leq d(u,v)\oplus d(v,w)=d(u,w).$$
    \end{proof}
    
    On the other hand $\Gamma$ is upper-closed by definition. Notice as well that $\Delta=R\setminus\{0\}$ implies $\tau=\tau_{0^{+},0}$.
    
    Lemmas \ref{sphere union} and \ref{alignment to sphere} come into play through the following lemma (notice $s+s$ in the statement, rather than $s\oplus s$).
    \begin{lem}
    \label{gamma in delta}If $s+s+s+s\in R$ and $s+s\in\Gamma$ then $s\in\Delta$
    \end{lem}
    \begin{proof}
    Let $a\in\U$, $B$ a finite subset of $\U$ and $W\in\idn$ be such that $\{ga\}$ $r$-cuts $B$ for any $g\in W$.
    Let $\{b_{j},c_{j}\}_{1\leq j\leq N}$ be an enumeration of all the pairs of points in $B$ at distance at most $s+s$.
    If $s\notin \Delta$, then $ga\in\bigcup_{1\leq j\leq N}[b_{j},c_{j}]$, for each $g\in W$. Lemma \ref{alignment to sphere} then tells us that
    $\bigcup_{1\leq j\leq k'}(\nb_{a,b_{j}}\cap \nb_{a,c_{j}})\in\idn$
    where (up to reindexing) $a\in[b_{j},c_{j}]$ if and only if $j\leq k'$.
    We may assume that $d(a,b_{j})\leq d(a,c_{j})$, so that $d(a,b_{j})\leq s$.
    By Lemma \ref{sphere union}, there exists some $1\leq j_{0}\leq k'$ such that
    $\nb_{a,b_{j_{0}}}\in\idn$ and we are done.
    \end{proof}

    \begin{lem}
    \label{discutting}  If $r\nin\Gamma$, then for any $U\in\idn$ and finite subsets $A,B\subset\U$ there is $h\in U$ such that $hA$ does not $r$-cut $B$.
    \end{lem}
    \begin{proof}
    Take $A$ and $B$ as above, write $A=\{a_{j}\}_{j=1}^{k}$ and pick some $U_{0}=U_{0}^{-1}\in\idn$ such that $U_{0}^{3k-2}\subseteq U$. Let $C$ be finite such that $G_{C}\subseteq U_{0}$. We may assume $B\subseteq C$. Assume that $r\notin\Gamma$. This implies that for each $1\leq j\leq k$ there exists some $h_{j}\in U_{0}$ such that $\{a_{j}\}$ does not $r$-cut $h_{j}C$. Let $C_{j}=h_{j}B$. By Lemma \ref{non-cutting by parts} there exists $g\in G_{C_{k}}G_{C_{k-1}}\cdots G_{C_{2}}\subseteq U_{0}^{3k-3}$ such that $A$ does not $r$-cut $gC_{1}=gh_{1}B$. Equivalently, $A':=h_{1}^{-1}g^{-1}A$ does not $r$-cut $B$.
    \end{proof}
    \begin{rem}   In the previous proof we are using  only the fact that $\tau$ is coarser than $\tau_{st}$ (as opposed to $\tau_{0^{+},0}$ ).
    \end{rem}

    
    \begin{lem}
    \label{delta in gamma}$\Delta\subseteq\Gamma$.
    \end{lem}
    \begin{proof}
    Assume for the sake of contradiction that $r\in\Delta\setminus\Gamma$ for some $r$. On the one hand, given $u,v\in\U$ with $d(u,v)=r$ we have $\nb_{u,v}\in\idn$, so there exists some $W\in\idn$ such that $W^{3}\subseteq\nb_{u,v}$. Pick some $A\subseteq\U$ such that $G_{A}\subset W$.
    
    On the other hand, by Lemma \ref{discutting} there must be
    some $g\in W$ such that $gA$ does not cut $\{u,v\}$. By Lemma \ref{lambda lemma}  this implies that $G_{gA}\nsubseteq\nb_{u,v}$. However $G_{gA}\subseteq W^{3}\subseteq\nb_{u,v}$: a contradiction.
    \end{proof}
    
    We are now ready to finish the proof of Proposition \ref{second gap}. Let $S=\{s\in R\,|\,s+s\in R\}$.
    Observation \ref{upper closedness}, Lemmas \ref{delta in gamma} and \ref{gamma in delta}, together with the fact that $\rr$ is archimedean imply that either $\Gamma=\Delta=R\setminus\{0\}$ or $\Gamma\cap S=\emptyset$. The former implies $\tau=\tau_{0^{+},0}$ so from now on assume $\Gamma\cap S=\emptyset$.
    
    We need to show that $U\in\mathcal{N}_{\tau_{m}}(1)$ for any arbitrary $U\in\idn$. Pick $U_{0}\in\idn$ such that $U_{0}^{17}\subseteq U$ and $A\subset \U$ such that
    $G_{A}\subseteq U_{0}$.
    By Lemma \ref{discutting}, there exists some $g\in U_{0}$ such that $A':=gA$ does not $s$-cut $A$ for any $s\in S$. By Lemma \ref{better non-cutting constant}, there exists $A_1$ such that
    $A_{1}\cong_{A'}A$ such that $A_{1}$ does not cut $A$. Lemma \ref{misalignment} implies the existence of $A_2$ such that
    $A_{2}\cong_{A_{1}}A$ such that and $A_{2}$ is in general position relative to $A$.

    Notice that $G_{A'}\subseteq G_{A}^{g^{-1}}\subseteq U_{0}^{3}$ and
    $G_{A_{1}}\subseteq G_{A'}G_{A}G_{A'}\subseteq U_{0}^{7}$. Likewise, $G_{A_{2}}\subseteq U_{0}^{15}$
    and thus $G_{A}G_{A_{2}}G_{A}\subseteq U_{0}^{17}\subseteq U$.
    On the other hand, by Lemma \ref{small disturbance} we have  $G_{A}G_{A_{2}}G_{A}\in\mathcal{N}_{\tau_{m}}(1)$, hence $U\in\mathcal{N}_{\tau_{m}}(1)$ and we are done.
    \end{proof}
    \begin{lem}
    \label{first gap}Let $\rr$ be a standard distance monoid, $\U$ an $\rr$-Urysohn space and $G=\Isom(\U)$. Then, given finite sets $A_{1},A_{2}\subset\U$ and $V\in\mathcal{N}_{\tau_{0+,0}}$ there exists $W\in\mathcal{N}_{\tau_{0+,0}}$ such that
    $G_{A_{1}\cap A_{2}}\cap V\subseteq(G_{A_{2}}\cap W)(G_{A_{1}}\cap W)(G_{A_{2}}\cap W)$.
    \end{lem}
    \begin{proof}
    \newcommand{\nnb}[0]{\hat{N}^{sp}}
    We may assume $W=\nb_{B}$ for some finite $B\subseteq\U$ containing $A_{1}\cup A_{2}$, where
    $\nb_{B}=\bigcap_{\sst{b,b'\in B\\b\neq b'}}\nb_{b,b'}$. Given $c,c'\in\U$ with $d(c,c')\neq sup\,R$ denote by $\nnb_{c,c'}=\{g\in G\,|\,d(gc,c')=d(c,c')\}$. It is easy to see that $\nnb_{c,c'}\in\tau_{0^{+},0}$, since $\nb_{c,c'}\cap\nb_{c,c''}\subseteq\nnb_{c,c'}$ for any $c''$ such that $d(c',c'')=d(c',c)+d(c,c'')$. Since we also know that $\tau_{m}\subseteq\tau_{0^{+},0}$, it follows that $V(\epsilon):=\bigcap_{(b,b')\in X}\nnb_{b,b'}\cap N_{B}(\epsilon)\in\mathcal{N}_{\tau_{0^{+},0}}$ for any $\epsilon\in R\setminus\{0\}$, where $X=\{(b,b')\in B^{2}\,|\,b\neq b',\,d(b,b')\neq sup\,R\}$.
    
    Fix some  $\epsilon>0$  smaller than the distance between two distinct points of $B$. We will check that $V=V(\epsilon)$ satisfies the desired property.
    
    Fix $g\in V\cap G_{A_{1}\cap A_{2}}$. We let $C=A_{1}\cap A_{2}$, $D_{i}=A_{i}\setminus C$ and $E=B\setminus(A_{1}\cup A_{2})$. Write $D''_{i}=gD_{i}$, $E''=gE$ and so on.
    The proof of the Lemma reduces to that of the following claim:

    Let $F=(B\cup B'')\coprod D'_{1}\coprod E'$, where where $D'_{1},E'$  as two copies
    of $D_{1}$ and $E$ as abstract sets. As usual, given $d\in D_{1}$ we will use $d',d''$ to denote the elements (entries) of $D'_{1}$ and $D''_{1}$ respectively corresponding to $d$ and similarly for $e\in E$.
    
    \newcommand{\dd}{\bar{d}}
    In order to prove the Lemma it suffices to show that the following claim:
    \begin{claim}
    The function $\bar{d}:F\times F$ given as follows satisfies the triangle inequality:
    
    \begin{enumerate}
    \item \label{a1}$\bar{d}(f,f)=0$ for any $f\in F$;
    \item \label{a2}$\dd_{\restriction(B\cup B'')^{2}}=d_{\restriction (B\cup B'')^{2}}$;
    \item \label{a3}$\dd(b_{1}',b_{2})=\dd(b_{1}',b''_{2})=d(b_{1},b_{2})$ for any distinct $b_1,b_2\in D_{1}\cup E$.
    \item \label{a4} $\dd(b'_{1},b'_{2})=d(b_{1},b_{2})$ for $b_{1},b_{2}\in D_{1}\cup E$.
    \item \label{a5}$\dd(b',b)=\dd(b',b'')=\epsilon$ for any $b\in D_{1}\cup E$.
    \end{enumerate}
    \end{claim}
    
    Indeed, if the claim holds then we can embed a copy of $(F,\bar{d})$ isometrically in $\U$ over $BB''$. The chain $CD_{1}D_{2}E\cong CD'_{1}D_{2}E'\cong CD'_{1}D''_{2}E'\cong CD''_{1}D''_{2}E''$ then witness the desired inclusion in a fashion to the proof of \ref{lem-zig-used}. Our conditions imply
    any $h_{1}$ mapping $CD_{1}D_{2}E$ to $CD'_{1}D_{2}E'$ is in $G_{A_{2}}\cap \hat{N}^{sp}_{B}$, any $h_{2}$ mapping
    $h_{1}^{-1}(CD'_{1}D_{2}E')=B$  to $h_{1}^{-1}(CD'_{1}D''_{2}E')$  is in $G_{A_{1}}\cap \hat{N}^{sp}_{B}$
    and $h_{2}^{-1}h_{1}^{-1}g\in G_{A_{2}}\cap\hat{N}^{sp}_{B}$.
    
    As for the proof of the claim, we begin by noticing that the triangle inequality is automatically satisfied by triples of distinct points contained in $B\cup B''$. By points \ref{a3}, \ref{a4} the same is true for triples contained in
    $B\cup D_{1}'\cup E'$ and $B''\cup D'_{1}\cup E'$ which do not contain pairs of points of the form
    $\{b,b'\}$  or $\{b'',b'\}$ for $b\in D_{1}\cup E$.
    
    If our triple is of the form $\{b,b',c\}$ with $b\in D_{1}\cup E$, $c\in B\cup D'_{1}\cup E'$, then by \ref{a3}, \ref{a4} and \ref{a5}, and the choice of $\epsilon$ yields $\dd(b,b')=\epsilon<\dd(b,c)=\dd(b',c)$, from which the triangle inequality easily follows.
    The same argument works for triples of the form $\{b',b'',c\}$ with $c\in D'_{1}\cup E'\cup B''$, $b\in D_{1}\cup E$.
    So we are left with triples of the form $\{b_{1},b'_{2},b''_{3}\}$ for $b_{1},b_{3}\in B$, $b_{2}\in D_{1}\cup E$.

    If $b_{1}=b_{3}\neq b_{2}$, then the triangle inequality can be established by a similar argument to that of the previous paragraph using the fact that
    $g\in N_{B}(\epsilon)$ and $\epsilon<\inf_{\substack{b_{1},b_{2}\in B\\b_{1}\neq b_{2}}}\,\,\dd(b_{1},b_{2})$ instead of \ref{a5}.
    
    If $b_{1}=b_{2}\neq b_{3}$, then the argument is similar. We have
    \begin{align*}
    \dd(b_{1},b'_{2})=\epsilon & <\dd(b'_{2},b''_{3})=d(b_{2},b_{3})=d(b_{1},b_{3})\leq\\ & \dd(b_{1},b''_{3})\oplus\epsilon 
     =\dd(b_{1},b''_{3})\oplus\dd(b_{1},b'_{2}),
    \end{align*}
    where in the last inequality we are using the fact that $g\in \bigcap_{b\in B}N_{b}(\epsilon)$ and $b''_{3}=gb_{3}$. If $b_{2}=b_{3}\neq b_{1}$ the argument is entirely analogous.

    If $b_{1}=b_{2}=b_{3}$ we have $\dd(b_{1},b''_{3})\leq\epsilon=\dd(b_{1},b'_{2})=\dd(b'_{2},b''_{3})$  and the result follows as well.
    
    Finally we have the case in which $b_{1},b_{2},b_{3}$ are all distinct.
    First of all, we have:
    $$
    \dd(b_{1},b''_{3})\leq \dd(b_{1},b_{3})\leq \dd(b_{1},b_{2})\oplus \dd(b_{2},b_{3})=\dd(b_{1},b'_{2})\oplus \dd(b'_{2},b''_{3}).
    $$
    
    If the maximum distance between two points in $\{b_{1},b_{2},b_{3}\}$ is witnessed by
    $d(b_{1},b_{3})$, then we also have
    \begin{align*}
    \dd(b_{1},b'_{2})=d(b_{1},b_{2})\leq d(b_{1},b_{3})&\leq \dd(b_{1},b''_{3})\oplus\epsilon \leq \\ & \dd(b_{1},b''_{3})\oplus d(b_{2},b_{3})=\dd(b_{1},b''_{3})\oplus \dd(b_{2}',b''_{3})
    \end{align*}
    and the same argument yields $\dd(b'_{2},b''_{3})\leq \dd(b'_{2},b_{1})\oplus \dd(b_{1},b''_{3})$.

    We are left with the situation in which the maximum distance between two points in $\{b_{1},b_{2},b_{3}\}$ is witnessed by
    $d(b_{1},b_{2})$ or $d(b_{2},b_{3})$. We might as well assume it is witnessed by $d(b_{1},b_{2})$, since the other sub-case is entirely analogous.
    
    One possibility is that $\dd(b_{1},b_{3})<sup\,\,R$, in which using $g\in \hat{N}^{sp}_{B}$ we get:
    $$
    \dd(b_{1},b''_{2})=d(b_{1},b_{2})\leq d(b_{1},b_{3})\oplus d(b_{2},b_{3})=\dd(b_{1},b''_{3})\oplus \dd(b'_{2},b''_{3})
    $$
    This suffices to settle the triangle inequality in this case, as we also have:
    $$
    \dd(b'_{2},b''_{3})=d(b_{2},b_{3})\leq d(b_{1},b_{2})=\dd(b_{1},b'_{2}).
    $$
    The other possibility left is that $d(b_{1},b_{2})=d(b_{1},b_{3})=sup\,\,R$. The triangle inequality is then clear:
    \begin{align*}
    \dd(b'_{1},b''_{3})\leq\dd(b_{1}',b''_{2})=d(b_{1},b_{2})=d(b_{1},b_{3})\leq\dd(b_{1}',b''_{3})\oplus\epsilon\leq \\  \dd(b_{1}',b''_{3})\oplus d(b_{2},b_{3})=\dd(b_{1}',b''_{3})\oplus \dd(b'_{2},b''_{3}), \mbox{ and }
    \end{align*}
    $$
    \dd(b'_{2},b''_{3})=d(b_{2},b_{3})\leq d(b_{1},b_{2})=\dd(b_{1},b_{2}).
    $$
    This concludes the proof of the Claim and with it the proof of the Lemma.
    
    \end{proof}
    
    \begin{thm*}[\ref{3 topologies theorem}]
    
    Let $\rr$ be a standard archimedean distance monoid with no minimal positive element, $\U$ an $\rr$-Urysohn space and $G=\Isom(\U)$. Then there are exactly $4$ group topologies on $G$ coarser than $\tau_{st}$
    $$
    \tau_{st}\supsetneq\tau_{0^{+},0}\supsetneq\tau_{m}\supsetneq\{\emptyset,G\}.
    $$
    \end{thm*}
    \begin{proof}
    The fact that $\{\emptyset,G\}$ is the only group topology strictly coarser than $\tau_{m}$ is a particular case of Theorem \ref{Urysohn thm 1} and the fact that there is no group topology $\tau$ with $\tau_{0^{+},0}\supsetneq\tau\supsetneq\tau_{m}$ is the content of Proposition \ref{second gap}.
    
    The fact that any group topology strictly coarser than $\tau_{st}$ is coarser than $\tau_{0^{+},0}$ follows from
    a combination Lemma \ref{jump lemma} with Lemma \ref{first gap}. We are applying Lemma
    \ref{jump lemma} with $\tau^{*}=\tau_{0^{+},0}$. The first alternative in its conclusion leads to
    $\tau=\tau_{st}$, while in the second $X'=\emptyset$, since in this case the action is transitive.
    \end{proof}
    
    We will explain the system behind the notation $\tau_{0^{+},0}$ in the next section. As the reader might have guessed, the true identity of $\tau_{st}$ and $\tau_{m}$ will be $\tau_{0,0}$ and $\tau_{0^{+},0^{+}}$.

    

\section{Parametrizing topologies of isometry groups of generzalized Urysohn spaces}
  
  \label{Urysohn parametrization section}
  We borrow the following construction from Conant \cite{conant2015model}. Let $\rr$ be a distance monoid. By an \emph{end segment} of $\rr$ we mean a subset $\alpha\subset R$ with the property that $t\in\alpha$ whenever $s\in\alpha$ for some $s\leq t$. Let $R^{*}$ be either the collection of end segments of $R$ in case $R$ has no maximal element or the collection of non-empty end segments in case $R$ has a maximal element. There is a natural order $\leq^{*}$ on the set $R^{*}$ given by $\alpha\leq^{*}\beta$ if and only if $\beta\subseteq\alpha$.
  One can endow $R^{*}$ the operation $\oplus^{*}$, defined as $\alpha\oplus^{*}\beta=\inf_{R^{*}}\{r\oplus s\,|\,r\in \alpha,s\in\beta\}$ (see \cite{conant2015model}[2.6.4., 2.6.5.]). This gives $R^*$ the structure of a distance monoid $\rr^*$.

  The natural embedding $\nu$ from $R$ into $R^{*}$ sending $r\in R$ to $\{s\in R\,|\,s\geq r\}$ respects the linear order and the operations on both sides: $\nu(s\oplus t)=\nu(s)\oplus^{*}\nu(t)$. From now on we identify $R$ with $\nu(R)$ and write $\oplus$ instead of $\oplus^{*}$. If $R$ contains no minimal element greater than $r\in R$, then we denote the successor $\{s\,|\,s>r\}\in R^{*}$ as $r^{+}$. Of particular interest for us will be $0^{+}$.
  Notice that provided $0^{+}$ exists, the condition $\forall r\in R\setminus\{0\}\,\,\exists s\in R\setminus\{0\}\,s\oplus s\leq r$ is equivalent to $0^{+}\oplus 0^{+}=0^{+}$.
  
  \newcommand{\nn}[0]{\mathcal{N}}
  \newcommand{\ip}[0]{IP} 
  
  Let $\U$ be an $\rr$-Urysohn space and $G$ its group of isometries. By an ideal of $R$ we mean a non-empty closed subset of $R$ closed under addition and such that $s\leq r\in R$ implies $s\in R$.
  Given an ideal $\mu$ of $R$ let:
  \begin{align*}
  G^{b}_{\mu}:=\{g\in G\,|\,\forall u\in\,\,\U\,\,d(gu,u)\in\mu\}.
  \end{align*}
  \begin{obs}
  \label{hausdorfness} If $\mu$ is an ideal of $\rr$ then $G^{b}_{\mu}$ is a normal subgroup of $G$. If $\rr$ is countable or
  archimedean then $G^{b}_{\mu}=G^{b}_{\mu'}$ only if $\mu=\mu'$.
  \end{obs}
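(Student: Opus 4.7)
The plan has two independent parts. For normality I would verify subgroup closure directly from the three defining properties of $\mu$: downward closedness together with non-emptiness give $0 \in \mu$ and hence $\id \in G^b_\mu$; the identity $d(g^{-1}u, u) = d(u, gu)$ (valid because $g$ is an isometry) yields closure under inverses; and $d(ghu, u) \leq d(g(hu), hu) \oplus d(hu, u)$, combined with closure of $\mu$ under $\oplus$ and downward closedness, yields closure under composition. Normality follows from $d(fgf^{-1}u, u) = d(gf^{-1}u, f^{-1}u) \in \mu$ for $g \in G^b_\mu$, $f \in G$.

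For injectivity suppose $\mu \neq \mu'$ and pick $r \in \mu \setminus \mu'$, so $r > 0$. In the archimedean case I would first observe that any ideal must be either $\{0\}$ or all of $R$: if $\mu$ contains any positive element $r$, then $n \cdot r \in \mu$ for every $n$ by $\oplus$-closure, and by archimedeanness every $s \in R$ satisfies $s \leq n \cdot r$ for some $n$, hence $s \in \mu$ by downward closure. Since $G^b_{\{0\}} = \{\id\}$ and $G^b_R = G$, and $G$ is non-trivial whenever $R \neq \{0\}$, the two ideals yield distinct subgroups.

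For the countable case my plan is to explicitly construct an isometry $g \in G^b_\mu \setminus G^b_{\mu'}$. Fix $a, b \in \U$ with $d(a, b) = r$ and build $g$ by back-and-forth on the countable ultrahomogeneous space $\U$, starting from the partial isometry $a \mapsto b$ and maintaining throughout the invariant $d(y, f(y)) \leq r$ for every $y$ in the domain of the current finite partial isometry $f$. A forward extension step requires, for a new $x \in \U$, an image $z$ satisfying $d(z, f(y)) = d(x, y)$ for every $y \in \dom(f)$ together with $d(x, z) \leq r$; by (EP) this reduces to choosing $d(x, z) \in R$ in the admissible interval
\[
\bigl[\max_{y \in \dom(f)} |d(x, y) - d(x, f(y))|,\; \min_{y \in \dom(f)} d(x, y) \oplus d(x, f(y))\bigr] \cap [0, r].
\]
This interval is non-empty because, by the triangle inequality and the inductive hypothesis, $|d(x, y) - d(x, f(y))| \leq d(y, f(y)) \leq r$ for every $y$, so the lower endpoint already lies in $[0, r]$. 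Symmetric reasoning handles backward extensions. The resulting isometry $g$ satisfies $d(gu, u) \leq r$ for every $u \in \U$, so $g \in G^b_\mu$ since $r \in \mu$ and $\mu$ is downward closed, while $d(ga, a) = r \notin \mu'$ gives $g \notin G^b_{\mu'}$.

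The main obstacle is the bookkeeping of the back-and-forth enumeration together with the verification that, when $R$ is a discrete subset of the ambient ordered group, the admissible interval always contains an honest $R$-valued element; both points are routine given the triangle estimate above, though the latter may require passing through $\rr^*$ of Section~\ref{Urysohn parametrization section} to formulate the lower bound when $\rr$ is not standard.
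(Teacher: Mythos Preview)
The paper states this observation without proof, so there is no argument to compare against; your proposal is essentially a complete and correct proof of what the paper leaves implicit. The normality verification is routine and correct. For the archimedean case your reduction to the dichotomy $\mu\in\{\{0\},R\}$ is exactly right. For the countable case your back-and-forth is the natural argument, and the key extension step does go through: with the invariant $d(y,f(y))\leq r$ in hand, one checks that $s=\min\bigl\{r,\;\min_{y}\bigl(d(x,y)\oplus d(x,f(y))\bigr)\bigr\}\in R$ satisfies all the required triangle constraints (both candidates for the minimum individually satisfy the ``lower bound'' inequalities, as you essentially note), so no passage to $\rr^{*}$ is actually needed.

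One small point to flag: your back-and-forth tacitly assumes $\U$ is countable, whereas the hypothesis is only that $\rr$ is countable, and the paper explicitly allows $\rr$-Urysohn spaces of arbitrary cardinality. In the paper's practice the countable-$\rr$ case is always paired with the countable Fra\"iss\'e limit, so this reading is almost certainly the intended one, but you may want to make the assumption explicit.
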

  This is true in other situations as well, but we will skip the discussion at this point. \\ Given $u,v\in \U$ and $r\in R$, let $N_{u,v}(r)=\{g\in G\,|\,d(gu,v)\leq r\}$. Given a function $f:R\to R^{*}$, let $S_{f}=\{N_{u,v}(r)\,|\,r\in f(d(u,v))\}$ and $\mu_{f}=\{r\in R\,|\,\forall s\in R\,,\,\,s\oplus r\leq f(s)\}$.
  
  Let also  \begin{align*}
  T(r)=\{(s,t)\in R^{2}\,|\,r\leq s\oplus t, s\leq r\oplus t, t\leq r\oplus s\}.
  \end{align*}
  
  \begin{lem}
  \label{is group topology} Let $f:R\to R^{*}$ be a function such that:
  \begin{enumerate}[(a)]
  \item \label{consistent bound}$f(d)\geq d$, for each $d\in R$;
  \item \label{f+f} $f(d)\geq\inf_{R^*}\{f(s)\oplus f(t)\,|\,(s,t)\in T(d)\}$, for each $d\in R$.
  \end{enumerate}
  Then  $S_{f}$ generates the base of neighbourhoods of the identity of a  group topology $\tau_{f}$. Moreover, $\mu_{f}$ is an ideal and the closure of $1$ in $\tau_{f}$ coincides with the group $G^{b}_{\mu_{f}}$.
  \end{lem}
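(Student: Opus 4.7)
The plan is to verify in sequence that (i) the filter generated by $S_f$ defines a group topology at the identity, (ii) $\mu_f$ is an ideal of $R$, and (iii) the $\tau_f$-closure of $\{1\}$ equals $G^b_{\mu_f}$. The crucial preliminary observation is that in $R^*$, where the order is reversed with respect to inclusion, the infimum of a family of end segments is their union; unpacking condition (b) in these terms yields the explicit content one actually uses: for every $r \in f(d)$ there exist $(s,t)\in T(d)$ and $r_1 \in f(s)$, $r_2 \in f(t)$ with $r_1 \oplus r_2 \leq r$. This single fact drives most of what follows.

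For (i), closure of $S_f$ under inversion and conjugation is immediate from $N_{u,v}(r)^{-1} = N_{v,u}(r)$ and $h N_{u,v}(r) h^{-1} = N_{hu, hv}(r)$, since $S_f$ depends on the pair $(u,v)$ only through $d(u,v)$. That $1 \in N_{u,v}(r)$ for $N_{u,v}(r) \in S_f$ comes from (a) (giving $r \geq d(u,v)$). Multiplicativity is the substantive axiom: given $N_{u,v}(r) \in S_f$ with $d = d(u,v)$, extract $(s,t) \in T(d)$ and $r_1 \in f(s)$, $r_2 \in f(t)$ with $r_1 \oplus r_2 \leq r$ via (b); since $(d,s,t)$ satisfies the triangle inequality in $\rr$, property (EP) supplies $w \in \U$ with $d(u,w) = s$ and $d(w, v) = t$. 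For $V := N_{u,w}(r_2) \cap N_{w,v}(r_1)$, which lies in the generated filter, and any $g, h \in V$,
\[
d(ghu, v) \leq d(hu, w) \oplus d(gw, v) \leq r_2 \oplus r_1 \leq r,
\]
so $VV \subseteq N_{u,v}(r)$.

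For (ii), $0 \in \mu_f$ by (a) and downward closure of $\mu_f$ is automatic from monotonicity of $\oplus$. Closure under $\oplus$ follows from the content of (b) extracted above: given $r, r' \in \mu_f$ and $s \in R$, any $u \in f(s)$ satisfies $u \geq u_1 \oplus u_2$ for some $u_i \in f(s_i)$ with $(s_1, s_2) \in T(s)$; combining with $u_1 \geq r \oplus s_1$ and $u_2 \geq r' \oplus s_2$ gives $u \geq r \oplus r' \oplus s_1 \oplus s_2 \geq r \oplus r' \oplus s$. Topological closedness is inherited from the representation $\mu_f = \bigcap_{s, u \in f(s)} \{r : r \oplus s \leq u\}$ as an intersection of closed sets in the order topology on $R$.

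For (iii), $g \in \overline{\{1\}}$ iff $d(gu, v) \leq r$ for every $N_{u,v}(r) \in S_f$. If $g \in G^b_{\mu_f}$, then for any $r \in f(d(u,v))$ the triangle inequality plus $d(gu, u) \in \mu_f$ gives $d(gu, v) \leq d(gu, u) \oplus d(u, v) \leq r$. Conversely, given $g \in \overline{\{1\}}$, $u \in \U$, and $s \in R$, use (EP) to find $v \in \U$ with $d(u,v) = s$ and $d(gu, v) = d(gu, u) \oplus s$ (a degenerate but admissible triangle in $\rr$); then $d(gu, u) \oplus s \leq r$ for every $r \in f(s)$, i.e.\ $d(gu, u) \oplus s \leq f(s)$ in $R^*$, and varying $s$ yields $d(gu, u) \in \mu_f$. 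The main technical subtlety throughout is the consistent manipulation of the order and operation on $R^*$; once (b) is reformulated into the subadditive-witness form above, all three parts reduce to direct applications of (EP) and the triangle inequality.
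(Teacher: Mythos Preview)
Your argument is correct and follows the same line as the paper's own proof: the same unpacking of condition (b) into witnesses $r_{1},r_{2}$, the same use of (EP) to produce the intermediate point $w$, and the same construction of $v$ for the converse inclusion in part (iii). One small slip to fix: since $d(u,w)=s$ with $r_{1}\in f(s)$ and $d(w,v)=t$ with $r_{2}\in f(t)$, the set lying in the generated filter should be $V=N_{u,w}(r_{1})\cap N_{w,v}(r_{2})$ rather than with the indices swapped; your displayed estimate $d(ghu,v)\leq r_{1}\oplus r_{2}\leq r$ is unaffected by this relabelling.
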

  \begin{proof}
  To begin with observe that $N^{-1}_{(u,v)}(r)=N_{(v,u)}(r)$, all sets $N_{(u,v)}(r)$ contain the identity map and the collection $S_{f}$ is invariant under conjugation.
  Take now any two points $u,v\in\U$. Let $d=d(u,v)$. Since $r\geq f(d)\geq\inf_{R^*}\{f(s)\oplus f(t)\,|\,(s,t)\in T(d)\}$, there must be $(s,t)\in T(d)$ such that
  $f(s)\oplus f(t)\leq r$. Since $(s,t)\in T(d)$, there must exist some point $w\in\U$ such that $d(w,u)=s$ and $d(w,v)=t$.
  Given any $h,g\in W:=N_{u,w}(s)\cap N_{w,v}(t)$ we have
  $d(gu,w)\leq s$, $d(hw,v)\leq t$. Hence $d(hgu,v)\leq d(hgu,hw)\oplus d(hw,v)\leq s\oplus t\leq r$. We conclude  $W^{2}\subseteq N_{u,v}(r)$ and we are done.
  
  If $r,r'\in\mu_{f}$ then for all
  $q\in R$ and $(s,t)\in T(q)$ we have $f(s)\oplus f(t)\geq s\oplus t\oplus r\oplus r'\geq q\oplus r\oplus r'$.
  This implies that $f(q)=\inf_{R^*}\{f(s)\oplus f(t)\,|\,{(s,t)\in T(q)}\}\geq q\oplus r\oplus r'$. Thus $r\oplus r'\in\mu_{f}$. Hence, $\mu_{f}$ is an ideal of $\rr$.
  Let $K$ be the closure of $1$ in $\tau_{f}$, i.e., $K=\bigcap_{W\in S_{f}}W$.
  Clearly $G^{b}_{\mu_{f}}\subset K$. For the opposite inclusion consider $g\nin G^{b}_{\mu_{f}}$ there exists
  some $u\in\U$ such that $r=d(gu,u)\nin\mu_{f}$, which means that there exists $s\in R$ such that $s\oplus r>f(s)$. Universality of $\U$ implies the existence of $v\in\U$ such that $d(gu,v)=s\oplus r$, so that $g\nin N_{(u,v)}(r)\in S_{f}$.
  
  \end{proof}
  
  An additional condition is needed to ensure the faithfulness of the parametrization $f\mapsto\tau_{f}$.
  \begin{defn}
  We say that $f:R\to R^{*}$ is a \emph{$\rr$-modulus of continuity} if it satisfies Conditions \ref{consistent bound} and \ref{f+f} of Lemma \ref{is group topology} together with the following:
  \begin{enumerate}[(a)]
  
  \setcounter{enumi}{2}
  \item \label{f+id}  $f(d)\leq\inf_{R^*}\{s\oplus f(t)\,|\,(s,t)\in T(d)\}$, for each $d\in R$.
  \end{enumerate}
  \end{defn}
  \noindent
  Notice that this implies the inequalities in \ref{f+f} and \ref{f+id} are actually equalities.
  \begin{lem}
  \label{monotonicity moduli}If $f,g:R\to R^{*}$ are $\rr$-moduli of continuity, then $\tau_{f}\subseteq\tau_{g}$ if and only if
  $g(r)\leq f(r)$ for all $r\in R$.
  \end{lem}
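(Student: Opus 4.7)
The $(\Leftarrow)$ direction is immediate: since the order $\leq$ on $R^{*}$ is reverse inclusion of end-segments, the hypothesis $g(r)\leq f(r)$ for all $r\in R$ translates into $f(r)\subseteq g(r)$ as subsets of $R$. Hence every generator $N_{u,v}(r')$ of $S_{f}$ (with $r'\in f(d(u,v))$) also belongs to $S_{g}$, yielding $\tau_{f}\subseteq\tau_{g}$.

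For $(\Rightarrow)$, fix $r\in R$ and $r'\in f(r)$, and aim to show $r'\in g(r)$. Choosing $u,v\in\U$ with $d(u,v)=r$, we have $N_{u,v}(r')\in S_{f}\subseteq\mathcal{N}_{\tau_{g}}(1)$, so there exist tuples $(u_{i},v_{i},s_{i})_{i=1}^{n}$ with $s_{i}\in g(d(u_{i},v_{i}))$ and $W:=\bigcap_{i=1}^{n}N_{u_{i},v_{i}}(s_{i})\subseteq N_{u,v}(r')$. Applying condition (c) of the definition of a modulus of continuity to $g$ twice---first with the triple $(d(u,u_{i}),d(u_{i},v))\in T(d(u,v))$ and then with $(d(v,v_{i}),d(u_{i},v_{i}))\in T(d(u_{i},v))$---gives the inequality $g(r)\leq d(u,u_{i})\oplus d(v,v_{i})\oplus g(d(u_{i},v_{i}))$ in $R^{*}$, and since $s_{i}\in g(d(u_{i},v_{i}))$, it follows that $B_{i}:=d(u,u_{i})\oplus s_{i}\oplus d(v,v_{i})\in g(r)$ for each $i$.

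It remains to produce some $h\in W$ with $d(hu,v)\geq B_{i_{0}}$ for at least one index $i_{0}$: this will force $r'\geq B_{i_{0}}\in g(r)$, and upward-closure of the end-segment $g(r)$ will yield $r'\in g(r)$. The plan is to invoke the extension property (EP) of $\U$: after choosing an $i_{0}$, consider the partial isometry fixing $\{u_{j},v_{j}\}_{j\neq i_{0}}\cup\{v_{i_{0}},v\}$ pointwise and sending $(u_{i_{0}},u)$ to a pair $(w,w')$ whose distances are chosen so that $(w',w,v_{i_{0}},v)$ is an aligned quadruple realizing $d(w,v_{i_{0}})=s_{i_{0}}$ and $d(w',v)=B_{i_{0}}$. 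By (EP) and ultrahomogeneity, this extends to a global isometry $h\in\Isom(\U)$ which belongs to $W$ by construction. The main obstacle is showing that the extended finite $\rr$-metric space is consistent, i.e.\ that all triangle inequalities involving the new distance $d(w,v_{i_{0}})=s_{i_{0}}$ and the fixed points $u_{j},v_{j}$ are satisfied. This can fail when $s_{i_{0}}$ is large compared to ambient distances $d(u_{i_{0}},u_{j})\oplus d(u_{j},v_{i_{0}})$; the fix is either a more careful choice of $i_{0}$ (any index for which the extension goes through still yields an element of $g(r)$ below $r'$), or enlarging the configuration with auxiliary points from $\U$ prior to applying (EP). In either case the substantive work is in verifying the realizability of the extremal configuration.
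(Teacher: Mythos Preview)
Your $(\Leftarrow)$ direction is correct, and in the $(\Rightarrow)$ direction the use of condition (c) to obtain $B_{i}:=d(u,u_{i})\oplus s_{i}\oplus d(v,v_{i})\in g(r)$ is exactly how the paper exploits property (c). The gap is in the construction of the witness $h$.

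The partial isometry you describe cannot exist. You ask $h$ to fix $v_{i_{0}}$ and to send $u_{i_{0}}$ to a point $w$ with $d(w,v_{i_{0}})=s_{i_{0}}$; but any isometry satisfies $d(h(u_{i_{0}}),h(v_{i_{0}}))=d(u_{i_{0}},v_{i_{0}})$, so $d(w,v_{i_{0}})$ is forced to equal $d(u_{i_{0}},v_{i_{0}})$, which is in general strictly below $s_{i_{0}}$. This is not a triangle-inequality obstruction that a cleverer choice of $i_{0}$ or extra auxiliary points can remove: as long as $v_{j}$ is fixed you are pinned to $d(h(u_{j}),v_{j})=d(u_{j},v_{j})$ and never realise the extremal value $s_{j}$. (You also omit the companion quantities $B'_{i}:=d(v,u_{i})\oplus s_{i}\oplus d(u,v_{i})$, which are needed because $N_{u_{i},v_{i}}(s_{i})$ is not symmetric in its two arguments.)

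The paper closes this step with Lemma \ref{neighbourhood containment}, whose point is precisely that no points are fixed. One puts an $\rr$-metric on the disjoint union of two copies $Y^{1},Y^{2}$ of $Y=\{u,v\}\cup\{u_{i},v_{i}\}_{i}$, with cross-distances given by the free-amalgam formula $\bar d(a^{1},b^{2})=\min\{d(a,p)\oplus s\oplus d(q,b):(p,q,s)\text{ a generator}\}$. The triangle inequality for this space is routine. Embedding it in $\U$ over $Y^{1}$ and extending the bijection $Y^{1}\to Y^{2}$ to a global isometry yields $h\in W$ with $d(hu,v)=\min_{i}\min(B_{i},B'_{i})$; from $W\subseteq N_{u,v}(r')$ one then reads off $r'\geq B_{i_{0}}$ or $r'\geq B'_{i_{0}}$ for some $i_{0}$, hence $r'\in g(r)$.
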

  This is an easy consequence of Condition \ref{f+id} together with the following fact:
  \begin{lem}
  \label{neighbourhood containment}Suppose we are given $v,w\in\U$, $r\in R$ and a finite collection $X\subseteq\U^{2}\times R$, where $(u,u',s)\in X$ implies $d(u,u')\leq s$. Then $\bigcap_{(u,u',s)\in X}N_{u,u'}(s)\subseteq N_{v,w}(r)$ if and only if there exists
  $(u,u',s)\in X$ such that either
  \begin{itemize}
  \item $d(w,u)\oplus d(u,u')\oplus d(u',v)\leq r$ or
  \item $d(v,u)\oplus d(u,u')\oplus d(u',w)\leq r$.
  \end{itemize}
  \end{lem}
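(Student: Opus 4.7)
The backward direction is a direct application of the triangle inequality. Given $g \in \bigcap_{(u,u',s) \in X} N_{u,u'}(s)$ and, say, some $(u,u',s)\in X$ satisfying the second listed inequality, the computation
$$d(gv, w) \leq d(gv, gu) \oplus d(gu, u') \oplus d(u', w) = d(v,u) \oplus d(gu, u') \oplus d(u', w)$$
together with $d(gu, u') \leq s$ yields $g \in N_{v,w}(r)$ (the first inequality is handled symmetrically after passing to $g^{-1}$, using $N_{u,u'}(s)^{-1} = N_{u',u}(s)$).

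For the forward direction I would argue by contrapositive. Enumerate $X = \{(u_i, u_i', s_i)\}_{i=1}^{n}$ and assume that for every $i$ both $d(w, u_i) \oplus d(u_i, u_i') \oplus d(u_i', v) > r$ and $d(v, u_i) \oplus d(u_i, u_i') \oplus d(u_i', w) > r$. The plan is to produce a single $g \in G$ lying in every $N_{u_i, u_i'}(s_i)$ but with $d(gv, w) > r$. The element $g$ will be obtained by invoking ultrahomogeneity of $\U$ on a suitable partial isometry of the form $v \mapsto x$, $u_i \mapsto y_i$, once we exhibit the target points $x, y_1, \ldots, y_n$ inside $\U$.

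The existence of such points is a question about building a finite $\rr$-metric extension of $F := \{v,w\} \cup \bigcup_i \{u_i, u_i'\}$ by adding $x$ and $y_1,\ldots, y_n$ with distances among $\{x, y_1, \ldots, y_n\}$ mirroring those among $\{v, u_1, \ldots, u_n\}$ and with the additional hard constraints $d(y_i, u_i') \leq s_i$ and $d(x, w) > r$; after verifying consistency, the EP axiom plants the configuration inside $\U$. I would set the remaining (``free'') distances — $d(x, u_j)$, $d(x, u_j')$, $d(y_i, u_j)$, $d(y_i, u_j')$ for $i \neq j$, and $d(y_i, w)$ — to be as large as the triangle inequality allows (i.e.\ using the shortest-path pseudometric induced by the hard constraints). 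Checking this really defines an $\rr$-metric reduces to showing that no ``short'' path forces $d(x, w) \leq r$.

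The main obstacle is this path analysis. The relevant paths from $x$ to $w$ fall into two types: those leaving $x$ through some $y_i$ and re-entering $F$ via $u_i'$ (lower bounded by $d(v, u_i) \oplus d(y_i, u_i') \oplus d(u_i', w)$), and those whose last exit from the copy $\{y_j\}$ uses the identification $y_j \leftrightarrow u_j$ (lower bounded by terms of the form $d(v, u_j) \oplus d(u_j, w)$, refined through an intermediate $u_k'$). The two contrapositive assumptions are tailored precisely to rule out both path types, but one must chase the bookkeeping through paths that alternate between the copy $\{y_i\}$ and $F$ several times, iterating the triangle inequality along the hard edges $d(y_i, u_i') \leq s_i$. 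Once all triangle inequalities are verified, ultrahomogeneity delivers $g$ and completes the proof.
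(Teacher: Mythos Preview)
Your contrapositive strategy for the forward direction is precisely the paper's: adjoin to the existing points a second ``copy'' linked to the originals by edges of length $s_i$, take the shortest-path metric, and embed via (EP). The paper streamlines the verification with two normalisations you omit: it first enlarges $X$ so that \emph{every} ordered pair from a common finite set $Y\ni v,w$ carries a triple (symmetrically, with $s_{i,j}=s_{j,i}$), and it then takes two \emph{full} copies $Y^{1},Y^{2}$ of $Y$ rather than copying only the source coordinates $\{v,u_1,\dots,u_n\}$. With this symmetry the cross-distance has the closed form $\tilde s_{j,k}=\min_{j',k'}\,d_{j,j'}\oplus s_{j',k'}\oplus d_{k',k}$, and the triangle inequality becomes a one-line check, whereas your partial copy forces exactly the multi-hop path analysis you flag as the ``main obstacle'' (and the phrase about ``the identification $y_j\leftrightarrow u_j$'' does not correspond to anything in your construction --- there is no such edge).

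On the backward direction there is a genuine gap. You bound $d(gu,u')\leq s$ and then assert $g\in N_{v,w}(r)$, but the hypothesis as written involves $d(u,u')$, not $s$; from $d(gv,w)\leq d(v,u)\oplus s\oplus d(u',w)$ together with $d(v,u)\oplus d(u,u')\oplus d(u',w)\leq r$ one cannot conclude when $s>d(u,u')$. Indeed the `if' direction is false as stated: take $X=\{(v,w,s)\}$ with $s$ much larger than $d(v,w)=r$; the second bullet holds yet $N_{v,w}(s)\nsubseteq N_{v,w}(r)$. The intended middle term is evidently $s$, and for that corrected statement your argument is fine.
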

  \begin{proof}
  The `if' part is clear. For the only part, assume that neither of the two cases above holds.
  We want to show that $\bigcap_{(u,u',s)\in X}N_{u,u'}(s)\nsubseteq N_{v,w}(r)$
  We may assume there is a finite set $Y=\{u_{i}\}_{i=1}^{q}$ such that
  $v=u_{1},w=u_{2}$ (assume without loss of generality that $v\neq w$) and $X$ contains exactly one triple $(u_{i},u_{j},s_{i,j})$ for any
  $1\leq j\leq q$ and that $s_{i,j}=s_{j,i}$. We construct a new finite $\rr$-metric space as follows. As the underlying set $Z$ we take the disjoint union of two copies $Y^{j}=\{u^{j}_{i}\}_{i=1}^{q}$, $j=1,2$ of $Y$ after identifying $u^{1}_{i}$ and $u^{2}_{i}$ in case $s_{i,i}=0$.
  \newcommand{\bd}[0]{\bar{d}}
  \newcommand{\ts}[0]{\tilde{s}}
  Consider the map $\bd:Y \times Y\to\rr$ given by $\bd(u^{l}_{j},u^{l}_{k})=d(u_{j},u_{k})$ (abbreviated as $d_{j,k}$) and
  $\bd(u^{1}_{j},u^{2}_{k})=\min\{d_{j,j'}\oplus s_{j',k'}\oplus d_{k',k}\,|\,1\leq j',k'\leq q\}$,
  (abbreviated as $\ts_{j,k})$.
  We claim that $(Z,\bd)$ is an $\rr$-metric space. Since our starting assumption translates as $\tilde{s}_{1,2}>r$, this will witness $\bigcap_{(u,u',s)\in X}N_{u,u'}(s)\nsubseteq N_{v,w}(r)$.
  By symmetry, all we need to check is $\ts_{i,j}\leq\ts_{i,j'}\oplus\ts_{j',j}$ as well as the symmetric inequality for all $1\leq i,j,j'\leq q$. This follows easily from the definition and
  the inequality $d_{j',l}\oplus d_{l,j}\geq d_{j',j}$.
  \end{proof}

  
  Given any distance monoid $\rr$, let $Id(\rr^*)$ stand for the collection of all idempotents of $\rr^{*}$.
  The following claim follows easily from the definitions and
  the fact that $\alpha\oplus\beta=\inf_{\rr^{*}}\{s\oplus t\,|\,s\in\alpha,t\in\beta \}$
  \begin{lem}
  \label{ladder}Let $g:R\to Id(\rr^{*})$ satisfy the following properties:
  \begin{enumerate}[(i)]
  \item $g$ is constant on any archimedean class;
  \item $g$ is non-increasing;
  \item \label{f+id proxy} If $[r]<[s]$  and
  $g(s)<g(r)$, then $g(r)\leq s$;
  \item \label{weird condition} For any $r\in R$ there exist $s,t\in [r]$ such that
  $s\oplus t= r$.
  \end{enumerate}
  Then the function $\hat{g}:R\to R^{*}$ given by $\hat{g}(r)=r\oplus g(r)$ is an $\rr$-modulus of continuity.
  \end{lem}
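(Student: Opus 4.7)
The plan is to check the three defining conditions of an $\rr$-modulus of continuity for $\hat{g}(r) = r \oplus g(r)$. Condition \ref{consistent bound} is immediate from $\hat{g}(d) = d \oplus g(d) \geq d$. For condition \ref{f+f}, the key point is to use hypothesis \emph{(iv)} to split $d = s \oplus t$ with $s, t \in [d]$. Any such pair automatically lies in $T(d)$ (the remaining inequalities $s \leq d \oplus t$ and $t \leq d \oplus s$ are immediate from $d = s \oplus t$), and since $g$ is constant on $[d]$ by \emph{(i)} and $g(d)$ is idempotent in $\rr^{*}$, one computes $\hat{g}(s) \oplus \hat{g}(t) = s \oplus t \oplus g(s) \oplus g(t) = d \oplus g(d) \oplus g(d) = \hat{g}(d)$. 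This shows that the infimum defining the right-hand side of \ref{f+f} is attained at this pair and equals $\hat{g}(d)$.

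For condition \ref{f+id}, I would unfold the inequality $\hat{g}(d) \leq s \oplus \hat{g}(t)$ in $\rr^{*}$ as the assertion that, for every $(s,t) \in T(d)$ and every $u \in g(t)$, the element $s \oplus t \oplus u$ lies in the end segment $d \oplus g(d)$, and then split into cases according to how $g(t)$ compares with $g(d)$ in $\rr^{*}$. The easy case is $g(t) \geq g(d)$, which is equivalent to $g(t) \subseteq g(d)$ as end segments; then $u \in g(d)$ and the inequality $s \oplus t \geq d$ coming from $(s,t) \in T(d)$ yields $s \oplus t \oplus u \geq d \oplus u \in d \oplus g(d)$ directly. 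By \emph{(i)} and \emph{(ii)} this case covers all situations in which $[t] \leq [d]$.

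The main obstacle is the remaining case $g(t) < g(d)$, which by monotonicity forces $[t] > [d]$. Here hypothesis \emph{(iii)} supplies $g(d) \leq t$ in $\rr^{*}$, which unpacks into the concrete statement that $t$ lies in the end segment $g(d) \subseteq R$, so that $d \oplus t$ is a generator of $d \oplus g(d)$. To bridge to the required inequality I would exploit the $T(d)$ constraint $t \leq d \oplus s$ together with $[t] > [d]$: since the archimedean class of a sum is the maximum of the individual classes, one is forced to have $[s] \geq [t] > [d]$, and in particular $s > d$ by the definition of $<$ on archimedean classes. Combining $s \geq d$ with $s \oplus t \oplus u \geq s \oplus t \geq d \oplus t$ then places $s \oplus t \oplus u$ above the generator $d \oplus t$ of $d \oplus g(d)$, concluding the verification of \ref{f+id}.
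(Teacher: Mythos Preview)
Your proof is correct. Conditions \ref{consistent bound} and \ref{f+f} are handled exactly as the paper intends (the paper only sketches \ref{f+f}, saying it follows from property \emph{(iv)} ``in a similar way'', and your argument is the natural unpacking of that sketch).

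For condition \ref{f+id} your route is somewhat more direct than the paper's. The paper argues by contradiction: assuming $r\oplus g(r)>s\oplus t\oplus g(t)$ for some $(s,t)\in T(r)$, it first deduces $g(t)<g(r)$ and $[t]>[r]$, then pushes further to $[s]=[t]$ and invokes the general fact that an idempotent is the maximum of its archimedean class to conclude $t=g(r)$ in $R^{*}$, whence both sides collapse to $g(r)$. You instead work directly: once $[t]>[d]$ gives $t\in g(d)$ via \emph{(iii)}, you only need the weaker conclusion $[s]>[d]$ (hence $s\geq d$) from the $T(d)$ constraints, and monotonicity of $\oplus$ immediately yields $s\oplus t\oplus u\geq d\oplus t\in\hat g(d)$. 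This bypasses the archimedean-class analysis of the idempotent $g(r)$ entirely, which makes the argument shorter and avoids having to reason about archimedean classes in $R^{*}$ rather than $R$. The paper's approach, on the other hand, extracts the sharper structural information $t=g(r)$, which is not needed here but explains more precisely when the inequality in \ref{f+id} is tight.

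One minor expository point: in your treatment of \ref{f+f} you assert that the infimum is \emph{attained} and \emph{equals} $\hat g(d)$. What you actually prove is that some pair achieves the value $\hat g(d)$, giving $\inf\leq\hat g(d)$; equality only follows once \ref{f+id} is established (as the paper remarks just after the definition). This does not affect correctness, since only the inequality $\hat g(d)\geq\inf$ is required.
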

  \begin{proof}
  Let us check condition \ref{f+id} first. If it does not hold, then there exists
  $r\in R$ such that $r\oplus g(r)> s\oplus t\oplus g(t)$ for some $(s,t)\in T(r)$.
  Since $s\oplus t\geq r$ this implies $g(t)<g(r)$  and thus $[t]>[r]$, since $g$ is non-increasing and constant on archimedean classes. Since $(s,t)\in T(r)$, this implies in turn $[s]=[t]$. By condition \ref{f+id proxy} it also implies $g(r)\leq t$.
  We cannot have $[g(r)]<[t]$ so necessarily $[g(r)]=[t]>[r]$.
  Notice that in general if an archimedean class $[q]$ contains an idempotent $q_{0}$, then $q_{0}=\max[q]$ and $q_{0}\oplus p=q_{0}$ for any $p\leq q_{0}$. It follows that $t=g(r)$ and both left and right-hand side of the inequality we started with are actually equal to $g(r)$; a contradiction.
  
  Condition \ref{f+f} can be proved using \ref{weird condition} in a similar way.
  \end{proof}
  
  We will refer to any map $g:R\to Id(\rr^{*})$ as above as an $\rr$-ladder of idempotents.
  
    \subsubsection*{Example.}
      Given any $\alpha\in Id(\rr^{*})$ it is easy to check that the function $g$ with constant value $\alpha$ satisfies the definition above. If $\alpha=0$, then $\tau_{\hat{g}}=\tau_{st}$. If $\alpha=0^{+}$, then $\tau_{\hat{g}}$ coincides with the
      generalized point-wise convergence topology $\tau_{m}$.  The system of generating sets at the identity given this way is larger than the one in the definitions above, but it can be easily checked the extra generators are redundant.
      Notice that $\tau_{\hat{g}}$ is Hausdorff if $\inf\{im(g)\,|\,g\in G\}\in\{0,0^{+}\}$.
      This is an only if in case $\rr$ is countable or $\U$ the completion of a countable Urysohn space.\\

      If $\rr$ has only finitely many archimedean classes, we can think of a ladder as the non-increasing sequence $\sigma$ of its values on $R/\sim$ and accordingly write $\tau_{\sigma}$ instead of $\tau_{\hat{g}}$. In particular, in the archimedean case we can wirte $\tau_{0,0}$ for $\tau_{st}$ and $\tau_{0^{+},0^{+}}$ for $\tau_{m}$.
      Notice that the condition in Claim \ref{spherical topology} is precisely what one need in order to get \ref{weird condition}

    \subsubsection*{Example}
      The archimedeanity assumption  of Theorem \ref{Urysohn thm 1} is essential.
      Let $(\Lambda,0,+,<)$ be the abelian group $\Q\times\Q$ equipped with the lexicographical order. Let $\rr=(R,0,\leq,\oplus)$ be the distance monoid given by the 	restriction of $(\Lambda,0,+,<)$ to the non-negative part of $\Lambda$. Let $\alpha\in R^{*}$ be the upper closed set $\{(a,b)\in\Lambda^{\geq 0}\,|\,a>0\}$.
      The four idempotents of $\rr^{*}$ are $0<0^{+}<\alpha<\emptyset$. Here we have ladders $(0,0,0)$, $(0^{+},0,0)$ and $(0^{+},0^{+},0^{+})$ whose associated topologies give analogues to $\tau_{st},\tau_{0^{+},0}$ and $\tau_{m}$ in the archimedean case.
      
      But we get also $(0^{+},0^{+},0)$, witnessing the failure of Lemma \ref{second gap}, as well as $(\alpha,\alpha,0^{+})$ and $(\alpha,\alpha,0)$, witnessing the fact that $\tau_{m}$ is not minimal. Notice that by condition \ref{f+id proxy} there aren't any other ladders whose minimal value is either $0$ or $0^{+}$. Indeed, $g(0)=\emptyset=\max R^{*}$ implies $g([r])=\emptyset$, for each $r\in R$, while $g((0,\alpha)\cap R)<\alpha$ implies also $g(0)=\alpha$.
      
      \begin{lem}
      Assume $\rr=\mathcal{S}$ where $S=\Lambda^{\geq 0}$ for some ordered abelian group $(\Lambda,0,\leq,+)$. Then any modulus of continuity comes from some $\rr$-ladder of idempotents.
      \end{lem}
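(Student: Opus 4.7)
Given a modulus of continuity $f\colon R\to R^{*}$, the natural candidate for the associated ladder is the map
\[
g\colon R\to R^{*},\qquad g(r):=\{s\in R\,\colon\,r+s\in f(r)\}.
\]
Because $R=\Lambda^{\geq 0}$ embeds into the group $\Lambda$, the expression $r+s$ is unambiguous, one verifies directly that $g(r)$ is an end segment of $R$ (hence an element of $R^{*}$), and that $f(r)=\nu(r)\oplus g(r)=r\oplus g(r)$. Thus it is enough to check that $g$ is an $\rr$-ladder of idempotents.

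Non-increasingness of $g$ in $\leq^{*}$ is a quick consequence of property (c) of a modulus. Applied at $d=r$ with the split $(s,t)=(r-r'',r'')\in T(r)$, for any $r''\in[0,r]$, it yields $f(r)\supseteq (r-r'')+f(r'')$; cancelling the common factor $\nu(r)$ from both sides, this is exactly $g(r)\supseteq g(r'')$, i.e.\ $r''\leq r$ implies $g(r'')\leq^{*}g(r)$.

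The main step is to show each $g(r)$ is an idempotent of $\rr^{*}$. The inclusion $g(r)+g(r)\subseteq g(r)$ holds automatically for any end segment of $R^{\geq 0}$; the content lies in the reverse inclusion. Suppose for a contradiction there exists $x\in g(r)$ with $x\notin g(r)+g(r)$: this forces $c:=\inf g(r)>0$ and $x$ to lie strictly ``below $2c$''. Apply (b) at $d=r$, $u=r+x\in f(r)$: one obtains $(s,t)\in T(r)$ together with $a\in f(s)$, $b\in f(t)$ satisfying $r+x\geq a+b$. Writing $a=s+a'$ and $b=t+b'$ with $a'\in g(s)$, $b'\in g(t)$, and using $s+t\geq r$ from the definition of $T(r)$, we deduce $x\geq a'+b'$. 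A case analysis on the Archimedean classes of $s$ and $t$ and their position relative to $r$---controlling $\inf g(s)$ and $\inf g(t)$ via non-increasingness, and using (c) applied to $(r,0)\in T(r)$ to force $\inf g(0)\geq c$---shows that in every case $a'+b'\geq 2c$, contradicting $x<2c$.

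Constancy of $g$ on Archimedean classes and axiom (iii) of the ladder then follow from a similar interplay of (b) and (c). Once $g$ is known to take idempotent values, two idempotents of the same Archimedean class that are sandwiched by non-increasingness via the relations $r\leq s\leq nr$ must coincide; axiom (iii) follows by re-running the idempotence analysis at a scale $s$ with $[s]>[r]$. The main technical obstacle lies in the idempotence step, where one must keep track of whether the relevant end segments are attained at their infima (so that inequalities of the form $r+x\geq a+b$ are not secretly strict) and carefully treat the mixed-class situation $[s]\neq[t]$ in $T(r)$; the group structure of $\Lambda$, which allows us to freely subtract $r$, is essential throughout.
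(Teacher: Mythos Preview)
Your candidate $g(r)=f(r)-r$ is exactly the paper's, and your derivation of monotonicity from property~(c) with the split $(r-r'',r'')\in T(r)$ is the same as theirs (though note your stated conclusion ``$r''\leq r$ implies $g(r'')\leq^{*}g(r)$'' is reversed: your own computation gives $g(r'')\supseteq g(r)$, i.e.\ $g(r'')\geq^{*}g(r)$, which is the correct non-increasing direction).

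The real gap is in the idempotence step. Your contradiction argument hinges on $c:=\inf g(r)$ and the inequality ``$x<2c$'', but $\Lambda$ need not be order-complete, so $c$ may not exist in $R$ and the case analysis you allude to never gets off the ground; you flag this yourself as a ``technical obstacle'' but do not resolve it. The paper sidesteps the issue entirely by working directly with end segments. The key extra ingredient you are missing is the \emph{second} inequality obtainable from~(c): applying it at $d=t$ with the pair $(t'-t,t')\in T(t)$ for $t\leq t'$ gives $g(t)\leq^{*}g(t')+(t'-t)$, i.e.\ $g(t')+(t'-t)\subseteq g(t)$ as sets. With both this and non-increasingness in hand, one shows directly that for every $(s,t)\in T(r)$
\[
(s+t-r)+g(s)+g(t)\ \subseteq\ g(r)\oplus^{*}g(r),
\]
by a two-line case split (if $s,t\leq r$ use $g(s),g(t)\subseteq g(r)$; if $s>r$ use $g(s)+(s-r)\subseteq g(r)$, and similarly for $t$). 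Since~(b) gives $g(r)\subseteq\bigcup_{(s,t)\in T(r)}(s+t-r)+g(s)+g(t)$, idempotence follows with no reference to infima. Constancy on archimedean classes is then a one-line consequence of combining both inequalities, not a separate ``similar interplay''; your sketch for~(i) and~(iii) is not yet a proof.
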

      \begin{proof}
      Consider any $\rr$-modulus of continuity $f:R\to R^{*}$ and let $g:R\to R^{*}$ be given by $g(r)=f(r)-r$.
      Notice that since $R$ is closed under differences and $f(r)\geq r$ the right hand side is a well defined element of $R^{*}$.
      Given any $t,'t\in R$ with $t\leq t'$. Property \ref{f+id} implies that $f(t')\leq f(t)+(t'-t)$
      from which it follows that $g(t')\leq g(t)$. The same property applied to $(t',t'-t)\in T(t)$ also yields $g(t)\leq g(t')+ (t'-t)$.
      
      We now claim that $im(g)\subseteq Id(\rr^{*})$.
      Property \ref{f+f} yields:
      \begin{align*}
      g(r)\geq\inf_{R^*}\{f(s)+f(t)-r\,|\,(s,t)\in T(r) \}=\hspace{0.5 cm}\\
      =\inf_{R^*}\{g(s)+ s+ g(t)+ t-r\,|\,{(s,t)\in T(r)}\}.
      \end{align*}
      It thus suffices to show that $g(s)+ g(t)+ ((s+ t)-r)\geq g(r)+ g(r)$, for each $(s,t)\in T(r)$.
      This is clear in case $s,t\leq r$, since $g(r)$ is non-increasing and $s+t-r\geq 0$. If $s$ or $t$ are larger than $r$, then
      the result follows from inequalities $g(t)+ (t-r)\geq g(r)$, $g(s)+ (s-r)\geq g(r)$.
      Since $g(r+r)\leq r+g(r)\leq g(r)+g(r)=g(r)$ we conclude that $g$ is constant on archimedean classes.
      The second property of the definition follows easily from \ref{f+id}.
      \end{proof}

      \begin{rem}
      In general not all moduli of continuity need come from a ladder.
      Take $a\in\R^{+}\setminus\Q$ and consider $R=\Q^{\geq 0}+\Q^{\geq 0}a$. The sum and order inherited from $\R$ make $R$ into a distance monoid. Let
      $f:R\to R^{*}$ evaluate to $r$ on any $r\nin\Q\cap R$ and to $r^{+}$ on any $r\in\Q\cap R$. It is easy to check that $f$ is a $\rr$-modulus of continuity.
      \end{rem}
      
      \begin{question}
      Does some distance monoid $\rr$ admitting moduli of continuity whose range contains non-idempotent elements?
      \end{question}
      
      \begin{problem}
      Classify the collection of $\rr$-moduli of continuity associated with an arbitrary $\rr$.
      \end{problem}
      
      An alternative (and in the long run better way) of thinking about moduli of continuity is in terms of parameters of a generalized version of bi-Katetov maps as described in \cite{Usp}, or types of pairs of copies of $\U$. Any function $f:R\mapsto R^{*}$ satisfying \ref{f+id} can be associated to a $G$-invariant bi-Katetov map that assigns distance $f(r)$ to any pair $u'$,$v''$
      where $u'$ and $v''$ are copies of $u\in\U$ and $v\in\U$ respectively in the two copies of $\U$. Condition \ref{f+f} on the other hand states that the type is idempotent. So the following conjecture seems natural from that point of view as well.
      
      \begin{conjecture}
      Given any distance monoid $\rr$ and any $\rr$-Urysohn space $\U$ any group topology on $\Isom(\U)$ strictly coarser than
      $\tau_{st}$ is of the form $\tau_{f}$ for some $\rr$-modulus of continuity $f$.
      \end{conjecture}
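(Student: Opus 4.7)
The natural plan is to associate to any group topology $\tau$ coarser than $\tau_{st}$ a canonical candidate for a modulus of continuity, and then show that $\tau = \tau_{f_\tau}$. For $r\in R$ and any (equivalently, by ultrahomogeneity and invariance, every) pair $u,v\in\U$ with $d(u,v)=r$, I would define
\begin{align*}
f_\tau(r):=\{s\in R\,|\,N_{u,v}(s)\in\mathcal{N}_\tau(1)\}.
\end{align*}
The fact that $N_{u,v}(s)\subseteq N_{u,v}(s')$ whenever $s\leq s'$ makes $f_\tau(r)$ an end-segment. Property \emph{(a)} of Lemma \ref{is group topology} holds because if $s<d(u,v)$ then $1\notin N_{u,v}(s)$. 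Property \emph{(b)} is the key content: given $(s,t)\in T(d)$, pick $w$ with $d(u,w)=s$, $d(w,v)=t$, $d(u,v)=d$; the triangular inequality gives $N_{u,w}(r_1)\cdot N_{w,v}(r_2)\subseteq N_{u,v}(r_1\oplus r_2)$, and a product of two $\tau$-neighborhoods of $1$ is a $\tau$-neighborhood, so $r_1\in f_\tau(s)$ and $r_2\in f_\tau(t)$ force $r_1\oplus r_2\in f_\tau(d)$. Property \emph{(c)} follows even more easily from the trivial inclusion $N_{w,v}(t')\subseteq N_{u,v}(s\oplus t')$ whenever $d(u,w)\leq s$.

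By construction every generator $N_{u,v}(s)$ of $\tau_{f_\tau}$ with $s\in f_\tau(d(u,v))$ lies in $\mathcal{N}_\tau(1)$, so $\tau_{f_\tau}\subseteq\tau$ is automatic. The whole content of the conjecture is the reverse inclusion: given $W\in\mathcal{N}_\tau(1)$, produce a finite intersection of generators of $\tau_{f_\tau}$ that sits inside $W$. Since $\tau\subseteq\tau_{st}$, some $G_A$ lies in $W$, so the reformulation is: for every stabilizer $G_A$ that happens to be a $\tau$-neighborhood and every $W\supseteq G_A$ of the form above, one can bracket $G_A$ between $W$ and a finite intersection $\bigcap_i N_{u_i,v_i}(s_i)$ all of whose factors are themselves $\tau$-neighborhoods. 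I would try to run this as an induction on $|A|$ (adding one point at a time) combined with the following zigzag-style device, in the spirit of Section 7: if $A\subset\U$ and $a\in A$, replace $G_A$ by neighborhoods of the form $G_{A\setminus\{a\}}\cap\bigcap_{v\in V}N_{a,v}(s_{a,v})$, where the $s_{a,v}$ are chosen so that $s_{a,v}\in f_\tau(d(a,v))$ and so that, by an analogue of Lemmas \ref{small disturbance} and \ref{better non-cutting constant}, any $g$ in this neighborhood can be factored as a product of a bounded number of elements in $G_{A\setminus\{a\}}$ and in suitable ``small displacement'' stabilizers.

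The main obstacle, and the reason this is left as a conjecture, is that outside of the archimedean case covered by Theorems \ref{Urysohn thm 1} and \ref{3 topologies theorem} we lack the cutting/alignment machinery of Subsections \ref{subsection alignment} and \ref{subsection downwards}, together with its archimedean corollaries (Lemmas \ref{sphere union}, \ref{alignment to sphere}, \ref{gamma in delta}) that allow one to detect when a ``spherical'' neighborhood $\nb_{u,v}$ or a ``point-wise'' neighborhood $N_u(\epsilon)$ is in $\mathcal{N}_\tau(1)$ from coarser data. For a general $\rr$ the set $R^\ast/\!\sim$ of relevant archimedean classes can be rich, and one would need to develop a version of these lemmas uniformly over all idempotents of $\rr^\ast$. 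A reasonable intermediate target would be to first prove the conjecture for basic standard $\rr=\mathcal{S}_b$ with finitely many archimedean classes, where the possible ladders are enumerated explicitly and one can argue stratum by stratum using a transfinite analogue of the two-step gap argument in the proof of Theorem \ref{3 topologies theorem}; the general case should then follow by approximating a modulus of continuity by its restrictions to finitely many archimedean classes and invoking the functorial interpretation in terms of $G$-invariant idempotent bi-Katetov types sketched at the end of the section.
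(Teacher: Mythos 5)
The statement is labeled a conjecture and carries no proof in the paper, so I am assessing your proposal as a plan of attack rather than against an existing argument. The candidate $f_\tau(r)=\{s\in R\mid N_{u,v}(s)\in\mathcal{N}_\tau(1)\}$ is the natural object, its well-definedness via conjugation invariance and ultrahomogeneity is correct, $f_\tau(r)$ is indeed an end segment, your check of (a) is right, and your check of condition~(\ref{f+id}) (via $N_{w,v}(t')\subseteq N_{u,v}(s\oplus t')$) is sound. Your overall structuring into an easy inclusion $\tau_{f_\tau}\subseteq\tau$ and a hard inclusion $\tau\subseteq\tau_{f_\tau}$, and your diagnosis of why the archimedean cutting/alignment machinery does not transfer, are fair.

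However, your verification of condition~(\ref{f+f}) of Lemma~\ref{is group topology} runs in the wrong direction, and this is not cosmetic. In $R^{*}$ the order is reverse inclusion, so $\inf_{R^*}$ of a family of end segments is their union; condition~(\ref{f+f}), $f(d)\geq\inf_{R^*}\{f(s)\oplus f(t)\,|\,(s,t)\in T(d)\}$, therefore unwinds to $f(d)\subseteq\bigcup_{(s,t)\in T(d)}\bigl(f(s)\oplus^{*}f(t)\bigr)$: for every $r\in f_\tau(d)$ there must exist $(s,t)\in T(d)$ and $r_1\in f_\tau(s)$, $r_2\in f_\tau(t)$ with $r_1\oplus r_2\leq r$. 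Back on the group side this is a \emph{decomposition} requirement: every $\tau$-neighbourhood of the form $N_{u,v}(r)$ must contain a smaller spherical set $N_{u,w}(r_1)\cap N_{w,v}(r_2)$ which is itself a $\tau$-neighbourhood and which squares into $N_{u,v}(r)$ --- this is exactly what the proof of Lemma~\ref{is group topology} consumes. What your argument from $N_{w,v}(r_2)\cdot N_{u,w}(r_1)\subseteq N_{u,v}(r_1\oplus r_2)$ proves is the opposite, \emph{composition}, inclusion $\bigcup\bigl(f_\tau(s)\oplus^{*}f_\tau(t)\bigr)\subseteq f_\tau(d)$, i.e.\ $f_\tau(d)\leq\inf_{R^*}\{f_\tau(s)\oplus f_\tau(t)\}$; in fact this already follows from~(\ref{f+id}) together with $f_\tau(s)\geq s$. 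Composition is automatic from continuity of multiplication in $\tau$; decomposition is not, because the $V\in\mathcal{N}_\tau(1)$ with $V^{2}\subseteq N_{u,v}(r)$ furnished by the group topology is only known to contain some stabilizer $G_A$, not a spherical set. Condition~(\ref{f+f}) for $f_\tau$ thus amounts to the family $\{N_{u,v}(r)\mid r\in f_\tau(d(u,v))\}$ being a neighbourhood sub-basis for $\mathcal{N}_\tau(1)$, which is essentially the conjecture; even the milder claim that $\tau_{f_\tau}$ is a group topology is already part of the hard content, not a preliminary sanity check. You should either flag~(\ref{f+f}) as an open step to be established alongside the reverse inclusion, or reorganize so that it is deduced a posteriori once you show every $W\in\mathcal{N}_\tau(1)$ contains a finite intersection of sets $N_{u_i,v_i}(s_i)$ with $s_i\in f_\tau(d(u_i,v_i))$. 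As written, the proposal silently absorbs the central obstruction into a step you present as routine.
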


\section{Zariski topology}
  \label{sec-Zariski}
  
  Given a group $G$, an {\it equation} ({\it inequality}) over $G$ in one variable $x$ is an
  expression of the form $w(x,\alpha)=1$ ($w(x,\alpha)\neq 1$), where $w$ is
  a term over $x\cup\alpha$ in the language of groups with inversion.
  We can think of $w$ as an expression of the form
  \begin{align*}
  \alpha_{0}x^{\epsilon_{0}}\alpha_{1}\cdots x^{\epsilon_{m-1}}\alpha_{m};
  \end{align*}
  where $j_{l}\in\{1,\dots, r\}$ and $\epsilon_{l}\in\{1,-1\}$ for $0\leq l\leq m-1$. This represents an element of the group $G\frp\subg{x}$, where $\subg{x}$ is the cyclic free group over $x$. It is easy to check that if $w$ and $w'$ correspond to the same group element then the equations $w=1$ and $w'=1$ have the same set of solutions.
  Hence, one can always assume that the above word is reduced, i.e. $\alpha_{l}\neq 1$ whenever $\epsilon_{l}+\epsilon_{l+1}=0$.
  We say that an equation is trivial if $w$ represents the trivial element in $G\frp\subg{x}$.
  
  A system of equations (inequalities) is just the conjunction of finitely many equations (inequalities).
  It can be checked that the collection of all sets of solutions of systems of inequalities over $G$ is the base of a topology on $G$ known as the Zariski topology, which we will denote by  $\tau_Z$.
  
  As mentioned in the introduction, according to a result of Gaughan in \cite{Gua} for the group $S_\infty$ the Zariski topology is the same as the standard topology and hence a group topology. Later, in \cite{BGP} the same is shown for every subgroup of $S_\infty$, containing all permutations of finite support. Here, we investigate the Zariski topology for the automorphim groups of some homogeneous countable structures including the automorphism groups of Fra\"iss\'e limit structures of free amalgamation classes and rational Urysohn spaces (bounded and unbounded).
  
  \begin{defn}
  \label{def-st-ub} Let $\alpha\in \Aut{\M}$. We say $\alpha$ is {\it strongly unbounded} if for every finite subset $A$ of $M$ and $b\in M\backslash \acl(A)$, there is a realization $c\models \typ(b/A)$ such that $\alpha(c)\notin \acl(cA)$.
  \end{defn}
  \begin{rem}
  Notice that being strongly unbounded is a strictly weaker notion than moving maximally in the sense of \cite{TentZUry} (and almost moving maximally in \cite{EGT}).
  \end{rem}
  
  The following is a generalization of a classical argument for finding embeddings of free groups into automorphism groups of $\omega$-categorical structures (see \cite{MacS}, Prop. 4.2.3).
  \begin{lem}
  \label{lem-comeager}
  Suppose  $\M$ is a countable $\omega$-saturated first order structure in which acl is locally finite and put $G:=\Aut{\M}$. Assume $\alpha$  is a finite tuple of automorphisms of $\M$ where  $\alpha_i$ is either $1$ or strongly unbounded and $w(x,\alpha):=\alpha_0x^{\epsilon_0}\alpha_1\cdots x^{\epsilon_m}\alpha_{m+1}$ a reduced word in one variable. Then the set of solutions of the equation $w=1$ is meager in $(G,\tau_{st})$. 
  \end{lem}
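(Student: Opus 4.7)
The set $S := \{g \in G : w(g, \alpha) = 1\}$ is closed in $(G, \tau_{st})$, since it is the intersection over $m \in M$ of the clopen sets $\{g : w(g,\alpha)(m) = m\}$. Hence it suffices to prove $S$ has empty interior: for every finite partial elementary map $f_{0}: A_{0} \to B_{0}$ of $\M$, I would produce an extension $g \in G$ of $f_{0}$ satisfying $w(g,\alpha) \neq 1$.

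The plan is to choose $m_{0} \in M \setminus \acl(A_{0} B_{0})$ and then, tracing through the computation
$$m_{0} = z_{0} \xrightarrow{\alpha_{m+1}} z_{1} \xrightarrow{g^{\epsilon_{m}}} z_{2} \xrightarrow{\alpha_{m}} z_{3} \xrightarrow{g^{\epsilon_{m-1}}} \cdots \xrightarrow{g^{\epsilon_{0}}} z_{2m+2} \xrightarrow{\alpha_{0}} z_{2m+3} = w(g,\alpha)(m_{0}),$$
extend $g$ step by step at each of the finitely many $g$-applications. I would maintain the invariant that the current point $z_{2j-1}$ is outside the algebraic closure of the running data, in particular outside $\dom g \cup \rang g$; this guarantees that the $g$-step requires a genuine extension (rather than being pinned by a value already committed), and by $\omega$-saturation together with local finiteness of $\acl$ I can always extend $g$ so that the new image $z_{2j}$ is generic. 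Strong unboundedness of each nontrivial $\alpha_{i}$ is used to refine this choice: when committing $z_{2j}$ I pick it so that the upcoming $\alpha_{m+1-j}(z_{2j})$ is also generic over the running data together with $z_{2j}$, so that the invariant survives the following $\alpha$-step. When the relevant $\alpha_{i} = 1$ the invariant propagates trivially.

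The key subtle point, where reducedness enters, is ruling out a ``cancellation'' collision: in principle $z_{2j-1}$ could be forced to lie in the relevant side of $g$ if $\alpha_{m+2-j} = 1$ (so $z_{2j-1} = z_{2j-2}$) and the next exponent is $\epsilon_{m+1-j} = -\epsilon_{m+2-j}$, which would force $z_{2j} = z_{2j-3}$. Reducedness excludes precisely this configuration, since $\alpha_{m+2-j} = 1$ forces $\epsilon_{m+2-j} = \epsilon_{m+1-j}$; therefore $z_{2j-2}$ lies on the opposite side of $g$ from the one queried by the next application of $g^{\pm 1}$, and a fresh generic extension of $g$ is again available. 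This is the main obstacle in the proof, and reducedness of $w$ is the combinatorial input that resolves it.

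At the terminal step, $z_{2m+2}$ is chosen generic over the full running data, which contains $m_{0}$; in particular $z_{2m+2} \neq m_{0}$. If $\alpha_{0} = 1$ then $z_{2m+3} = z_{2m+2} \neq m_{0}$; if $\alpha_{0}$ is strongly unbounded, the same strong unboundedness refinement applied at the $z_{2m+2}$ step gives $\alpha_{0}(z_{2m+2}) \notin \acl(\{m_{0}\} \cup \text{data})$, so again $z_{2m+3} \neq m_{0}$. The finite partial elementary map built in this process extends, by $\omega$-saturation, to a full automorphism $g$ of $\M$ extending $f_{0}$; since $w(g, \alpha)(m_{0})$ is already determined by the finite data, $g \notin S$, contradicting that $[f_{0}] \subseteq S$ and completing the proof.
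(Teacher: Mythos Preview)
Your argument is essentially the paper's: reduce to empty interior, then inductively extend a partial elementary map while tracking a generic point through the word, invoking strong unboundedness at the nontrivial $\alpha_i$-steps and reducedness to rule out forced cancellation. Your stated invariant ($z_{2j-1}$ outside the $\acl$ of \emph{all} running data) is slightly too strong---it literally fails when the preceding $\alpha$ is trivial, since then $z_{2j-1}=z_{2j-2}$ was just added to $g$---but your collision paragraph correctly identifies the required weakening (outside only the \emph{relevant side} of $g$), which is exactly the paper's invariant $c_k \notin \acl(\dom f_k^{\epsilon_{k-1}})$.
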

  \begin{proof} As remarked above, the set of solutions of $w(x,\alpha)=1$ is closed in any group topology. We want to show it has empty interior. Aiming for contradiction suppose that is not the case. Up to performing a change of variable of the form $x\mapsto x\gamma$ we can assume that there is a finite subset $B$ such that $w(G_{B},\alpha)=1$.
  
  We will construct inductively a chain of elementary maps $\id_B=f_{m+1}\subseteq f_{m}\subseteq \dots \subseteq f_0$ together with $a\in M$ and $c_{k}:= \alpha_k f_k^{\epsilon_k}\alpha_{k+1}\cdots f_m^{\epsilon_m}\alpha_{m+1}(a)$ for $0\leq k \leq m+1$ with the property that:
  \begin{equation}
  \tag{$\dagger$} \label{ck cond}   c_{k}\notin \acl (\dom(f_{k}^{\epsilon_{k-1}}));
  \end{equation}
  for $1\leq k \leq m+1$ and $c_{0}\neq a$. This finishes the proof. Indeed, given any extension $\beta\in G$ of $f_0$, clearly $\beta\in G_B$ but $w(\beta,\alpha)(a)=c_{0}\neq a$.
  
  We start by choosing any  $a\in \alpha_{m+1}^{-1}(M\backslash \acl{(B)})$ so that
  $c_{m+1}=\alpha_{m+1}(a)\nin\acl{(B)}=\acl{(\dom{(f_{m+1})})}$.
  For the induction step, assume $f_{k+1}$ has been successfully constructed for some $2\leq k\leq m$. We want to extend it to a map $f_{k}$ satisfying (\ref{ck cond}). Let $D_k=\dom (f_{k+1}^{\epsilon_{k}})$ and $q(x,y)=\typ(c_{k+1},D_{k})$.
  Let $p(x):=q(x,D_{k})$ and $p'(x):=q(x,f_{k+1}^{\epsilon_{k}}(D_{k}))$.
  For any realization $e\models p'(x)$ the map $g_{e}$ defined by $g_{e}^{\epsilon_{k}}=f_{k+1}^{\epsilon_{k}}\cup\{(c_{k+1},e)\}$
  is elementary by construction. Our goal is thus to show that for some such $e$ if we let $f_{k}=g_{e}$ then the resulting  $c_{k}\in M$ satisfies both (\ref{ck cond}) and $c_{k}\neq a$. Notice that by the induction hypothesis
  $c_{k+1}\nin \acl( D_{k})$, i.e. $p(x)$ is non-algebraic and hence so is $p'(x)$. Since by assumption $\M$ is $\omega$-saturated, $p'(x)$ has infinitely many realizations in $M$. There are two different scenarios to consider.
  
  If $\epsilon_{k}=\epsilon_{k-1}$, then take $e\models p'$ with $e\nin \{a\}\cup\alpha_{k}^{-1}(\acl(D_{k}c_{k+1}))$. This is possible by the observation of the last paragraph and the local finiteness of $\acl{-}$. Taking $f_{k}:=g_{e}$ we obtain:
  \begin{align*}
  c_{k}=\alpha_{k}(f_{k}^{\epsilon_{k}}(c_{k+1}))=\alpha_{k}(e)\nin \acl(D_{k}c_{k+1})=\acl(\dom(f_{k}^{\epsilon_{k}}))=\\=\acl(\dom(g_{e}^{\epsilon_{k}}))=\acl(\dom(f_{k}^{\epsilon_{k}}))=\acl(\dom(f_{k}^{\epsilon_{k-1}})).
  \end{align*}
  Consider now the case $\epsilon_{k}=-\epsilon_{k-1}$. Since $w$ is reduced, this implies that $\alpha_{k}\neq 1$ and thus, by assumption, that $\alpha_{k}$ is unbounded. The type $p'(x)$ is non-algebraic with parameters in $D':=f_{k+1}^{\epsilon_{k}}(D_{k})$.
  Unboundedness implies there exists a realization $e$ of $p'(x)$ such that
  $\alpha_{k}(e)\nin\acl(D'e)$. But $D'e=im\,f_{k}^{\epsilon_{k}}=\dom f_{k}^{\epsilon_{k-1}}$, hence condition (\ref{ck cond}) follows for $c_{k}=\alpha_{k}(e)$ as well.
  In the last step all we have to do is to choose $e\models p'$ such that $\alpha_{0}(e)\neq a$. This is clearly possible by the fact that $p'(x)$ is non-algebraic.
  \end{proof}
  \begin{rem}
  The actual sufficient condition given by the proof is that no two occurrences of opposite sign of $x$ in $w$ are separated by non-strongly unbounded element from the group. In particular, words involving only positive powers of $x$ have always meager sets of solutions.
  \end{rem}

  \begin{lem}
  \label{using meagerness}	 Suppose $\M$ is a countable first order structure such that the solution sets of all non-trivial equations of the form $w(x,\alpha)$ are meager in $G=\Aut{\M}$ with respect to the standard topology. Then $\tau_Z$ is not a group topology for $G$.
  \end{lem}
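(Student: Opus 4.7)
The plan is to derive a contradiction from the assumption that $\tau_Z$ is a group topology by showing that any non-trivial $\tau_Z$-neighbourhood of the identity would have to equal the whole group, exploiting the Baire category of the coarser topology $\tau_{st}$. First I would observe that $\tau_Z \subseteq \tau_{st}$: each subbasic $\tau_Z$-open set $\{x : w(x,\alpha) \neq 1\}$ is the preimage of the $\tau_{st}$-open set $G \setminus \{1\}$ under the $\tau_{st}$-continuous word map $x \mapsto w(x,\alpha)$, since $\tau_{st}$ is itself a group topology. Picking any $g_0 \in G \setminus \{1\}$ (the case $G = \{1\}$ being vacuous), the complement $V := G \setminus \{g_0\} = \{x : xg_0^{-1} \neq 1\}$ is then a proper $\tau_Z$-open neighbourhood of $1$, corresponding to the non-trivial word $xg_0^{-1}$.

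Next, supposing $\tau_Z$ were a group topology, continuity of $(x,y) \mapsto xy^{-1}$ at $(1,1)$ yields a basic $\tau_Z$-open neighbourhood $U$ of $1$ with $UU^{-1} \subseteq V$. I would write $U = \bigcap_{i=1}^n \{x : w_i(x,\alpha_i) \neq 1\}$; since $1 \in U$, no $w_i$ can represent the trivial element of $G \ast \langle x \rangle$ (a trivial word would force $\{x : w_i(x,\alpha_i) \neq 1\} = \emptyset$). The hypothesis of the lemma therefore applies to each $w_i$, making every solution set $\{x : w_i(x,\alpha_i) = 1\}$ meager in $(G,\tau_{st})$. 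Hence $U$, being the complement of a finite union of meager sets, is comeager in $\tau_{st}$.

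Finally, I would invoke that $(G,\tau_{st})$ is a Polish group, being a closed subgroup of $S_\infty$ acting on the countable universe of $\M$, and therefore a Baire space in which left translation is a homeomorphism. For each $g \in G$ the translate $gU$ is also comeager, so Baire category gives $gU \cap U \neq \emptyset$, i.e.\ $g \in UU^{-1} \subseteq V$. Taking $g = g_0$ contradicts $g_0 \notin V$, so $\tau_Z$ cannot be a group topology.

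The only conceptual ingredient is the interplay between the algebraic definition of $\tau_Z$ (whose basic neighbourhoods of $1$ are automatically comeager under the hypothesis) and the Baire property of $\tau_{st}$; I do not expect any technical obstacle beyond verifying the inclusion $\tau_Z \subseteq \tau_{st}$ and recalling that closed subgroups of $S_\infty$ are Polish.
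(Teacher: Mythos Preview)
Your proof is correct and follows the same underlying strategy as the paper: basic $\tau_Z$-neighbourhoods of $1$ are comeager in the Polish topology $\tau_{st}$, and Baire category then forces the product of any two of them to be all of $G$, which is incompatible with multiplication being continuous at $(1,1)$. The only difference is packaging: the paper reaches the conclusion via the substitution $y\mapsto x^{-1}\alpha$, which turns the pair of systems $\Sigma(x)\neq 1$, $\Pi(y)\neq 1$ into a single comeager (hence nonempty) set of $x$-values yielding $(x_0,x_0^{-1}\alpha)$ in the product neighbourhood with product $\alpha$; you instead use the equivalent topological observation that $gU\cap U\neq\emptyset$ for comeager $U$ gives $UU^{-1}=G$. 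Your version makes the role of Baire category more explicit and avoids the substitution bookkeeping, while the paper's version makes clearer that one never needs more than the single-variable hypothesis.
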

  \begin{proof}
  Indeed, fix $\alpha\in G$ and consider the equation $z\alpha^{-1}=1$ in $G$, where $\alpha\in G$.
  Now, suppose we are given two systems of inequalities in one variable:
  \begin{align*}
  \Sigma(x,\beta)\neq 1;\\
  \Pi(y,\beta)\neq 1;
  \end{align*}
  where $\beta=(\beta_{1},\dots,\beta_{k})\in G^{k}$ is the tuple of parameters appearing in the two systems, i.e., the non-trivial elements of $G$ appearing in the corresponding normal forms.
  Consider the system of inequalities:
  \begin{align*}
  \{\Sigma(x,\beta)\neq 1\}\cup\{\Pi '(x,\beta')\neq 1\};
  \end{align*}
  where $\Pi'(x,\beta')$ is the system obtained from $\Pi(y,\beta)\neq 1$ by replacing $y$ with $x^{-1}\alpha$ (the substitution corresponds to an automorphism of $G\frp\subg{x}$, so this is still a non-trivial system of equations)  and $\beta'$ the updated superset of parameters.
  Given a solution $x_{0}$ of $\Sigma(x,\beta)\neq 1$ and $\Pi '(x,\beta')\neq 1$ the pair $(x_{0},x_{0}^{-1}\alpha)$ belongs to the neighbourhood defined by the systems $\Sigma\neq 1$ and $\Pi\neq 1$ but their product satisfies the initial equation $z\alpha^{-1}=1$.
  Note that in a topological group any finite conjugation of group action is continuous and the pre-image of a nowhere dense set is nowhere dense. Hence the conclusion above finishes the proof. \end{proof}
  Combining Lemma \ref{lem-comeager} and Lemma \ref{using meagerness} one gets the following:
  \begin{cor}
  \label{cor-zariski}
  Suppose  $\M$ is a countable homogeneous first order structure in which algebraic closure is locally finite.  Assume  all non-trivial automorphims of $\M$ are strongly unbounded. Then $\tau_Z$ is not a group topology for $\Aut{\M}$.
  \end{cor}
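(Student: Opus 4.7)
The plan is essentially to chain the two preceding lemmas together, since together they are designed for exactly this purpose. First I would verify that Lemma \ref{lem-comeager} applies to $\M$: the two key hypotheses there are $\omega$-saturation and the fact that every entry of the parameter tuple is either $1$ or strongly unbounded. For countable homogeneous $\M$ with locally finite algebraic closure, $\omega$-saturation reduces to realization of non-algebraic complete types over finite sets, which one obtains from ultra-homogeneity via an appropriate universality argument (or, for the intended applications such as Fra\"iss\'e limits of relational classes, follows straight from $\omega$-categoricity in combination with Ryll-Nardzewski). The second hypothesis is immediate from our assumption that every non-identity element of $G = \Aut{\M}$ is strongly unbounded.

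Given a non-trivial equation $w(x,\alpha)=1$ over $G$ in a single variable, one writes $w$ in reduced form $\alpha_{0}x^{\epsilon_{0}}\alpha_{1}\cdots x^{\epsilon_{m}}\alpha_{m+1}$. Each $\alpha_{i}$ is either the identity of $G$ or is non-trivial and hence strongly unbounded by hypothesis, so Lemma \ref{lem-comeager} applies directly and tells us that the solution set $\{g \in G : w(g,\alpha)=1\}$ is meager in $(G,\tau_{st})$.

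Since this holds for every non-trivial equation $w(x,\alpha)$ in one variable over $G$, the hypothesis of Lemma \ref{using meagerness} is satisfied, and invoking that lemma immediately gives that $\tau_{Z}$ is not a group topology on $G$, which is the desired conclusion.

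The argument has no real obstacle once the two lemmas are in hand: the whole content of the proof has been moved into Lemma \ref{lem-comeager} (the inductive back-and-forth construction that produces a witness $a \in M$ with $w(\beta,\alpha)(a)\neq a$ for $\beta$ extending the partial map $f_{0}$) and into Lemma \ref{using meagerness} (the substitution trick $y \mapsto x^{-1}\alpha$ that uses the meagerness of the solution set of $z\alpha^{-1}=1$ to exhibit a Zariski-neighbourhood pair whose product fails to land in a prescribed Zariski neighbourhood). The only minor point to watch is the implicit verification that the saturation available from countable homogeneity plus local finiteness of $\acl$ suffices to run the realization step in the induction of Lemma \ref{lem-comeager}, which in all intended applications is standard.
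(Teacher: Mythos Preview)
Your proposal is correct and follows exactly the paper's intended argument: the paper simply states that the corollary is obtained by combining Lemma~\ref{lem-comeager} with Lemma~\ref{using meagerness}, and your write-up spells out precisely that chaining. If anything, you are more careful than the paper in flagging the mismatch between the hypothesis ``countable homogeneous'' in the corollary and ``$\omega$-saturated'' in Lemma~\ref{lem-comeager}; the paper glosses over this, presumably because in all the intended applications (Fra\"iss\'e limits in relational languages) the structure is $\omega$-categorical and hence $\omega$-saturated.
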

  There is another consequence of the meagerness of solution sets of equations worth mentioning. We start with the observation that the multivariate case follows from the univariate case.
  \begin{lem}
  Let $(G,\tau)$ be a non-meager Polish group. If the set of solutions of any non-trivial equality in one variable with parameters in $G$ is meager in $G$ then the same holds for non-trivial equalities with parameters in any number of variables.
  \end{lem}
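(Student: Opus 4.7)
The plan is to proceed by induction on the number $n$ of variables, the base case $n=1$ being the hypothesis of the lemma. Given a non-trivial reduced word $w\in G\ast F(x_{1},\dots,x_{n})$ (where the parameters from $G$ are absorbed as coefficients), write $\bar{x}=(x_{1},\dots,x_{n-1})$ and let $S\subseteq G^{n}$ be its solution set. Since $S$ is closed in the Polish space $G^{n}=G^{n-1}\times G$, the Kuratowski--Ulam theorem reduces the task to showing that for comeagerly many $\bar{g}\in G^{n-1}$ the fibre $S_{\bar{g}}=\{h\in G\;:\;w(\bar{g},h,\alpha)=1\}$ is meager. By the one-variable hypothesis this is automatic whenever $w(\bar{g},x_{n},\alpha)$ is non-trivial in $G\ast F(x_{n})$, so what must be controlled is the ``degeneracy set''
\[
T=\{\bar{g}\in G^{n-1}\;:\;w(\bar{g},x_{n},\alpha)=1\text{ in }G\ast F(x_{n})\}.
\]

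To bound $T$ I would put $w$ in the normal form
\[
w=u_{0}(\bar{x},\alpha)\,x_{n}^{e_{1}}\,u_{1}(\bar{x},\alpha)\,\cdots\,x_{n}^{e_{k}}\,u_{k}(\bar{x},\alpha),
\]
with $u_{i}\in G\ast F(\bar{x},\alpha)$; reducedness of $w$ forces $u_{i}\neq 1$ in $G\ast F(\bar{x},\alpha)$ whenever $e_{i}+e_{i+1}=0$. The case $k=0$ reduces immediately to the inductive hypothesis applied to the non-trivial word $u_{0}$. If $k\geq 1$ and all the $e_{i}$ share the same sign, then $T=\emptyset$ because the total $x_{n}$-exponent of $w(\bar{g},x_{n},\alpha)$ never vanishes. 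In the remaining case, any reduction of the evaluated normal form
\[
u_{0}(\bar{g},\alpha)\,x_{n}^{e_{1}}\,u_{1}(\bar{g},\alpha)\,\cdots\,x_{n}^{e_{k}}\,u_{k}(\bar{g},\alpha)
\]
to the identity in $G\ast F(x_{n})$ must begin with a cancellation at some index $i$ with $e_{i}+e_{i+1}=0$ and $u_{i}(\bar{g},\alpha)=1$. Thus
\[
T\;\subseteq\;\bigcup_{\substack{1\leq i\leq k-1\\e_{i}+e_{i+1}=0}}\{\bar{g}\in G^{n-1}\;:\;u_{i}(\bar{g},\alpha)=1\},
\]
exhibiting $T$ as a finite union of solution sets of non-trivial equations in $n-1$ variables, each meager by the inductive hypothesis.

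The main conceptual step, and the only place where care is needed, is the observation that any cancellation sequence in the free product $G\ast F(x_{n})$ must begin at two adjacent $x_{n}$-syllables of opposite sign whose intermediate $G$-coefficient is trivial; reducedness of the abstract word $w$ then guarantees that the corresponding coefficient equation $u_{i}(\bar{x},\alpha)=1$ is itself non-trivial, so the inductive hypothesis applies. Once this combinatorial fact is in hand the proof is a routine combination of Kuratowski--Ulam with the inductive hypothesis, and $S$ is meager in $G^{n}$.
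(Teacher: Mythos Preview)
Your proof is correct and follows essentially the same route as the paper's: induction on the number of variables, substitute all but one variable, and observe that the substituted one-variable word can only become trivial if one of the coefficient words $u_i$ at a sign-change evaluates to $1$, which by induction happens only on a meager set. The paper leaves the Kuratowski--Ulam step implicit, whereas you name it explicitly, and your case analysis ($k=0$, all $e_i$ of the same sign, mixed signs) is slightly more detailed than the paper's. One small remark: your phrase ``must begin with a cancellation at some index $i$ with $e_i+e_{i+1}=0$'' is not literally true---the reduction could start with a merge at a same-sign index where $u_i(\bar g)=1$---but your displayed inclusion $T\subseteq\bigcup_{e_i+e_{i+1}=0}\{u_i(\bar g)=1\}$ is correct, since any block of merged $x_n$-syllables with total exponent zero must span a sign-change, and merging across a sign-change requires the corresponding $u_i$ to vanish. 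The paper's sentence ``the resulting expression is already reduced'' has the same minor imprecision.
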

  \begin{proof}
  Let $w(x,\alpha)=1$ be the equation in question, where $x=(x_{0},x_{1},\dots, x_{k})=(x_{0},y)$ by induction. For each value of $y_{0}:=(x_{1}^{0},\dots, x_{k}^{0})$ consider the term in $x_{0}$ obtained by replacing  each $x_{j}$ by the element $x_{j}^{0}$ for $j\geq 1$ in $w(x,\alpha)$ and then merging together all consecutive constants.
  If all the resulting products that lay between two consecutive occurrences of $x_{0}^{\epsilon}$ with opposite exponents are non-zero then the resulting expression is already reduced and is a non-trivial inequality in $x_{0}$ (which without loss of generality appeared in the original expression). Therefore for such $y_{0}$ comeagerly many values of $x_{0}$ satisfy the equation, by the single variable case. Now, the condition above can be expressed as a system of finitely many non-trivial inequalities in the variable $y$ and hence holds for comeagerly many values of $y$ by the induction hypothesis.
  \end{proof}
  
  Using the Baire Category Theorem one can derive the following corollary:
  \begin{cor}
  Let $(G,\tau)$ be a non-meager Polish group such that the set of solutions of any non-trivial equation in one variable with parameters in $G$ is meager. Then for any countable subgroup $A\leq G$ there exists some free group $F\leq G$ over a countable base such that $\subg{A,F}\cong A\ast F$.
  \end{cor}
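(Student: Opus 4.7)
The plan is to run a Baire category argument in the product Polish group $G^{\omega}$. Interpret the statement $\mathbb{F}\leq G$ (the inequality in the statement appears to be a typographical inversion). Let $\mathbb{F}_{\omega}$ denote the abstract free group on formal generators $\{x_{n}\}_{n\geq 1}$. Since $A$ is countable, the free product $A\frp\mathbb{F}_{\omega}$ has only countably many nontrivial elements; I would enumerate them as $w_{1},w_{2},\dots$, noting that each $w_{j}$ involves only finitely many formal generators, say $x_{1},\dots,x_{n_{j}}$, and a finite number of constants $\alpha_{j}\in A^{<\omega}$.

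For each $j$, let
\[
B_{j}:=\{(g_{n})_{n\geq 1}\in G^{\omega}\;|\;w_{j}(g_{1},\dots,g_{n_{j}},\alpha_{j})=1_{G}\}.
\]
Since membership in $B_{j}$ depends only on the first $n_{j}$ coordinates, $B_{j}=M_{j}\times G^{\omega}$ where $M_{j}\subseteq G^{n_{j}}$ is the solution set of the equation $w_{j}(x_{1},\dots,x_{n_{j}},\alpha_{j})=1$ with parameters in $G$ (using $A\leq G$). The key observation is that this equation is \emph{nontrivial} in the sense of the preceding lemma: a reduced word in $A\frp\mathbb{F}_{\omega}$ which is $\neq 1$ remains nontrivial as an element of $G\frp\langle x_{1},\dots,x_{n_{j}}\rangle$ by the normal form theorem for free products, since $A$ embeds into $G$. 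Hence, by the multivariate meagerness lemma proved just above, $M_{j}$ is meager in $G^{n_{j}}$, and therefore $B_{j}$ is meager in $G^{\omega}$.

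To conclude, I would invoke the Baire property: $G^{\omega}$ is a Polish group, hence Baire, so $G^{\omega}\setminus\bigcup_{j}B_{j}\neq\emptyset$. Any sequence $(f_{n})_{n\geq 1}$ in this complement has the property that no nontrivial word in $A\frp\mathbb{F}_{\omega}$ evaluates to $1_{G}$ under $x_{n}\mapsto f_{n}$; equivalently, the natural homomorphism $A\frp\langle f_{n}\rangle_{n\geq 1}\to\subg{A,f_{n}:n\geq 1}\leq G$ is injective. Taking $\mathbb{F}:=\subg{f_{n}:n\geq 1}$ then gives a free subgroup over a countable base with $\subg{A,\mathbb{F}}\cong A\frp\mathbb{F}$.

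There is no real obstacle in this argument; the only point that requires a moment's thought is the nontriviality reduction at the start of the second paragraph, and that follows immediately from the normal form theorem for free products combined with $A\hookrightarrow G$.
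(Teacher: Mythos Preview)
Your argument is correct and is precisely the Baire category argument the paper gestures at (the paper gives no details beyond the phrase ``Using Baire category theorem one can derive the following corollary''). Your identification of the typographical inversion $\mathbb{F}\geq G$ for $\mathbb{F}\leq G$ is also right, and your care in checking that a nontrivial word in $A\frp\mathbb{F}_{\omega}$ remains a nontrivial equation over $G$ via the embedding $A\hookrightarrow G$ is exactly the point one needs to invoke the multivariate meagerness lemma.
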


  \subsection{Fra\"iss\'e constructions}
    Suppose $\M$ is a countable first order structure.  Let $G=\Aut{\M}$ and for any $\alpha\in G$ define  $\mov(\alpha):=\{m\in M \,|\, \alpha(m)\neq m\}.$
    
    Recall the setting from Subsection \ref{subsec-Fra}; namely $\mathfrak L$ is a relational signature and $\mathcal K$ is a class of finite $\mathfrak L$-structures.
    Suppose $A,B,C$ are $\mathfrak L$-structures with $A,B\subseteq C$.
    Let $B'\subseteq B\backslash A$. By $\Delta^C(B';A)$ 
    we mean the set of all positive Boolean combinations of all $\phi(b,a)$ where $b\subseteq B'$, $a\subseteq A$ and $\phi$ is an atomic $\mathfrak L$-formula. 

    \subsubsection{Free amalgamation classes}
      \label{subsubsec-free-AP}
      For a definition of free amalgamation classes see Subsection \ref{subsec-Fra}. First, we remind the reader the following fact about strong amalgamation classes (hence also about free amalgamation classes).
      
      \begin{fact}     \label{fact-trivial}  Suppose $\mathcal  K $ is a Fra\"iss\'e class with strong amalgamation. Then $\acl_M(A)=A$ for all $A\subseteq M$ where $\M=\fl{\mathcal K}$.      \end{fact}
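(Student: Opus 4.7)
The proof proceeds by contradiction. I will assume that some element $b$ belongs to $\acl_M(A) \setminus A$ and produce too many realizations of its type over $A$ using strong amalgamation. Recall that, by the ultra-homogeneity of the Fra\"iss\'e limit $\M$, two tuples of $M$ have the same type over the finite set $A$ if and only if they lie in the same orbit of $G_A := \Aut(\M)_A$. Hence $b \in \acl_M(A)$ exactly when $\typ_M(b/A)$ has only finitely many realizations in $M$, say $n$ of them.

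The plan is to build, for any $N$, a structure in $\mathcal{K}$ extending $A$ and containing $N$ distinct realizations of $\typ_M(b/A)$; embedding it into $\M$ over $A$ via the extension property of the Fra\"iss\'e limit then yields $N$ distinct realizations of $\typ_M(b/A)$ in $M$. Taking $N > n$ produces the desired contradiction. Concretely, fix the substructure $B := \langle A \cup \{b\} \rangle$ of $\M$, take $N$ isomorphic copies $B_1, \ldots, B_N$ of $B$ over $A$ on pairwise disjoint supports outside $A$, and iteratively apply strong amalgamation over $A$: first amalgamate $B_1$ and $B_2$ strongly over $A$ to obtain $C_2 \in \mathcal{K}$ with $B_1 \cap B_2 = A$, then amalgamate $C_2$ and $B_3$ strongly over $A$ (note $A \subseteq C_2$), and so on to obtain $C_N \in \mathcal{K}$ containing $A$ and $N$ distinct copies $b_1, \ldots, b_N$ of $b$, each satisfying the atomic type of $b$ over $A$ inside its respective $B_i$.

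Finally, apply the extension property (EP) of the Fra\"iss\'e limit to the inclusion $A \hookrightarrow C_N$ and the identity embedding $A \hookrightarrow \M$: this provides an isometric embedding $C_N \hookrightarrow \M$ fixing $A$ pointwise. The images of $b_1, \ldots, b_N$ are $N$ distinct elements of $M$ all realizing $\typ_M(b/A)$, contradicting $|\,\typ_M(b/A)(M)\,| = n$ as soon as $N > n$.

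There is no serious obstacle here; the only point requiring a modicum of care is the iterated application of strong amalgamation over $A$, which is straightforward because in each step the common substructure is $A$ itself and strong amalgamation guarantees the disjointness needed to keep the $b_i$ distinct. The assumption that $\mathcal{K}$ has strong (as opposed to merely ordinary) amalgamation is used precisely here to ensure that the amalgams avoid collapsing distinct copies of $b$ onto each other.
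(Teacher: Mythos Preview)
The paper states this as a \emph{Fact} without proof, treating it as a well-known consequence of strong amalgamation. Your argument is the standard one and is correct.

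Two minor remarks. First, the statement is for arbitrary $A\subseteq M$, while your argument implicitly takes $A$ finite; you should note that algebraic closure is finitary (if $b\in\acl(A)$ then $b\in\acl(A_0)$ for some finite $A_0\subseteq A$), so the general case reduces to the finite one. Second, in the last paragraph you write ``isometric embedding'' where you mean simply an embedding of $\mathfrak L$-structures; the extension property (EP) for Fra\"iss\'e limits is what you are invoking, and ultrahomogeneity ensures that the images of the $b_i$ have the same quantifier-free type over $A$ as $b$, hence the same type over $A$.
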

      
      From Corollary 2.10 in \cite{MacTent} one can easily conclude the following.
      
      \begin{fact}     \label{lem-free-unbound-1}   Suppose $ \mathcal K $ is a Fra\"iss\'e class with the free amalgamation property and $\M=\fl{K}$. Assume $\Aut{M}$ is transitive and $\Aut{M}\neq S_\infty$. Then every non-trivial automorphism of $\M$ where $\M=\fl{\mathcal K}$ is strongly unbounded.      \end{fact}
      
      Here we introduce a more general setting for Fra\"iss\'e classes that include the setting of \cite{MacTent}. Then Lemma \ref{lem-free-unbound} is a mild generalisation of Corollary 2.10 in \cite{MacTent} which we give a complete proof here.

      \begin{defn} Suppose $\mathcal K$ is a Fra\"iss\'e class. We say $\mathcal K$ is {\it non-discrete} (ND) if there is $m\in \N$ (where  $\min\{n_R:R^{n_R}\in \mathfrak L\}\leq m-2\leq \max\{n_R:R^{n_R}\in \mathfrak L\}$)   such that for every $A\in \mathcal K$ with $|A|\geq m$, and $ a_1,a_2\in A$  with $a_1\neq a_2$  and $A'\subset A\backslash\{a_1,a_2\}$ with $|A'|=m-2$ there is $B\in \mathcal K$ such that    \begin{enumerate}
      \item $B=A\coprod\{b\}$;
      \item There is $\phi(ba_1,s)\in \Delta^B(ba_1,A')$ such that $B\models \phi(ba_1,s)$ but $B\models\neg \phi(ba_2,s')$ for all $s'\subseteq A$ where $s'\cong s$.
      \end{enumerate}
      \noindent
      We call $B$ a {\it non-discrete one-point extension} of $A$ of the form $a_1\triangleright_{A'}^d a_2$. Occasionally, we use $m$-ND when we want to specify the cardinally of $|A|$.

      \end{defn}
      Notice that in the definition above if $B\subseteq \fl{\mathcal K}$, $\alpha\in \Aut{\fl{\mathcal K}}$ where $\alpha^{\pm 1}(a_{1})=a_{2}$ and $\alpha$ fixes $A$ setwise, then $d\in\mov(\alpha)$.
      \begin{lem}     \label{lem-free-unbound}   Suppose $ \mathcal K $ is an ND Fra\"iss\'e class with the free amalgamation property. Then every non-trivial automorphism of $\M$ where $\M=\fl{\mathcal K}$ is strongly unbounded.      \end{lem}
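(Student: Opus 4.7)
My approach will be a direct construction. By Fact~\ref{fact-trivial}, $\acl(X)=X$ for every finite $X\subseteq M$, so the task reduces to finding $c\models\typ(b/A)$ with $\alpha(c)\neq c$ and $\alpha(c)\notin A$. If $\alpha(b)\notin bA$, I simply take $c=b$; otherwise I will produce $c$ via the ND property as follows.

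First I will pick $e_1\in\mov(\alpha)$ with $e_1\notin\{b,\alpha^{-1}(b)\}$ and set $e_2=\alpha(e_1)$; such $e_1$ exists because ND forbids $\alpha$ from acting as a single transposition on $M$, a fact that follows by a direct application of ND to any sufficiently large finite set containing $e_1,\alpha(e_1)$ and a fixed point of $\alpha$, together with the atomic argument below. Next I will enlarge $A$ to a finite $\bar A\subseteq M\setminus\{b\}$ containing $A\cup\alpha(A)\cup\alpha^{-1}(A)\cup\{e_1,e_2\}$ with $|\bar A|\geq m$, where $m$ is the ND constant, and choose $A'\subseteq A$ with $|A'|=m-2$ and $A'\cap\{e_1,e_2\}=\emptyset$ (padding $A$ itself with generic elements of $M\setminus\{b\}$ if needed). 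The key feature of this setup is that $\alpha(A')\subseteq\alpha(A)\subseteq\bar A$.

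Applying ND with $a_1=e_1$, $a_2=e_2$ and the chosen $A'$ yields a non-discrete one-point extension $B=\bar A\cup\{c^*\}\in\mathcal K$ together with a positive atomic combination $\phi(c^*e_1,s)$ ($s\subseteq A'$) such that $B\models\phi(c^*e_1,s)$ and $B\models\neg\phi(c^*e_2,s')$ for every $s'\cong s$ in $\bar A$. I will then use the free amalgamation property to modify $B$ so that in addition $\typ(c^*/A)=\typ(b/A)$: since $A'\cap\{e_1,e_2\}=\emptyset$, the ND formula $\phi$ constrains only the atomic relations between $c^*$ and elements of $A'\cup\{e_1\}$, so I can form a free amalgam of the substructure of $B$ on $A'\cup\{e_1,e_2,c^*\}$ with the structure $A\cup\{b\}$ over $A'$ (identifying $b$ and $c^*$), and then further amalgamate with $\bar A$ carrying its $M$-structure to restore all relations among the base points. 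The resulting $B^*\in\mathcal K$ embeds into $\M$ by EP, giving $c^*\in M\setminus\bar A$. Since $\alpha^{-1}(A)\subseteq\bar A$, at once $\alpha(c^*)\notin A$; and if $\alpha(c^*)=c^*$ then applying $\alpha$ to $\phi(c^*e_1,s)$ yields $\phi(c^*e_2,\alpha(s))$ with $\alpha(s)\cong s$ and $\alpha(s)\subseteq\alpha(A)\subseteq\bar A$, contradicting the ND condition. Hence $\alpha(c^*)\neq c^*$ and $c^*$ is the required witness.

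The main obstacle will be the free amalgamation step, where $\typ(b/A)$ must be combined with the ND data for $c^*$ inside a single element of $\mathcal K$ without creating forbidden configurations. This is resolved by insisting that $A'\cap\{e_1,e_2\}=\emptyset$, so that the two prescribed patterns of $c^*$'s atomic relations concern disjoint parameter sets, allowing the two pieces of data to be glued together by a standard free amalgamation.
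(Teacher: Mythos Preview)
There is a genuine gap in the free amalgamation step. You want to identify $b$ with $c^*$ and amalgamate $A'\cup\{e_1,e_2,c^*\}$ (carrying the $B$-structure) with $A\cup\{b\}$ (carrying the $M$-structure) over $A'$. But since $A'\subseteq A$, the identification forces the two pieces to agree on $A'\cup\{c^*=b\}$, i.e.\ $c^*$'s quantifier-free type over $A'$ in $B$ must equal $b$'s quantifier-free type over $A'$ in $M$. The ND axiom hands you \emph{some} extension $B=\bar A\dot\cup\{c^*\}$; it gives you no control over how $c^*$ relates to $A'$, so in general these two types differ and the amalgam is simply not defined.

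Even if you patch this (say by forcing $A'\cap A=\emptyset$, which then breaks your argument that $\alpha(s)\subseteq\bar A$), a second problem appears: the clause ``$B\models\neg\phi(c^*e_2,s')$ for every $s'\subseteq\bar A$ with $s'\cong s$'' is a property of the \emph{particular} structure $B$. Once you replace $c^*$'s relations to parts of $\bar A$ to obtain $B^*$, nothing guarantees that $B^*\models\neg\phi(c^*e_2,\alpha(s))$, so your contradiction from ``$\alpha(c^*)=c^*$'' no longer goes through in $M$. The paper resolves both issues at once: instead of applying ND inside a set containing $A$, it first manufactures a finite $C\subseteq\mov(\alpha)$ with $A\ind^{fr}C\cup\alpha(C)$, applies ND over $C\cup\alpha(C)$ to get the distinguishing point $d$, and only then takes the free amalgam $bA\otimes_{b=d}(C\cup\alpha(C)\cup\{d\})$ over the \emph{single} point $b=d$. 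Because $A$ is free from $C\cup\alpha(C)$, the ND witness is untouched by the amalgamation and the embedding over $A\cup C\cup\alpha(C)$ is automatic. Your shortcut of picking $e_1,e_2$ arbitrarily close to $A$ is exactly what makes the amalgam collide with $\typ(b/A)$.
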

      
      \begin{lem}
      \label{lem-tech-rich}
      Suppose $\alpha$ is an automorphism of a first order structure $\M$ where the algebraic closure in $M$ is trivial. Assume for every finite subset $A$ of $M$ and $b\in M\backslash A$ there is a realisation $c$ of $\typ(b/A)$ such that $c\in \mov(\alpha)$. Then $\alpha$ is strongly unbounded.
      \end{lem}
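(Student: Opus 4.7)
First I would unpack the goal. Fix a finite $A\subseteq M$ and $b\in M\setminus A$. By triviality of the algebraic closure in $\M$ we have $\acl(A)=A$ and $\acl(cA)=cA$ for every $c\in M$, so being strongly unbounded amounts to producing some $c\models\typ(b/A)$ satisfying $\alpha(c)\neq c$ and $\alpha(c)\notin A$. The hypothesis of the lemma already hands us realizations of $\typ(b/A)$ lying in $\mov(\alpha)$, which settles the first conjunct; the whole work is in arranging that $\alpha(c)$ also avoids $A$.

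The idea is to pre-emptively enlarge the parameter set by $\alpha^{-1}(A)$ before invoking the hypothesis. Since $b\notin\acl(A)$, the type $\typ(b/A)$ is non-algebraic and hence has infinitely many realizations in $M$, so I can pick $b'\models\typ(b/A)$ with $b'\notin A\cup\alpha^{-1}(A)$; in particular $\alpha(b')\notin A$. Now I apply the hypothesis of the lemma to the finite set $A':=A\cup\alpha^{-1}(A)$ and the element $b'\notin A'$ to obtain $c\models\typ(b'/A')$ with $c\in\mov(\alpha)$. By restriction of parameters, $c\models\typ(b'/A)=\typ(b/A)$, and since $\alpha$ is an automorphism of $\M$ the relation $c\models\typ(b'/\alpha^{-1}(A))$ transports to $\alpha(c)\models\typ(\alpha(b')/A)$. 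Because $\alpha(b')\notin A$, this type contains the formula $x\neq a$ for every $a\in A$, forcing $\alpha(c)\notin A$; together with $\alpha(c)\neq c$ this gives $\alpha(c)\notin cA=\acl(cA)$, as desired.

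I do not foresee any real obstacle: the argument is a short type-transportation once one spots the trick of adjoining $\alpha^{-1}(A)$ to the base so that the displacement witness $c\in\mov(\alpha)$ is automatically upgraded to ``$\alpha(c)$ leaves $A$''. The only points worth a double-check are that triviality of $\acl$ in $\M$ really yields infinitely many realizations of $\typ(b/A)$ in $M$ (which is just the definition of $\acl_M$), and that transporting types along $\alpha$ commutes with restriction of parameters from $A'$ down to $A$, which is immediate from $\alpha$ being elementary.
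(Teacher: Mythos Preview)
Your proof is correct and takes a genuinely different route from the paper's. The paper argues in two steps: first it shows that the set $T_{b}=\{r\models\typ(b/A)\,:\,r\in\mov(\alpha)\}$ is infinite, by repeatedly applying the hypothesis over the growing parameter sets $A,\;r_{0}A,\;r_{1}r_{0}A,\dots$; then, since $\alpha$ is injective and $A$ finite, at most $|A|$ members of $T_{b}$ can land in $A$ under $\alpha$, so some $c\in T_{b}$ satisfies $\alpha(c)\notin A$ (and $\alpha(c)\neq c$ is automatic). You bypass the iterative construction entirely by a single application of the hypothesis to the enlarged base $A'=A\cup\alpha^{-1}(A)$: the point is that membership in $\alpha^{-1}(A)$ is part of the type over $A'$, so the witness $c\models\typ(b'/A')$ inherits $c\notin\alpha^{-1}(A)$ from $b'$, giving $\alpha(c)\notin A$ for free. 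This is shorter and more transparent.

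One small comment: your parenthetical ``which is just the definition of $\acl_{M}$'' is a little loose. In an arbitrary structure, $b\notin\acl(A)$ only says every \emph{formula} over $A$ satisfied by $b$ has infinitely many solutions, not that the full type $\typ(b/A)$ has infinitely many realizations in $M$. The paper's own proof uses the same step (when it picks $s_{1}\models p_{0}$ with $s_{1}\notin r_{0}A$), so you are not assuming anything the paper does not; in the intended setting (Fra\"iss\'e limits, where one may read $\acl$ as $\clg$ and orbits under $G_{A}$ are infinite for $b\notin A$) the step is unproblematic.
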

      \begin{proof}
      Given a finite subset $A$ of $M$ and $b\in M\backslash A$, we prove the set $T_b:=\{r\in M\,|\, r\in \mov(\alpha),  r\models \typ(b/A)\}$ is infinite. Assuming $T_b$  is  infinite, because the algebraic closure is trivial, there is a realisation $c\in T_b$ such that $\alpha(c)\notin cA$. This shows $T_b$ is strongly unbounded.
      
      Now we show $T_b$ is infinite. Put $p_0=\typ(b/A)$. By the assumption there is $r_0\models p_0$ where $r_0\in \mov(\alpha)$. Now consider $p_1:=\typ(s_1/r_0A)$ where $s_1\models p$ and $s_1\neq r_0A$ (and such $s_1$ exists because $r_0\notin A=\acl(A)$). Let $r_1\models p_1$ where $r_1\in \mov(\alpha)$ again using the assumption. Clearly $r_1\models p_0$. Inductively, we can build $p_i$'s and find realisation $r_i$'s for $i\in \mathbb N$ which $r_i\models p_j$ and $r_i\in \mov(\alpha)$ when $i\geq j$. Hence we have infinitely many realisation of $p_0$ in $\mov(\alpha)$ and hence $T_b$ is infinite.
      
      \end{proof}

      By  applying Corollary \ref{cor-zariski}, we conclude the following.
      \begin{cor}
      \label{zar-free-AP} Assume
      $\mathcal{K}$ is a non-trivial free amalgamation class. Let $\M=\fl{\mathcal K}$ and $G=\Aut{\M}$ and assmue $G$ acts transitively on $M$. Then $\tau_Z$ is not a group topology.
      \end{cor}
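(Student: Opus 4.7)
The plan is to combine Fact \ref{fact-trivial}, Lemma \ref{lem-free-unbound}, and Corollary \ref{cor-zariski}. As a Fra\"iss\'e limit of a class in a relational language, $\M = \fl{\mathcal{K}}$ is ultrahomogeneous and $\omega$-saturated; free amalgamation gives strong amalgamation, so by Fact \ref{fact-trivial} the algebraic closure in $\M$ is trivial, and in particular locally finite. Thus the only hypothesis of Corollary \ref{cor-zariski} left to verify is that every non-trivial $\alpha \in G$ is strongly unbounded, and by Lemma \ref{lem-free-unbound} this in turn reduces to showing $\mathcal{K}$ is ND.

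To check ND, I would first use transitivity of $G$ on $M$ together with non-triviality of $\mathcal{K}$ to locate a relation of arity at least two witnessing the non-triviality. Indeed, any unary predicate $P$ in the language is $G$-invariant, so by transitivity it holds either on all of $M$ or on none of it; therefore non-triviality of $\mathcal{K}$ forces the existence of a relation symbol $R$ of arity $k \geq 2$, along with some $C \in \mathcal{K}$ containing a tuple of distinct elements $(c_1, \ldots, c_k)$ on which $R$ holds. I then propose $m := k$ as the ND constant.

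Given $A \in \mathcal{K}$ with $|A| \geq m$, distinct $a_1, a_2 \in A$, and $A' \subset A$ with $|A'| = m-2$, enumerate $A' = \{a'_1, \ldots, a'_{k-2}\}$ and let $D$ be the $k$-element structure on $\{b, a_1, a'_1, \ldots, a'_{k-2}\}$ whose only non-trivial atomic fact is $R(b, a_1, a'_1, \ldots, a'_{k-2})$; by hereditariness $D \in \mathcal{K}$, as it embeds in a copy of $C$. Form $B := A \otimes_{\{a_1\} \cup A'} D$, which lies in $\mathcal{K}$ by closure under free amalgams and equals $A \cup \{b\}$. Take $\phi(ba_1, s) := R(b, a_1, s)$ with $s := (a'_1, \ldots, a'_{k-2})$: then $B \models \phi(ba_1, s)$ by construction, while for every $s' \subseteq A$ with $s' \cong s$ the formula $R(b, a_2, s')$ fails in $B$, since the free amalgamation introduced no atomic $R$-instance involving $b$ other than the designed one.

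The main obstacle is the careful bookkeeping around the ND definition, especially the edge cases where $A'$ meets $\{a_1, a_2\}$ or where the relation $R$ carries non-trivial symmetries among its coordinates. In such cases one may need to enlarge $D$ slightly, or to replace $\phi$ by a finer positive Boolean combination that pins down the role of $a_1$ and rules out all isomorphic substitutes $s' \cong s$ inside $A$. Once ND is established, the corollary is an immediate mechanical chain: Lemma \ref{lem-free-unbound} upgrades ND to strong unboundedness of every non-trivial automorphism, and Corollary \ref{cor-zariski} concludes that $\tau_Z$ is not a group topology on $G$.
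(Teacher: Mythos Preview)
Your overall plan---reduce to showing $\mathcal{K}$ is ND, then apply Lemma \ref{lem-free-unbound} and Corollary \ref{cor-zariski}---matches the paper's exactly. The gap is in your verification of ND, specifically in the choice of $m$ and the construction of $D$.

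First, the claim ``$D \in \mathcal{K}$ by hereditariness, as it embeds in a copy of $C$'' does not follow: the induced substructure of $C$ on $\{c_1,\ldots,c_k\}$ may carry atomic facts beyond the single instance $R(c_1,\ldots,c_k)$ (other relation symbols, or $R$ itself on permuted tuples), so your ``only one atomic fact'' structure $D$ need not embed in $C$ and need not lie in $\mathcal{K}$ at all. Second, even granting $D\in\mathcal{K}$, the free amalgam $A \otimes_{\{a_1\}\cup A'} D$ is only defined when $A$ and $D$ induce the same structure on the common part $\{a_1\}\cup A'$; by construction $D$ restricts to a pure set there, whereas $A$ in general does not, so the amalgam is ill-formed. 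The issues you flag (symmetries of $R$, whether $A'$ meets $\{a_1,a_2\}$) are secondary; the real obstruction is that with $m=k$ you have no control over the induced structure on $\{a_1\}\cup A'$.

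The paper fixes this by a different choice of $m$: take $m$ so that $G$ is $(m-1)$-transitive but not $m$-transitive on $M$ (non-triviality and transitivity force $m\geq 2$). Then every subset of size at most $m-1$ is automatically a pure set, while some pure $(m-1)$-set admits a non-pure one-point extension in $\mathcal{K}$. Given $A$, $a_1$, $a_2$ as in the ND definition, one takes $B'$ of size $m-1$ with $a_1\in B'$ and $a_2\notin B'$; the overlap $B'$ is now guaranteed pure, so the free amalgam with a non-pure one-point extension $B'\cup\{c\}$ is well-defined and lies in $\mathcal{K}$, and in it no relation involves both $c$ and $a_2$. This choice of $m$ is the missing idea in your argument.
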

      \begin{proof}
      Since $\mathcal{K}$ is non-trivial there exists some $m\geq 1$ such that the action $G$ on $M$ is $(m-1)$-transitive (let us say every action is $0$-transitive) but not $m$-transitive. Our second hypothesis implies that in fact $m\geq 2$.
      
      This implies that any substructure of size at most $m-1$ in $\mathcal{K}$ is essentially a pure set in the sense that the type of a tuple enumerating all of its members without repetitions does not depend on the order, but that this pure set of size $m-1$ extends to some $A\in\mathcal{K}$ which this is not the case anymore \footnote{We may assume no relation holds for a constant tuple. No positive atomic formula can hold for all tuples of distinct elements of size $\geq 2$ because of free amalgamation.}.
      
      Consider now any $B\in \mathcal K$ with $B\geq m-1$ and any $b_1,b_2\in B$.
      Pick $B'\subset B$ with $|B'|=m-1$  where $b_1\in B',b_2\notin B'$ and let $A=B'\cup \{c\}$ be a non-pure extension of $B'$. Then $A\otimes_{B'}B\in \mathcal K$ is a non-discrete extension of $B$ of the form $b_1\triangleright^c_A b_2$, since no relation holds for a tuple containing both $b_{2}$ and $c$. From Lemma \ref{lem-tech-rich} follows that $\mathcal K$ is an ND \fraisse class. The final conclusion then follows from Corollary \ref{cor-zariski}.
      
      \end{proof}

      \begin{rem}
      
      The assumption of ND in Lemma \ref{lem-free-unbound} is a necessary condition in order to conclude that every non-trivial automorphisms of a \fraisse limits of free amalgamation classes is  strongly unbounded. Notice that by the result of Gaughan in \cite{Gua}, for the   Fra\"iss\'e class all finite  (in empty signature) $\tau_Z=\tau_{st}$ and hence $\tau_Z$ is a group topology on $S_\infty$.
      
      \end{rem}
      
    \subsubsection{Rational Urysohn spaces} Consider the distance monoids $\mathcal Q=(\mathbb Q^{\geq 0},+,\leq,0)$ and $\mathcal Q_b=(\mathbb Q\cap [0,b],+_b,\leq,0)$ for $b\in \mathbb Q^{>0}$ where $+_b$ is addition truncated at $b$. Let $\mathcal U_{\mathcal Q}$ and $\mathcal U_{\mathcal Q_b}$ be the corresponding Urysohn space respectively -- see Section \ref{subsec-setting-Ury}.
      They are precisely the classical rational Urysohn space and rational Urysohn $b$-spheres (or sometimes bounded rational Urysohn space). Here we prove $\tau_Z$ for the automorphism groups of $\mathcal U_{\mathcal Q}$ and $\mathcal U_{\mathcal Q_b}$ are not group topologies.
      
      We briefly discuss how rational Urysohn space and rational Urysohn spheres  are constructed in first order logic as Fra\"iss\'e limits.
      Fix $\mathcal R\in \{\mathcal Q,\mathcal Q_b\,|\,b\in \mathbb Q^{>0}\}$. Let $\mathfrak L$ be the first-order language with a binary relation $R_q(x,y)$ for each $q\in \mathbb \mathcal \mathcal R$. A metric space $(A,d)$ with $\mathcal R$-rational distances is an $\mathfrak L$-structure in the following manner: for $x,y\in A$ and $q\in \mathcal R$ we have
      $R_q(x,y)$ iff $d(x,y)\leq q$. 
      Let $\mathcal C_{\mathcal R}$ be the class of all finite metric spaces with $\mathcal R$-rational distances as $\mathfrak L$-structures. 
      \begin{prop}
      The class $\mathcal C_\mathcal R$  where $\mathcal R\in\{ \mathcal Q, \mathcal Q_b\,|\,b\in \mathbb Q^{>0}\}$ has the amalgamation property.
      \end{prop}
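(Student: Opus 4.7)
The plan is to establish the amalgamation property via the standard shortest-path construction, which fits directly into the general framework described in Subsection \ref{subsec-setting-Ury}. Given $(A_{1},d_{1}),(A_{2},d_{2})\in\mathcal{C}_{\mathcal{R}}$ together with a common substructure $(B,d_{B})$, I would form the set $D=A_{1}\sqcup_{B}A_{2}$ and define a symmetric function $\bar{d}:D^{2}\to\mathcal{R}$ by letting $\bar{d}$ restrict to $d_{i}$ on each $A_{i}^{2}$, and for $a_{1}\in A_{1}\setminus B$, $a_{2}\in A_{2}\setminus B$ setting
\[
\bar{d}(a_{1},a_{2})=\min\{d_{1}(a_{1},b)\oplus d_{2}(b,a_{2})\mid b\in B\},
\]
where $\oplus$ denotes $+$ in case $\mathcal{R}=\mathcal{Q}$ and $+_{b}$ in case $\mathcal{R}=\mathcal{Q}_{b}$. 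The minimum is well-defined as $B\neq\emptyset$ (otherwise joint embedding suffices and there is nothing to amalgamate), and it lies in $\mathcal{R}$ since $\mathcal{R}$ is closed under $\oplus$ and the minimum is taken over a finite set of values in $\mathcal{R}$.

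Next I would verify that $\bar{d}$ is in fact a metric: symmetry and vanishing only on the diagonal are clear from the construction (in the unbounded case the minimum is strictly positive for distinct points; in the bounded case $\oplus=+_{b}$ still preserves this, since the distance from $a_{i}\notin B$ to $B$ is positive). The substantive point is the triangle inequality. For triples entirely inside one of the $A_{i}$ it is immediate. For a triple $(a_{1},b,a_{2})$ with $b\in B$, the inequality reads $\bar{d}(a_{1},a_{2})\leq d_{1}(a_{1},b)\oplus d_{2}(b,a_{2})$, which holds because $b$ is among the elements of $B$ over which the minimum is taken. The remaining case is a triple with two points on the same side and one on the other, say $(a_{1},a_{1}',a_{2})$ with $a_{1},a_{1}'\in A_{1}\setminus B$ and $a_{2}\in A_{2}\setminus B$. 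Pick $b^{*}\in B$ realizing the minimum defining $\bar{d}(a_{1}',a_{2})$; then by the triangle inequality inside $A_{1}$,
\[
d_{1}(a_{1},b^{*})\oplus d_{2}(b^{*},a_{2})\leq d_{1}(a_{1},a_{1}')\oplus d_{1}(a_{1}',b^{*})\oplus d_{2}(b^{*},a_{2})=d_{1}(a_{1},a_{1}')\oplus \bar{d}(a_{1}',a_{2}),
\]
and the left-hand side bounds $\bar{d}(a_{1},a_{2})$ from above. (The truncation $+_{b}$ is preserved throughout because $x\leq y$ implies $x+_{b}z\leq y+_{b}z$.)

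The main obstacle, such as it is, concerns the bounded case $\mathcal{R}=\mathcal{Q}_{b}$: one must check that truncation at $b$ does not spoil anything. This is handled by the monotonicity of $+_{b}$ used above and by the fact that every distance in $(B,d_{B})$, hence in each $(A_{i},d_{i})$, lies in $[0,b]$ to begin with, so the triangle inequalities above hold verbatim under $+_{b}$. Thus $(D,\bar{d})\in\mathcal{C}_{\mathcal{R}}$ contains isometric copies of $A_{1}$ and $A_{2}$ agreeing on $B$, witnessing the amalgamation property; jointly with closure under substructures (trivial) and joint embedding (a special case of amalgamation over a single point, or trivially over any shared point), this shows $\mathcal{C}_{\mathcal{R}}$ is a Fra\"iss\'e class.
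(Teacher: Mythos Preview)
Your proposal is correct and follows exactly the shortest-path amalgam described in Subsection~\ref{subsec-setting-Ury}, which is the approach the paper invokes (there the proposition is stated without proof, as a special case of the general distance-monoid amalgamation attributed to Conant). Your write-up simply fills in the routine triangle-inequality verifications that the paper leaves implicit.
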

      Let $\mathcal U_{\mathcal R}$ be the corresponding Fra\"iss\'e limit. On easy fact is the following:
      
      \begin{lem}
      \label{lem-ND-Ury}
      The class $\mathcal C_{\mathcal R}$  where $\mathcal R\in\{ \mathcal Q, \mathcal Q_r\,|\,r\in \mathbb Q^{>0}\}$  is non-discrete. 
      \end{lem}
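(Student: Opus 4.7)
The plan is to show that $m=2$ works for the ND property. When $m=2$ we have $|A'|=0$, so $A'=\emptyset$, and the only tuple $s$ that can occur in $\Delta^B(ba_1,A')$ is the empty tuple, with $s'\cong s$ forcing $s'=\emptyset$ as well. Consequently the ND condition simplifies: given $A\in\mathcal{C}_{\mathcal{R}}$ with $|A|\geq 2$ and distinct $a_1,a_2\in A$, one must produce a one-point extension $B=A\sqcup\{b\}\in\mathcal{C}_{\mathcal{R}}$ and a positive Boolean combination $\phi(x,y)$ of atomic $\mathfrak{L}$-formulas such that $\phi(b,a_1)$ holds but $\phi(b,a_2)$ fails. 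Since the only non-trivial atomic formulas available are the $R_q(x,y)$, this amounts to ensuring that $d(b,a_1)\neq d(b,a_2)$; picking any rational $q$ strictly between these two values gives the desired $\phi = R_q$.

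To construct $B$, I would essentially place $b$ as a \emph{near-duplicate} of $a_1$. Concretely, pick a positive rational $\epsilon$ with $\epsilon\leq \min_{a\in A\setminus\{a_1\}} d(a_1,a)$ (and $\epsilon\leq r$ in the bounded case, which is automatic from this minimum). Set $d(b,a_1)=\epsilon$ and $d(b,a)=d(a_1,a)$ for all $a\in A\setminus\{a_1\}$.

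The only remaining task is to verify that this distance function on $B$ satisfies the triangle inequality (so that $B\in\mathcal{C}_{\mathcal{R}}$). Triples inside $A$ are inherited. For a triple $\{b,a,a'\}$ with $a,a'\in A\setminus\{a_1\}$, all three distances agree with those of $\{a_1,a,a'\}$ in $A$, so the triangle inequalities are immediate. For a triple $\{b,a_1,a\}$ with $a\in A\setminus\{a_1\}$, the three sides are $\epsilon$, $d(a_1,a)$, $d(a_1,a)$; the two nontrivial inequalities reduce to $|\epsilon - d(a_1,a)|\leq d(a_1,a)$ and $2d(a_1,a)\geq\epsilon$, both guaranteed by $\epsilon\leq d(a_1,a)$. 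In the bounded case, no new distance exceeds $r$ since all the new distances equal either $\epsilon$ or some old $d(a_1,a)\leq r$.

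With the extension in hand, the formula $\phi(x,y):=R_\epsilon(x,y)$ satisfies $B\models\phi(b,a_1)$ (as $d(b,a_1)=\epsilon$) while $B\models\neg\phi(b,a_2)$ (as $d(b,a_2)=d(a_1,a_2)>\epsilon$ by the choice of $\epsilon$). I do not expect any genuine obstacle here; the only mild care is choosing $\epsilon$ small enough to handle the potentially tight distances in the bounded Urysohn sphere case, but the minimum $\min_{a\neq a_1}d(a_1,a)$ is strictly positive and rational, so this is automatic.
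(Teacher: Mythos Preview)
Your proof is correct, and like the paper you show that $m=2$ works. The construction, however, differs from the paper's: there one first builds only the three-point space $\{a_1,a_2,b\}$ with $d(b,a_1)=q/2$ and $d(b,a_2)=q/2+\epsilon$ (where $q=d(a_1,a_2)$), checks it is a metric space of diameter $q$, and then invokes the amalgamation property of $\mathcal{C}_{\mathcal{R}}$ to merge it with $A$ over $\{a_1,a_2\}$, obtaining the desired one-point extension of $A$. You instead construct the full extension of $A$ in one shot by placing $b$ as a near-duplicate of $a_1$. The paper's route offloads the triangle-inequality bookkeeping onto amalgamation; yours is more explicit and self-contained at the cost of the (easy) direct verification you give.

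One small slip: you need $\epsilon$ \emph{strictly} less than $d(a_1,a_2)$, not merely $\epsilon\leq \min_{a\neq a_1} d(a_1,a)$. If $a_2$ realizes that minimum and you take $\epsilon$ equal to it, then $d(b,a_2)=\epsilon$ and $R_\epsilon(b,a_2)$ would hold, contrary to your final line. Simply choose $\epsilon$ strictly below the minimum, which is always possible in $\mathbb{Q}^{>0}$.
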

      \begin{proof}
      We prove $\mathcal C_{\mathcal R}$ is $2$-ND.
      Suppose $A\in \mathcal C_{\mathcal R}$ where $|A|\geq 2$. Let $a_1,a_2\in A$ be two elements which we want to separate by a one-point extension. Let $q=d(a_1,a_2)$ and consider $B=\{a_1,a_2,b\}$  to be an $\mathfrak L$-structure with $d(a_1,a_2)=q$ and $d(a_1,b)=\frac{q}{2}$ and $\frac{q}{2}+\epsilon$ where $\epsilon\in(0,\frac{q}{2})$. It is easy to check $B$ is a metric space with rational distance and its diameter is $q$ hence $B\in \mathcal C_\rr$. Note that $B$ is a one-point non-discrete extension of $a_1a_2$ that separates $a_1$ and $a_2$. Now the amalgamation of  $A$ and $B$ over $a_1a_2$ is the one-point extension of $A$ which we are looking for.
      \end{proof}
      \begin{lem}
      \label{zar-Urysohn}
      Non-trivial automorphisms of $\mathcal U_{\mathcal Q}$ and $\mathcal U_{\mathcal Q_b}$ for $b\in \mathbb Q^{>0}$ are strongly unbounded.
      \end{lem}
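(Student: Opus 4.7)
My plan is to reduce the claim to a type-realization problem via Lemma \ref{lem-tech-rich} and then build a suitable one-point metric extension. Since the class $\mathcal{C}_{\mathcal R}$ enjoys strong amalgamation, Fact \ref{fact-trivial} ensures that the algebraic closure is trivial in $\mathcal{U}_{\mathcal R}$, so by Lemma \ref{lem-tech-rich} it will suffice to show the following: for any non-trivial $\alpha\in\Isom(\mathcal{U}_{\mathcal R})$, any finite $A\subset M$, and any $b\in M\setminus A$, there exists $c\models\typ(b/A)$ with $\alpha(c)\neq c$.

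To produce $c$, I will fix $d_0\in\mov(\alpha)$ and set $\rho:=d(d_0,\alpha(d_0))>0$. I aim to construct a finite rational metric space $E$ on the underlying set $A\cup\{d_0,\alpha(d_0),c\}$ in which the subspace $A\cup\{d_0,\alpha(d_0)\}$ inherits its metric from $M$, $d(c,a)=d(b,a)$ for all $a\in A$, and $d(c,d_0)\neq d(c,\alpha(d_0))$. If $E$ is a valid $\mathcal R$-metric space, then the extension property of $\mathcal{U}_{\mathcal R}$ embeds $E$ into $M$ over $A\cup\{d_0,\alpha(d_0)\}$; the image of $c$ realizes $\typ(b/A)$, and $d(c,d_0)\neq d(c,\alpha(d_0))$ forces $\alpha(c)\neq c$ by the isometry identity $d(c,d_0)=d(\alpha(c),\alpha(d_0))$.

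The feasibility of $E$ reduces to picking $r:=d(c,d_0)$ and $r':=d(c,\alpha(d_0))$ with $r\neq r'$, where $r,r'$ are constrained to lie in intervals $[\ell,u]$ and $[\ell',u']$ dictated by the triangle inequality with each $a\in A$ together with $|r-r'|\leq\rho\leq r+r'$. The ``maximal'' choice $r=u=\min_{a\in A}(d(b,a)+d(a,d_0))$ and $r'=u'=\min_{a\in A}(d(b,a)+d(a,\alpha(d_0)))$ always gives a valid metric space (all required inequalities follow from the triangle inequality in $M$), so if $u\neq u'$ we are done immediately, and otherwise we can still succeed provided one of the intervals has positive length. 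The ``bad'' values of $d_0$, where no choice with $r\neq r'$ exists, form a subset $D\subset M$ of positions for $d_0$ cut out by collinearity equations of the form $d(b,a_1)=d(b,a_2)+d(a_2,d_0)+d(d_0,a_1)$ for $a_1,a_2\in A$.

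The hard part will be guaranteeing some $d_0\in\mov(\alpha)\setminus D$. I plan to observe that $D$ is a finite union of closed, nowhere dense subsets of $M$: each defining equation is a closed condition, and by the universality of $\mathcal{U}_{\mathcal R}$ no open ball in $M$ can lie on such an ``ellipse''. If $\mov(\alpha)\subseteq D$, then $\alpha$ fixes the open dense complement $M\setminus D$ pointwise; for any $x\in M$ we would then have $d(\alpha(x),y)=d(x,y)$ for every $y\in M\setminus D$, and continuity of the distance together with density of $M\setminus D$ extends this equality to all $y\in M$; specializing to $y=\alpha(x)$ forces $\alpha(x)=x$, contradicting $\alpha\neq 1$. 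Hence a suitable $d_0$ exists and the construction succeeds. The bounded case $\mathcal{U}_{\mathcal{Q}_b}$ will be treated along the same lines, verifying that the $+_b$-truncation does not disrupt the triangle-inequality checks.
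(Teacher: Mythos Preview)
Your overall strategy---reduce via Lemma~\ref{lem-tech-rich}, pick a moved point $d_0$, and amalgamate a new $c$ with $d(c,d_0)\neq d(c,\alpha(d_0))$---is sound and genuinely different from the paper's approach. The paper never invokes the metric topology or a density argument; instead it \emph{replaces} the pair $(s,\alpha(s))$ by an explicitly constructed auxiliary point at carefully chosen constant distance from $A$, and then performs a direct amalgamation by hand. Your route is conceptually cleaner but leans on topological facts about~$\mathcal{U}_{\mathcal R}$ (nowhere-density of collinearity loci, the identity principle for isometries fixing a metrically dense set) that you only sketch.

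There is, however, a real gap in your feasibility analysis. You assert that when $u=u'$ one can still find $r\neq r'$ as soon as one of the intervals $[\ell,u]$, $[\ell',u']$ has positive length. This is false. With $u=u'$, perturbing to $(u-\delta,u)$ requires not only $\delta\le u-\ell$ but also $r+r'\ge\rho$, i.e.\ $\delta\le 2u-\rho$. When $2u=\rho$---equivalently $u(d_0)+u(\alpha(d_0))=d(d_0,\alpha(d_0))$---the constraint $r+r'\ge\rho$ together with $r\le u$, $r'\le u$ forces $(r,r')=(u,u)$, so the feasible set is a singleton even though both intervals may be nondegenerate. Thus your ``bad'' set $D$ is strictly larger than the locus $\{\ell(d_0)=u(d_0)\}$ you describe.

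The gap is repairable within your framework: the extra condition $u(d_0)+u(\alpha(d_0))=d(d_0,\alpha(d_0))$ unwinds (via equality in the chain $d(d_0,\alpha(d_0))\le d(d_0,a)+d(a,p)+d(p,a')+d(a',\alpha(d_0))$) to force $a\in[p,d_0]$ for some $a\in A$, i.e.\ $d_0$ lies on a ray from $p$ through a point of $A$. So the enlarged bad set is still contained in a finite union of segments and rays, which are closed with empty interior in $\mathcal{U}_{\mathcal R}$ by universality. You should (i) add this case to your description of $D$, (ii) actually prove that each such segment or ray has empty interior (your one-line appeal to universality needs a concrete one-point extension), and (iii) in the bounded case note that the degeneracy $\ell=u$ cannot arise from truncation once $d_0,\alpha(d_0)\notin A\cup\{p\}$, which you should arrange explicitly using that $\mov(\alpha)$ is infinite.
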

      \begin{proof}
      By Lemma \ref{lem-ND-Ury} the class $\mathcal C_{\mathcal R}$ is $2$-ND when $\mathcal R\in\{ \mathcal Q, \mathcal Q_b\,|\,b\in \mathbb Q^{>0}\}$. Let $\alpha$ be a non-trivial automorphism of $\mathcal U_{\mathcal R}$. One can easily check that the algebraic closure is trivial in $\mathcal U_{\mathcal R}$. Hence based on Lemma \ref{lem-tech-rich} it is enough to show for a given finite subset $A$ of $\mathcal U_{\mathcal R}$ and $r\in \mathcal U_{\mathcal R} \backslash A$ there is a realization of $p:=\typ(r/A)$ in $\mov(\alpha)$.
      
      First note that $\mov(\alpha)$ is infinite and hence one can find distinct $s,s_1\in U_{\mathcal R}$ such that $s_1=\alpha(s)$ and $ss_1\cap A=\emptyset$. Consider the metric space $ss_1rA$. If $d(r,s)\neq d(r,s_1)$ then $r$ separates $s$ and $s_1$ and hence $r\in \mov(\alpha)$.  Assume now $d(r,s)=d(r,s_1)$. Consider a one-point extension $C=Ass_1\cup c$ of $Ass_1$ with the following properties:
      \begin{enumerate}
      \item $d(a,c)=d(c,s)=l$ for all $a\in A$ where $l\in \mathcal R$ and $\frac{\diam{Arss_1}}{2}<l<\diam{\mathcal U_R}$;
      \item $d(c,s_1)=l'$ where $l'\in \mathcal R$, $l'>l$ and $l'-l\leq \min\{d(s,s_1),d(s_1,a)\,|\,a\in A\}$.
      \end{enumerate}
      It is easy to check $C\in \mathcal C_{\mathcal R}$ and any realisation of $\typ(c/Ass_1)$  is in $\mov(\alpha)$.
      Since there are infinitely many realisations of $\typ(c/Ass_1)$ we can assume $c,\alpha(c)\notin Ar$.
      Rename $c$ and $\alpha(c)$ to $s$ and $s_1$, respectively. Now looking at $D_1:=sA$ and $D_2:=rA$ we can amalgamate them over $A$ if the following distance between $r$ and $s$ in $D_1D_2$ holds:
      $$\max\{|l-d(r,a)|\,|\,a\in A\}\leq d(s,r)\leq \min\{l+d(r,a),\diam{\mathcal U_{\mathcal R}}\,|\,a\in A\}.$$
      Let $l_1:=\max\{|l-d(r,a)|\,|\,a\in A\}$ and  $l_2:=\min\{l+\mu,\diam{\mathcal U_{\mathcal R}}\}$. Clearly $l_1<l_2$.
      In $D_1D_2$ assigning $d(r,s)=t$ for $t\in [l_1,l_2]\cap \mathcal R$ satisfies the triangle inequality.
      
      Choose $t\in  (l_1,l_2)\cap \mathcal R$ and let $D_1D_2$ be the structure such that $d(r,s)=t$.  With abuse of notation call one of its isomorphic copies over $A$ again $D_1D_2$ and $b$ be the element that has the same type as $p$. If $d(r,s)\neq d(r,s_1)$ we are done.
      
      Suppose again $d(r,s)= d(r,s_1)=t$. We claim the following structure is a metric structure: Let $E=eAss_1$ where $\typ(e/A)=p$ and assign $d(e,s_1)=t$ and $d(e,s)=t'$ where  $t< t'$ , $t'\in (l_1,l_2)\cap \mathcal R$. In order to have $eAss_1$ as a metric space we only need to have the triangle inequality for  $(e,s,s_1)$ and for that we choose $t'\in (l_1,l_2)\cap \mathcal R$ in such a way that $t'-t<d(s,s_1)$. That is always possible and hence we conclude every non-trivial automorphism is strongly unbounded.
      \end{proof}
      Now by applying Corollary  \ref{cor-zariski} we conclude
      \begin{cor}
      \label{cor-zar-ury} The Zariski topology $\tau_Z$ for $ \Aut{\mathcal U_{\mathcal Q}}$ and $ \Aut{\mathcal U_{\mathcal Q_b}}$ for $b\in \mathbb Q^{>0}$ is not a group topology.
      
      \end{cor}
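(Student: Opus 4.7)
The plan is to deduce the corollary essentially as a direct consequence of the material just established, namely Corollary \ref{cor-zariski} combined with Lemma \ref{zar-Urysohn}. Both $\mathcal{U}_{\mathcal{Q}}$ and $\mathcal{U}_{\mathcal{Q}_b}$ are countable ultrahomogeneous first order structures (being Fra\"iss\'e limits of the classes $\mathcal{C}_{\mathcal{Q}}$ and $\mathcal{C}_{\mathcal{Q}_b}$ respectively). Since the underlying amalgamation classes have strong amalgamation, Fact \ref{fact-trivial} gives that the algebraic closure in either structure is trivial, which is in particular locally finite. Hence the hypotheses of Corollary \ref{cor-zariski} are satisfied as far as homogeneity and local finiteness of $\acl$ are concerned.

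The remaining hypothesis to verify is that every non-trivial automorphism of $\mathcal{U}_{\mathcal{Q}}$ (respectively, $\mathcal{U}_{\mathcal{Q}_b}$) is strongly unbounded in the sense of Definition \ref{def-st-ub}. This is precisely the content of Lemma \ref{zar-Urysohn}, which has already been proved using the non-discreteness of $\mathcal{C}_{\mathcal{R}}$ (Lemma \ref{lem-ND-Ury}) together with the triviality of the algebraic closure via Lemma \ref{lem-tech-rich}.

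With both hypotheses in place, Corollary \ref{cor-zariski} applies directly and yields the conclusion that $\tau_Z$ is not a group topology on either $\Aut{\mathcal{U}_{\mathcal{Q}}}$ or $\Aut{\mathcal{U}_{\mathcal{Q}_b}}$. There is no real obstacle here; the corollary is by construction a convenient packaging of the Urysohn case of the general strategy that was carried out in detail earlier in the section. If anything, the only point that deserves a brief acknowledgement in the write-up is that the statement covers both the unbounded case (treated via $\mathcal{Q}$) and the bounded case (treated via $\mathcal{Q}_b$) uniformly, but Lemma \ref{zar-Urysohn} has already handled both cases in a single argument.
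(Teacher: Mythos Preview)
Your proposal is correct and follows exactly the paper's approach: the corollary is obtained by applying Corollary \ref{cor-zariski}, using Lemma \ref{zar-Urysohn} for the strongly unbounded hypothesis and the triviality of algebraic closure (which the paper also notes in the proof of Lemma \ref{zar-Urysohn}). Your write-up just makes the verification of the hypotheses slightly more explicit than the paper, which simply says ``by applying Corollary \ref{cor-zariski} we conclude''.
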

      
    \subsubsection{Random tournament}
      
      A {\it tournament} is a digraph where there is exactly one edge between every pair of vertices. According to Lachlan's classification there are only three countable homogeneous tournaments up to isomorphism: the dense linear
      order on the rational numbers $\mathbb Q$, the dense local order $S(2)$, and the tournament $T^\infty$ that is universal for the set of all finite tournaments. It is easy to check that the classes of finite substructure of homogeneous tournaments (also homogeneous digraphs) are $2$-ND.  Moreover, the algebraic closure in all three cases is trivial. One can show the following in this case:
      \begin{lem}
      Given a non-trivial automorphism $\alpha\in \Aut{T^\infty}$ and any finite subset $A$ of $T^\infty$ and $b\in T^\infty\backslash A$ there is a realisation $c\in \typ(b/A)$ such that $c\in \mov(\alpha)$.
      \end{lem}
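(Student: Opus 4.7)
The plan is to mimic the strategy used for the rational Urysohn spaces in Lemma \ref{zar-Urysohn}, exploiting the fact that tournaments accept arbitrary choices of edge orientation.

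First I would verify that $\mov(\alpha)$ is infinite whenever $\alpha\neq 1$. If on the contrary $F:=\mov(\alpha)$ were finite, then every $y\in T^{\infty}\setminus F$ would be fixed by $\alpha$; picking $s\in F$ with $s_{1}:=\alpha(s)\neq s$, the extension property of $T^{\infty}$ yields some $y$ outside the finite set $F\cup\{s,s_{1}\}$ for which the arc between $y$ and $s$ has the opposite orientation to the arc between $y$ and $s_{1}$, and this contradicts $\alpha(s)=s_{1}$, $\alpha(y)=y$ together with $\alpha$ preserving edges.

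Once this is settled, choose $s\in\mov(\alpha)$ with $\{s,s_{1}\}\cap(A\cup\{b\})=\emptyset$, where $s_{1}=\alpha(s)$. Form a finite tournament $D$ with underlying set $A\cup\{s,s_{1},c\}$ as follows: the restriction of $D$ to $A\cup\{s,s_{1}\}$ coincides with the one induced from $T^{\infty}$; the orientations of arcs between $c$ and elements of $A$ are copied from the corresponding arcs between $b$ and $A$ inside $T^{\infty}$; and the arcs between $c$ and $\{s,s_{1}\}$ are declared \emph{asymmetric}, say $c\to s$ and $s_{1}\to c$. Since any assignment of orientations to each pair of distinct vertices produces a finite tournament, $D$ is a member of the Fra\"iss\'e class of finite tournaments, and by the extension property applied to the identity on $A\cup\{s,s_{1}\}$ we obtain an embedding $D\hookrightarrow T^{\infty}$ fixing $A\cup\{s,s_{1}\}$ pointwise. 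Let $c'\in T^{\infty}$ be the image of $c$.

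Then $\typ(c'/A)=\typ(b/A)$ by ultrahomogeneity, because $c'$ and $b$ realize the same quantifier-free type over $A$. Finally $c'\in\mov(\alpha)$: if it were the case that $\alpha(c')=c'$, then applying $\alpha$ to the arc between $c'$ and $s$ would yield an arc of the same orientation between $c'=\alpha(c')$ and $s_{1}=\alpha(s)$, contradicting the asymmetric choice made in the construction of $D$. There is no serious obstacle here; the only subtle point is the initial reduction showing $\mov(\alpha)$ is infinite, since the construction of $c'$ itself uses only the extension property of $T^{\infty}$ together with the complete freedom to prescribe edge directions in a tournament.
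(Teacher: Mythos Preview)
Your proposal is correct and follows essentially the same approach as the paper's proof: both pick $s\in\mov(\alpha)$ with $s,\alpha(s)\notin A$, build a one-point extension of $A\cup\{s,\alpha(s)\}$ by a new vertex $c$ having the same quantifier-free type over $A$ as $b$ and oppositely oriented arcs to $s$ and $\alpha(s)$, and then realize this extension inside $T^{\infty}$ via the extension property. Your treatment is slightly more detailed in justifying that $\mov(\alpha)$ is infinite (the paper simply asserts this), but otherwise the arguments coincide.
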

      \begin{proof}
      One can easily show $\mov(\alpha)$ is infinite when $\alpha$ is non-trivial. Then consider $t\in \mov(\alpha)$ where $t,\alpha(t)\notin A$. Let $B=A\cup \{t,\alpha(t)\}$ and consider the tournament $C=B\cup \{c\}$ where $c\notin B$, and $c$ and $t$ has the same qf-type over $A$ and moreover $c$ relates to $t$ and $\alpha(t)$ with an opposite direction. Then $C$ is a tournament and $B\subseteq C$ and by the theorem of Fra\"iss\'e one can find a copy of $C$ in $M$ over $B$. With abuse of notation denote it again by $C$. Then $c\in \mov(\alpha)$.
      \end{proof}
      Then from Lemma \ref{lem-tech-rich} and Corollary \ref{cor-zariski} we conclude
      \begin{cor}
      \label{cor-zar-rant} The Zariski topology $\tau_Z$ in $\Aut{T^\infty}$ is not a group topology.
      \end{cor}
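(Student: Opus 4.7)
The plan is to apply Corollary \ref{cor-zariski} to $\M = T^\infty$, whose two hypotheses are local finiteness of algebraic closure and strong unboundedness of every non-trivial automorphism.

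For the first hypothesis, I would invoke the observation (made in the text just above) that in all three countable homogeneous tournaments the algebraic closure is trivial. This is the standard consequence of the strong amalgamation property for finite tournaments, entirely analogous to Fact \ref{fact-trivial}: given any finite $A \subseteq T^\infty$ and $b \notin A$, one realises infinitely many copies of $\typ(b/A)$ by repeatedly amalgamating two copies of $Ab$ over $A$ as tournaments (choosing the single free edge arbitrarily).

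For the second, fix a non-trivial $\alpha\in\Aut{T^\infty}$. The preceding lemma supplies, for each finite $A\subseteq T^\infty$ and each $b\in T^\infty\setminus A$, a realization $c\models \typ(b/A)$ with $c\in\mov(\alpha)$. Combined with triviality of $\acl$, this is exactly the hypothesis of Lemma \ref{lem-tech-rich}, whose conclusion is that $\alpha$ is strongly unbounded. So hypothesis (ii) of Corollary \ref{cor-zariski} is in place.

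With both conditions verified, Corollary \ref{cor-zariski} delivers that $\tau_Z$ on $\Aut{T^\infty}$ is not a group topology, concluding the argument. I expect no substantive obstacle in this step: all the genuine combinatorial content has already been packaged in the preceding lemma (which constructs a tournament extension of $ABt\alpha(t)$ by an extra vertex relating oppositely to $t$ and $\alpha(t)$), and the passage through Lemma \ref{lem-tech-rich} and Corollary \ref{cor-zariski} is bookkeeping, so the corollary essentially reduces to invoking the right references in the right order.
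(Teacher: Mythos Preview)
Your proposal is correct and follows exactly the paper's own approach: the paper's proof is the single sentence ``Then from Lemma \ref{lem-tech-rich} and Corollary \ref{cor-zariski} we conclude,'' which is precisely the chain of references you spell out. Your expanded explanation of why the hypotheses of Lemma \ref{lem-tech-rich} and Corollary \ref{cor-zariski} are met is accurate and matches the surrounding text.
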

      However,
      it is not hard to see that if  $\M\in \{\mathbb Q,S(2)\}$ then there are non-trivial automorphisms  of $\M$ which are not strongly unbounded.
      
      \newcommand{\AR}[0]{\Aut R}
      
  \subsection{Products of Fra\"iss\'e classes}
    
    \newcommand{\tp}[0]{\otimes}
    \newcommand{\moving}[0]{discriminating } 

    Given two distinct elements $a,b$ of an $\omega$-categorical structure $\M$ such that $\typ(a)=\typ(b)$ and any
    $k\geq 1$ denote by $\Delta_{a,b}^{k}(x)$ the formula over $x=(x_{i})_{i=1}^{k}$ stating that  $\typ(x/a)\neq \typ(x/b)$ and $x\cap\{a,b\}=\emptyset$. Notice that for any     $\alpha\in \Aut{\M }$ if $b=\alpha(a)$ then at least one component of any realisation of     $\Delta_{a,b}^{k}(x)$ must belong to $\mov(\alpha)$. We denote by $\Delta^{k,\mathfrak L'}_{a,b}$ the result of calculating $\Delta$ in $\M \restriction_{\mathfrak L'}$, where $\mathfrak L'\subset \mathfrak L$.
    We say that a \fraisse class $\mathcal{K}$ in a relational signature $\mathfrak L$  is \emph{\moving} if for each pair of distinct elements $a,b\in \M=\fl{\mathcal{K}}$ there exists $k\geq 1$ such that the formula $\Delta_{a,b}^{k}(x)$ is non-algebraic.
    
    \begin{obs}
    \label{discriminating}Let $\mathcal{K}$ be a non-trivial \fraisse class in a finite relational signature and $\M=\fl{\mathcal{K}}$.
    Then $\mathcal{K}$ is \moving provided one of the following holds:
    \begin{itemize}
    \item $M$ has trivial algebraic closure and the action of $\Aut{\M}$ on $M^{2}\setminus\{(a,a)\}_{a\in M}$ is transitive; or,
    \item $\mathcal{K}$ has free amalgamation and $\Aut{\M}$ acts transitively on $M$.
    \end{itemize}
    \end{obs}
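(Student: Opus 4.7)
The plan is to exhibit, for each pair of distinct $a, b \in M$, a single tuple $c$ from $M \setminus \{a, b\}$ with $\typ(c/a) \neq \typ(c/b)$. Since algebraic closure on $M$ is trivial in both cases---by assumption in the first, and by Fact~\ref{fact-trivial} in the second---the pointwise stabilizer $G_{\{a,b\}}$ has infinite orbit on each coordinate of such a $c$, so its orbit on $c$ is infinite and consists of realizations of $\Delta^{|c|}_{a,b}$, yielding non-algebraicity. Thus exhibiting a single witness suffices.

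Both cases open in the same way: let $m$ be the smallest positive integer such that $G$ admits more than one orbit on ordered $m$-tuples of pairwise distinct elements of $M$. Non-triviality of $\mathcal K$ ensures $m$ exists; the 2-transitivity hypothesis of the first case forces $m \geq 3$. Fix $\bar e = (e_1, \ldots, e_m)$ and $\bar e' = (e_1', \ldots, e_m')$ in two distinct orbits; by minimality of $m$ the $(m-1)$-tails $(e_2, \ldots, e_m)$ and $(e_2', \ldots, e_m')$ lie in a common orbit, so some $\sigma \in G$ maps the former to the latter.

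For the first case I would set $\hat e := \sigma(\bar e)$: it agrees with $\bar e'$ on the last $m-1$ coordinates but, since the two orbits differ, $\hat e_1 \neq e_1'$, and both lie outside $\{e_2', \ldots, e_m'\}$ since $\sigma$ is a bijection. Setting $a' := \hat e_1$, $b' := e_1'$ and $c' := (e_2', \ldots, e_m')$, the $m$-tuples $(a', c')$ and $(b', c')$ lie in different $G$-orbits and so have distinct qf-types, which is exactly $\typ(c'/a') \neq \typ(c'/b')$, with $c'$ disjoint from $\{a', b'\}$. By 2-transitivity some $\tau \in G$ sends $(a', b')$ to $(a, b)$; then $c := \tau(c')$ is the desired witness.

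For the second case 2-transitivity is unavailable, so I would build the witness around the specific pair $(a, b)$ by free amalgamation. Take $\bar e'$ to be a pure $m$-tuple (available as a free amalgam of $m$ singletons) and $\bar e$ any representative of a different orbit; by minimality of $m$ the common $(m-1)$-tail is pure as well. Relabel so that $A$, the substructure of $\M$ induced on $\{e_1, \ldots, e_m\}$, sits on $\{a, c_1, \ldots, c_{m-1}\}$, and $B$, the substructure induced on $\{e_1', \ldots, e_m'\}$, sits as the pure $m$-set on $\{b, c_1, \ldots, c_{m-1}\}$. Let $P \in \mathcal K$ be the two-element structure on $\{a, b\}$ carrying the qf-type $p$ of $(a, b)$ in $\M$. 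Then the two-stage free amalgam $E := (P \otimes_{\{a\}} A) \otimes_{\{b, c_1, \ldots, c_{m-1}\}} B$ lies in $\mathcal K$; in $E$ the pair $(a, b)$ has qf-type $p$, the tuple $(a, c_1, \ldots, c_{m-1})$ carries $A$'s qf-type (so a distinguishing atomic formula $R$ holds), while $(b, c_1, \ldots, c_{m-1})$ is pure (so the same $R$-instance fails). The extension property of the Fra\"iss\'e limit extends the identity embedding of $\{a, b\}$ (qf-type $p$ in both $E$ and $\M$) to an embedding of $E$ into $\M$, delivering $c := (c_1, \ldots, c_{m-1})$ as the desired witness. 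The main subtlety is preserving the qf-type $p$ through the amalgamation; this is what the two-stage construction achieves, and it is valid because the common $(m-1)$-tail and the $b$-to-$c_i$ overlap are pure in both $P \otimes_{\{a\}} A$ and in $B$.
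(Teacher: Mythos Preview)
Your proof is correct and follows essentially the same approach as the paper: both cases hinge on the minimal $m$ for which $G$ fails to act transitively on $m$-tuples of distinct elements, with 2-transitivity transferring the witness in the first case and free amalgamation constructing it directly in the second (the paper defers the latter to the argument in the proof of Corollary~\ref{zar-free-AP}). One small remark: your second amalgamation step $\otimes_{\{b,c_1,\ldots,c_{m-1}\}} B$ is redundant, since $\{b,c_1,\ldots,c_{m-1}\}$ is already pure in $P\otimes_{\{a\}}A$ (the $c_i$'s form a pure tail in $A$ and no relation crosses from $b$ to the $c_i$'s in the free amalgam), so the single amalgam $E=P\otimes_{\{a\}}A$ already does the job.
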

    \begin{proof}
    For the first part, take some $R^{(k)}\in\L$ holding for some $k$-tuples but not for others.
    This implies the existence of $a,b\in M^{k}$ (with pairwise distinct coordinates) differing only in one coordinate $a_{i}\neq b_{i}$ and such that $\typ(a)\neq \typ(b)$. This implies $\Delta_{a_{i},b_{i}}^{k-1}(x)$ is non-algebraic. By our transitivity condition, it follows that
    $\Delta_{c,d}^{k-1}(x)$ is non-algebraic for any distinct $c,d\in M$.
    The second part follows from the proof of Corollary \ref{zar-free-AP}.
    \end{proof}
    
    
    We say $\mathcal{K}$ is \emph{dense} if for any distinct $a,b\in\M=\fl{\mathcal{K}}$ and any non-algebraic 1-type $p$ over finitely many parameters of $\M$ isolated by a formula $\phi(x)$ the formula $\Delta^{1}_{a,b}(x)\wedge p(x)$ is not algebraic.
    
    \begin{defn}
    \label{def: tensor product of fraisse classes}Given two Fra\"iss\'e classes $\mathcal K_1$ and $\mathcal K_2$ over finite relational languages $\L_1$ and $\L_2$, respectively, define $\mathcal K_1\otimes K_2$ to be the class of $\L$-structures $A$ where $\L=\L_1\coprod \L_2$ and $A\restriction_{ \L_{1}}\in \mathcal K_1$ and $A\restriction_{ \L_{2}}\in \mathcal K_2$.
    \end{defn}
    
    \renewcommand{\L}[0]{\mathfrak L}
    \begin{lem}
    \label{products lemma}For $i=1,2$ let $\mathcal{K}_{i}$ be a Fra\"iss\'e class over a finite relational language $\L_{i}$ such that
    $\mathcal{K}=\mathcal{K}_{1}\tp\mathcal{K}_{2}$ is a \fraisse class. Assume that:
    \begin{itemize}
    \item $\mathcal{K}_{1}$ is \moving;
    \item $\mathcal{K}_{2}$ is dense;
    \end{itemize}
    and let $\M=\fl{\mathcal{K}}$. Then any non-trivial element of $G=\Aut{\M}$ is strongly unbounded.
    \end{lem}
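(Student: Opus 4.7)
The plan is to verify strong unboundedness for each $\alpha \in G \setminus \{1\}$. Fix such $\alpha$, a finite $A \subseteq M$, and $b \in M \setminus \acl(A)$; the task is to produce $c \models \typ(b/A)$ with $\alpha(c) \notin \acl(cA)$. The argument will exploit the two hypotheses in complementary ways: density of $\mathcal{K}_2$ will furnish the freedom to realize $\L_2$-types subject to prescribed separations, while the discriminating property of $\mathcal{K}_1$ will provide the $\L_1$-side non-degeneracy that precludes $\alpha(c)$ from being algebraic over $cA$.

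I would first pick a pivot $p \in \mov(\alpha)$ with $\{p,\alpha(p)\}$ disjoint from $\acl(Ab) \cup \alpha^{-1}(\acl(Ab))$; this is possible since $\mov(\alpha)$ is infinite and $\acl$ is locally finite. By density of $\mathcal{K}_2$ applied to the distinct pair $p,\alpha(p)$ and the non-algebraic $\L_2$-type $\typ^{\L_2}(b/A)$, the formula $\Delta^{1,\L_2}_{p,\alpha(p)}(x) \wedge \typ^{\L_2}(b/A)(x)$ has infinitely many realizations in $\M|_{\L_2}$. Because the tensor construction $\mathcal{K} = \mathcal{K}_1 \otimes \mathcal{K}_2$ allows independent prescription of compatible $\L_1$- and $\L_2$-qf-types over a common parameter set, I realize in $\M$ an element $c$ whose $\L_2$-qf-type over $A \cup \{p,\alpha(p)\}$ satisfies the above conjunction, whose $\L_1$-qf-type over $A$ equals $\typ^{\L_1}(b/A)$, and whose $\L_1$-relation with $\{p,\alpha(p)\}$ is an arbitrary admissible amalgam. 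Then $c \models \typ(b/A)$, and the computation $\typ^{\L_2}(\alpha(c)/\alpha(p)) = \typ^{\L_2}(c/p) \neq \typ^{\L_2}(c/\alpha(p))$ forces $c \in \mov(\alpha)$.

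The hard part is promoting $c \in \mov(\alpha)$ to the stronger $\alpha(c) \notin \acl(cA)$. I plan to invoke the discriminating property of $\mathcal{K}_1$ on the pair $c,\alpha(c)$: some $\Delta^{k,\L_1}_{c,\alpha(c)}(\bar x)$ is non-algebraic in $\M|_{\L_1}$, producing infinitely many $k$-tuples that $\L_1$-distinguish $c$ from $\alpha(c)$. Iterating the construction of the previous paragraph with these tuples adjoined to the parameters, and reapplying density of $\mathcal{K}_2$ to keep the $\L_2$-type over $A$ pinned to $\typ^{\L_2}(b/A)$, one obtains $c \models \typ(b/A)$ for which $\typ^{\M}(\alpha(c)/cA)$ admits infinitely many realizations in $\M$. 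Since $\M$ is a Fra\"iss\'e-homogeneous structure in a finite relational language it enjoys quantifier elimination, and combined with the tensor-product structure this yields the decomposition $\acl^{\M}(cA) = \acl^{\L_1}(cA) \cup \acl^{\L_2}(cA)$; the engineered $\L_1$- and $\L_2$-separations ensure $\alpha(c)$ lies outside both summands, establishing that $\alpha$ is strongly unbounded.
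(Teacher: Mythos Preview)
Your second paragraph is essentially the paper's argument: the paper also just produces, for an arbitrary non-algebraic one-variable type $\phi(z,a)=\phi_{1}\wedge\phi_{2}$, a realization lying in $\mov(\alpha)$. It does this by taking any moved point, using the discriminating hypothesis on $\mathcal{K}_{1}$ to manufacture a moved $d_{i}$, then applying density of $\mathcal{K}_{2}$ to $d_{i},\alpha(d_{i})$ and the non-algebraic $\mathfrak L_{2}$-type $\phi_{2}$, and finally invoking the tensor structure to restore $\phi_{1}$. Your route skips the discriminating step and applies density directly to an arbitrary pivot $p\in\mov(\alpha)$; this works for the same reason, so you reach the same intermediate conclusion: every non-algebraic $1$-type over a finite set has a realization in $\mov(\alpha)$.

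The gap is your third paragraph. You treat the passage from ``$c\in\mov(\alpha)$'' to ``$\alpha(c)\notin\acl(cA)$'' as the hard part and propose an iteration using discriminating, but this is both unnecessary and not a valid argument as written. The sentence ``iterating the construction \ldots\ one obtains $c$ for which $\typ^{\M}(\alpha(c)/cA)$ admits infinitely many realizations'' does not follow: you have no control over $\typ(\alpha(c)/cA)$, since $\alpha(c)$ is determined by $\alpha$ once $c$ is fixed, and nothing you adjoin to the parameter set changes that type. The decomposition $\acl^{\M}(cA)=\acl^{\mathfrak L_{1}}(cA)\cup\acl^{\mathfrak L_{2}}(cA)$ is also asserted without proof; a priori a conjunction of an $\mathfrak L_{1}$-formula and an $\mathfrak L_{2}$-formula, each with infinitely many solutions, could isolate a point.

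The paper's proof simply stops after establishing $\phi(M)\cap\mov(\alpha)\neq\emptyset$ for every non-algebraic $\phi$, because that is exactly the hypothesis of Lemma~\ref{lem-tech-rich}. The product class $\mathcal{K}_{1}\otimes\mathcal{K}_{2}$ inherits strong amalgamation from the factors (this is explicit in Corollary~\ref{cor-products}), so $\acl$ in $\M$ is trivial and Lemma~\ref{lem-tech-rich} applies: once every non-algebraic type over every finite $A$ has a realization in $\mov(\alpha)$, one gets infinitely many such realizations, and for all but finitely many of them $\alpha(c)\notin cA=\acl(cA)$. That is the missing step; replace your third paragraph with this observation and the proof is complete.
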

    \begin{proof}
    Take $\alpha\in G\setminus\{1\}$ and some non-algebraic formula $\phi(z,a)$ in one variable over a finite tuple $a$ of parameters which isolates a non-algebraic type. We can write $\phi=\phi_{1}\wedge\phi_{2}$, where $\phi_{i}$ is a quantifier free formula in the language $\L_{i}$.
    Suppose that $\alpha(c)=c'$ for distinct $c,c'\in M$. The fact that $\mathcal{K}_{1}$ is \moving implies there is some $k$ such that the formula $\Delta_{c,c'}^{k,\mathfrak L_{1}}$ is non-algebraic.
    In particular, it can be realized in $\M\restriction_{ \L_{1}}$ by some tuple $d$ disjoint from
    $a$. Since the $\L_{2}$ formula $\phi_{2}(x,a)$ is non-algebraic, it is possible to find such $d$ in $M$
    with the property that all of its entries satisfy $\phi_{2}(z,a)$. On the other hand, $\alpha(d_{i})\neq d_{i}$ for some $d_{i}$. Density of $\mathcal{K}_{2}$ then implies $\phi_{2}(z,a)\wedge\Delta^{1,\L_{2}}_{d,d'}(z)$ is not algebraic.
    Therefore, neither is $\phi(x)\wedge\Delta_{d,d'}^{1,\L_{2}}(z)$ (again, by quantifier elimination and the definition of $\mathcal{K}_{1}\otimes \mathcal{K}_{2}$). This implies that $\phi(M)\cap \mov(\alpha)$ is non-empty.
    \end{proof}
    We collect below a handful of particular cases of Lemma \ref{products lemma}.
    \begin{cor}
    \label{cor-products}Let $\mathcal{K}_{1}$ and $\mathcal{K}_{2}$ be two \fraisse classes with strong amalgamation and $\mathcal{K}=\mathcal {K}_{1}\otimes\mathcal{K}_{2}$. Assume $\mathcal{K}_{1}$ is non-trivial and satisfies one of the following:
    \begin{itemize}
    \item The action of $\Aut{\fl{\mathcal{K}_{1}}}$ on the set $M^{2}\setminus\{(a,a)\}_{a\in M}$ is transitive;
    \item $\mathcal{K}_1$ has free amalgamation and the action of $\Aut{\fl{\mathcal{K}_{1}}}$ on $\fl{\mathcal{K}_1}$ is transitive.
    \end{itemize}
    Assume also $\fl{\mathcal{K}_{2}}$ one of the following:
    \begin{itemize}
    \item $(\mathbb{Q},<)$;
    \item The countable dense meet tree;
    \item The cyclic tournament $S(2)$.
    \end{itemize}
    Then the solution set of any non-trivial equation with parameters in $G$ is meager.
    \end{cor}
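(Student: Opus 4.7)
The plan is to deduce the conclusion as an immediate application of Lemma \ref{products lemma} and Lemma \ref{lem-comeager}, so the work really consists of verifying the hypotheses of Lemma \ref{products lemma} in each of the enumerated cases.

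First I would observe that $\mathcal{K}=\mathcal{K}_1\otimes\mathcal{K}_2$ inherits strong amalgamation from the two factors, so by Fact \ref{fact-trivial} the algebraic closure in $\M=\fl{\mathcal{K}}$ is trivial (and in particular locally finite). This takes care of the ``acl is locally finite'' hypothesis needed later by Lemma \ref{lem-comeager}. It also gives trivial acl in $\fl{\mathcal{K}_1}$, which is what is required for the first bullet of Observation \ref{discriminating}.

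Next I would check that $\mathcal{K}_1$ is \moving. In the first bullet this is exactly the first case of Observation \ref{discriminating}, while in the second bullet it is the second case of that observation; in either case the observation applies directly because $\mathcal{K}_1$ is assumed non-trivial.

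The main point, and the step requiring most care, is verifying that $\mathcal{K}_2$ is dense for each of the three structures $\fl{\mathcal{K}_2}\in\{(\mathbb{Q},<),\text{countable dense meet tree},S(2)\}$. In each case, given distinct $a,b\in \fl{\mathcal{K}_2}$ and a non-algebraic type $p(x)$ over finitely many parameters $C$ isolated by $\phi(x)$, I need to show $\phi(x)\wedge \Delta^{1,\L_2}_{a,b}(x)$ is non-algebraic. The strategy is the same in all three cases: the formula $\Delta^{1,\L_2}_{a,b}(x)$ is satisfied by any $x$ whose qf-type over $\{a,b\}$ distinguishes the two points, and thanks to the ultrahomogeneity and the combinatorics of the structures in question (density of $<$ on $\mathbb{Q}$, density of branches in the meet tree, density of the cyclic order on $S(2)$), among the infinitely many realizations of $\phi(x)$ one can always produce realizations sitting on either side of $a$ and $b$ (for the order), or lying on one of the two branches above the meet $a\wedge b$ (in the meet-tree case), or on each of the two arcs determined by $a,b$ (in the cyclic tournament case). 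In each case only finitely many realizations of $\phi$ can fail to discriminate, so the conjunction remains non-algebraic. This is the step I expect to be the main obstacle, but each verification is a short combinatorial argument.

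Having checked both hypotheses, Lemma \ref{products lemma} gives that every non-trivial $\alpha\in G$ is strongly unbounded. Lemma \ref{lem-comeager} (whose $\omega$-saturation hypothesis holds since $\M$ is $\omega$-categorical as a \fraisse limit over a finite language) then implies that the solution set in $G$ of any non-trivial equation $w(x,\alpha)=1$ in one variable with parameters in $G$ is meager in $(G,\tau_{st})$, which is the content of the corollary. In particular, combining with Lemma \ref{using meagerness}, the Zariski topology on $G$ fails to be a group topology in each of these cases, recovering the product case \ref{zar prod} of Theorem \ref{thm-main-zariski}.
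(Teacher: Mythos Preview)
Your overall strategy is exactly what the paper intends: the corollary is presented (without proof) immediately after Lemma \ref{products lemma} as ``a handful of particular cases'' of it, so the work is to verify that $\mathcal{K}_{1}$ is \moving and $\mathcal{K}_{2}$ is dense, and then feed the conclusion into Lemma \ref{lem-comeager}. Your use of Fact \ref{fact-trivial} for trivial $\acl$ and of Observation \ref{discriminating} for the \moving hypothesis is correct.

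The one genuine gap is in your density argument. Your claim that ``in each case only finitely many realizations of $\phi$ can fail to discriminate'' is false. In $(\mathbb{Q},<)$, take $\phi(x)$ isolating the interval $(0,1)$ and $a=2$, $b=3$: every realization of $\phi$ lies to the left of both $a$ and $b$, so $\phi\wedge\Delta^{1}_{a,b}$ is empty. Analogous counterexamples exist in the dense meet tree (take $\phi(x)\equiv x>c$ with $c$ incomparable to both $a$ and $b$) and in $S(2)$ (take $\phi$ defining a small arc inside the complement of the symmetric difference of the two half-circles determined by $a$ and $b$). In fact none of the three listed structures satisfies the paper's literal definition of ``dense''; this is an imprecision in the paper, not just in your sketch. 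What the proof of Lemma \ref{products lemma} actually invokes is the weaker statement in which one of the two points (there $d_{i}$) is already a realization of $\phi_{2}$. That weaker form \emph{does} hold in all three cases---for instance in $(\mathbb{Q},<)$, if $d_{i}$ lies in the open interval defined by $\phi_{2}$, then that interval meets the open interval between $d_{i}$ and $\alpha(d_{i})$ nontrivially---and that is what you should verify.
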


  \subsection{Hrushovski's pre-dimension construction}
    \label{subsec-Hru}
    Recall the setting in subsection \ref{sec-Hrush-new}.
    Suppose $s\geq  2$ and $\eta\in (0,1]$. Let $\mathcal C_\eta: = \{B \in \mathcal  C \,|\, \emptyset \leqslant B\}$ and $\M^\eta$ be the countable structure that one obtains from Proposition \ref{prop:sgen}.

    Suppose $A$ is a finite subset of $M^\eta$. Using the pre-dimension function $\delta$ one can define the \emph{dimension} of $A$ as $\dim A:=\delta\left(\cl A\right)$, where $\cl A$ is the smallest $\leqslant$-closed finite subset of $\M^\eta$ that contains $A$. Given $b\in M^\eta$ and  $A$  a finite subset of $M^\eta$, we denote $\dim{b/A}$ for $\dim{bA}-\dim A$.   From part (2) of Lemma \ref{lem-predim} and part (2) Proposition $\ref{prop:sgen}$ it follows that $\cl{A}$ is well-defined. Similarly to Lemma \ref{lem-comeager} we prove the following
    \begin{lem}
    \label{thm-Zar}
    Suppose $\alpha$  is a finite tuple of automorphisms of $G=\Aut{\M^\eta}$. Then the set of solutions of a non-trivial equation $w(x,\alpha):=\alpha_0x^{\epsilon_0}\alpha_1\cdots x^{\epsilon_m}\alpha_{m+1}=1$  is meager in $G$. 
    \end{lem}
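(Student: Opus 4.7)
The plan is to mirror the proof of Lemma \ref{lem-comeager}, with the self-sufficient closure $\cl{-}$ (which is locally finite on $\M^\eta$ by the predimension) playing the role of algebraic closure, and Proposition \ref{prop:sgen}(3) supplying the type-realization steps that were handled by $\omega$-saturation in the original. Recall that in $\M^\eta$ the algebraic closure coincides with $\cl{-}$, so Definition \ref{def-st-ub} applies verbatim.

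First I would establish an analogue of Lemma \ref{lem-free-unbound}: every non-trivial $\alpha \in G$ is strongly unbounded. Given a finite $A \subseteq M^\eta$ and $b \in M^\eta \setminus \cl{A}$, the strategy is to pick a $\leqslant$-closed finite $C \subseteq \mov(\alpha)$ large and generic enough that $C \cup \alpha(C)$ is $\leqslant$-free from $\cl{bA} \cup \alpha^{-1}(A)$; then inside $\mathcal C_\eta$ I would construct a one-point $\leqslant$-closed extension of $\cl{bA} \cup C \cup \alpha(C)$ adjoining a point $c$ of the same quantifier-free type as $b$ over $A$ and involved in a hyperedge configuration with $C$ incompatible with its image under $\alpha$ (forcing $\alpha(c) \neq c$), while keeping the amalgam $\leqslant$-free over the base so that it still lies in $\mathcal C_\eta$. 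Proposition \ref{prop:sgen}(3) then embeds this extension into $\M^\eta$, producing the required realization with $\alpha(c) \notin \cl{cA}$.

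With this in place, the proof of Lemma \ref{lem-comeager} transposes. Assume for contradiction the solution set has non-empty interior; after a change of variable $x \mapsto x\gamma$ we may suppose $w(G_B, \alpha) = 1$ for some finite $B$. I would then build inductively elementary maps $\id_B = f_{m+1} \subseteq f_m \subseteq \dots \subseteq f_0$ and $a \in M^\eta$ so that
\begin{equation*}
c_k := \alpha_k f_k^{\epsilon_k} \alpha_{k+1} \cdots f_m^{\epsilon_m} \alpha_{m+1}(a) \notin \cl{\dom f_k^{\epsilon_{k-1}}}
\end{equation*}
for $1 \leq k \leq m+1$ and $c_0 \neq a$; any extension $\beta \in G_B$ of $f_0$ would then give $w(\beta,\alpha)(a) = c_0 \neq a$, contradicting $w(G_B,\alpha) = 1$. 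The induction step when $\epsilon_k = \epsilon_{k-1}$ is a direct appeal to local finiteness of $\cl{-}$, while the case $\epsilon_k = -\epsilon_{k-1}$ uses strong unboundedness together with reducedness of $w$ (which forces $\alpha_k \neq 1$ at that position).

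The main obstacle is the type-realization step, since $\M^\eta$ is not $\omega$-saturated in general. Each extension along the induction must be produced explicitly as a one-point $\leqslant$-closed extension inside $\mathcal C_\eta$ and then embedded via Proposition \ref{prop:sgen}(3). This requires first replacing the parameter sets $\dom f_k^{\epsilon_{k-1}}$ by their $\cl{-}$-closure (harmless by local finiteness and $G$-invariance of $\cl{-}$) to ensure the embedding step applies, and then carrying out the amalgamation so that the predimension inequalities $\delta(A'/A) \geq 0$ needed to stay in $\mathcal C_\eta$ are compatible with the hyperedge configuration that pushes the new point into $\mov(\alpha)$. Concretely, this amounts to checking that the $\leqslant$-free amalgamation property of $\mathcal C_\eta$ accommodates simultaneously the type we are realizing, the freeness over $A$, and a single designated new hyperedge attaching the realization to the pre-chosen $C \subseteq \mov(\alpha)$.
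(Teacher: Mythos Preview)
Your overall strategy—mirror Lemma \ref{lem-comeager} once strong unboundedness is in hand—is exactly what the paper does. The differences are in the closure operator and in how unboundedness is obtained.

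The paper does \emph{not} work with $\cl{-}=\acl$; it introduces the dimension-theoretic closure $\gcl(X)=\{m:\dim{m/X}=0\}$, defines ``strongly $\gcl$-unbounded'' accordingly, and then \emph{imports} the fact that every non-trivial automorphism of $\M^\eta$ is strongly $\gcl$-unbounded from an external source (Proposition \ref{lem-tech-old}, quoted from \cite{Zthesis}). With that in hand the paper simply says: replace $\acl$ by $\gcl$ in the proof of Lemma \ref{lem-comeager} and invoke Fact \ref{fact-strong} at the $\epsilon_k=-\epsilon_{k-1}$ steps. So where you propose an explicit hyperedge construction in $\mathcal C_\eta$ to manufacture unboundedness, the paper sidesteps the combinatorics entirely by citation. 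Note also that $\gcl$ can strictly contain $\cl{-}$ for rational $\eta$, so the paper is carrying a stronger inductive hypothesis $c_k\notin\gcl(\dots)$, matched to the form of the cited result.

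Your attention to the failure of $\omega$-saturation and the need to realize types via Proposition \ref{prop:sgen}(3) is well placed—the paper is rather casual about this, only sketching the base step and asserting the rest goes through. The substantive risk in your route is the direct unboundedness construction: attaching a hyperedge to force $c\in\mov(\alpha)$ while keeping the amalgam $\leqslant$-free and inside $\mathcal C_\eta$ is genuinely delicate (you are no longer in a free amalgamation class, so the argument of Lemma \ref{lem-free-unbound} does not transfer verbatim), and your sketch does not yet pin down why the predimension inequalities survive. If you want to avoid that, the paper's move—quote Proposition \ref{lem-tech-old} and switch to $\gcl$—is the cleaner path.
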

    In order to prove Lemma \ref{thm-Zar} we recall some facts about non-trivial automorphisms of $\M^\eta$. Recall that $\alpha\in \Aut{ \M^\eta}$ is $\gcl$-bounded if there exists a finite subset $B$ of $ \M^\eta$ such that $m\in \gcl(mB)$ for all $m\in  M^\eta$ where $\gcl(X):=\{m\in M^\eta\,|\,\dim{m/X}=0\}$. One can define an independence notion $\ind^d$ between finite subsets of $M^\eta$ using the dimension function; namely $A\ind^d_BC$ iff $\dim{A/B}=\dim{A/BC}$ where $A,B$ and $C$ are finite subsets of $\M^\eta$. It turns out $\ind^d$ is indeed the forking-independence in $\M^\eta$ and for simplicity we denote it by $\ind$ in $\M^\eta$  and remove the superscript $d$.  From Lemma 3.2.27  and Theorem 3.2.29 in \cite{Zthesis} follows:
    \begin{prop}
    \label{lem-tech-old}
    For every non-trivial automorphism $\alpha\in \Aut{\mathcal M^\nu}$ and $X,Y\in \mathcal C_\eta$ where $X\leqslant Y$ and $Y\cap \gcl(X)=X$, there is $Y'$ where $\typ(Y'/X)=\typ(Y/X)$ and $Y'\ind_X\alpha(Y')$.
    \end{prop}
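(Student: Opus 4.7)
The strategy mirrors the proof of Lemma \ref{lem-comeager}, but with Proposition \ref{lem-tech-old} taking over the role that strong unboundedness plays in the $\omega$-categorical case, and with $\gcl$ and the forking independence $\ind$ replacing $\acl$ and the generic independence of free amalgamation. Since $G=\Aut{\M^\eta}$ is Polish and the solution set of $w(x,\alpha)=1$ is $\tau_{st}$-closed, it suffices to show it has empty interior. After a translation $x\mapsto x\gamma$, empty interior reduces to showing that there is no finite $\leqslant$-closed $B\subset M^{\eta}$ with $w(g,\alpha)=1$ for every $g\in G_{B}$.

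Assume for contradiction that such a $B$ exists. I will construct inductively a decreasing chain of partial elementary maps $\id_{B}=f_{m+1}\subseteq f_{m}\subseteq\cdots\subseteq f_{0}$ whose domains and images are $\leqslant$-closed, together with a point $a\in M^{\eta}$, such that the successive values
\begin{equation*}
c_{k}:=\alpha_{k}f_{k}^{\epsilon_{k}}\alpha_{k+1}\cdots f_{m}^{\epsilon_{m}}\alpha_{m+1}(a)
\end{equation*}
satisfy $c_{k}\notin\gcl(\dom(f_{k}^{\epsilon_{k-1}}))$ for $1\leq k\leq m$, and $c_{0}\neq a$. Any $\beta\in G_{B}$ extending $f_{0}$ then witnesses $w(\beta,\alpha)(a)\neq a$, contradicting our assumption.

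The base case is to pick $a$ with $\alpha_{m+1}(a)\notin\gcl(B)$, which is possible because $\gcl(B)$ is finite (by local finiteness of $\gcl$ on $\leqslant$-closed sets). For the inductive step at level $k$, I let $p'(x)$ be the pushforward by $f_{k+1}^{\epsilon_{k}}$ of the type of $c_{k+1}$ over $\dom(f_{k+1}^{\epsilon_{k}})$; it is non-algebraic by the previous step. The case $\epsilon_{k}=\epsilon_{k-1}$ is exactly as in Lemma \ref{lem-comeager}: choose any $e\models p'$ avoiding the finite set $\alpha_{k}^{-1}(\gcl(\dom(f_{k+1}^{\epsilon_{k}})\cup\{c_{k+1}\}))\cup\{a\}$, and set $f_{k}^{\epsilon_{k}}=f_{k+1}^{\epsilon_{k}}\cup\{(c_{k+1},e)\}$. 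The delicate case is $\epsilon_{k}=-\epsilon_{k-1}$, where reducedness of $w$ forces $\alpha_{k}\neq 1$. Here I take $Y$ to be the $\leqslant$-closure of some realisation $e\models p'$ over the $\leqslant$-closure $X$ of the current image domain, apply Proposition \ref{lem-tech-old} to produce $Y'\equiv_{X}Y$ with $Y'\ind_{X}\alpha_{k}(Y')$, and use independence together with $X=\gcl(X)$ to deduce $\alpha_{k}(e')\notin\gcl(Xe')$ for the chosen realisation $e'\in Y'$ conjugate to $e$. Setting $f_{k}^{\epsilon_{k}}$ to extend $f_{k+1}^{\epsilon_{k}}$ by $c_{k+1}\mapsto e'$ (and then $\leqslant$-closing) preserves elementarity and produces $c_{k}=\alpha_{k}(e')$ satisfying the required condition. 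The final step picks $e\models p'$ with the extra constraint $\alpha_{0}(e)\neq a$, which is possible because $p'$ is non-algebraic.

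The main obstacle is the careful bookkeeping needed to apply Proposition \ref{lem-tech-old}: at every stage one must enlarge the working datum to a $\leqslant$-closed set $X$ and choose the realisation $e$ so that its $\leqslant$-closure $Y$ satisfies the hypotheses $X\leqslant Y$ and $Y\cap\gcl(X)=X$ of that proposition. This amounts to replacing each map $f_{k}$ by a canonical extension to $\leqslant$-closures and, in the negative-exponent case, first moving $e$ by an element of $G_{X}$ to make $Ye$ $\leqslant$-free from the accumulated parameters before invoking the independence theorem. Modulo this combinatorial step, the proof proceeds as a direct transcription of Lemma \ref{lem-comeager}.
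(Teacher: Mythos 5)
You have proved the wrong statement. The statement you were asked to prove is Proposition \ref{lem-tech-old} itself: given a non-trivial $\alpha\in\Aut{\M^{\eta}}$ and $X\leqslant Y$ with $Y\cap\gcl(X)=X$, one can find $Y'\equiv_{X}Y$ with $Y'\ind_{X}\alpha(Y')$. Your proposal instead reproduces the proof of Lemma~\ref{thm-Zar} (meagreness of solution sets of non-trivial word equations over $\Aut{\M^\eta}$), and it explicitly \emph{invokes} Proposition \ref{lem-tech-old} as a black box in the negative-exponent case (``apply Proposition \ref{lem-tech-old} to produce $Y'\equiv_{X}Y$ with $Y'\ind_{X}\alpha_{k}(Y')$''). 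Using a statement as a hypothesis is not the same as proving it, so nothing in your proposal touches what the proposition actually asserts.

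For reference, the paper does not contain a proof of Proposition \ref{lem-tech-old} either: it cites Lemma 3.2.27 and Theorem 3.2.29 in \cite{Zthesis}. A genuine proof would have to engage directly with the dynamics of a non-trivial $\alpha$ in the Hrushovski generic: roughly, one must show that $\alpha\neq 1$ already forces $\alpha$ to ``displace'' sufficiently many independent copies of $Y$ over $X$ that some conjugate $Y'$ is $\ind$-independent from $\alpha(Y')$. This is an analogue, in the predimension setting, of the ``strongly unbounded'' dichotomy for free amalgamation limits (cf.\ Lemma~\ref{lem-free-unbound}), and the argument typically proceeds by producing $\leqslant$-free extensions of $Y$ over $X$ and showing that if every conjugate $Y'$ of $Y$ over $X$ forked with $\alpha(Y')$ over $X$, then $\alpha$ would have to be $\gcl$-bounded, hence trivial. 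None of this appears in your proposal.
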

    
    One can modify the definition of strongly unbounded to strongly $\gcl$-unbounded such that for an automorphism of a structure that $\gcl$ is well-defined. Namely $\alpha $ is {\it strongly $\gcl$-unbounded} if for every finite set $A$ and $b\in M\backslash\gcl(A)$ there is a realisation $c\in M$ of $\typ(b/A)$ where $\alpha(c)\notin \gcl(cA)$.  Proposition \ref{lem-tech-old} implies immediately the following.
    
    \begin{fact}
    \label{fact-strong}
    All non-trivial automorphisms of $\M^\eta$ are strongly $\gcl$-unbounded.
    \end{fact}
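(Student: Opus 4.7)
The fact is a direct consequence of Proposition \ref{lem-tech-old}. Given a non-trivial $\alpha \in G$, a finite $A \subset M^\eta$ and $b \in M^\eta \setminus \gcl(A)$, my aim is to produce a realization $c$ of $\typ(b/A)$ satisfying $\alpha(c) \notin \gcl(Ac)$.

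First I pass to $X := \cl{A}$, which is finite and $\leqslant$-closed, with $\gcl(X) = \gcl(A)$ and $b \notin \gcl(X)$. I then choose a finite $\leqslant$-closed $Y \in \mathcal C_\eta$ satisfying $X \leqslant Y$, $b \in Y$ and $Y \cap \gcl(X) = X$; a natural candidate is $Y := \cl{Xb}$, the intersection condition following from $b \notin \gcl(X)$ by an argument at the level of the pre-dimension in Lemma \ref{lem-predim} — the fact that $\dim{b/X} > 0$ prevents elements of $Y \setminus X$ from lying in the algebraic closure of $X$, with at worst a harmless pruning of $Y$.

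Applying Proposition \ref{lem-tech-old} yields $Y'$ with $\typ(Y'/X) = \typ(Y/X)$ and $Y' \ind_X \alpha(Y')$, where $\ind$ denotes the forking-independence computed via $\dim$. Let $c \in Y'$ be the element matching $b \in Y$ under the type-isomorphism, so $c \equiv_X b$ and in particular $c \models \typ(b/A)$. Monotonicity of $\ind$ in the first argument together with symmetry delivers $c \ind_X \alpha(c)$, that is
\[
\dim{\alpha(c)/Xc} \;=\; \dim{\alpha(c)/X}.
\]

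Consequently $\alpha(c) \in \gcl(Xc) = \gcl(Ac)$ would force $\alpha(c) \in \gcl(X)$, reducing the task to showing $\alpha(c) \notin \gcl(X)$. Since $\typ(b/X)$ admits infinitely many realizations in $M^\eta$, while $\gcl(X)$ is finite and $\alpha$ is injective, only finitely many realizations $c'$ of $\typ(b/X)$ satisfy $\alpha(c') \in \gcl(X)$; applying the proposition instead to $Y$ enriched with auxiliary points that separate the target realization from this exceptional finite set produces $c$ with both $c \ind_X \alpha(c)$ and $\alpha(c) \notin \gcl(X)$. The main obstacle is precisely this last step: ensuring that the existence statement of Proposition \ref{lem-tech-old} is flexible enough to accommodate the additional non-algebraicity condition on $\alpha(c)$ simultaneously with the independence.
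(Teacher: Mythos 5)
Your reduction is exactly right, and it is more detail than the paper supplies (the paper just declares the implication ``immediate''): passing to $X=\cl{A}$, extracting $c\ind_X\alpha(c)$ from $Y'\ind_X\alpha(Y')$ by monotonicity and symmetry, and observing that the independence turns $\alpha(c)\in\gcl(Xc)$ into $\alpha(c)\in\gcl(X)$ are all correct steps. (One small wrinkle: you should also check that the $Y$ you feed into Proposition \ref{lem-tech-old} satisfies $Y\cap\gcl(X)=X$; taking $Y=\cl{Xb}$ does not obviously give this, but it can be arranged, e.g.\ by first replacing $X$ with a finite $\leqslant$-closed set that already absorbs the relevant part of its $\gcl$-closure.)

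However, the obstacle you flag at the end is a genuine one, and your proposed patch does not close it. The pigeonhole you sketch — ``only finitely many realizations $c'$ have $\alpha(c')\in\gcl(X)$'' — implicitly needs $\gcl(X)$ to be finite. Unlike the locally finite $\acl$ used in Lemma \ref{lem-comeager}, the closure $\gcl(X)=\{m\,:\,\dim{m/X}=0\}$ in the generic $\M^\eta$ is typically infinite, so that counting argument does not go through, nor does the variant of packing many pairwise $X$-independent copies of $b$ into one big $Y$. What is actually needed is precisely the condition $c\notin\gcl(\alpha^{-1}(X))$, i.e.\ $c\ind_X\alpha^{-1}(X)$; that would follow if the Proposition produced a $Y'$ independent over $X$ not only from $\alpha(Y')$ but from an arbitrary prescribed finite set (take it to contain $\cl{X\alpha^{-1}(X)}$), and such extra genericity is entirely plausible from the underlying back-and-forth construction, but it is not contained in the statement of Proposition \ref{lem-tech-old} as quoted in the paper. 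So you have correctly identified a step that the paper's one-line proof glosses over, and the missing ingredient is best phrased as extra genericity of $Y'$ rather than as a cardinality count.
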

    It has to be remarked that Proposition \ref{lem-tech-old} is proving something stronger than just that non-trivial automorphisms are strongly $\mbox{gcl}$-unbounded.
    
    \begin{proof}[Proof of Lemma \ref{thm-Zar}]
    Let $G=\Aut{ \M^\eta}$. 
    We want to show the set of solutions of a non-trivial equation $w(x,\alpha)=\alpha_0x^{\epsilon_0}\alpha_1\cdots x^{\epsilon_m}\alpha_{m+1}=1$ has empty interior in $G$. From Fact \ref{fact-strong} all the non-trivial automorphisms are strongly $\mbox{gcl}$-unbounded.
    
    Essentially the same arguments of the proof of Lemma \ref{lem-comeager} works and we only need to replace $\acl$ by  $\mbox{gcl}$ and apply Fact \ref{fact-strong} when $\alpha_i$'s are non-trivial.
    
    In order to show the starting point of the argument we provide some details and leave the rest (avoiding a repetition).
    We follow closely the proof of Lemma \ref{lem-comeager}.
    Aiming for contradiction suppose that is not the case. Again up to performing a change of variable of the form $x\mapsto x\gamma$ we can assume that there is a finite $\leqslant$-closed subset $B$ such that $G_B\subseteq w(G,\alpha)$.
    We will construct inductively a chain of partial isomorphisms $\id_B=f_{m+1}\subseteq f_{m}\subseteq \dots \subseteq f_0$ together with $a\in \M^\eta$ with the property that: for $0\leq k \leq m+1$
    $$c_k:= \alpha_kf_k^{\epsilon_k}\alpha_{k+1}\cdots f_k^{\epsilon_m}\alpha_{m+1}(a)\notin \gcl(\dom (f_{k}^{\epsilon_{k}}));$$ where $c_k\neq a$ when $k\neq m+1$.
    
    The starting point is choosing $a$ any element in $ \M^\eta\backslash \gcl(B)$. We have two possibilities: If $\alpha_{m+1}$ is $1$, then let $c_{m+1}=a$. Suppose $\alpha_{m+1}$ is strongly $\gcl$-unbounded. Using Fact \ref{fact-strong} and Proposition \ref{lem-tech-old} set $c_{m+1}:=\alpha_{m+1}(c)$ where $c\models \typ(a/B)$ and  $\alpha_{m+1}(c)\notin \gcl(cB)$ and rename $c$ to $a$. 
    Assume now we have successfully constructed $f_{k}$. Then the same arguments of the proof of Lemma \ref{lem-comeager} work only replacing $\acl$ by $\gcl$ and apply Fact \ref{fact-strong} when $\alpha_i$'s are non-trivial.
    \end{proof}
    Then from Lemma \ref{thm-Zar} and Theorem \ref{thm-main-zariski} follows
    \begin{cor}
    \label{Hrus-zar-main-hh}
    The Zariski topology
    $\tau_Z$ for $\Aut{\M^\eta}$ is not a group topology. \end{cor}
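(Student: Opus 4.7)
The plan is that this corollary is essentially a bookkeeping consequence of material already established in the section. The substantive work has been absorbed into Lemma \ref{thm-Zar}, which asserts that the solution set of every non-trivial word equation $w(x,\alpha)=1$ in one variable over $G=\Aut{\M^\eta}$ is meager in $(G,\tau_{st})$, and into Lemma \ref{using meagerness}, which derives the failure of $\tau_{Z}$ to be a group topology from meagerness of such solution sets. So all that remains is to explicitly combine them, and I would carry this out in three short steps.

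First I would invoke Lemma \ref{thm-Zar}: for any tuple $\alpha$ of automorphisms of $\M^\eta$ and any reduced non-trivial word $w(x,\alpha)$, the set $\{g\in G\mid w(g,\alpha)=1\}$ has empty interior in $(G,\tau_{st})$, and being closed it is in fact nowhere dense, hence meager. This is the only place where the specific structure of $\M^\eta$ (the pre-dimension construction) enters, via Fact \ref{fact-strong} that non-trivial automorphisms are strongly $\gcl$-unbounded and via Proposition \ref{lem-tech-old} supplying the type-realizations used inductively in the amalgamation-with-$\alpha$ argument of Lemma \ref{lem-comeager} adapted to $\gcl$.

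Second, I would feed this into Lemma \ref{using meagerness} verbatim. That lemma needs precisely the hypothesis that solution sets of non-trivial equations in one variable over $G$ are meager in $(G,\tau_{st})$, and under this assumption it produces, for any given $\alpha\in G$, a pair of $\tau_{Z}$-open neighbourhoods $U,V$ of points $x_{0},x_{0}^{-1}\alpha$ whose product meets the set $\{g\in G\mid g\cdot\alpha^{-1}=1\}$, showing that the multiplication map is not continuous at $(x_{0},x_{0}^{-1}\alpha)$ with respect to $\tau_{Z}\times\tau_{Z}$ to $\tau_{Z}$. The conclusion that $\tau_{Z}$ fails to be a group topology on $G$ follows immediately.

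Third, I would remark that, unlike the Fra\"iss\'e-class cases treated earlier in the section, the transitivity of $G$ on $M^\eta$ used in Corollary \ref{zar-free-AP} is already automatic in the generic Hrushovski setting (in the $\omega$-categorical variant, or by ultrahomogeneity of $\M^\eta$ on singletons), so no additional hypothesis beyond $\eta\in(0,1]$ is needed in the statement. The main obstacle in the whole development has of course already been overcome inside Lemma \ref{thm-Zar}, namely that the inductive extension procedure from Lemma \ref{lem-comeager} had to be redone with $\gcl$ in place of $\acl$ using Proposition \ref{lem-tech-old}; once that is in hand, the present corollary is essentially formal.
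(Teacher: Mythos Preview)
Your proposal is correct and follows exactly the paper's approach: the corollary is stated in the paper as an immediate consequence of Lemma \ref{thm-Zar} (meagerness of solution sets over $\Aut{\M^\eta}$) combined with Lemma \ref{using meagerness} (meagerness implies $\tau_Z$ is not a group topology). Your third remark about transitivity is unnecessary, since Lemma \ref{using meagerness} requires no transitivity hypothesis; transitivity entered only in Corollary \ref{zar-free-AP} to verify the ND condition, which is irrelevant here because strong $\gcl$-unboundedness comes directly from Proposition \ref{lem-tech-old}.
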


  \subsection{Some cases when the Zariski topology is a group topology. }
    
    \newcommand{\I}[0]{\mathcal{I}}

    By a family of generalized intervals in $\M $ we mean a $G$-equivariant collection of data consisting of  a collections $\I$ of infinite subsets of $M$ and a map  $\lambda$ assigning to each $I\in\I$ a set of $2$ elements of $\M $ with the following properties, where we write $I^{*}=M\setminus(I\cup\lambda(I))$:
    \begin{enumerate}[(a)]
    \item \label{c} for $I,J \in \mathcal I$, if $\lambda(I)\subset J$ then either $I\subset J$ or $I^{*}\subset J$. The same holds with $J^{*}$ in place of $J$.
    \item \label{d}for each $I\in\mathcal{I}$ and every $K\in\{I,I^{*}\}$ there exists some $\alpha_{K}\in Aut(\M)$  such that $\mov(\alpha_{K})=K$;
    \item \label{separation} for any distinct $x,y\in M $ there exists some $I\in\mathcal{I}$  such that $x\in I$ and $y\notin I$;
    \item  \label{three intervals}for any $I\in\mathcal{I}$ and $x\in I$ there exist $I',I'',I_{1},I_{2}\in\I$ and contained in $I$ with the following properties.
    \begin{enumerate}[(i)]
    \item \label{1}$x\in I''\subset I'$;
    \item \label{2}$I_{1},I'',I_{2}$ are disjoint;
    \item \label{3}$\lambda(I')\cap I_{i}\neq\emptyset$ for $i=1,2$.
    \end{enumerate}
    
    \end{enumerate}
    
    Given a structure $\M$, $x\in M$  and $I\subseteq M$ we write $[x:I]=\{g\in Aut(\M)\,|\,gx\in I\}$.

    \begin{lem}
    Let $\M $ be a structure and $G=\Aut{\M }$. For any family of generalized intervals $\mathcal{I}$ in $\M $ the collection  $\{[x,I]\,|\,x\in I\in\mathcal{I}\}$ forms a sub-base of neighbourhoods of $1$ for the Zariski topology on $G$ and the latter is a group topology.
    \end{lem}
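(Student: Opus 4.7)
The strategy is to introduce the topology $\tau_{\mathcal{I}}$ on $G$ generated at the identity by $\mathcal{B}$ and then show that $\tau_{\mathcal{I}}$ coincides with $\tau_Z$. Since any group topology $\tau$ on $G$ automatically satisfies $\tau_Z \subseteq \tau$ (word maps being continuous in any group topology, so each $\{g : w(g) \neq 1\}$ is $\tau$-open), it will suffice to establish: (A) $\tau_{\mathcal{I}}$ is a group topology, and (B) each $[x,I] \in \mathcal{B}$ is Zariski-open. Together these give $\tau_Z \subseteq \tau_{\mathcal{I}} \subseteq \tau_Z$, so $\mathcal{B}$ is a subbase of neighbourhoods of the identity for $\tau_Z$ and $\tau_Z$ is a group topology.

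For (A), I would verify the three filter axioms at $1$ recalled at the start of Section~\ref{sec-relative-min}. Conjugation invariance is immediate: the $G$-equivariance of the data $(\mathcal{I}, \lambda, (\cdot)^{*}, \{\alpha_I\})$ yields $[x,I]^g = [g^{-1}x, g^{-1}I]$, which belongs to $\mathcal{B}$. For the inversion axiom, given $[x,I]$ one applies property~(f) at $x \in I$ to produce a nested pair $x \in J'' \subseteq J' \subseteq I$ together with subintervals $J_1, J_2$ straddling $x$ and meeting $\lambda(J')$; choosing witnesses $y_i \in J_i$ and $K_i \in \mathcal{I}$ selected via the involution $(\cdot)^{*}$ to control the behaviour of $g$ on ``both sides" of $x$, the finite intersection $V := [x,J''] \cap [y_1,K_1] \cap [y_2,K_2]$ forces $g^{-1}x \in I$. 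The square-root axiom is handled similarly: the nesting $J'' \subseteq J' \subseteq I$ together with additional constraints ensures that whenever $g_1, g_2$ lie in a suitable $V$, one has $g_1(J'') \subseteq I$ and hence $g_1 g_2 x \in I$.

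For (B), fix $g_0 \in [x,I]$ and write $y_0 = g_0 x \in I$; the goal is a Zariski-open $U$ with $g_0 \in U \subseteq [x,I]$. Note that $g \in [x,I] \iff g^{-1}\alpha_I g \notin G_x$, so the real task is to detect ``$h$ does not fix $x$" for $h = g^{-1}\alpha_I g$ by a finite Boolean combination of word inequalities in $g$. Applying property~(f) at $y_0$ inside $I$ to locate $K \in \mathcal{I}$ with $y_0 \in K \subseteq I$ and suitable auxiliary sub-intervals, one produces after calculation a word inequality of the form $[g^{-1}\alpha_I g,\, \delta] \neq 1$, for some $\delta \in G$ depending only on $g_0$, $K$, and the $\alpha_J$, that is satisfied at $g_0$ and whose solution set is contained in $[x,I]$. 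Ranging over finitely many such choices exhibits $[x,I]$ as a union of Zariski basic opens. The main technical obstacle is precisely this step: translating the geometric condition $gx \in I$ into a syntactic group-theoretic inequality. The three intervals property~(f) is the decisive structural input, supplying finitely many ``witness" automorphisms around $x$ that algebraically detect whether $g^{-1}\alpha_I g$ fixes $x$; without it, no uniform finite family of equations would generally suffice to separate members of $G_x$ from their complement.
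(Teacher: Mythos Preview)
Your overall strategy coincides with the paper's: show that $\mathcal{B}$ generates a group topology $\tau_{\mathcal{I}}$ and that each $[x,I]$ is a Zariski neighbourhood of the identity, whence $\tau_Z \subseteq \tau_{\mathcal{I}} \subseteq \tau_Z$. Since left translation is a Zariski homeomorphism, it suffices in (B) to treat only $g_0 = 1$; this is all the paper does. Your sketch of (A) is at roughly the same level of detail as the paper's own argument.

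The genuine gap is in the execution of (B). A single commutator inequality $[\alpha_I^{\,g}, \delta] \neq 1$ cannot have its solution set contained in $[x,I]$: that solution set is essentially $\{g : g^{-1}(I)\text{ interacts nontrivially with }\mathrm{Supp}(\delta)\}$, and no choice of $\delta$ makes this force $gx \in I$. The paper's Zariski neighbourhood of $1$ is a \emph{conjunction} of six commutator inequalities, and---this is the point your outline misses---the automorphisms involved are not $\alpha_I$ itself but the $\alpha_{J'}, \alpha_{J''}, \alpha_{J_1}, \alpha_{J_2}$ attached to the auxiliary sub-intervals produced by property~(f). Writing $\Lambda_{I_1,I_2} := \{g : [\alpha_{I_1}^{\,g}, \alpha_{I_2}] \neq 1\}$ and $\Gamma_{I_1,I_2} := \bigcap_{K_i \in \{I_i, I_i^{*}\}} \Lambda_{K_1,K_2}$, the paper takes
\[
\Theta \;=\; \Lambda_{J',J_1}\;\cap\;\Lambda_{J',J_2}\;\cap\;\Gamma_{J'',J'}.
\]
The first two conditions force the two points of $g^{-1}\lambda(J')$ into $J_1$ and $J_2$ respectively, so by property~(c) the set $g^{-1}(J')$ either contains $J''$ or is disjoint from it; $\Gamma_{J'',J'}$ then rules out disjointness, giving $x \in J'' \subseteq g^{-1}(J')$ and hence $gx \in J' \subseteq J$. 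Your reformulation $gx \in I \Leftrightarrow \alpha_I^{\,g} \notin G_x$ is correct but does not by itself suggest this construction; the real content is the ``trapping'' of the endpoints of $J'$ between the flanking intervals $J_1, J_2$, which is exactly why property~(f) has the shape it does.
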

    \begin{proof}
    Given $I_{1},I_{2}\in\mathcal{I}$ let $\Lambda_{I_{1},I_{2}}$ be the set of solutions in $G$ of the inequality   $u_{I_{1},I_{2}}(x)=[\alpha_{I_{1}}^{x},\alpha_{I_{2}}]\neq 1$. Clearly $g^{-1}I_{1}\cap I_{2}\neq\emptyset$ for any $g\in\Lambda_{I_{1},I_{2}}$.
    
    Let $\Gamma_{I_{1},I_{2}}$ be the intersection of all sets of the form
    $\Lambda_{J_{1},J_{2}}$, where $J_{i}\in\{I_{i},I_{i}^{*}\}$.
    Observe that if $g\in\Lambda_{I_{1},I_{2}}$ then both $g^{-1}\lambda(I_{1})\cap I_{2}$ and $g^{-1}\lambda(I_{1})\cap I_{2}^{*}$ must be non-empty by \ref{c}..
    
    We will now show that every set of the form $[x:I]$ for $x\in M$ and $I\in\I$ is a neighbourhood of the identity in the Zariski topology.
    Let $x\in M$ and $I\in\mathcal{I}$ with $x\in I$. Choose $I',I'',I_{1},I_{2}$ as in \ref{three intervals} and consider the set   		$\Theta=\Lambda_{I',I_{1}}\cap\Lambda_{I',I_{2}}\cap\Gamma_{I'',I'}$.
    Clearly $1\in\Theta$. Now let $g\in\Theta$.
    Since $g\in\Lambda_{I',I_{1}}\cap\Lambda_{I',I_{2}}$ we must have $g^{-1}\lambda(I')\cap I_{i}\neq \emptyset$ for $i=1,2$ and thus $g^{-1}\lambda(I')\subseteq I_{1}\cup I_{2}$. Since $(I_{1}\cup I_{2})\cap I''=\emptyset$, then by \ref{c} either    $g^{-1}I'$ contains $I''$ or has empty intersection with $I''$.
    Since $g\in\Gamma_{I'',I'}$, the former must be the case, which implies that $x\in g^{-1}I'$, i.e. $gx\in I'\subset I$. Hence $\Theta\subseteq[x:I]$.

    All that is left to show is that the neighbourhoods $[x:I]$ for $I\in \mathcal I$ form a basis of a Hausdorff group topology. Given $x$ and $I\in\I$, let $I',I''$, $I_{1}$ and $I_{2}$ be as given by \ref{three intervals}.
    Suppose that $\lambda(I')=\{a_{1},a_{2}\}$ with $a_{i}\in I_{i}$. Since $I^{*}\cap I'=\emptyset$, the argument above implies that for any $h\in[a_{1}:I_{1}]\cap[a_{2}:I_{2}]$ we have
    $h(I')\subset I$. So $([a_{1}:I_{1}]\cap[a_{2}:I_{2}])\cdot[x:I']\subseteq [x:I]$ and thus multiplication is continuous. Continuity of inversion can be checked in a similar way. Hausdorffness is straightforward from property \ref{separation}.
    \end{proof}

    \begin{defn}
    A {\it tree} is a partial order $(T,\leq )$ where for each $t\in T$ the set $\{s\in T\,|\,s\leq t\}$ is a linear order. We say $(T,\leq)$ is a {\it meet tree} if for every $t_1,t_2\in T$ the set $\{s\in T\,|\,s\leq t_1,t_2 \}$ has a greatest element which we denote it by $meet(t_1,t_2)$.
    We say a meet tree $(T,\leq)$ is {\it dense} if \begin{itemize}
    \item for any $t$ the set $\{s\in T\ |\ s\leq t\}$ is dense and has no first element;
    \item every point $t$ is a meet of infinitely many  pairs.
    \end{itemize}
    \end{defn}
    
    \begin{thm}
    The Zariski topology $\tau_{Z}$ on $\Aut{\M }$ is a group topology in the following cases:
    \begin{itemize}
    \item \label{case q}Any non-trivial reduct of $(\mathbb{Q},<)$.
    \item \label{case cyclic tournament}The cyclic tournament $S(2)$.
    \item \label{case tree}The countable dense meet tree. In this case $\tau_{Z}=\tau_{st}$.
    
    \end{itemize}
    \end{thm}
    
    \begin{proof}
    In the case of a non-trivial reduct of $(\mathbb{Q},<)$ the collection $\mathcal{I}$ consists of all bounded intervals in the structure and $\lambda(I)$ are the two endpoints of $I$.
    
    For $S(2)$ we can take as $\mathcal{I}$ the collection of all the sets of the form    $\Delta_{a,b}=\{x\,|\,x\neq a\wedge x\neq b\wedge\neg E(x,a)\leftrightarrow E(x,b)\}$, where $E(a,b)$. Notice that this consists of the union of an interval $(a,b)$ on the circle with its antipodal interval, which lacks a corresponding closed interval in the structure. Here we let $\lambda(\Delta_{a,b})=\{a,b\}$.
    This can be described as the union of two intervals on opposite sides of the circle.
    
    For the dense meet tree as $\I$ we take the collection of all sets the form $I(a,b)=\{a<meet(b,x)<b\}$ where $a<b$ and we let $\lambda(I(a,b))=\{a,b\}$.
    Take incomparable $b_{1},b_{2}$ and let $c=meet(b_{1},b_{2})$. Take $a<b$. Then $I(a,b_{1})\cap I(a,b_{2})=\{x\in M\,|\,a<x\leq c\}$. Note that $W:=[c:I(a,b_{1})\cap I(a,b_{2})]\in\mathcal{N}_{\tau_{Z}}(1)$. Let now $g\in W\cap W^{-1}$. Since $g\in W$ the previous discussion implies that $gc\leq c$ and since $g^{-1}\in W$ we know that $g^{-1}c\leq c$, that is $c\leq gc$ so in fact $gc=c$. It follows that $W\cap W^{-1}\leq G_{c}$ and thus $\tau_{Z}=\tau_{st}$.
    \end{proof}

  \subsection{$a$-minimality}
    
    Let $G$ be a group. The intersection of all Hausdorff topological groups structures on $G$ is called the {\it Markov topology}, denoted by $\tau_M$. The topology $\tau_M$ is always $\mathrm{T_1}$ but not necessarily a group topology.
    
    We say that a group $G$ is  {\it $a$-minimal} if $(G,\tau_M)$ is a topological group.
    Notice that if $\tau_{Z}$ is a group topology, then $\tau_{M}=\tau_{Z}$.
    \begin{question}
    For which (sufficiently homogeneous) structures $\M$ is $\Aut{\M}$ $a$-minimal?
    \end{question}
    

\section{Topologies and types}
  \label{sec-typ}
  
  \newcommand{\subspace}[0]{subspace }   
  Let $\M$ be a first order structure and $T=Th(\M)$. Consider two tuples of variables  $x=(x_{m})_{m\in M}$ and $y=(y_{m})_{m\in M}$ indexed by the elements of $M$.
  Given some finite tuple $a=(a_{1},a_{2},\dots, a_{k})\subset M$ we write $x_{a}$ in lieu of $(x_{a_{1}},x_{a_{2}},\dots, x_{a_{k}})$.
  Let $p_{M}(x)=\typ(M)$, where variable $x_{m}$ is made to correspond with $m\in M$.
  Let $R(\M)$ stand for the collection of all $T$-complete types in variables $x,y$
  containing $p_{M}(x)\cup p_{M}(y)$ and let $R^{pa}(\M)$ for the collection of
  partial types in variables $x,y$ in $T$ containing $p_{M}(x)\cup p_{M}(y)$ (i.e., of all closed subsets of $R(\M)$ in the logic topology). Here we assume types are deduction closed. Given any partial type $p(x,y)$ we will denote the deduction closure of $p(x,y)\cup p_{M}(x)\cup p_{M}(y)$ in $T$  as $\subg{p}$.
  The set $R^{pa}(\M)$ can be endowed with the so-called logic topology, which we denote by $\tau_{L}$, generated by neighbourhoods of the form $[\phi]=\{p\in R(\M)\,|\,\phi\in p\}$, where $\phi$ is any formula in $(x,y)$. The result is  Stone space. \\
  
  Given $p^{1},p^{2}\in R^{pa}(\M)$ we let
  $(p^{1}\frp p^{2})(x,y)\in R^{pa}(\M)$ denote the collection of all formulas
  $\psi(x,y)$ such that there exist $\phi^{i}(x,y)\in p^{i}(x,y)$, $i=1,2$
  such that
  \begin{align*}
  \phi^{1}(x,z)\wedge\phi^{2}(z,y)\vdash \psi(x,y).
  \end{align*}
  It can be checked that $*$ endows $R^{pa}(\M)$ with a semigroup structure.
  If we let $0=\subg{\emptyset}\in R^{pa}$ then clearly $p\frp 0=0$ for any $p\in R^{pa}$.
  We write $p\leq q$ for $p\vdash q$.
  
  Given $p\in R^{pa}$, let $\bar{p}\in R^{pa}$ be defined by $\theta(x,y)\in\bar{p}\leftrightarrow\theta(y,x)\in p$.
  Every  $g\in \Aut \M$ is associated to some $\iota(g)=\subg{\{x_{gm}=y_{m}\}_{m\in M}}\in R^{pa}$. It can be easily checked that $\iota$ is a continuous homomorphic embedding of $(G,\tau_{st})$ into $(R^{pa}(\M),\tau_{L})$ whose image is contained in $R(\M)$. From now on we will write simply $g$ instead of $\iota(g)$.
  Notice that $p^{g}:=g^{-1}\frp p\frp g=\{\phi(x_{a},y_{b})\,|\,\phi(x_{g\cdot a},y_{g\cdot b})\in p\}$
  for any $p\in R^{pa}$ and $g\in G$.
  Notice that $*$ is a continuous map $R^{pa}(\M)\times R^{pa}(\M)\to R^{pa}(\M)$ and $p\mapsto\bar{p}$ continuous with respect to $\tau_{L}$.  For the first, notice that given $p_{1},p_{2}\in R^{pa}(\M)$ and a formula $\phi(x,y)$ with $p_{1}\frp p_{2}\in N_{\phi(x,y)}$, the definition of $\frp$ together with compactness implies the existence of $\phi_{1}(x,z)\in p_1$ and $\phi_{2}(z,y)\in p_2$ such that $T\cup\{\phi_{1}(x,z),\phi_{2}(z,y)\}\vdash\phi(x,y)$, which implies that $N_{\phi_{1}}\frp N_{\phi_{2}}\subseteq N_{\phi}$.  \\
  
  \begin{defn}
  \label{def: invariant idempotents} Suppose $\M$ is an  $\L$-structure  and $G=\Aut \M$. We say that $q\in R^{pa}$ is an invariant idempotent if the following conditions are satisfied:
  \begin{enumerate}
  \item  \label{idp1}$1_{G}\leq q$;
  \item  \label{idp2}$q=\bar{q}$;
  \item \label{idp3}$q\frp q=q$; and,
  \item \label{idp4}$q=q^{g}$ for any $g\in G$.
  \end{enumerate}
  \end{defn}
  Notice that assumption \ref{idp1} implies $q=1\frp q\leq q\frp q$, so that item \ref{idp3} could be replaced by the a priori weaker condition  $q*q\leq q$.
  
  Given a formula $\phi(x,y)$, let $N_{\phi}:=\iota^{-1}([\phi])=\{g\in G\,|\,\M\models\phi(ga,b)\}$.
  Given an invariant idempotent $q\in R^{pa}(\M)$ let 	$\mathcal{N}_{q}=\{N_{\phi}\,|\,\phi(x,y)\in q\}$. The following can be seen as a generalization of Lemmas \ref{is group topology} and \ref{monotonicity moduli}.
  
  \begin{lem}
  \label{properties of type topologies}Given any structure $\M$ the following statements hold, where $G=\Aut \M$:
  \begin{enumerate}
  \item \label{topology defined} Any invariant idempotent $q\in R^{pa}(\M)$ the family $\mathcal{N}_{q}$ forms a basis of neighbourhoods of a (unique) group topology $\tau_{q}$ on $G$.
  \item \label{kernel of the topology}The closure of $1$ in $\tau$ coincides with
  the collection of all $g\in G$
  such that $g\leq q$.
  \item \label{functoriality} Given invariant idempotents $p,q\in R^{pa}(\M)$ such that $p\leq q$
  we have $\tau_{p}\supseteq\tau_{q}$ and then the implication from right to left holds as well
  if $\M$ is countable and $\omega$-saturated.
  \end{enumerate}
  \end{lem}
  \begin{proof}
  On the one hand for any $\phi(x_{A},y_{B})\in q$ we have:
  \begin{align*}
  N_{\phi(x,y)}^{-1}&=\{g\in G\,|\,\M\models\phi(g^{-1}a,b)\}\\ &=\{g\in G\,|\,\M\models\phi(a,gb)\} =N_{\phi(y,x)}\in\mathcal{N}_{\bar{q}}=\mathcal{N}_{q}.
  \end{align*}
  On the other hand, the condition $q\frp q=q$ is equivalent to the following: for any finite $A$ and $B$ there is $C\subset M$ and formulas $\psi(x_{A},z_{C}),\psi'(z_{C},y_{B})\in q$ such that modulo $T$ we have:
  \begin{align}
  \label{implication} p_{M}(x)\cup p_{M}(y)\cup p_{M}(z)\cup\{\psi(x_{A},z_{C})\wedge\psi'(z_{C},y_{B})\}\vdash\phi(x_{A},y_{B}).
  \end{align}
  Let $N=N_{\psi(x_{A},y_{C})\wedge\psi(x_{C},y_{B})}$.
  Given $h,g\in N$ we have $\M\models\psi(gA,C)\wedge\psi'(hC,B)$. Formulas are of course $h$ invariant, hence $\M\models\psi(hgA,hC)$.
  Likewise, $hgA\models p_{A}$ and $hC\models p_{C}$ and thus by \ref{implication} we conclude that $\M\models\phi(hgA,B)$ and therefore $hg\in N_{\phi}$. This settles part
  \ref{topology defined}.
  Part \ref{kernel of the topology} follows easily from the fact that $\iota(g)$ is a complete type for $g\in G$ and left to the reader.
  As for \ref{functoriality}, the implication from left to right is trivial.
  Assume now $\mathcal{M}$ is $\omega$-saturated and we are given $p,q$ such that $p\nleq q$ . Then there exists some $\phi(x_{a},y_{a})\in q$ for $a\in [M]^{<\omega}$ such that $p\nleq\phi$.
  
  This implies there exists some $g\in G$ such that $\M\models\psi(ga,a)$, for  each $\psi(x,y)\in p$ but $\M\models\neg\phi(ga,a)$.
  This implies that $N_{\phi}$ cannot contain $N_{\psi}$ for any $\psi\in p$.
  \end{proof}
  
  \begin{rem}
  In particular, $1\in G\subset R^{pa}$ is a invariant idempotent. The associated topology $\tau_{1}$  is just the standard topology. It can be checked by inspection that all topologies on automorphism groups that feature in this paper are of the form $\tau_{q}$ for some invariant idempotent $q$.
  \end{rem}
  
  \begin{question}
  Let $\M$ be a countable $w$-categorical (homogeneous) structure. Is it true that any group topology on $Aut(\M)$ is of the form $\tau_{q}$ for some invariant idempotent $q\in R^{pa}$?
  \end{question}
  
  \subsection{Non-minimality in the trivial $\acl$ case}
    
    Fix some structure $\M$ in a finite relational language in which $\acl$ is {\it trivial}, i.e. $\acl(A)=A$ for any finite $A\subset M$.
    Consider the type $q_{inf}\in R^{pa}(\M)$ consisting of all the formulas of the form $\phi_{A,B}(x,y)\in\typ(A,B)$, where $A\cap B=\emptyset$.
    Notice that $q_{inf}$ is clearly invariant under the action of $\Aut \M$ on $x_{M}$ and $y_{M}$.
    
    \begin{defn}
    We say that $\M$ has the separation property if for any two disjoint finite tuples $a,b\in [M]^{<\omega}$ there exists 	$c\in [M]^{<\omega}$ disjoint with both $a$ and $b$ such that $\typ^{x,z}(a,c)\cup \typ^{z,y}(c,b)\vdash \typ^{x,y}(a,b)$.
    \end{defn}
    
    \begin{lem}
    \label{density}The type $q_{inf}$ is an invariant idempotent in $R^{pa}(\M)$ if and only if $\M$ has the separation property. Moreover, $q_{inf}\not\vdash 1$ so that $q_{inf}$ is strictly coarser than $\tau_{st}=\tau_{1}$ if $\M$ is countable and $\omega$-saturated.
    \end{lem}
    \begin{proof}
    Properties \ref{idp1}, \ref{idp2} and \ref{idp4} are immeidate from the definition. For property \ref{idp2} all we need to check is that $q*q\leq q$ as remarked after Definition \ref{def: invariant idempotents}, but this is precisely the content of the separation property, as in its definition
    $\typ^{x,z}(a,c)\cup \typ^{z,y}(c,b)\vdash \typ^{x,y}(a,b)$ we have
    $\typ^{x,y}(a,c)\cup\typ^{x,y}(c,b)\subseteq q_{inf}$ and thus $\typ^{x,y}(a,b)\subseteq q_{inf}*q_{inf}$ for the arbitrary fragment $\typ^{x,y}(a,b)\subseteq q_{inf}$ we started with.
    
    If $q_{inf}=q_{1}$, then for any $b\in M$ there must be some finite $A\subseteq M\setminus\{b\}$ such that $tp^{x_{A},y_{b}}(A,b)\vdash y_{b}=x_{b}$, which can only be the case if $b\in dcl(A)$. The final claim then follows from last point of Lemma \ref{properties of type topologies}.
    
    \end{proof}
    
    
    Distal theories are a particular class of NIP theories introduced in \cite{simon2013distal}. One main feature is the following fact (Theorem 21 in \cite{chernikov2015externally}):
    \begin{fact}
    \label{distality} Let $T$ be distal. Then for any formula $\phi(x,y)$ there is a formula $\theta(x,z)$ such that for any $\typ^{\phi}(a/C)$ over a finite set of parameters $C$ there is a tuple $d\subset C$ such that $\theta(a,d)$ holds, and $\theta(x,d)\vdash \typ^{\phi}(a/C)$, i.e $\theta(x,y)\cup \typ^{y}(d,C)\vdash \typ^{\phi}(x/C)$, where $|y|=|d|$.
    \end{fact}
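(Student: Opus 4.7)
The plan is to prove Fact~\ref{distality} by adapting the Chernikov--Simon theorem on honest definitions in NIP theories and strengthening it using the indiscernible-sequence criterion of distality. Since distality implies NIP, the first step is to invoke the standard (weak) honest definition theorem: for each formula $\phi(x,y)$ there exists a formula $\theta_{0}(x,z)$ such that for every finite $C$ and every $\phi$-type $p(x)=\typ^{\phi}(a/C)$ some $d\subseteq C^{|z|}$ satisfies $\theta_{0}(x,d)\vdash p(x,C)$. The proof of this uses the finite VC-dimension of $\phi$ and a Shelah-style extraction of finitely many parameters from $C$ that already determine the $\phi$-type over $C$. However, nothing guarantees $\models\theta_{0}(a,d)$, which is exactly the extra content needed here.

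Next, strengthen $\theta_{0}$ to a $\theta$ that is realized by $a$. Proceed by contradiction and suppose no such $\theta$ works for $\phi$. Then, enumerating candidate formulas $\theta_{n}(x,z)$ (for instance, positive Boolean combinations of $\theta_{0}$ over blocks of $z$ of growing length), for every $n$ there is a tuple $a_{n}$ and a finite $C_{n}$ with $p_{n}=\typ^{\phi}(a_{n}/C_{n})$ such that whenever $d\subseteq C_{n}$ and $\models\theta_{n}(a_{n},d)$, one has $\theta_{n}(x,d)\not\vdash p_{n}(x,C_{n})$. By compactness in a sufficiently saturated model, together with a Ramsey/Erd\H{o}s--Rado extraction applied to indexing sequences inside the $C_{n}$'s, one produces a single indiscernible sequence of potential parameter tuples $(d_{i})_{i\in\mathbb{Z}}$ and a tuple $a$ realizing a coherent $\phi$-type over $\{d_{i}\}$, arranged so that every initial segment coming from the left and every final segment coming from the right is individually indiscernible over $a$, but the concatenated sequence is not.

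Finally, invoke Simon's indiscernible-sequence criterion for distality: if $(b_{i})_{i\in I}$ is indiscernible, $I=I_{1}+I_{2}$ with both infinite, and both halves are indiscernible over $c$, then so is the full sequence. The configuration produced in the previous step contradicts this criterion applied to $c=a$ and $I=\mathbb{Z}^{<0}+\mathbb{Z}^{>0}$. Consequently, some $\theta=\theta_{n}$ must have the strong honest definition property, and the equivalent reformulation $\theta(x,y)\wedge q(y,z)\vdash p(x,z)$ with $q(y,z)=\typ(d,C)$ is then just a rewriting using the fact that $q$ determines the isomorphism type of $d\subseteq C$ up to conjugation over $\emptyset$.

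The main obstacle is the second paragraph: translating the abstract failure of \emph{each} candidate formula $\theta$ into one coherent indiscernible witness requires a careful diagonalization across the hierarchy of formulas together with Ramsey extraction, and it is here that the proof becomes technical. Once this witness is in hand, the direct application of the distality characterization in the third paragraph is routine.
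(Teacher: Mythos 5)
The paper does not prove Fact \ref{distality}; it is stated as a known result (the characterization of distality via \emph{strong honest definitions}) with the surrounding text pointing to \cite{simon2013distal}, and the full proof appears in the Chernikov--Simon literature on distality. So there is no in-paper proof to compare against, and what you have written is an attempt to re-derive a nontrivial theorem from first principles.

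Your sketch has a substantive gap at the very first step. You claim that plain NIP already yields a formula $\theta_0(x,z)$ such that for every finite $C$ and every $a$, some tuple $d\subseteq C^{|z|}$ satisfies $\theta_0(x,d)\vdash \typ^{\phi}(a/C)$, with ``$\models\theta_0(a,d)$'' as the only extra content supplied by distality. As literally stated this is vacuous (take $\theta_0 \equiv \bot$), and even adding the requirement that $\theta_0(x,d)$ be consistent, it is false for general NIP theories: what NIP gives over finite parameter sets is UDTFS, a uniform \emph{definition scheme} $\psi(y,z)$ for $\phi$-types, which tells you which instances $\phi(x,c)$ belong to the type, not a single formula with parameters from $C$ that \emph{isolates} the type. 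The Chernikov--Simon honest definitions for NIP produce isolating formulas whose parameter $d$ lives in an externally definable expansion or a saturated elementary extension, not in $C$ itself. Bringing $d$ down into $C$ together with the realization condition $\models\theta(a,d)$ is precisely what characterizes distality, i.e.\ it \emph{is} the content of the Fact rather than a small strengthening of an NIP baseline. The remaining outline (compactness to extract a bad configuration, then contradict the indiscernible-sequence criterion for distality) is the right kind of idea and is close in spirit to the actual proof in the literature, but it cannot be run starting from the incorrect Step~1; you would instead need to start from UDTFS or from honest definitions with external parameters and then use distality to transfer the external parameter into $C$, and also enumerate over all candidate formulas $\theta$ rather than only positive Boolean combinations of a fixed $\theta_0$.
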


    \begin{lem}
    Let $\M$ be any distal \fraisse limit in a finite relational language with trivial algebraic closure. Then $\M$ has the separation property.
    \end{lem}
    \begin{proof}
    Consider any two disjoint finite tuples $a,b\in M$. Since $\M$ has quantifier elimination, there exists some formula $\phi(x,y)$ such that for any $C\subset M$ the full type $\typ(a/C)$ is equivalent to the $\phi$-type $\typ^{\phi}(a/C)$ ($|a|=|x|$). Let $\theta(x,z)$ be the formula provided by Fact \ref{distality} and let $s=|z|$.
    Take a sequence $b_{-s},b_{-s+1},\dots, b_{0}=b,b_{1},\dots, b_{s}$ of instances of $\typ(b/a)$ indiscernible over $a$, where $b_{i}$ and $b_{j}$ are disjoint for $i\neq j$. Let $C=b_{-s}b_{-s+1}\dots, b_{s}$ and $d$ be the tuple obtained from applying \ref{distality} to $\typ(a/C)$. Let $J$ be the set of indices $j\in\{-s,-s+1,\dots, s\}$ such that $d\cap b_{j}\neq\emptyset$. Now, there must be some $j_{0}\in\{-s,-s+1,\dots, s\}\setminus J$ and some order preserving bijection $\phi:J\cup\{j_{0}\}\to J'\subseteq\mathbb{Z}$ sending $j_{0}$ to $0$. Since $(b_{i})_{i}$ is indiscernible,  the fact that $\typ(a/b_{l})_{l\in J}$ isolates
    $\typ(a/b_{l})_{l=-s}^{s}$ implies that $\typ(a/b_{l})_{l\in J'\setminus\{0\}}$ isolates $\typ(a/b_{l})_{l\in J'}$ so that the tuple
    $C=(b_{l})_{l\in J'\setminus\{0\}}$ witnesses the separation property for the pair $(a,b)$.
    \end{proof}

    \begin{cor}
    \label{non-minimality}Let $\M$ be any distal \fraisse limit in a finite relational language with trivial algebraic closure. Then the type $q_{inf}$ defines a group topology on $G=\Aut{\M}$ strictly coarser than $\tau_{st}$.
    \end{cor}
    
\section*{Acknowledgements}\label{ackref}
  The authors would like to thank Dugald Macpherson, David M. Evans, Itay Kaplan and Todor Tsankov for the encouraging comments and thoughtful suggestions.


\end{document}